\NeedsTeXFormat{LaTeX2e}

\documentclass[12pt]{amsart}

\usepackage{a4wide}
\usepackage[all]{xy}
\usepackage{amsfonts}
\usepackage{amssymb}

\numberwithin{equation}{section}

\theoremstyle{plain}

\newtheorem{theorem}[subsection]{Theorem}
\newtheorem{proposition}[subsection]{Proposition}
\newtheorem{lemma}[subsection]{Lemma}
\newtheorem{corollary}[subsection]{Corollary}
\newtheorem*{claim}{Claim}

\theoremstyle{definition}

\newtheorem{example}[subsection]{Example}


\renewcommand{\leq}{\leqslant}
\renewcommand{\geq}{\geqslant}


\providecommand{\supp}{\mathop{\rm supp}\nolimits}
\providecommand{\Sym}{\mathop{\rm Sym}\nolimits}
\providecommand{\Spec}{\mathop{\rm Spec}\nolimits}
\providecommand{\Tr}{\mathop{\rm Tr}\nolimits}
\providecommand{\Bohr}{\mathop{\rm Bohr}\nolimits}

\providecommand{\End}{\mathop{\rm End}\nolimits}


\newcommand{\wh}{\widehat}
\newcommand{\Z}{\mathbb{Z}}
\newcommand{\R}{\mathbb{R}}

\newcommand{\N}{\mathbb{N}}

\newcommand{\E}{\mathbb{E}}

\begin{document}

\title[Quantitative version of non-abelian idempotent theorem]{A quantitative version of the non-abelian idempotent theorem}

\author{Tom Sanders}
\address{Department of Pure Mathematics and Mathematical Statistics\\
University of Cambridge\\
Wilberforce Road\\
Cambridge CB3 0WA\\
England } \email{t.sanders@dpmms.cam.ac.uk}

\begin{abstract}
Suppose that $G$ is a finite group and $f$ is a complex-valued function on $G$.  $f$ induces a (left) convolution operator from $L^2(G)$ to $L^2(G)$ by $g \mapsto f \ast g$ where
\begin{equation*}
 f \ast g(z):=\E_{xy=z}{f(x)g(y)} \textrm{ for all } z \in G.
\end{equation*}
This operator is a linear map $L^2(G) \rightarrow L^2(G)$ between two finite dimensional Hilbert spaces, and so it has well-defined singular values; we write $\|f\|_{A(G)}$ for their sum.

The quantity $\|\cdot\|_{A(G)}$ is of particular interest because in the abelian setting it coincides with the $\ell^1$-norm of the Fourier transform of $f$.  Thus, in the abelian setting it is an algebra norm, and it turns out that this extends to the non-abelian setting as well when $\|\cdot\|_{A(G)}$ is defined as above.

It is relatively easy to see that if $A:=xH$ where $H \leq G$ and $x \in G$, then $\|1_{A}\|_{A(G)} = 1$, so that indicator functions of cosets of subgroups have algebra norm $1$.  Since $\|\cdot\|_{A(G)}$ is a norm we can easily construct other sets whose indicator functions have small algebra norm by taking small integer-valued sums of indicator functions of cosets (when these sums are themselves indicator functions of cosets); the object of this paper is to show the following converse.

Suppose that $A \subset G$ has $\|1_A\|_{A(G)} \leq M$. Then there is an integer $L=L(M)$, subgroups $H_1,\dots,H_L \leq G$, elements $x_1,\dots,x_L \in G$ and signs $\sigma_1,\dots,\sigma_L \in \{-1,0,1\}$ such that
\begin{equation*}
1_A=\sum_{i=1}^L{\sigma_i1_{x_iH_i}},
\end{equation*}
where $L$ may be taken to be at most triply tower in $O(M)$.  This may be seen as a quantitative version of the non-abelian idempotent theorem.
\end{abstract}

\maketitle

\section{Introduction}

Suppose that $G$ is a finite group and $f,g \in L^1(\mu_G)$, where $\mu_G$ denotes the unique Haar probability measure on $G$.  The convolution $f \ast g$ of $f$ and $g$ is then defined point-wise by
\begin{equation*}
f \ast g(x):=\int{f(y)g(y^{-1}x)d\mu_G(y)}.
\end{equation*}
This can be used to introduce the family of convolution operators: given $f \in L^1(\mu_G)$ we define $L_f \in \End(L^2(\mu_G))$ via
\begin{equation*}
L_f:L^2(\mu_G) \rightarrow L^2(\mu_G); v \mapsto f \ast v.
\end{equation*}
In this paper we are interesting in the algebra norm which is defined by
\begin{equation*}
\|f\|_{A(G)} := \sup\{|\langle f,g\rangle_{L^2(\mu_G)}|: \|L_f\| \leq 1\},
\end{equation*}
where $\|\cdot\|$ denotes the operator norm.

It is easy to check that this is, indeed, a norm, but to help understand it, we consider the case when $G$ is abelian.  Here we write $\wh{G}$ for the dual group, that is the finite abelian group of homomorphisms $G \rightarrow S^1$. The Fourier transform is then the map taking $f \in L^1(\mu_G)$ to $\wh{f} \in \ell^\infty(\wh{G})$ defined by
\begin{equation*}
\wh{f}(\gamma):=\langle f,\gamma\rangle_{L^2(\mu_G)} = \int{f(x)\overline{\gamma(x)}d\mu_G(x)}.
\end{equation*}
The elements of $\wh{G}$ form an orthonormal basis for $L^2(\mu_G)$ so the map $L_f$ takes $v$ to 
\begin{equation*}
\sum_{\gamma \in \wh{G}}{\wh{f}(\gamma)\langle v,\gamma\rangle_{L^2(\mu_G)}\gamma}.
\end{equation*}
It is then easy to see that the operator norm of this operator is $\|\wh{f}\|_{\ell^\infty(\wh{G})}$. Now, by Parseval's theorem we have
\begin{equation*}
\|f\|_{A(G)} = \sup\{|\langle \wh{f},\wh{v}\rangle_{\ell^2(\wh{G})}|:v \in L^1(\mu_G) \textrm{ and }\|\wh{v}\|_{\ell^\infty(\wh{G})}\leq 1\} ,
\end{equation*}
and so by the Fourier inversion formula and the dual characterisation of the $\ell^1$-norm we get that
\begin{equation*}
\|f\|_{A(G)} =\sum_{\gamma \in \wh{G}}{|\wh{f}(\gamma)|}.
\end{equation*}
Thus our definition of the $A(G)$-norm coincides with the usual one when $G$ is abelian.  There are many basic properties of the $A(G)$-norm which can be arrived at from the above expression in the case when $G$ is abelian, but require a little more work in the non-abelian case.  These are developed in detail in \S\ref{sec.algnormprop}.

Remaining with $G$ abelian we consider some examples.  If $H \leq G$ then it is easy to compute its Fourier transform:
\begin{equation*}
\wh{1_H}(\gamma)=\begin{cases} \mu_G(H) & \textrm{ if } \gamma(x)=1 \textrm{ for all }x \in H\\ 0 & \textrm{ otherwise.} \end{cases}
\end{equation*}
It follows that $\|1_H\|_{A(G)} =1$.  Moreover, the $A(G)$-norm is easily seen to be translation invariant so we conclude that $1_{A}$ has algebra norm $1$ whenever $A$ is a coset.  It turns out that the same is true for general $G$ (the details may be found in Corollary \ref{cor.indsmall}), but it is particularly easy to see when $G$ is abelian.

At the other end of the spectrum we have highly unstructured -- random -- sets: suppose that $A$ is a set of $k$ independent elements of $G$.  Then by Kinchine's inequality we have that $\|1_A\|_{A(G)} = \Omega(\sqrt{|A|})$ which is to say that the algebra norm is very large.  The optimist might feel inclined to guess that `small algebra norm implies structure', and they would be right.

By taking sums and differences of indicator functions of cosets we can produce other indicator functions of sets where the algebra norm is small by the triangle inequality, but this is essentially the only way in which this can happen.  In particular, the following is the quantitative idempotent theorem in the abelian case.
\begin{theorem}[{\cite[Theorem 1.3]{BJGTS2}}]\label{thm.green}
Suppose that $G$ is a finite abelian group and $f\in A(G)$ is integer-valued and has $\|f\|_{A(G)} \leq M$. Then there is some $L= \exp(\exp(O(M^4)))$, subgroups $H_1,\dots,H_{L} \leq G$, elements $x_1,\dots,x_{L} \in G$ and signs $\sigma_1,\dots,\sigma_L\in \{-1,0,1\}$ such that
\begin{equation*}
f = \sum_{i=1}^k{\sigma_i1_{x_i+H_i}}.
\end{equation*}
\end{theorem}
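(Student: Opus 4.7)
The plan is to combine three ingredients: Chang's theorem to control the large Fourier coefficients of $f$, a Bogolyubov / Freiman--Ruzsa-style argument to pass from approximate to genuine subgroup structure, and an iterative peeling of coset indicators that exploits the integer-valued hypothesis. Since $\|f\|_\infty \leq \|f\|_{A(G)} \leq M$, the integer-valued $f$ takes at most $2M+1$ distinct values, so at a polynomial cost in $M$ it is enough to handle the case $f = 1_A$ for some $A \subset G$ with $\|1_A\|_{A(G)} = O(M)$.

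For such an indicator, I would apply Chang's theorem to the spectrum $\mathrm{Spec}_\eta(1_A) := \{\gamma \in \wh{G} : |\wh{1_A}(\gamma)| \geq \eta |A|/|G|\}$ at threshold $\eta \sim 1/M$, placing it inside a Bohr set $B = B(\Gamma,\rho)$ of rank $|\Gamma| = O(M^2 \log(|G|/|A|))$ and radius $\rho = \Omega(1)$. A Bogolyubov-type computation then shows that $1_A$ is close, in $A(G)$-norm, to a function constant on the translates of $B$, and the integer-valued constraint lets me round this approximation to a $\{-1,0,1\}$-valued $\tilde f$ which is a signed sum of indicators of translates of $B$. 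A quantitative Freiman--Ruzsa theorem then covers the rank-$d$ Bohr set $B$ by $\exp(O(d)) = \exp(O(M^2))$ cosets of an actual subgroup $H \leq G$, so $\tilde f$ is expressible as a signed sum of $\exp(O(M^4))$ cosets of $H$. The residual $f - \tilde f$ is integer-valued with $A(G)$-norm at most $M + O(1)$, and by construction vanishes on a positive proportion of $G$ (measured against $\mu_B$), so I would iterate the construction on it; careful book-keeping of the parameters shows the recursion can be made to terminate after $O(\log M)$ rounds.

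The main obstacle, and the origin of the doubly exponential bound $\exp(\exp(O(M^4)))$, is the interplay between iteration depth and parameter degradation: the $M^4$ inside comes from Chang's theorem (rank $M^2$) squared through the Bogolyubov trick, while the outer exponential comes both from the Freiman--Ruzsa covering of a Bohr set of rank $M^2$ by cosets of a genuine subgroup, and from the depth of the recursion. The delicate part is ensuring at each stage that the residual remains integer-valued, has controlled algebra norm, and has a strictly simpler coset structure — so that the recursion genuinely terminates. This is where integrality of $f$ is used essentially, rather than merely in the base case, and where the precise quantitative exponents have to be tracked to avoid a blow-up beyond doubly exponential.
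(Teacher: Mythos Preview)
Your proposal has a genuine and fatal gap at the step where you invoke ``a quantitative Fre{\u\i}man--Ruzsa theorem'' to cover a Bohr set by $\exp(O(d))$ cosets of an actual subgroup $H \leq G$. In a general finite abelian group this is simply false: when $G = \Z/p\Z$ with $p$ prime the only subgroups are trivial, yet Bohr sets of small rank and positive radius are typically arithmetic progressions of intermediate size. No covering by boundedly many cosets of a genuine subgroup exists, and Fre{\u\i}man--Ruzsa (which outputs coset \emph{progressions}, not cosets) does not help you here. Relatedly, the rank you obtain from Chang's theorem is $O(M^2 \log(|G|/|A|))$, which depends on the density of $A$, not just on $M$; you silently drop this dependence when you write $\exp(O(d)) = \exp(O(M^2))$.

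The approach actually taken in \cite{BJGTS2} (and sketched in \S\ref{sec.abover} of the present paper) addresses exactly this obstacle, and it is where integrality is used essentially. One first shows, via Balog--Szemer{\'e}di--Fre{\u\i}man and a quantitative continuity argument of Green--Konyagin type, that there is a large set $B$ over which $f$ varies by less than $1/3$, say. Because $f$ is (almost) integer-valued, this small variation propagates: $f$ must be approximately constant not just on $B$ but on the entire subgroup $H := \langle B \rangle$ generated by $B$. This is how one passes from approximate structure to a genuine subgroup without any covering argument. One then has $\|f \ast \mu_H\|_\infty \geq 1/2$, so by Lemma~\ref{lem.decompmass} the algebra norm of $f - f \ast \mu_H$ drops by at least $1/2$, and the iteration terminates in $O(M)$ steps. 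Your scheme, by contrast, has the residual with $A(G)$-norm ``at most $M + O(1)$'', which is no decrease at all, so your recursion has no termination mechanism either. The reduction to indicators at the start is also unjustified: there is no obvious bound on $\|1_{\{f=k\}}\|_{A(G)}$ in terms of $\|f\|_{A(G)}$.
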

The objective of this paper is to extend this to non-abelian groups.
\begin{theorem}\label{thm.main}
Suppose that $G$ is a finite group and $f\in A(G)$ is integer-valued and has $\|f\|_{A(G)} \leq M$. Then there is some $L=L(M)$, subgroups $H_1,\dots,H_{L} \leq G$, elements $x_1,\dots,x_{L} \in G$ and signs $\sigma_1,\dots,\sigma_L\in \{-1,0,1\}$ such that
\begin{equation*}
f = \sum_{i=1}^L{\sigma_i1_{x_iH_i}}.
\end{equation*}
\end{theorem}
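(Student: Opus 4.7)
My plan is to proceed by an iterative argument, peeling off one coset indicator from $f$ at each stage. The base case is $\|f\|_{A(G)}$ small enough that, together with the integer-valuedness, one is forced to $f \equiv 0$; the inductive step will produce a subgroup $H$, an element $x \in G$, and a sign $\sigma \in \{-1,1\}$ such that $f$ is identically $\sigma$ on the coset $xH$. We then replace $f$ by $f - \sigma 1_{xH}$, which remains integer-valued, has algebra norm at most $M+1$, but strictly smaller support, and iterate. The triply tower bound announced in the abstract then arises from the compounded quantitative losses at each stage.

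The heart of the argument is the inductive step, which I would approach in two halves. First, one seeks a large piece of \emph{approximate} symmetry. Since $\|f\|_{A(G)} \leq M$, a Plancherel/singular-value argument shows that $f$ has substantial spectral concentration, and the natural non-abelian substitute for Chang's theorem is the Croot--Sisask almost-periodicity lemma, which is available in general groups. It produces a large symmetric set $T \subset G$ of ``approximate periods'' such that $f \ast (\mu_G(T)^{-1}1_T)$ closely approximates $f$ in a suitably strong norm, where the $A(G)$-bound on $f$ ensures one can afford to take $T$ of controlled density.

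Converting this approximate $T$-invariance into exact $H$-invariance for an honest subgroup $H$ is where the main obstacle lies. In the abelian case this is handled by Bohr set/Freiman technology; in the non-abelian case the appropriate substitute is some form of the Breuillard--Green--Tao structure theorem for approximate groups (or a bare-hands iterated-product-set argument), which locates a genuine subgroup $H$ inside an iterated product of $T$, and shows that $f$ is nearly constant on cosets of $H$. Here the integer-valuedness is crucial: one has to drive the Croot--Sisask approximation to precision better than $1/2$ on coset averages, so that the $H$-coset averages, being close to integers, actually equal those integers. This produces a coset $xH$ on which $f$ is identically some integer $\sigma$, and one then argues $|\sigma|=1$ (using that $f$ is bounded in algebra norm and any larger integer would contribute too much $A(G)$-mass).

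The final task is to track the quantitative dependence through the iteration. The decreasing monovariant cannot be the algebra norm itself, which drifts upwards by $\|1_{xH}\|_{A(G)} = 1$ under subtraction; one instead uses something like the size of $\supp f$, or the algebra norm on an appropriate quotient, noting that once a coset has been peeled off, the residual $f - \sigma 1_{xH}$ vanishes on $xH$. The three sources of quantitative loss --- the Croot--Sisask density, the approximate-group to subgroup conversion, and the number of iterations $L$ --- each contribute an exponential layer, which is the source of the triply-tower dependence on $M$. Balancing these layers so that the argument actually closes is, I expect, the most delicate point of the proof.
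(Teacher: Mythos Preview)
Your proposal has a genuine gap in the termination argument, and it stems from the decision to peel off a \emph{single} coset $\sigma 1_{xH}$ at each stage rather than the full convolution $f\ast\mu_H$.

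You correctly observe that $\|f - \sigma 1_{xH}\|_{A(G)}$ can exceed $\|f\|_{A(G)}$, so the algebra norm is not a monovariant for your scheme. But your proposed replacement, $|\supp f|$, is not controlled by $M$ at all: if $f = 1_H$ for a subgroup $H$ then $\|f\|_{A(G)} = 1$ while $|\supp f| = |H|$ is arbitrary. So there is no function of $M$ alone that bounds the number of iterations, and the argument does not close. (Your side-claim that the constant value $\sigma$ on $xH$ must satisfy $|\sigma|=1$ is also unsupported: the $A(G)$-bound only gives $\|f\|_{L^\infty}\leq M$, and $f$ may well equal $2$ or $3$ on a coset.)

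The paper's fix is to notice that once you have driven the approximation far enough that $f$ is (nearly) integer-constant on \emph{every} coset of $H$, you should subtract $f\ast\mu_H$ rather than a single coset indicator. The point is the exact splitting
\[
\|f\|_{A(G)} \;=\; \|f - f\ast\mu_H\|_{A(G)} \;+\; \|f\ast\mu_H\|_{A(G)},
\]
which follows from the spectral orthogonality of $L_{f\ast\mu_H}$ and $L_{f - f\ast\mu_H}$ (Lemma~\ref{lem.decompmass}). Since one has arranged $\|f\ast\mu_H\|_{L^\infty(\mu_G)} > 1/2$, the algebra norm genuinely drops by at least $1/2$ at every stage, and the iteration halts after $O(M)$ steps. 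The piece $f\ast\mu_H$ is constant on cosets of $H$, and a separate counting argument (comparing $\mu_G(H)$ to $\|f\|_{L^1}$) bounds the number of cosets on which it is nonzero. A subtlety is that the iterate $f_{i+1}=f_i-f_i\ast\mu_{H_i}$ is only \emph{almost} integer-valued; the structural tools are applied to its integer round $(f_i)_\Z$, whose algebra norm may grow with $i$, but the unrounded $f_i$ retains the decreasing $A(G)$-norm that controls termination.

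As a smaller remark, the paper does not use Croot--Sisask or Breuillard--Green--Tao; it builds a bespoke theory of ``multiplicative pairs'' and a local Fourier transform relative to them in order to obtain the quantitative continuity statement. Whether a Croot--Sisask route could substitute here is plausible but not what is done.
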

There is nothing special about the choice of left cosets: $xH=(xHx^{-1})x$ for all $x \in G$ and $H \leq G$, hence we can easily pass between the left and right versions of the above result.

More than the above, our proof gives an effective, albeit weak, bound: $L$ may be taken to be triply tower in $O(M)$ -- $A(6,O(M))$ where $A$ is the Ackerman function -- although clearly the precise nature of the bound is not important.  A more detailed discussion of these matters may be found in the concluding remarks of \S\ref{sec.con}.

Theorem \ref{thm.green} was motivated by the celebrated idempotent theorem of Cohen \cite{PJC} which characterises idempotent (with respect to convolution) measures on locally compact abelian groups.  Our result is essentially a quantitative version of the non-abelian idempotent theorem of Lefranc \cite{ML}.

Recall that if $G$ is a locally compact group then $B(G)$ denotes the Fourier-Stieltjes algebra, that is the linear span of the set of continuous positive-definite functions on $G$ endowed with point-wise multiplication.  We say that $f \in B(G)$ is \emph{idempotent} if $f$ is $\{0,1\}$-valued.  Finally we write $R(G)$ for the ring of subsets of $G$ generated by the left cosets of open subgroups of $G$.
\begin{theorem}[The idempotent theorem]
Suppose that $G$ is a locally compact group. Then the idempotents of $B(G)$ are exactly the indicator functions of the sets in $R(G)$.
\end{theorem}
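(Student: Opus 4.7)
The plan has two parts. For the easy direction, that indicators of $R(G)$-sets are idempotents in $B(G)$, I would first show that for any open subgroup $H \leq G$ the function $1_H$ lies in $B(G)$: it is continuous (since $H$ is clopen) and positive definite, because for any finitely supported $c: G \to \C$,
\begin{equation*}
\sum_{x,y}{c(x)\overline{c(y)}1_H(y^{-1}x)} = \sum_{zH \in G/H}{\Bigl|\sum_{x \in zH}{c(x)}\Bigr|^2} \geq 0.
\end{equation*}
Translation invariance of $B(G)$ then gives $1_{xH} \in B(G)$ for every coset. Since $B(G)$ is a pointwise algebra, the $\{0,1\}$-valued elements form a Boolean ring under $f \wedge g := fg$ and $f \vee g := f + g - fg$, so closure under the Boolean operations generating $R(G)$ lands inside the idempotents of $B(G)$.

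For the reverse direction, that every idempotent $f \in B(G)$ is the indicator of a set in $R(G)$, I would follow the approach pioneered by Cohen in the abelian case and extended to the general locally compact setting by Lefranc and Host. The first step is to realise $f$ as a matrix coefficient $f(x) = \langle \pi(x)\xi, \eta \rangle$ of some continuous unitary representation $\pi: G \to U(\mathcal{H})$. A polarisation and GNS-type argument reduces to the case $\xi = \eta$ with $\pi$ cyclic, and the idempotent relation $f^2 = f$ then translates into strong structural constraints on the closed invariant subspace $\overline{\pi(G)\xi}$ and on the projection in the group von Neumann algebra that $f$ represents.

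The main obstacle is globalising abelian information. Restricted to any closed abelian subgroup $K \leq G$, the function $f|_K$ is an idempotent in $B(K)$, so by Cohen's abelian idempotent theorem the set $\{x \in K : f(x) = 1\}$ already belongs to the coset ring of $K$. The hard part is assembling these per-subgroup descriptions into a single finite description of $A := f^{-1}(1)$ as an element of $R(G)$: one has to control how the open subgroups witnessing the local pictures vary with the base point, and rule out infinite accumulation along the boundary of $A$. Host's resolution proceeds by induction on a notion of complexity of the decomposition, combined with a Gelfand-theoretic analysis of the commutative C*-subalgebra of $B(G)$ generated by translates of $f$. This is what produces the qualitative finiteness, at the cost of any quantitative control -- which is precisely what Theorem \ref{thm.main} is designed to recover in the finite setting.
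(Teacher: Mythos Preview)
The paper does not actually prove this theorem: it is stated as background, attributed to Cohen (abelian case) and Lefranc (general case), and the paper simply remarks that ``the idempotent theorem tends to be established as an immediate corollary'' of Theorem~\ref{thm.lefranc}. That deduction is one line: an idempotent in $B(G)$ is a $\{0,1\}$-valued (hence integer-valued) element of $B(G)$, so Theorem~\ref{thm.lefranc} writes it as a finite signed sum of coset indicators, which forces its support to lie in $R(G)$. The converse direction is exactly your easy direction.

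Your proposal is not wrong, but it is doing far more work than the paper does. Your easy direction is fine and matches what anyone would write. For the hard direction, you sketch a direct attack in the spirit of Host --- matrix coefficients, restriction to abelian subgroups, Cohen's theorem there, and a globalisation step --- rather than simply invoking Lefranc's Theorem~\ref{thm.lefranc} as a black box. That is a legitimate route (indeed it is essentially how Lefranc's theorem itself is proved), but relative to this paper it is the long way round: the paper treats both the idempotent theorem and Theorem~\ref{thm.lefranc} as known qualitative results and focuses entirely on making Theorem~\ref{thm.lefranc} quantitative in the finite setting. One small caution on your sketch: the ``globalisation'' step you flag as the main obstacle really is the entire content of the argument, and your description of it (inducting on complexity, Gelfand-theoretic analysis) is vague enough that it would not count as a proof without substantially more detail.
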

Cohen proved this when $G$ is abelian by making heavy use of the dual group.  Unfortunately a suitable dual object is not available in general and so in the non-abelian setting the proofs had to proceed along rather different operator-theoretic lines.

The idempotent theorem tends to be established as an immediate corollary of the following theorem; it is this theorem that we have made quantitative.
\begin{theorem}\label{thm.lefranc}
Suppose that $G$ is a locally compact group and $f \in B(G)$ is integer-valued. Then there is some $L<\infty$, open subgroups $H_1,\dots,H_{L} \leq G$, elements $x_1,\dots,x_{L} \in G$ and signs $\sigma_1,\dots,\sigma_L\in \{-1,0,1\}$ such that
\begin{equation*}
f = \sum_{i=1}^L{\sigma_i1_{x_iH_i}}.
\end{equation*}
\end{theorem}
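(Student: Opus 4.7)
The plan is to reduce the problem to the idempotent (i.e., $\{0,1\}$-valued) case via polynomial calculus in the Banach algebra $B(G)$, and then establish the idempotent case by iteratively extracting cosets.

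Since $B(G)$ embeds continuously into $C_b(G)$ with $\|f\|_\infty \leq \|f\|_{B(G)}$, an integer-valued $f \in B(G)$ has finite range $\{n_1,\dots,n_k\} \subset \Z$. The space $B(G)$ is a commutative unital Banach algebra under pointwise multiplication, so if $p_j$ is the Lagrange interpolation polynomial with $p_j(n_i) = \delta_{ij}$ then $1_{f^{-1}(n_j)} = p_j \circ f \in B(G)$. Writing $f = \sum_{j=1}^k n_j 1_{f^{-1}(n_j)}$, it suffices (after a trivial further decomposition to absorb the coefficients $n_j$ into signed sums of indicators) to prove the theorem for idempotents: if $1_A \in B(G)$ then $A$ lies in the coset ring $R(G)$.

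For the idempotent case, $A$ is clopen since $1_A$ is continuous. The strategy is a ``peeling'' induction on $A$: I aim to find an open subgroup $H \leq G$ and a coset $xH \subset A$ that can be removed while keeping the remainder in $B(G)$. Since any coset of an open subgroup satisfies $1_{xH} \in B(G)$ with $\|1_{xH}\|_{B(G)} = 1$ (being a matrix coefficient of the quasi-regular representation on $\ell^2(G/H)$), the difference $1_{A \setminus xH} = 1_A - 1_{xH}$ remains in $B(G)$, and one iterates.

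The main obstacle is producing such a coset and guaranteeing termination of the peeling. In the abelian case, Fourier analysis suffices: large Fourier coefficients of $1_A$ force correlation with a Bohr set, from which one extracts a finite-index open subgroup on which $A$ is a union of cosets. In the general locally compact setting one replaces characters by matrix coefficients of irreducible unitary representations via the Peter--Weyl decomposition (after first reducing to the compactly generated, almost-connected case using the structure theory of locally compact groups); the relevant spectral input is that the projection in the universal von Neumann algebra $W^*(G)$ corresponding to $1_A$ has a structure forcing a coset of an open subgroup to lie inside $A$. Qualitative termination is then immediate from a Zorn / compactness argument on the resulting decreasing chain of subsets; the quantitative bound $L=L(M)$ pursued in the rest of the paper requires a quantitative analogue of each of these steps, which is precisely the substance of Theorem~\ref{thm.main}.
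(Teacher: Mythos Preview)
The paper does not supply its own proof of Theorem~\ref{thm.lefranc}: it is quoted as a known result of Lefranc, with the remark that ``there are a number of proofs of this result including a very elegant one by Host \cite{BH}, however they are all very soft.'' The paper's contribution is the quantitative finite-group statement, Theorem~\ref{thm.main}, and all of \S\S\ref{sec.lon}--\ref{sec.pmt} are devoted to that. So there is nothing in the paper to compare your proposal against directly.

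Evaluating your sketch on its own merits: the first paragraph (Lagrange interpolation to reduce to idempotents) is correct and standard. The rest, however, is not a proof but a plan with substantial gaps. Two concrete issues:

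\emph{Existence of the coset.} You assert that the spectral structure of the projection in $W^*(G)$ ``forces a coset of an open subgroup to lie inside $A$'', but you do not say how. Peter--Weyl is a theorem about compact groups, not locally compact ones, so your parenthetical reduction ``to the compactly generated, almost-connected case'' is doing all the work and is itself unexplained. This is exactly the hard part of the theorem; Host's argument, for instance, proceeds quite differently and does not peel off one coset at a time.

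\emph{Termination.} Removing a coset gives $1_{A\setminus xH}=1_A-1_{xH}\in B(G)$, but the triangle inequality only yields $\|1_{A\setminus xH}\|_{B(G)}\leq \|1_A\|_{B(G)}+1$, so there is no obvious decreasing quantity. Your appeal to ``Zorn / compactness on the decreasing chain of subsets'' does not give finiteness of $L$: a strictly decreasing chain of clopen sets need not be finite, and Zorn's lemma produces maximal elements, not termination of a process. You need an invariant (such as the $B(G)$-norm, or the number of ``components'' in some structural sense) that strictly decreases, together with a reason it is bounded below or integer-valued.

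In short, your reduction step is fine, but the idempotent case is asserted rather than proved.
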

There are a number of proofs of this result including a very elegant one by Host \cite{BH}, however they are all very soft.  Of course the theorem has nevertheless received many applications in the classification of various structures, for example closed unital ideals in \cite{AU} and homomorphisms between Fourier algebras in \cite{MINS} to name two.

The above applications are, however, not our primary interest: our agenda is two-fold.  In the first instance the objective has been to extend the quantitative idempotent theorem to the non-abelian setting.  However, on the way to doing this we have developed our second objective of finding a useful version of the non-abelian Fourier transform relative to a suitable notion of `approximate group', and tools to simulate Fre{\u\i}man's theorem in the non-abelian setting.  Both Fourier and Fre{\u\i}man-type tools are used extensively in additive combinatorics and an overview of what they become in the non-abelian setting may be found in the next section.

\section{An overview of the paper}\label{sec.abover}

We shall now give an overview of the proof of the abelian quantitative idempotent theorem from \cite{BJGTS2}, and then explain how this needs to be adapted to the non-abelian setting which should serve as motivation for the remainder of the paper. 

The proof is iterative in nature and at each stage it takes an almost integer-valued function with small algebra norm and gives out a decomposition of that function into two non-trivial almost integer-valued functions with disjoint spectral support.
\begin{enumerate}
\item \emph{(Arithmetic connectivity)} If $f:G \rightarrow \R$ has $\|f\|_{A(G)} \leq M$ then it follows from the $\log$-convexity of the $L^p(\mu_{\wh{G}})$-norms that
\begin{equation*}
\|f \ast f\|_{L^2(\mu_G)}^2 \geq \|f\|_{L^2(\mu_G)}^6/M^2.
\end{equation*}
Given this we should like to use the Balog-Szemer{\'e}di-Fre{\u\i}man theorem to say that $f$ correlates with a coset progression.  Unfortunately if $f$ is almost integer-valued rather than integer-valued then it may be that most of its mass is supported at points where $f$ is very small.  

It turns out that the set of points where $f$ is large has a property called arithmetic connectivity and a consequence of this is that it itself has large additive energy which recovers the situation.
\item \emph{(Correlation with an approximate subgroup)} Given a set with large additive energy one can apply the Balog-Szemer{\'e}di-Fre{\u\i}man theorem; we now do so as planned to get that the set of points where $f$ is large correlates with a coset progression.
\item \emph{(Quantitative continuity)} It is classically well known (and easy to check) that if $f \in A(G)$ then $f=g$ \emph{a.e.} for some continuous function $g$.  When $G$ is finite this statement is not useful, but it can nevertheless be made quantitative in a certain sense.  Indeed, this is the main aim of the paper \cite{BJGSVK} of Green and Konyagin.  For our purposes we require a relative version of the Green-Konyagin result which says that given a coset progression we can find a large subset over which $f$ does not vary very much.
\item \emph{(Generating a group)}  Since $f$ is almost integer-valued and by the above will turn out to not vary very much over translation by some set $B$, it follows that it does not vary very much over the group generated by $B$.  Moreover, since $f$ also correlates with $B$ we find that we have more or less generated a subgroup $H \leq G$ such that
\begin{equation*}
\|f \ast \mu_H\|_{L^\infty(\mu_G)} \geq 1/2 \textrm{ and } \|f - f\ast \mu_H\|_{L^\infty(\mu_G)} \leq \epsilon.
\end{equation*}
\end{enumerate}
Having done this we find that $H^\perp$ supports a spectral mass of size $\Omega(1)$, $f-f\ast \mu_H$ has Fourier transform that is orthogonal to $H^\perp$ and $f-f\ast \mu_H$ is almost integer-valued.  We may now repeat the process applied to this new function $f-f\ast \mu_H$.  The iteration eventually terminates since $\|f\|_{A(G)} \leq M$ (and therefore we cannot go on collecting spectral mass indefinitely).  When it terminates we can unravel the output to find that we have subgroups $H_1,\dots,H_l$ such that
\begin{equation*}
f\ast (\delta - \mu_{H_1}) \ast \dots (\delta-\mu_{H_l}) \equiv 0.
\end{equation*}
This leads to a representation of $f$ as a weighted sum of subgroups and it is easy to see that the weights are integers.  The result then follows.

The proof has an additive combinatorial flavour and some motivation for why this might be is the following which may be checked from the definition of convolution.  If $A,B \subset G$ for some finite group $G$, then
\begin{equation*}
\supp 1_A \ast 1_B = AB \textrm{ and }1_A \ast 1_B(x) = \mu_G(A \cap xB^{-1}).
\end{equation*}
Here $AB$ denotes the product set $\{ab: a \in A, b \in B\}$, and we make the obvious convention for powers $A^n$.

The above proof strategy is fairly straightforward, and the real work of \cite{BJGTS2} is in pushing it through in a general abelian group; in this paper we take the next step and try to push it through in a general group. This sort of thing has varying degrees of success and in general it is combinatorial arguments which `remain entirely in physical space' that stand a better chance of transferring directly, which is very definitely not the situation we are in.  An example of this phenomenon is with Roth's theorem, which has many proofs; the one which transfers directly to the non-abelian setting is the regularity proof (see \cite{DKOSLV}) which is the most purely combinatorial. This is one reason to be interested in finding more combinatorial arguments for results in additive combinatorics, a philosophy which has been independently pursued by other authors such as Shkredov in \cite{IDS1,IDS}.

We turn now to the process of transferring the high level proof above. First we address the notion of arithmetic connectivity.  This does not seem to have a useful non-abelian analogue.  Indeed, part of the problem is that it is naturally exploited by Riesz products which are also abelian in nature, and it is tricky to replicate their properties in general groups.  In view of this we are unable to induct on the wider class of almost integer-valued functions.  Instead we note that it is relatively easy to show that if $f$ is integer-valued, $f \ast \mu_H$ is almost integer-valued, and $\mu_G(H)$ is comparable to $\|f\|_{L^1(\mu_G)}$ then the algebra norm of $g$, the function which takes $x$ to the integer closest to $f \ast (\delta-\mu_H)(x)$, is bounded in terms of $M$.  We then apply the next two steps of the argument to $g$, only recovering $f$ in the last step because $f \ast (\delta-\mu_H)$  is close to $g$ in $L^\infty(\mu_G)$.

This modification may be done in the abelian setting.  However, doing so results in a tower-type loss in the consequent bounds, and is one of the quantitative weaknesses of our argument here.  In any case the details of this are wrapped up in the proof of the main theorem in \S\ref{sec.pmt}.

The next difficulty in generalising the argument is in the correlation with an approximate subgroup.  While the Balog-Szemer{\'e}di theorem extends immediately to the non-abelian setting, Fre{\u\i}man's theorem does not.  One of the main contributions of \cite{BJGTS2} was to give a useful definition of approximate subgroups called Bourgain systems which combined both coset progressions and Bohr sets.  The definition of a Bourgain systems transfers directly to the non-abelian setting, while the correct version of coset progressions is conjecturally coset nilprogressions\footnote{See Tao's blog for a discussion of this and related matters.}.  In any case we are unfortunately not able to establish correlation with any structure as strong as a Bourgain system in our replacement of Fre{\u\i}man's theorem; we have to make do with something which we shall call a multiplicative pair.  That is roughly a pair of sets $(B,B')$ such that $BB' \approx B$.  These structures are formally introduced in \S\ref{sec.ag}, and the general theory in \S\S\ref{sec.locpsa}--\ref{sec.anallarge}.  In \S\S\ref{sec.frei}--\ref{sec.containment}, a suitable Fre{\u\i}man-type theorem is proved showing that any set with small doubling correlates with a multiplicative pair.  These Fre{\u\i}man-type results are brought together in the algebra norm context in \S\ref{sec.ank}.

The main disadvantage with these multiplicative pairs is that their notion of dimension is not additive.  In the abelian setting, all the notions of approximate subgroup have an additive notion of dimension: the intersection of two Bohr sets, multi-dimensional progressions or Bourgain systems of dimension $d_1$ and $d_2$ is $O(d_1+d_2)$.  By contrast, the intersection of multiplicative pairs has a dimension roughly exponential in $d_1$ and $d_2$,  and this is where one of the `tower contributions' comes into the bound in Theorem \ref{thm.main}.

The final difficulty we face is in running the quantitative continuity argument relative to a multiplicative pair. There is a well established non-abelian Fourier transform and in the case when $f$ is dense in the ambient group -- meaning that it has density $\Omega(1)$ -- these arguments work to prove an analogue of the result in \cite{BJGSVK}.   The non-abelian Fourier transform does not work relative to a multiplicative pair because there is no way to get a handle on the dimension of the representations the argument outputs. 

It turns out that when we use the Fourier transform on abelian groups all we really use is the magnitude of the Fourier coefficients and that the map diagonalizes the convolution square $v \mapsto\tilde{f} \ast f\ast v$. Now, if $G$ is non-abelian then $v \mapsto \tilde{f} \ast f \ast v$ is still diagonalizable by the spectral theorem and the magnitudes of the Fourier coefficients correspond to the singular values of $v \mapsto f \ast v$.  It is this diagonalizing basis and these singular values, therefore, which we use in place of the traditional Fourier transform.  Extending this idea, when we are working relative to a multiplicative pair we consider the basis diagonalizing the convolution square relative to said multiplicative pair.  The basic details of the non-relative analysis are contained in \S\ref{sec.ft} and are extended to multiplicative pairs in \S\S\ref{sec.locft}--\ref{sec.anallarge}.  This work is then used in \S\S\ref{sec.bog},\ref{sec.qc}\verb!&!\ref{sec.dis} to prove the quantitative continuity result we desire.

The group generation of the last stage of the abelian argument generalises straightforwardly and is contained in the main proof in \S\ref{sec.pmt}.

It remains now to say that the basic facts about linear operators which we need for our non-abelian Fourier analysis are contained in \S\ref{sec.lon}.  The transform relative to multiplicative pairs is then developed in \S\ref{sec.ft}; it may be worth reading this before the relative Fourier transform is introduced in \S\ref{sec.locft} as it provides a simple introduction to that framework.  Finally we have \S\S\ref{sec.algnormprop}--\ref{sec.sag} where the basic properties of the algebra norm are developed and the case of very small algebra norm is studied.  These sections provide a gentle introduction to the sort of arguments we shall be using throughout the paper, but may be skimmed over by the experts as much of the material is standard.

Some final remarks on notation are due.  Throughout the paper we shall write things such as $f \in A(G)$ or $g \in L^2(\mu_G)$ when, of course, all complex-valued functions are in all spaces.  We write them in this way to give an idea of which space $f$ `morally' belongs in, and would belong in in the infinitary setting.  Indeed, the passage from finite to compact groups is not difficult, and in that case the functional restrictions would be necessary.  We have not presented the work in this way because we feel that the necessary addition of various `almost everywhere' qualifiers and continuity requirements only serve to obscure our work.

Since the bounds in all our results are so poor, we make heavy use of $O$ and $\Omega$-notation, although we usually give some indication in words about the shape of the bounds after the various lemmas and theorems. 

\section{Linear operators: notation and basic properties}\label{sec.lon}

In this section we briefly record the notation and properties of linear operators which we shall need.  Since we are only interested in operators on finite dimensional spaces all the material is standard and may be found in any basic book on linear analysis.

Suppose that $H$ is a $d$-dimensional complex Hilbert space.  We write $\End(H)$ for the algebra of endomorphisms of $H$, that is linear maps $H \rightarrow H$.  The adjoint of some operator $M \in \End(H)$ is defined in the usual way as the unique $M^*\in \End(H)$ such that
\begin{equation*}
\langle Mx,y\rangle_H = \langle x,M^*y\rangle_H \textrm{ for all } x,y \in H.
\end{equation*}
The operation of taking adjoints is an involution and when coupled with this $\End(H)$ becomes a $*$-algebra; if the $*$-algebra is additionally normed with the operator norm (denoted $\|\cdot\|$) it becomes a $C^*$-algebra and so, in particular, $\|M^*\|=\|M\|$.

There is also an inner product on $\End(H)$ that is of particular interest to us.  Let $v_1,\dots,v_d$ be an orthonormal basis of $H$, then the trace of an operator $M \in \End(H)$ is defined to be
\begin{equation*}
\Tr M:=\sum_{i=1}^d{\langle Mv_i,v_i\rangle_H},
\end{equation*}
and is independent of the particular choice of orthonormal basis.  We then define an inner product on $\End(H)$ by
\begin{equation*}
\langle M,M' \rangle_{\End(H)}:=\Tr M'^*M \textrm{ for all } M,M' \in \End(H).
\end{equation*}
This inner product is sometimes called the Hilbert-Schmidt inner product.

In fact $\End(H)$ can be normed in a number of ways, and all those of interest to us can be expressed in terms of the singular values of an operator.  Recall that if $M \in \End(H)$ then the singular values of $M$ are the non-negative square roots of the eigenvalues of $M^*M$; we denote them $s_1(M) \geq s_2(M)\geq \dots\geq s_d(M)\geq 0$.

Singular values are most conveniently analysed through an orthonormal basis diagonalizing $M^*M$ and to find such we record a spectral theorem.

Recall that if an orthonormal basis simultaneously diagonalizes two operators then they must commute, since scalar multiplication commutes.  Furthermore, if an operator is unitarily diagonalizable, then the same basis diagonalizes its adjoint.  Thus any maximal collection of simultaneously unitarily diagonalizable operators is commuting and adjoint closed.  The spectral theorem encodes the remarkable fact that the converse holds.
\begin{theorem}[Spectral theorem for adjoint closed families of commuting operators]
Suppose that $H$ is a $d$-dimensional Hilbert space and $\mathcal{M}$ is an adjoint closed family of commuting operators on $H$. Then there is an orthonormal basis $v_1,\dots,v_d$ of $H$ such that each $v_i$ is an eigenvector of $M$ for all $M \in \mathcal{M}$.
\end{theorem}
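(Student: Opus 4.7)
The plan is to proceed by induction on the dimension $d$ of $H$, with the case $d=1$ being trivial since every nonzero vector spans $H$ and is automatically an eigenvector of every linear operator. The crucial observation at the outset is that, because $\mathcal{M}$ is adjoint closed and commuting, each $M \in \mathcal{M}$ commutes with $M^*$, i.e.\ is \emph{normal}; this is what makes a unitary diagonalisation of the individual operators possible in the first place.

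For the inductive step I would split into two cases. If every $M \in \mathcal{M}$ is a scalar multiple of the identity then any orthonormal basis of $H$ works, since every vector is an eigenvector of a scalar operator. Otherwise pick some $M_0 \in \mathcal{M}$ which is not a scalar, and appeal to the single-operator spectral theorem for the normal operator $M_0$ to obtain an orthogonal decomposition
\begin{equation*}
H = \bigoplus_{\lambda \in \Spec M_0}{E_\lambda}
\end{equation*}
into $M_0$-eigenspaces, each of dimension strictly less than $d$.

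The key step is to observe that each $E_\lambda$ is invariant under every $M' \in \mathcal{M}$: if $M_0 v = \lambda v$ and $M' \in \mathcal{M}$ then $M_0(M'v) = M'M_0v = \lambda M'v$, so $M'v \in E_\lambda$; moreover $M'^* \in \mathcal{M}$ so $E_\lambda$ is also invariant under $M'^*$, which means that the restricted family $\{M'|_{E_\lambda} : M' \in \mathcal{M}\}$ is again commuting and adjoint closed as a family of operators on $E_\lambda$. Applying the inductive hypothesis on each $E_\lambda$ yields an orthonormal basis of $E_\lambda$ simultaneously diagonalising the restricted family, and concatenating these bases as $\lambda$ ranges over $\Spec M_0$ gives the desired orthonormal basis of $H$, since eigenvectors inside one $E_\lambda$ remain eigenvectors (with eigenvalue $\lambda$) of $M_0$ itself.

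The only real content is the single-operator statement invoked above: a normal operator on a finite-dimensional complex Hilbert space is unitarily diagonalisable. This I would prove separately, using first that the eigenspaces of a normal operator are mutually orthogonal (via $\langle Mv, Mw\rangle_H = \langle M^*Mv,w\rangle_H = \langle MM^*v,w\rangle_H$ to force $(\lambda - \mu)\langle v, w\rangle_H = 0$ when $Mv=\lambda v$, $Mw=\mu w$), and second, using the fundamental theorem of algebra to produce an eigenvector together with the observation that the orthogonal complement of any $M$-invariant subspace is invariant under $M^*$, and hence (by normality, via a short argument) still invariant under $M$, allowing a recursion on dimension. This is the only place where one genuinely uses the structure of $\C$; the passage from one normal operator to an adjoint closed commuting family is, as described above, a routine matter of restricting to invariant subspaces.
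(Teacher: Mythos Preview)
The paper does not actually prove this theorem: it is stated as a standard fact from linear analysis and used without proof (the section opens by remarking that all the material ``may be found in any basic book on linear analysis''). Your argument is a correct and entirely standard proof of the result --- induction on dimension, reducing via the eigenspace decomposition of a single non-scalar normal operator in the family, with the commutativity ensuring that each eigenspace is invariant under the whole family and adjoint-closure ensuring the restricted family remains adjoint closed.

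One cosmetic remark: the displayed chain you give for orthogonality of eigenspaces, $\langle Mv, Mw\rangle_H = \langle M^*Mv,w\rangle_H = \langle MM^*v,w\rangle_H$, does not by itself force $(\lambda-\mu)\langle v,w\rangle_H = 0$. The clean route is to first note that normality of $M$ gives $\|(M-\lambda)v\|_H = \|(M^*-\overline{\lambda})v\|_H$, so $Mv=\lambda v$ implies $M^*v=\overline{\lambda}v$; then $\lambda\langle v,w\rangle_H = \langle Mv,w\rangle_H = \langle v,M^*w\rangle_H = \mu\langle v,w\rangle_H$. This is surely what you had in mind, and it does not affect the overall correctness of the proof.
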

With this result in hand we record the relevant corollary for singular values.
\begin{corollary}\label{cor.speccor}
Suppose that $H$ is a $d$-dimensional Hilbert space and $M$ is an operator on $H$.  Then there is an orthonormal basis $v_1,\dots,v_d$ of $H$ such that $M^*Mv_i = |s_i(M)|^2v_i$ for all $1 \leq i \leq d$.
\end{corollary}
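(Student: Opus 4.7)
The plan is to deduce the corollary directly from the spectral theorem applied to the single operator $M^{*}M$. First I would observe that $M^{*}M$ is self-adjoint, since $(M^{*}M)^{*} = M^{*}M^{**} = M^{*}M$, so the singleton family $\mathcal{M} := \{M^{*}M\}$ is trivially commuting and adjoint-closed. The spectral theorem as stated then produces an orthonormal basis $v_1,\dots,v_d$ of $H$ and scalars $\lambda_1,\dots,\lambda_d \in \C$ with $M^{*}Mv_i = \lambda_i v_i$ for each $i$.

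Next I would check that the $\lambda_i$ are non-negative reals. This is the standard positivity computation:
\begin{equation*}
\lambda_i = \lambda_i \langle v_i,v_i\rangle_H = \langle M^{*}Mv_i,v_i\rangle_H = \langle Mv_i,Mv_i\rangle_H = \|Mv_i\|_H^2 \geq 0.
\end{equation*}
Hence each $\lambda_i$ is a non-negative real, and so equals $\mu_i^2$ for a unique $\mu_i \geq 0$.

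By the definition of singular values recalled just before the corollary, the multiset $\{\mu_1,\dots,\mu_d\}$ is precisely the multiset of singular values $\{s_1(M),\dots,s_d(M)\}$. Thus, after permuting the basis vectors $v_i$ so that the eigenvalues appear in decreasing order, we obtain $\lambda_i = s_i(M)^2 = |s_i(M)|^2$ as required, which completes the proof.

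There is essentially no obstacle here: the content is entirely captured by the spectral theorem, and the remaining work is the trivial positivity check and an ordering of the basis. The only thing to be slightly careful about is matching the fixed ordering convention $s_1(M) \geq \dots \geq s_d(M) \geq 0$ for the singular values, which is handled by the final permutation of the basis.
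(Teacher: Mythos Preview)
Your proof is correct and follows the same approach as the paper: apply the spectral theorem to the self-adjoint operator $M^*M$ viewed as a one-element adjoint-closed commuting family. Your additional positivity check and reordering step simply make explicit the identification of the resulting eigenvalues with the $|s_i(M)|^2$, which the paper leaves implicit.
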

\begin{proof}
Since $(M^*M)^*=M^*M$ we have that $M^*M$ is self-adjoint and therefore is an adjoint closed family of commuting operators.  A suitable basis then follows from the spectral theorem.
\end{proof}
This basis immediately gives a characterisation of the operator norm in terms of singular values.
\begin{lemma}[Singular value characterisation of the operator norm]
Suppose that $H$ is a $d$-dimensional Hilbert space and $M \in \End(H)$. Then
\begin{equation*}
\|M\|=|s_1(M)|.
\end{equation*}
\end{lemma}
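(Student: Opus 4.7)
The plan is to use the diagonalizing basis from Corollary \ref{cor.speccor} to compute $\|Mx\|_H$ for arbitrary $x$ and read off both bounds.

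First I would invoke Corollary \ref{cor.speccor} to obtain an orthonormal basis $v_1,\dots,v_d$ of $H$ such that $M^*Mv_i=|s_i(M)|^2 v_i$. Any $x\in H$ can then be expanded as $x=\sum_{i=1}^{d}c_i v_i$ with $c_i=\langle x,v_i\rangle_H$, and Parseval gives $\|x\|_H^2=\sum_i|c_i|^2$.

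The central computation uses the definition of the adjoint to rewrite
\begin{equation*}
\|Mx\|_H^2 = \langle Mx,Mx\rangle_H = \langle M^*Mx,x\rangle_H = \sum_{i=1}^{d}|s_i(M)|^2|c_i|^2.
\end{equation*}
Since $|s_1(M)|\geq |s_i(M)|$ for all $i$ by the ordering convention, the right-hand side is at most $|s_1(M)|^2\|x\|_H^2$, giving $\|M\|\leq |s_1(M)|$.

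For the matching lower bound I would take $x=v_1$, which is a unit vector, and observe that the same identity yields $\|Mv_1\|_H^2=|s_1(M)|^2$, so $\|M\|\geq |s_1(M)|$, completing the proof. There isn't really a serious obstacle here; the content is entirely in the spectral theorem that was already invoked for the corollary, and the remainder is a one-line Parseval computation.
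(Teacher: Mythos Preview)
Your proof is correct and follows essentially the same approach as the paper: both use the orthonormal basis from Corollary~\ref{cor.speccor} to compute $\|Mx\|_H^2 = \langle M^*Mx,x\rangle_H = \sum_i |s_i(M)|^2|c_i|^2$, bound above by $|s_1(M)|^2\|x\|_H^2$, and attain equality at $v_1$. The only cosmetic difference is that the paper starts from a maximising unit vector rather than a generic one.
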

\begin{proof}
By definition of the operator norm there is some unit vector $v \in H$ such that $\|M\|^2 = \langle Mv,Mv\rangle_H$.  We may decompose $v$ in terms of the basis $v_1,\dots,v_d$ given by Corollary \ref{cor.speccor} so that
\begin{equation*}
v=\sum_{i=1}^d{\mu_iv_i} \textrm{ and } \sum_{i=1}^d{|\mu_i|^2} = 1.
\end{equation*}
Then
\begin{equation*}
\|M\|^2=\langle Mv,Mv\rangle_H = \langle M^*Mv,v\rangle_H =\sum_{i=1}^d{|s_i(M)|^2|\mu_i|^2} \leq |s_1(M)|^2,
\end{equation*}
with equality if and only if $v=v_1$.  The lemma follows.
\end{proof}
Similarly, but more easily, we have the following characterisation of the Hilbert-Schmidt norm.
\begin{lemma}[Singular value characterisation of the Hilbert-Schmidt norm]
Suppose $H$ is a $d$-dimensional Hilbert space and $M \in \End(H)$. Then
\begin{equation*}
\|M\|_{\End(H)} = \sqrt{\sum_{i=1}^d{|s_i(M)|^2}}.
\end{equation*}
\end{lemma}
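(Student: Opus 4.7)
The plan is to unwind the definition of the Hilbert-Schmidt norm in the diagonalizing basis provided by Corollary \ref{cor.speccor}. By the definition of the inner product on $\End(H)$, we have
\begin{equation*}
\|M\|_{\End(H)}^2 = \langle M, M\rangle_{\End(H)} = \Tr M^*M,
\end{equation*}
so it suffices to show that $\Tr M^*M = \sum_{i=1}^d |s_i(M)|^2$.

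To do this, I would invoke Corollary \ref{cor.speccor} to obtain an orthonormal basis $v_1,\dots,v_d$ of $H$ such that $M^*Mv_i = |s_i(M)|^2 v_i$ for each $i$. Since the trace is independent of the choice of orthonormal basis, I compute it in this particular basis:
\begin{equation*}
\Tr M^*M = \sum_{i=1}^d \langle M^*Mv_i, v_i\rangle_H = \sum_{i=1}^d |s_i(M)|^2\langle v_i,v_i\rangle_H = \sum_{i=1}^d |s_i(M)|^2.
\end{equation*}
Taking square roots then yields the claimed formula.

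There is essentially no obstacle here: all the work has already been done in establishing Corollary \ref{cor.speccor} and the basis-independence of the trace. The only thing to be careful about is the order of factors in the definition $\langle M, M'\rangle_{\End(H)} = \Tr M'^* M$, but with $M = M'$ this ambiguity disappears, and $M^*M$ is exactly the self-adjoint operator whose spectral decomposition is supplied by the corollary.
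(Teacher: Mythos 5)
Your proof is correct and follows essentially the same route as the paper: both use the diagonalizing basis from Corollary \ref{cor.speccor} together with basis-invariance of the trace to evaluate $\Tr M^*M$ as $\sum_{i=1}^d |s_i(M)|^2$. The paper writes the intermediate step as $\sum_i \langle Mv_i, Mv_i\rangle_H$ before applying the adjoint, but this is the same computation.
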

\begin{proof}
Since trace is basis invariant we have a short calculation using Corollary \ref{cor.speccor} to see that
\begin{equation*}
\langle M,M\rangle_{\End(H)} = \sum_{i=1}^d{\langle Mv_i,Mv_i\rangle_H} = \sum_{i=1}^d{\langle M^*Mv_i,v_i\rangle_H} = \sum_{i=1}^d{|s_i(M)|^2},
\end{equation*}
from which the lemma follows.
\end{proof}
More generally, one can define the Schatten $p$-norm of an operator $M$ to be the $p$-norm of the singular values of $M$, so that the Hilbert-Schmidt norm $\|\cdot\|_{\End(H)}$ is the Schatten $2$-norm and the operator norm $\|\cdot\|$ is the Schatten $\infty$-norm.  In view of the definition of $A(G)$ it will be of little surprise that we are essentially interested in the Schatten $1$-norm.

\section{Convolution and the Fourier transform}\label{sec.ft}

In this section we shall develop a lot of the ideas of Fourier analysis on non-abelian groups.  This is all standard material and most books on the subject go far beyond what we need, although we found the notes \cite{TCT} of Tao to be an uncluttered introduction.

Suppose that $G$ is a finite group.  It is easy to check that convolution is associative so that $L^1(\mu_G)$ becomes an algebra with this multiplication.  If $f \in L^1(\mu_G)$ then we write $\tilde{f}$ for the \emph{adjoint} of $f$, that is the function $x \mapsto \overline{f(x^{-1})}$, and, again, it is easy to check that $L^1(\mu_G)$ is now a $*$-algebra.

Now, the map $f \mapsto L_f$, the convolution operator defined by $f \in L^1(\mu_G)$ functions a bit like the Fourier transform and encodes the aspects of the Fourier transform which are most easily generalised to the approximate setting of interest to us later.  Linearity and associativity of convolution give us that this map is an algebra homomorphism and, in fact, a short calculation shows that it is a $*$-algebra homomorphism.
\begin{lemma}[Adjoints of convolution operators]\label{lem.triv}
Suppose that $G$ is a finite group and $f,g,h \in L^2(\mu_G)$. Then
\begin{equation*}
 \langle g,\tilde{f} \ast h\rangle_{L^2(\mu_G)} = \langle f \ast g,h\rangle_{L^2(\mu_G)} =  \langle f,h \ast \tilde{g}\rangle_{L^2(\mu_G)}.
\end{equation*}
In particular $L_f^*=L_{\tilde{f}}$.
\end{lemma}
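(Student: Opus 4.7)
The plan is to prove both equalities by a direct unfolding of the definition of convolution, using Fubini's theorem and the translation-invariance of the Haar probability measure $\mu_G$, and then to deduce $L_f^* = L_{\tilde f}$ as an immediate consequence.

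First I would compute the middle quantity $\langle f\ast g,h\rangle_{L^2(\mu_G)}$ by writing out the convolution, swapping the order of integration, and substituting $z = y^{-1}x$ in the inner integral to obtain the symmetric triple-integral-type expression
\begin{equation*}
\langle f\ast g,h\rangle_{L^2(\mu_G)} = \int\int f(y)g(z)\overline{h(yz)}\,d\mu_G(y)\,d\mu_G(z).
\end{equation*}
This is the key reformulation: both remaining inner products will be massaged into this same double integral.

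Next, to handle $\langle f,h\ast\tilde g\rangle_{L^2(\mu_G)}$, I would expand $h\ast\tilde g(y) = \int h(u)\overline{g(y^{-1}u)}\,d\mu_G(u)$, take complex conjugates, apply Fubini, and change variables $z = y^{-1}u$ in the $u$-integral, arriving at exactly the same double integral. For $\langle g,\tilde f\ast h\rangle_{L^2(\mu_G)}$, I would similarly expand $\tilde f\ast h(x) = \int \overline{f(y^{-1})}h(y^{-1}x)\,d\mu_G(y)$, substitute $w=y^{-1}$ (which uses inversion-invariance of $\mu_G$), take conjugates, and swap the order of integration to recover the same expression with the roles of the two integration variables renamed. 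All three quantities thus coincide.

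Finally, the adjoint identity $L_f^* = L_{\tilde f}$ is read off directly from the first equality: for all $g,h \in L^2(\mu_G)$,
\begin{equation*}
\langle L_f g,h\rangle_{L^2(\mu_G)} = \langle f\ast g,h\rangle_{L^2(\mu_G)} = \langle g,\tilde f\ast h\rangle_{L^2(\mu_G)} = \langle g,L_{\tilde f}h\rangle_{L^2(\mu_G)},
\end{equation*}
and uniqueness of the adjoint gives the claim. There is no genuine obstacle here; the only subtle point to be slightly careful about is that the change of variables $y\mapsto y^{-1}$ and left-translations $u\mapsto yu$ both preserve $\mu_G$ (by uniqueness of Haar measure on a finite group together with the identity $\mu_G = \mu_G \circ \iota$ for inversion $\iota$), which is what makes the two inner products collapse onto the same symmetric double integral.
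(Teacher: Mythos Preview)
Your proof is correct. The paper in fact omits the proof of this lemma entirely (the label \texttt{lem.triv} signals it is left as a trivial calculation), so your direct unfolding via Fubini and the two changes of variable is precisely the short computation the author has in mind.
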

The image of the map $f\mapsto L_f$, is the sub-$*$-algebra of convolution operators in the space $\End(L^2(\mu_G))$.  It turns out that this map is not just a $*$-homomorphism, but it is also isometric.  Write $\delta_y$ for the usual Dirac delta measure supported at $y\in G$, that is the unique measure such that
\begin{equation*}
\int{kd\delta_y}=k(y) \textrm{ for all } k \in L^\infty(\mu_G).
\end{equation*}
\begin{theorem}[Parseval's theorem]
Suppose that $G$ is a finite group and $f,g \in L^2(\mu_G)$. Then
\begin{equation*}
\langle L_f,L_g\rangle_{\End(L^2(\mu_G))} = \langle f,g\rangle_{L^2(\mu_G)}.
\end{equation*}
\end{theorem}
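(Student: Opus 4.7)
The plan is to unpack the Hilbert-Schmidt inner product by expanding the trace in a convenient orthonormal basis of $L^2(\mu_G)$. By definition of the inner product on $\End(L^2(\mu_G))$ we have $\langle L_f,L_g\rangle_{\End(L^2(\mu_G))} = \Tr L_g^*L_f$. Lemma \ref{lem.triv} gives $L_g^* = L_{\tilde g}$, and since $f \mapsto L_f$ is an algebra homomorphism (as already noted in the text) we may compose to obtain $L_g^*L_f = L_{\tilde g \ast f}$. It therefore suffices to prove the two identities $\Tr L_h = h(1_G)$ for every $h \in L^2(\mu_G)$, and $(\tilde g \ast f)(1_G) = \langle f, g\rangle_{L^2(\mu_G)}$, where $1_G$ denotes the identity element of $G$.

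For the first identity, consider the orthonormal basis $\{\phi_y\}_{y \in G}$ of $L^2(\mu_G)$ given by $\phi_y := \sqrt{|G|}\, 1_{\{y\}}$; orthonormality follows from $\mu_G(\{y\}) = |G|^{-1}$. A direct computation using the definition of convolution yields
\begin{equation*}
L_h\phi_y(z) = \int_G h(x)\, \phi_y(x^{-1}z)\, d\mu_G(x) = |G|^{-1/2} h(zy^{-1}),
\end{equation*}
whence $\langle L_h \phi_y, \phi_y\rangle_{L^2(\mu_G)} = h(1_G)/|G|$, and summing over the $|G|$ basis vectors produces $\Tr L_h = h(1_G)$.

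For the second identity, unpacking convolution at the identity and using the invariance of $\mu_G$ under inversion gives
\begin{equation*}
(\tilde g \ast f)(1_G) = \int_G \overline{g(x^{-1})}\, f(x^{-1})\, d\mu_G(x) = \langle f, g\rangle_{L^2(\mu_G)}.
\end{equation*}
Combining the two identities yields the theorem. The argument is essentially bookkeeping; the only real choice is to work in the basis of appropriately normalised delta functions, which is adapted to convolution in that it makes the trace collapse to evaluation at the identity. There is no substantive obstacle beyond handling the normalising factors of $|G|$ correctly.
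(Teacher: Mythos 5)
Your proof is correct and uses essentially the same approach as the paper: both expand the Hilbert--Schmidt trace in the orthonormal basis of normalised delta functions, where the key computation $L_h\phi_y(z)=|G|^{-1/2}h(zy^{-1})$ does all the work. The only (cosmetic) difference is that you first combine $L_g^*L_f$ into the single operator $L_{\tilde g\ast f}$ via the homomorphism property and then invoke $\Tr L_h=h(1_G)$, whereas the paper computes $\langle L_f\delta_y,L_g\delta_y\rangle_{L^2(\mu_G)}=\langle f,g\rangle_{L^2(\mu_G)}$ directly for each basis vector.
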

\begin{proof}
The collection $(\delta_y/\sqrt{|G|})_{y \in G}$ is an orthonormal basis of $L^2(\mu_G)$, and since trace is independent of basis we get that
\begin{equation}\label{eqn.park}
\langle L_f,L_g\rangle_{\End(L^2(\mu_G))} = \frac{1}{|G|}\sum_{y \in G}{\langle L_f\delta_y,L_g\delta_y\rangle_{L^2(\mu_G)}}.
\end{equation}
However 
\begin{equation*}
L_f\delta_y(x) = \int{f(z)\delta_y(z^{-1}x)d\mu_G(z)} = f(xy^{-1}),
\end{equation*}
and similarly $L_g\delta_y(x)=g(xy^{-1})$, so it follows that
\begin{equation*}
\langle L_f\delta_y,L_g\delta_y\rangle_{L^2(\mu_G)} = \int{f(xy^{-1})\overline{g(xy^{-1})}d\mu_G(x)} = \langle f,g\rangle_{L^2(\mu_G)}.
\end{equation*}
Inserting this into (\ref{eqn.park}) we arrive at the result.
\end{proof}
A key property of the algebra of convolution operators is that all the elements commute with right translation. To be clear the right regular representation $\rho$ is defined by
\begin{equation*}
\rho_y:L^2(\mu_G) \rightarrow L^2(\mu_G);v \mapsto v \ast \delta_{y^{-1}}. 
\end{equation*}
It is easy to check that $\rho_y(f)(x)=f(xy)$ and that $x \mapsto \rho_x$ is a group homomorphisms of $G$ into $U(L^2(\mu_G))$.  Now, since convolution is associative
\begin{equation*}
\rho_yL_f = L_f \rho_y \textrm{ for all } y \in G, f \in L^1(\mu_G).
\end{equation*}
In fact it turns out that the algebra of convolution operators is precisely the subalgebra of operators in $\End(L^2(\mu_G))$ that commute with all right translation operators as the following result shows.
\begin{theorem}[Inversion theorem]
Suppose that $G$ is a finite group and $M \in \End(L^2(\mu_G))$ is such that $M\rho_y = \rho_yM$ for all $y \in G$.  Then there is some $f \in L^1(\mu_G)$ such that $M=L_f$.
\end{theorem}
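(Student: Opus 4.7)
The plan is to construct $f$ as the image of a Dirac delta under $M$ and then verify that $M$ and $L_f$ agree on the spanning set of Dirac deltas, so by linearity agree everywhere on $L^2(\mu_G)$. Concretely, write $e$ for the identity of $G$ and set
\begin{equation*}
f:=M\delta_e,
\end{equation*}
which is a genuine element of $L^2(\mu_G)\subset L^1(\mu_G)$ since $G$ is finite. This is the only natural candidate, because if $M=L_f$ really did hold then the calculation $L_f\delta_e(x)=f(xe^{-1})=f(x)$ already performed in the proof of Parseval's theorem forces this choice.

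First I would note the identity $\rho_y\delta_e=\delta_{y^{-1}}$, which is a one line check from the definition $\rho_y(v)(x)=v(xy)$ together with the fact that $\delta_e$ is supported at $e$. Applying the hypothesis $M\rho_y=\rho_yM$ and using linearity we then obtain
\begin{equation*}
M\delta_{y^{-1}} = M\rho_y\delta_e = \rho_yM\delta_e=\rho_yf,
\end{equation*}
so that $M\delta_{y^{-1}}(x)=f(xy)$ for every $x,y\in G$. On the other hand, the formula derived inside the proof of Parseval's theorem gives $L_f\delta_{y^{-1}}(x)=f(xy)$ as well. Hence $M$ and $L_f$ agree on each element of the family $\{\delta_y\}_{y\in G}$, which (after normalisation) is an orthonormal basis of $L^2(\mu_G)$, so $M=L_f$ by linearity.

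There is essentially no hard step here: the only thing to be careful about is the bookkeeping for $\delta_y$, which is being used in its incarnation as the function $|G|\cdot 1_{\{y\}}$ so that integration against $\mu_G$ recovers point evaluation and the Parseval-style computation $L_f\delta_y(x)=f(xy^{-1})$ applies verbatim. Once this convention is in place the argument is just the finite group version of the standard observation that the commutant of the right regular representation inside $\End(L^2(\mu_G))$ is exactly the image of the left convolution algebra, and the construction $f=M\delta_e$ is the analogue of evaluating a translation-invariant operator on a unit point mass.
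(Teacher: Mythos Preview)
Your proof is correct and is essentially identical to the paper's: both define $f:=M\delta_{1_G}$ and verify $M=L_f$ on the basis $\{\delta_y\}_{y\in G}$ using the commutation hypothesis together with the identity $L_f\delta_y(x)=f(xy^{-1})$ from the proof of Parseval's theorem. The only difference is cosmetic ordering---you compute $M\delta_{y^{-1}}$ first and match it to $L_f\delta_{y^{-1}}$, whereas the paper computes $L_f\delta_y$ first and matches it to $M\delta_y$.
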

\begin{proof}
We define $f$ in the obvious way: put $f=M\delta_{1_G}$.  Now $(\delta_y)_{y \in G}$ is a basis for $L^2(\mu_G)$ so we just need to check that $L_f=M$ on this basis and we shall be done.  First,
\begin{equation*}
L_f\delta_y(z) = \int{f(x)\delta_y(x^{-1}z)d\mu_G(x)} = f(zy^{-1}) = M\delta_{1_G}(zy^{-1}) = \rho_{y^{-1}}(M\delta_{1_G})(z).
\end{equation*}
Since $M$ commutes with $\rho_{y^{-1}}$ we conclude that
\begin{equation*}
L_f\delta_y = M\rho_{y^{-1}}(\delta_{1_G})=M\delta_y \textrm{ for all }y\in G.
\end{equation*}
The result follows.
\end{proof}
This result is basically the Fourier inversion theorem, and will not have an analogue when we generalise from groups to approximate groups, so we need to be careful about using it.

It should be said that the combinatorial importance of the fact that the algebra of convolution operators commutes with all right translation operators is well demonstrated by Lubotzky, Phillips and Sarnak in \cite{ALRPPS} (see also \cite{GDPSAV}), and was put to work in additive combinatorics by Gowers \cite{WTGQ}.

The utility of the Fourier transform in abelian groups is derived from the fact that it is the unique (up to scalar) unitary change of basis that simultaneously diagonalizes all convolution operators.  Of course simultaneously diagonalizable operators commute with each other so such a basis cannot exist in the non-abelian setting.

Examining many Fourier arguments in additive combinatorics reveals that in fact the universaily of the Fourier basis is unnecessary and, furthermore, we tend only to consider the absolute values of Fourier coefficients.  In light of this we make the following definitions. 

Given $f \in L^1(\mu_G)$ we write $s_i(f)$ for $s_i(L_f)$, the $i$th singular value of the operator $L_f$.  We then call an orthonormal basis $v_1,\dots,v_N$ a \emph{Fourier basis of $L^2(\mu_G)$ for $f$} if
\begin{equation*}
L_f^*L_fv_i = |s_i(f)|^2v_i \textrm{ for all } i \in \{1,\dots, N\}.
\end{equation*}
The existence of Fourier bases follows immediately from Corollary \ref{cor.speccor}.
\begin{theorem}[Fourier bases]
Suppose that $G$ is a finite group and $f \in L^1(\mu_G)$. Then there is an orthonormal Fourier basis for $f$.
\end{theorem}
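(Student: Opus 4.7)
The statement is essentially a direct corollary of the spectral machinery already set up. My plan is to simply invoke Corollary \ref{cor.speccor} applied to the operator $L_f$ on the finite-dimensional Hilbert space $L^2(\mu_G)$.

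More concretely, first I would note that $L^2(\mu_G)$ is a complex Hilbert space of dimension $N = |G|$, so the hypotheses of Corollary \ref{cor.speccor} are satisfied with $H = L^2(\mu_G)$ and $M = L_f \in \End(L^2(\mu_G))$. The corollary then produces an orthonormal basis $v_1,\dots,v_N$ of $L^2(\mu_G)$ such that
\begin{equation*}
L_f^*L_f v_i = |s_i(L_f)|^2 v_i \text{ for all } 1 \leq i \leq N.
\end{equation*}
Second, I would unwind the convention $s_i(f) := s_i(L_f)$ introduced just before the theorem statement, which turns the displayed identity into exactly $L_f^*L_f v_i = |s_i(f)|^2 v_i$. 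This is the defining property of a Fourier basis for $f$, so $v_1,\dots,v_N$ is such a basis.

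There is really no obstacle here: all the work is in the spectral theorem for commuting adjoint-closed families, which was stated and used to derive Corollary \ref{cor.speccor}. The only thing one might want to comment on is that $L_f^*L_f$ is self-adjoint (hence $\{L_f^*L_f\}$ is trivially an adjoint-closed commuting family) and positive semidefinite, so its eigenvalues are non-negative reals and their square roots agree with the singular values $s_i(f)$ in the sense already fixed by the paper. No further computation is needed.
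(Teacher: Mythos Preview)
Your proposal is correct and matches the paper's approach exactly: the paper simply states that the existence of Fourier bases follows immediately from Corollary \ref{cor.speccor} and gives no further argument. Your write-up just spells out this immediate deduction.
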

Note that if $v_1,\dots,v_N$ is a Fourier basis of $L^2(\mu_G)$ for $f$ then so is the orthonormal basis $\rho_y(v_1),\dots,\rho_y(v_N)$ for all $y \in G$ since $\rho_y$ is unitary and the operators $L_f$ and $L_f^*$ commute with it.

Finally for this section we record the Hausdorff-Young inequality as it is realised in our framework.
\begin{lemma}[Hausdorff-Young inequality]
Suppose that $G$ is a finite group and $f \in L^1(\mu_G)$.  Then
\begin{equation*}
s_1(f) \leq \|f\|_{L^1(\mu_{G})}.
\end{equation*}
\end{lemma}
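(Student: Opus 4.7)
The plan is to translate the inequality back into a statement about the convolution operator and prove it as a direct Young-type estimate. From the singular value characterization of the operator norm proved earlier in \S\ref{sec.lon}, we have $s_1(f) = |s_1(L_f)| = \|L_f\|$, so the inequality $s_1(f) \leq \|f\|_{L^1(\mu_G)}$ is equivalent to showing that
\begin{equation*}
\|f \ast v\|_{L^2(\mu_G)} \leq \|f\|_{L^1(\mu_G)} \|v\|_{L^2(\mu_G)} \quad \text{for all } v \in L^2(\mu_G).
\end{equation*}

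The main step is a pointwise Cauchy--Schwarz applied to the convolution integral, splitting the measure $|f(y)|d\mu_G(y)$ as $|f(y)|^{1/2} \cdot |f(y)|^{1/2} d\mu_G(y)$. This gives
\begin{equation*}
|f \ast v(x)|^2 = \left| \int f(y) v(y^{-1}x) d\mu_G(y) \right|^2 \leq \|f\|_{L^1(\mu_G)} \int |f(y)| |v(y^{-1}x)|^2 d\mu_G(y).
\end{equation*}
Integrating in $x$, swapping the order of integration (Fubini), and using translation invariance of the Haar measure $\mu_G$ to evaluate the inner integral as $\|v\|_{L^2(\mu_G)}^2$ independent of $y$, one obtains $\|f \ast v\|_{L^2(\mu_G)}^2 \leq \|f\|_{L^1(\mu_G)}^2 \|v\|_{L^2(\mu_G)}^2$, which is what we wanted.

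There is essentially no obstacle: the argument is purely a two-line application of Cauchy--Schwarz and Fubini, and works identically in the non-abelian setting since it only uses the left/right invariance of $\mu_G$. The only minor thing to note is that the non-abelian convolution $f \ast v(x) = \int f(y) v(y^{-1}x) d\mu_G(y)$ requires us to use invariance of $\mu_G$ under the map $x \mapsto yx$ rather than $x \mapsto x+y$, but this is immediate from the definition of a Haar measure on $G$.
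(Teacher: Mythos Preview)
Your proof is correct and follows essentially the same route as the paper: reduce $s_1(f)$ to the operator norm $\|L_f\|$ and bound $\|f\ast v\|_{L^2(\mu_G)}$ by $\|f\|_{L^1(\mu_G)}\|v\|_{L^2(\mu_G)}$. The paper does not give a separate proof here but defers to the relative version (Lemma~\ref{lem.hdy}), whose proof invokes Young's inequality as a black box rather than unpacking it via Cauchy--Schwarz and Fubini as you do.
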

We call this the Hausdorff-Young inequality because when $G$ is abelian, the singular values $s_1(f) \geq s_2(f) \geq \dots \geq s_N(f)\geq 0$ are just the absolute values of the Fourier transform arranged in order, whence $s_1(f)=\|\wh{f}\|_{\ell^\infty(\wh{G})}$ and the inequality reduces to the usual Hausdorff-Young inequality.  Since this is one of the few facts that relativises without loss we shall not prove it here; it is a special case of Lemma \ref{lem.hdy}.

\section{The $A(G)$-norm: basic properties}\label{sec.algnormprop}

For an arbitrary locally compact group the space $A(G)$ was defined by Eymard in \cite{PE}, and many of the basic properties are developed in that paper.  For completeness and because of their simplicity we shall include some the tools we require here.  First recall that the two main norms of interest:
\begin{equation*}
\|f\|_{PM(G)}:=\|L_f\| \textrm{ and } \|f\|_{A(G)}=\sup\{|\langle f,g\rangle_{L^2(\mu_G)}: \|g\|_{PM(G)} \leq 1\}.
\end{equation*}
To begin with we note that the $A(G)$-norm is involution and translation invariant in the following sense.
\begin{lemma}[Invariance of the $A(G)$-norm]\label{lem.invariance}
Suppose that $G$ is a  finite group and $f \in A(G)$.  Then
\begin{equation*}
\|\tilde{f}\|_{A(G)} = \|f\|_{A(G)} \textrm{ and } \|\rho_y(f)\|_{A(G)} = \|f\|_{A(G)}.
\end{equation*}
\end{lemma}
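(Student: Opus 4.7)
The plan is to derive both invariances from the duality
\[
\|f\|_{A(G)} = \sup\{|\langle f, g\rangle_{L^2(\mu_G)}| : \|g\|_{PM(G)} \leq 1\}
\]
by exhibiting, in each case, a $PM(G)$-isometric bijection of the unit ball of $PM(G)$ that re-expresses the pairing against $\tilde{f}$ (respectively $\rho_y(f)$) as a pairing against $f$.

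For the involution, the first step is to substitute $y = x^{-1}$ in the defining integral; since Haar measure on a finite group is inversion invariant, a short calculation will give $\langle \tilde{f}, g\rangle_{L^2(\mu_G)} = \overline{\langle f, \tilde{g}\rangle_{L^2(\mu_G)}}$. The map $g \mapsto \tilde{g}$ is clearly an involutive bijection of $L^2(\mu_G)$, and it preserves the $PM(G)$-norm: by Lemma \ref{lem.triv} one has $L_{\tilde{g}} = L_g^*$, so the $C^*$-identity $\|M^*\| = \|M\|$ recorded at the start of \S\ref{sec.lon} yields $\|\tilde{g}\|_{PM(G)} = \|g\|_{PM(G)}$. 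Taking the supremum over $g$ then gives $\|\tilde{f}\|_{A(G)} = \|f\|_{A(G)}$.

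For the translation, the substitution $u = xy$ (right-invariance of Haar measure) will produce $\langle \rho_y(f), g\rangle_{L^2(\mu_G)} = \langle f, \rho_{y^{-1}}(g)\rangle_{L^2(\mu_G)}$. The map $g \mapsto \rho_{y^{-1}}(g)$ is a bijection with inverse $\rho_y$, so the remaining task is to check $\|\rho_{y^{-1}}(g)\|_{PM(G)} = \|g\|_{PM(G)}$. Unwinding the definition, $L_{\rho_{y^{-1}}(g)}(v)(z)$ equals $\int g(w) v(y^{-1} w^{-1} z) d\mu_G(w)$, which is precisely $L_g(Tv)(z)$ where $T$ is the left translation $Tv(z) = v(y^{-1}z)$; since left translation is unitary on $L^2(\mu_G)$, composing $L_g$ with $T$ cannot change its operator norm, and we conclude.

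There is no real obstacle here: the entire argument is a routine exercise in unwinding definitions, using invariance of Haar measure, and invoking the $C^*$-identity together with Lemma \ref{lem.triv}. The one mildly delicate point is consistently tracking left- versus right-translation conventions so as to land on the correct $y^{-1}$ on the $g$-side of the pairing in the translation case.
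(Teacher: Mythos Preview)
Your proposal is correct and follows essentially the same approach as the paper: duality, a change of variables in the pairing using invariance of Haar measure, and then the identification $L_{\tilde g}=L_g^*$ (via Lemma~\ref{lem.triv}) for the involution and the unitarity of left translation for the $\rho_y$-case. The only cosmetic difference is that the paper packages the translation step as $L_{\rho_y(g)}=L_gL_{\delta_{y^{-1}}}$ with $\|L_{\delta_{y^{-1}}}\|=1$, which is exactly your $L_g\circ T$ with $T$ unitary.
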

\begin{proof} 
Since $f=\tilde{\tilde{f}}$ and $\rho_y(\rho_{y^{-1}}(f)) = f$ it suffices to prove that $\|f\|_{A(G)} \leq \|\tilde{f}\|_{A(G)}$ and $\|f\|_{A(G)} \leq \|\rho_y(f)\|_{A(G)}$ for all $f \in A(G)$ and $y \in G$.

The space $A(G)$ is defined by duality so there is essentially only one way to begin.  Suppose $f \in A(G)$ and let $g \in PM(G)$ be such that 
\begin{equation*}
\|f\|_{A(G)} = \langle f,g\rangle_{L^2(\mu_G)} \textrm{ and } \|L_g\| \leq 1.
\end{equation*}
First we show involution invariance. By change of variables we see that
\begin{equation*}
\langle f,g\rangle_{L^2(\mu_G)} = \overline{\langle \tilde{f},\tilde{g}\rangle_{L^2(\mu_G)}} \leq \|\tilde{f}\|_{A(G)}\|L_{\tilde{g}}\|.
\end{equation*}
However, by Lemma \ref{lem.triv} we have that $L_{\tilde{g}}=L_g^*$, and so
\begin{equation*}
\|L_{\tilde{g}}\| = \|L_g^*\|=\|L_g\|\leq 1.
\end{equation*}
The first inequality follows.

Translation invariance is proved in much the same way.  By a (different) change of variables we get that
\begin{equation*}
\langle f,g\rangle_{L^2(\mu_G)} = \langle \rho_y(f),\rho_y(g)\rangle_{L^2(\mu_G)} \leq \|\rho_y(f)\|_{A(G)}\|L_{\rho_y(g)}\|.
\end{equation*}
On the other hand by definition of $\rho_y$ we have that
\begin{equation*}
\|L_{\rho_y(g)}\| = \|L_{g \ast \delta_{y^{-1}}}\| = \|L_g L_{\delta_{y^{-1}}}\| \leq \|L_g\| \|L_{\delta_{y^{-1}}}\| =\|L_{\delta_{y^{-1}}}\|. 
\end{equation*}
But
\begin{equation*}
L_{\delta_{y^{-1}}}v(x) = v(yx),
\end{equation*}
whence $\|L_{\delta_{y^{-1}}}\| = 1$ and we are done.
\end{proof}
Inspired by the abelian setting where the singular values of $L_f$ are just the absolute values of the Fourier coefficients of $f$ we have the following useful explicit formula for the $A(G)$-norm.
\begin{lemma}[Explicit formula]\label{lem.algnormexp}
Suppose that $G$ is a finite group and $f \in A(G)$. Then
\begin{equation*}
\|f\|_{A(G)} = \sum_{i=1}^N{|s_i(f)|}.
\end{equation*}
\end{lemma}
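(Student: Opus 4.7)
The idea is to realise $\|f\|_{A(G)}$ as the Schatten $1$-norm of $L_f$ in disguise, using Parseval. Parseval's theorem gives $\langle f,g\rangle_{L^2(\mu_G)} = \langle L_f,L_g\rangle_{\End(L^2(\mu_G))} = \Tr(L_g^* L_f)$, so the task reduces to computing
\[
\|f\|_{A(G)} = \sup\{|\Tr(L_g^* L_f)| : \|L_g\| \leq 1\},
\]
which matches the classical duality of Schatten $\infty$ with Schatten $1$, except that the supremum is restricted to the subalgebra of convolution operators. The proof will now split into a routine upper bound via Cauchy--Schwarz and a lower bound in which one has to exhibit an optimising $g$; for the latter the real issue is manufacturing a convolution operator of the correct shape.

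For the upper bound, pick a Fourier basis $v_1,\dots,v_N$ for $f$ as provided in \S\ref{sec.ft}. Basis-invariance of the trace combined with Cauchy--Schwarz gives
\[
|\Tr(L_g^* L_f)| = \Bigl|\sum_{i=1}^N \langle L_f v_i,L_g v_i\rangle_{L^2(\mu_G)}\Bigr| \leq \sum_{i=1}^N \|L_f v_i\|_{L^2(\mu_G)}\|L_g v_i\|_{L^2(\mu_G)}.
\]
Since $\|L_f v_i\|^2 = \langle L_f^* L_f v_i,v_i\rangle_{L^2(\mu_G)} = |s_i(f)|^2$ and $\|L_g v_i\| \leq \|L_g\| \leq 1$, this yields $\|f\|_{A(G)} \leq \sum_{i=1}^N |s_i(f)|$.

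For the matching lower bound I would exhibit a $g$ achieving equality. Take the polar decomposition $L_f = UP$ where $P = (L_f^*L_f)^{1/2}$ and $U$ is the partial isometry with initial space $\mathrm{ran}(P)$, so that $U^*UP = P$. Evaluating in the Fourier basis, $\Tr(U^* L_f) = \Tr(U^* UP) = \Tr(P) = \sum_{i=1}^N |s_i(f)|$, which is exactly the target value. It remains to check that $U$ is itself a convolution operator. Both $L_f$ and $L_f^* = L_{\tilde f}$ commute with every right translation $\rho_y$ (by associativity of convolution and Lemma \ref{lem.triv}), hence so do $L_f^* L_f$ and $P$ via functional calculus on the eigenspaces of $L_f^*L_f$. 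In particular $\rho_y$ preserves both $\mathrm{ran}(P)$ and $\ker P$, and from $UP\rho_y = L_f \rho_y = \rho_y L_f = \rho_y UP$ one reads off $U \rho_y = \rho_y U$ on $\mathrm{ran}(P)$, while both sides vanish on $\ker P$. The Inversion Theorem of \S\ref{sec.ft} then supplies $g \in L^1(\mu_G)$ with $U = L_g$, and $\|L_g\| = \|U\| \leq 1$ because $U$ is a partial isometry.

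The main technical obstacle is precisely the last step: forcing the partial isometry from the polar decomposition to be of the form $L_g$. This relies essentially on the Inversion Theorem, which, as the author has already flagged, has no analogue in the multiplicative-pair setting. This is why the clean identity $\|f\|_{A(G)} = \sum_i |s_i(f)|$ is special to the group setting, and why the relativised analogue of the algebra norm appearing later in the paper will require more delicate surrogates.
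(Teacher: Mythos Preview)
Your proof is correct and follows essentially the same approach as the paper: the upper bound via Cauchy--Schwarz in a Fourier basis is identical, and your partial isometry $U$ from the polar decomposition $L_f = UP$ is literally the same operator the paper constructs explicitly by setting $Uv_i := L_f v_i / |s_i(f)|$ on the nonzero singular directions. Your commutation argument via functional calculus is a slightly slicker repackaging of the paper's explicit eigenspace computation, but both proofs hinge on the Inversion Theorem in exactly the way you describe.
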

\begin{proof}
Let $v_1,\dots,v_N$ be a Fourier basis of $L^2(\mu_G)$ for $f$. We shall show that $\|f\|_{A(G)}$ is both less than or equal and greater than or equal to the right hand side.  The first of these is easy: let $g$ be such that
\begin{equation*}
\|f\|_{A(G)}= \langle f,g\rangle_{L^2(\mu_G)} \textrm{ and } \|L_g\| \leq 1.
\end{equation*}
By Parseval's theorem and the definition of trace we have that
\begin{equation*}
\langle f,g\rangle_{L^2(\mu_G)} = \Tr L_g^*L_{f}=\sum_{i=1}^N{\langle L_fv_i,L_gv_i\rangle_{L^2(\mu_G)}}.
\end{equation*}
It follows from the Cauchy-Schwarz inequality that
\begin{equation*}
\|f\|_{A(G)} \leq \sum_{i=1}^N{\|L_fv_i\|_{L^2(\mu_G)}\|L_gv_i\|_{L^2(\mu_G)}}.
\end{equation*}
Of course,
\begin{equation*}
\|L_fv_i\|_{L^2(\mu_G)}^2 = \langle L_f v_i,L_fv_i\rangle_{L^2(\mu_G)} = \langle L_f^*L_fv_i,v_i\rangle_{L^2(\mu_G)} = |s_i(f)|^2
\end{equation*}
and $\|L_gv_i\|_{L^2(\mu_G)} \leq 1$, whence
\begin{equation*}
\|f\|_{A(G)} \leq \sum_{i=1}^N{|s_i(f)|}.
\end{equation*}
For the other direction we define an operator $U$, extending by linearity from the basis $v_1,\dots,v_N$ as follows
\begin{equation*}
Uv_i:=\begin{cases}L_fv_i/|s_i(f)| & \textrm{ if } s_i(f)\neq 0\\ 0 & \textrm{ otherwise.}\end{cases}
\end{equation*}
We have two claims about $U$.
\begin{claim}
$\|U\| \leq 1$.
\end{claim}
\begin{proof}
As usual it suffices to check that $\|Uv_i\|_{L^2(\mu_G)} \leq 1$ since the basis $v_1,\dots,v_N$ is orthonormal.  If $s_i(f)=0$ then $Uv_i=0$ whence $\|Uv_i\|_{L^2(\mu_G)} = 0$; if $s_i(f)\neq 0$ then
\begin{eqnarray*}
\|Uv_i\|_{L^2(\mu_G)}^2 = \langle Uv_i,Uv_i\rangle_{L^2(\mu_G)} &= &\frac{1}{|s_i(f)|^2}\langle L_fv_i,L_fv_i\rangle_{L^2(\mu_G)}\\ & = & \frac{1}{|s_i(f)|^2}\langle L_f^*L_fv_i,v_i\rangle_{L^2(\mu_G)}=1.
\end{eqnarray*}
The claim follows.
\end{proof}
\begin{claim}
$U\rho_y = \rho_yU \textrm{ for all } y \in G$.
\end{claim}
\begin{proof}
By linearity it suffices to verify this on the basis $v_1,\dots,v_N$.   Since $\rho_y$ is unitary we see that $\rho_y(v_i)$ is a unit vector and so there are complex numbers $\mu_1,\dots,\mu_N$ such that
\begin{equation*}
\rho_y(v_i)=\sum_{j=1}^N{\mu_jv_j} \textrm{ and } \sum_{j=1}^N{|\mu_j|^2}=1.
\end{equation*}
Now, $\rho_y$ commutes with $L_f^*L_f$ for all $y \in G$ whence
\begin{equation*}
\sum_{j=1}^N{\mu_j|s_j(f)|^2v_j} = \sum_{j=1}^N{\mu_jL_f^*L_fv_j}=L_f^*L_f\rho_yv_i = \rho_yL_f^*L_fv_i = \sum_{j=1}^N{\mu_j|s_i(f)|^2v_j}.
\end{equation*}
Since $v_1,\dots,v_N$ is a basis it follows that $|s_j(f)|=|s_i(f)|$ whenever $\mu_j\neq 0$. Now, if $s_i(f)\neq 0$ then it follows that
\begin{eqnarray*}
U\rho_yv_i = \sum_{j=1}^N{\frac{\mu_j}{|s_j(f)|}L_fv_j}&=&\frac{1}{|s_i(f)|}L_f\sum_{j=1}^N{\mu_jv_j}\\ &=&L_f\rho_yv_i/|s_i(f)|=\rho_yL_fv_i/|s_i(f)| = \rho_yUv_i.
\end{eqnarray*}
Similarly if $s_i(f)=0$, both $U\rho_yv_i$ and $\rho_yUv_i$ is $0$.
\end{proof}
Now, by the inversion formula there is some $g \in  L^1(\mu_G)$ such that $U=L_g$ and hence by Parseval's theorem and the definition of trace we have
\begin{eqnarray*}
\langle f,g\rangle_{L^2(\mu_G)} & =& \langle L_f,L_g\rangle_{\End(L^2(\mu_G)}\\& =& \sum_{i=1}^N{\langle L_fv_i,Uv_i\rangle_{L^2(\mu_G)}}\\ & =& \sum_{i: s_i(f)\neq 0}{\frac{1}{|s_i(f)|}\langle L_fv_i,L_fv_i\rangle_{L^2(\mu_G)}}\\ & =& \sum_{i: s_i(f)\neq 0}{\frac{1}{|s_i(f)|}\langle L_f^* L_fv_i,v_i\rangle_{L^2(\mu_G)}} =  \sum_{i=1}^N{|s_i(f)|}.
\end{eqnarray*}
On the other hand
\begin{equation*}
|\langle f,g\rangle_{L^2(\mu_G)}| \leq \|f\|_{A(G)}\|g\|_{PM(G)} = \|f\|_{A(G)} \|L_g\|  \leq \|f\|_{A(G)},
\end{equation*}
since $\|L_g\|=\|U\| \leq 1$ by the claim and construction of $G$.  We conclude that
\begin{equation*}
\sum_{i=1}^N{|s_i(f)|}\leq \|f\|_{A(G)},
\end{equation*}
and hence the result is proved.
\end{proof}
Qualitatively if a function is in $A(G)$ then it is continuous.  Of course this has little utility in the finite setting, but a key part of this paper is concerned with developing a quantitative analogue of this statement. To begin this process we record the following trivial nesting of norms.
\begin{lemma}[$A(G)$ dominates $L^\infty(\mu_G)$]\label{lem.linfag}
Suppose that $G$ is a finite group and $f \in A(G)$. Then
\begin{equation*}
\|f\|_{L^\infty(\mu_G)} \leq \|f\|_{A(G)}.
\end{equation*}
\end{lemma}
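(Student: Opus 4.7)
The plan is to use the duality definition $\|f\|_{A(G)} = \sup\{|\langle f, g\rangle_{L^2(\mu_G)}| : \|L_g\| \leq 1\}$ directly. It suffices to exhibit, for each $x \in G$, a test function $g_x$ with $\|L_{g_x}\| \leq 1$ and $|\langle f, g_x\rangle_{L^2(\mu_G)}| = |f(x)|$; taking the supremum over $x$ then gives the desired inequality.

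For the construction I would take $g_x$ to be a unit-modulus complex multiple of the Dirac mass at $x$. Concretely, choose $c \in \C$ with $|c|=1$ and $\bar c f(x) = |f(x)|$, and set $g_x := c|G|\cdot 1_{\{x\}}$, which represents the measure $c\delta_x$ as a function against $\mu_G$. A routine computation, using $\mu_G(\{x\}) = 1/|G|$, then yields both $\langle f, g_x\rangle_{L^2(\mu_G)} = \bar c f(x) = |f(x)|$ and $L_{g_x}v(z) = c\cdot v(x^{-1}z)$, with the $|G|$ factors cancelling.

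The latter identity displays $L_{g_x}$ as a unit scalar times a left-translation operator on $L^2(\mu_G)$, and this is unitary by translation invariance of the Haar probability measure; in particular $\|L_{g_x}\| = 1$. This is essentially the same observation that appeared at the end of the proof of Lemma \ref{lem.invariance} showing $\|L_{\delta_{y^{-1}}}\| = 1$. With both pieces in place the inequality $|f(x)| \leq \|f\|_{A(G)}$ is immediate, and there is no real obstacle to overcome — the lemma is essentially just the statement that translated point masses lie in the unit ball of the predual of $A(G)$.
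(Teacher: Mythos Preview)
Your argument is correct and is in fact more direct than the paper's. You work straight from the duality definition by testing against (a unimodular multiple of) $\delta_x$, reducing the matter to the unitarity of left translation; no further machinery is needed.

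The paper instead takes a detour through its Fourier-basis framework: it expands $f(x)=L_f\delta_{1_G}(x)$ in a Fourier basis $v_1,\dots,v_N$ for $f$, writes $f(x)=\sum_i \overline{v_i(y)}\,L_fv_i(xy)$, integrates in $y$ and applies Cauchy--Schwarz termwise to obtain $|f(x)|\le \sum_i |s_i(f)|$, and then invokes the explicit formula (Lemma~\ref{lem.algnormexp}) to identify the right-hand side with $\|f\|_{A(G)}$. The payoff of the paper's route is that it rehearses exactly the decomposition-and-average manoeuvre that is reused in the proof that $A(G)$ is an algebra and, later, in the spectral-mass arguments; your route, by contrast, is self-contained and does not require the explicit singular-value formula at all.
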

\begin{proof}
Let $v_1,\dots,v_N$ be a Fourier basis of $L^2(\mu_G)$ for $f$.  Since for every $y \in G$, the sequence $\rho_yv_1,\dots,\rho_yv_N$ is also a Fourier basis of $L^2(\mu_G)$ for $f$ we see that
\begin{equation*}
f(x)=L_f\delta_{1_G}(x)=\sum_{i=1}^N{\langle \delta_{1_G},\rho_yv_i\rangle_{L^2(\mu_G)}L_f\rho_yv_i(x)}
\end{equation*}
for all $x,y \in G$.  On the other hand $L_f\rho_yv_i(x)=\rho_yL_fv_i(x) = L_fv_i(xy)$ by definition of $\rho_y$ and the fact that it commutes with $L_f$, whence
\begin{equation*}
f(x)=\sum_{i=1}^N{\overline{v_i(y)}L_fv_i(xy)}.
\end{equation*}
Integrating against $y$ and applying the triangle inequality and then Cauchy-Schwarz inequality term-wise to the summands we get that
\begin{eqnarray*}
|f(x)| & \leq & \sum_{i=1}^N{|\int{\overline{v_i(y)}L_fv_i(xy)d\mu_G(y)}|}\\ & \leq & \sum_{i=1}^N{\|v_i\|_{L^2(\mu_G)}\|L_fv_i\|_{L^2(\mu_G)}} = \sum_{i=1}^N{|s_i(f)|}.
\end{eqnarray*}
The last equality is since
\begin{equation*}
\|L_fv_i\|_{L^2(\mu_G)}^2 = \langle L_f v_i,L_fv_i\rangle_{L^2(\mu_G)} = \langle L_f^*L_fv_i,v_i\rangle_{L^2(\mu_G)} = |s_i(f)|^2.
\end{equation*}
The lemma now follows from the explicit formula for the $A(G)$-norm.
\end{proof}
Although the above results are useful, the main result of this section and the principal reason that the $A(G)$-norm is so important is that it is an algebra norm.  Finally we are in a position to prove this fact.
\begin{proposition}[The $A(G)$-norm is an algebra norm]
Suppose that $G$ is a finite group and $f,g \in A(G)$.  Then
\begin{equation*}
\|fg\|_{A(G)}\leq \|f\|_{A(G)}\|g\|_{A(G)}.
\end{equation*}
\end{proposition}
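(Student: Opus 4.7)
My plan is to combine the explicit formula $\|f\|_{A(G)} = \sum_i s_i(f)$ from Lemma \ref{lem.algnormexp} with a coefficient-function decomposition of $f$ and $g$ coming from the SVD, then bound the pointwise product of two such coefficient functions by a direct double-integral manipulation.

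First, given a Fourier basis $v_1,\dots,v_N$ for $f$ and the associated vectors $u_i$ defined by $L_f v_i = s_i(f) u_i$, expanding $\delta_y$ in the basis and using $L_f\delta_y(x) = f(xy^{-1})$ yields the identity
\begin{equation*}
f(z) = \sum_{i=1}^N s_i(f) u_i(zy) \overline{v_i(y)} \quad \textrm{for every } y \in G.
\end{equation*}
Integrating in $y$ produces a decomposition $f = \sum_i s_i(f) \phi_i$ with $\phi_i(z) := \int u_i(zy) \overline{v_i(y)} d\mu_G(y)$, and analogously $g = \sum_j s_j(g) \psi_j$ with vectors $u'_j, v'_j$. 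Pairing $\phi_i$ against an arbitrary $h$, reversing the order of integration, and recognising the inner integral as $\langle L_h^* u_i, v_i\rangle_{L^2(\mu_G)}$, Cauchy-Schwarz together with $\|L_h^*\|=\|L_h\|$ (from Lemma \ref{lem.triv} and the $C^*$-identity) delivers the rank-one bound $\|\phi_i\|_{A(G)} \leq \|u_i\|_{L^2(\mu_G)}\|v_i\|_{L^2(\mu_G)} = 1$, and similarly for each $\psi_j$.

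The main obstacle is then the rank-one bound $\|\phi_i\psi_j\|_{A(G)} \leq 1$ for the \emph{pointwise} product, since two triangle inequalities then close the argument:
\begin{equation*}
\|fg\|_{A(G)} \leq \sum_{i,j} s_i(f) s_j(g) \|\phi_i\psi_j\|_{A(G)} \leq \|f\|_{A(G)}\|g\|_{A(G)}.
\end{equation*}
For this I would expand $\phi_i \psi_j$ as a double integral in $y_1,y_2$, substitute $y_2 = y_1 w$ so that the arguments $zy_1$ and $zy_1 w$ (appearing in $u_i, u'_j$) and $y_1, y_1 w$ (in $v_i, v'_j$) share a common right factor, and recognise the inner integral in $y_1$ as a coefficient function of exactly the same shape as $\phi_i$ but built from the pasted vectors $F_w(x) := u_i(x)u'_j(xw)$ and $K_w(x) := v_i(x)v'_j(xw)$. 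Applying the rank-one bound to each such inner integral and integrating in $w$ via the triangle inequality followed by Cauchy-Schwarz, the iterated $L^2$-norms collapse, by translation invariance of $\mu_G$, to $\|u_i\|_{L^2}\|u'_j\|_{L^2}\|v_i\|_{L^2}\|v'_j\|_{L^2} = 1$. The non-commutative sensitive point is the substitution itself: one must use $y_2 = y_1 w$ rather than $y_2 = w y_1$, because only this ordering leaves $zy_1$ as a common right factor and thus lets the $z$-dependence of the inner integral be captured by a single right translation --- morally, this is the step where the tensor square of the regular representation is absorbed back into a copy of the regular representation, but carried out in physical space to avoid any appeal to representation theory.
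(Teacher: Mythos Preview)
Your proposal is correct and is essentially the same argument as the paper's, just organised more modularly. The paper expands $f(x)=\sum_i \overline{v_i(y)}L_f v_i(xy)$ and $g(x)=\sum_j \overline{w_j(yz)}L_g w_j(xyz)$ (your substitution $y_2=y_1w$ appears here as the shift by $yz$ in the second expansion), pairs directly against $h$, and applies Cauchy--Schwarz in $y$ and $z$; your version isolates the rank-one coefficient bound $\|\phi\|_{A(G)}\le\|u\|_{L^2}\|v\|_{L^2}$ as a lemma and then applies it once more to the pasted vectors $F_w,K_w$, but the underlying computation is identical.
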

\begin{proof}
As usual we proceed by duality.  Let $h$ be such that
\begin{equation}\label{eqn.inne}
\|fg\|_{A(G)}=\langle fg,h\rangle_{L^2(\mu_G)} \textrm{ and } \|L_h\| \leq 1.
\end{equation}
Let $(v_i)_{i=1}^N$ be a Fourier basis of $L^2(\mu_G)$ for $f$ and $(w_i)_{i=1}^N$ be a Fourier basis of $L^2(\mu_G)$ for $g$. As in the previous lemma, for all $y \in G$ we have that $\rho_yv_1,\dots,\rho_yv_N$ is a Fourier basis of $L^2(\mu_G)$ for $f$ and so
\begin{equation*}
f(x)= L_f\delta_{1_G}(x)=\sum_{i=1}^N{\langle \delta_{1_G},\rho_yv_i\rangle_{L^2(\mu_G)}L_f\rho_yv_i(x)}=\sum_{i=1}^N{\overline{v_i(y)}L_fv_i(xy)},
\end{equation*}
and similarly
\begin{equation*}
g(x) = \sum_{i=1}^N{\overline{w_i(yz)}L_gw_i(xyz)}.
\end{equation*}
Inserting these expressions for $f$ and $g$ into the inner product in (\ref{eqn.inne}) we get that
\begin{eqnarray*}
\|fg\|_{A(G)} & = & \sum_{i=1}^N{\sum_{j=1}^N{\int{\overline{v_i(y)}L_fv_i(xy)\overline{w_i(yz)}L_gw_i(xyz)\overline{h(x)}d\mu_G(x)}}}\\ & = & \sum_{i=1}^N{\sum_{j=1}^N{\overline{v_i(y)w_j(yz)}\int{\tilde{h}(x)L_fv_i(x^{-1}y)L_g\rho_{z}(w_j)(x^{-1}y)d\mu_G(x)}}}\\& = & \sum_{i=1}^N{\sum_{j=1}^N{\overline{v_i(y)w_j(yz)}L_{\tilde{h}}(L_fv_iL_g\rho_{z}w_j)(y)}}.
\end{eqnarray*}
Since the above expression is valid for all $y,z \in G$ we may apply the triangle inequality and integrate to get that
\begin{equation*}
\|fg\|_{A(G)} \leq \sum_{i=1}^N{\sum_{j=1}^N{\int{|v_i(y)w_j(yz)L_{\tilde{h}}(L_fv_iL_g\rho_{z}w_j)(y)|d\mu_G(y)\mu_G(z)}}}.
\end{equation*}
By the Cauchy-Schwarz inequality 
\begin{equation*}
\int{|v_i(y)w_j(yz)L_{\tilde{h}}(L_fv_iL_g\rho_{z}w_j)(y)|d\mu_G(y)\mu_G(z)}
\end{equation*}
is at most
\begin{equation*}
\left(\int{|v_i(y)w_j(yz)|^2d\mu_G(y)d\mu_G(z)}\right)^{1/2}\left(\int{|L_{\tilde{h}}(L_fv_iL_g\rho_{z}w_j)(y)|^2d\mu_G(y)\mu_G(z)}\right)^{1/2}.
\end{equation*}
The first integral is $1$ by the change of variables $u=yz$; the second is at most
\begin{equation*}
\int{\|L_{\tilde{h}}\|^2 \int{|L_fv_i(y)L_g\rho_zw_j(y)|^2d\mu_G(y)}d\mu_G(z)}.
\end{equation*}
Since $L_g$ and $\rho_z$ commute we see that $L_g\rho_zw_j(y)=L_gw_j(yz)$ whence, by change of variables, the previous expression is equal to
\begin{equation*}
\|L_{\tilde{h}}\|^2\|L_fv_i\|_{L^2(\mu_G)}^2\|L_gw_j\|_{L^2(\mu_G)}^2 = |s_i(f)|^2|s_j(g)|^2.
\end{equation*}
The inequality follows since $\|L_{\tilde{h}}\| = \|L_h^*\| =\|L_h\|=1$.  It follows that
\begin{equation*}
\|fg\|_{A(G)} \leq \sum_{i=1}^N{\sum_{j=1}^N{|s_i(f)||s_j(g)|}} = \|f\|_{A(G)}\|g\|_{A(G)},
\end{equation*}
where the last equality is by the explicit formula for the $A(G)$-norm.  The result is proved.
\end{proof}
Related to the above is what happens when we convolve two functions instead of multiplying them.  In this regard we have the following lemma.
 \begin{lemma}\label{lem.a-ap}
 Suppose that $G$ is a finite group and $f\in A(G), g \in PM(G)$. Then
 \begin{equation*}
 \|f \ast g\|_{A(G)} \leq \|f\|_{A(G)}\|g\|_{PM(G)}.
 \end{equation*}
 \end{lemma}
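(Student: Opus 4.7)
The plan is to proceed by duality, exactly along the lines of the preceding proof that the $A(G)$-norm is an algebra norm, but using $L_f$ on one side and $L_g$ on the other rather than the pointwise product. Choose $h \in PM(G)$ with $\|L_h\| \leq 1$ realizing the sup in the definition, so that
\begin{equation*}
\|f \ast g\|_{A(G)} = \langle f \ast g, h\rangle_{L^2(\mu_G)}.
\end{equation*}

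The key observation is that Lemma \ref{lem.triv} lets us move $g$ from one side of the inner product to the other: we have
\begin{equation*}
\langle f \ast g, h\rangle_{L^2(\mu_G)} = \langle f, h \ast \tilde{g}\rangle_{L^2(\mu_G)},
\end{equation*}
and so by the definition of the $A(G)$-norm applied to $f$ against $h \ast \tilde{g}$,
\begin{equation*}
\|f \ast g\|_{A(G)} \leq \|f\|_{A(G)} \cdot \|h \ast \tilde{g}\|_{PM(G)}.
\end{equation*}

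It remains to bound $\|h \ast \tilde{g}\|_{PM(G)}$ by $\|g\|_{PM(G)}$. Since $f \mapsto L_f$ is an algebra homomorphism, $L_{h \ast \tilde{g}} = L_h L_{\tilde{g}}$, and by submultiplicativity of the operator norm together with Lemma \ref{lem.triv} (which gives $L_{\tilde{g}} = L_g^*$, and hence $\|L_{\tilde{g}}\| = \|L_g^*\| = \|L_g\|$), we obtain
\begin{equation*}
\|h \ast \tilde{g}\|_{PM(G)} = \|L_h L_{\tilde{g}}\| \leq \|L_h\| \cdot \|L_{\tilde{g}}\| \leq 1 \cdot \|g\|_{PM(G)}.
\end{equation*}
Combining the two displays gives the desired inequality.

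There is no real obstacle here: the result is just the statement that the Schatten $1$-norm of $L_f L_g$ is at most the Schatten $1$-norm of $L_f$ times the operator norm of $L_g$, which falls out formally from the duality definition of $A(G)$ and the fact that $f \mapsto L_f$ is an isometric $*$-homomorphism into $\End(L^2(\mu_G))$.
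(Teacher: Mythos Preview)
Your proof is correct and is essentially identical to the paper's own argument: both proceed by duality, use Lemma~\ref{lem.triv} to rewrite $\langle f\ast g,h\rangle$ as $\langle f,h\ast\tilde g\rangle$, and then bound $\|h\ast\tilde g\|_{PM(G)}$ by submultiplicativity of the operator norm together with $\|L_{\tilde g}\|=\|L_g\|$. The only cosmetic difference is that the paper keeps the supremum explicit throughout rather than fixing a maximizer $h$.
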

 \begin{proof}
 By the definition of the algebra norm and Lemma \ref{lem.triv} we have
 \begin{eqnarray*}
 \|f \ast g\|_{A(G)} & = & \sup\{|\langle f \ast g,h\rangle_{L^2(\mu_G)}|: \|h\|_{PM(G)}\leq 1\}\\ & = &  \sup\{|\langle f,h\ast \tilde{g} \rangle_{L^2(\mu_G)}|: \|h\|_{PM(G)}\leq 1\}\\ & \leq &   \sup\{|\langle f,k\rangle_{L^2(\mu_G)}|: \|k\|_{PM(G)}\leq \|g\|_{PM(G)}\}\\ & =& \|f\|_{A(G)}\|g\|_{PM(G)}.
 \end{eqnarray*}
The result is proved.
 \end{proof}
 
 \section{Basic computations with the algebra norm: some functions with small algebra norm}\label{sec.algfunc}
 
In this section we shall compute the algebra norm of functions of various shapes which will be used later in our work.  It may also be useful to read the short lemmas that follow to get more of a hand on how the norm behaves.

As indicated in the overview in \S\ref{sec.abover} and as should be clear from the definition of a Fourier basis we shall make heavy use of convolution squares.  No small part of that reason is the following lemma.
\begin{lemma}\label{lem.convag}
Suppose that $G$ is a finite group and $A$ is a non-empty subset of $G$. Then
\begin{equation*}
\|\widetilde{1_A} \ast \mu_A\|_{A(G)} = 1.
\end{equation*}
\end{lemma}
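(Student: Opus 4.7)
The plan is to use the explicit formula for the $A(G)$-norm (Lemma \ref{lem.algnormexp}) and reduce everything to computing a single trace.

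First I would write $\mu_A = 1_A/\mu_G(A)$ and observe, using the fact that $f \mapsto L_f$ is a $*$-algebra homomorphism together with Lemma \ref{lem.triv}, that
\begin{equation*}
L_{\widetilde{1_A} \ast \mu_A} = L_{\widetilde{1_A}} L_{\mu_A} = \frac{1}{\mu_G(A)} L_{1_A}^* L_{1_A}.
\end{equation*}
The key point is that this operator is positive semidefinite: for any $v \in L^2(\mu_G)$ we have $\langle L_{1_A}^* L_{1_A} v, v\rangle = \|L_{1_A} v\|_{L^2(\mu_G)}^2 \geq 0$. In particular it is self-adjoint, so its singular values coincide with its (nonnegative) eigenvalues.

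Next, by Corollary \ref{cor.speccor} applied to $L_f$ where $f:=\widetilde{1_A}\ast \mu_A$, there is an orthonormal basis $v_1,\dots,v_N$ diagonalizing $L_f^*L_f$, but since $L_f$ is itself self-adjoint and positive semidefinite, the same basis diagonalizes $L_f$ with eigenvalues $s_1(f),\dots,s_N(f)\geq 0$. Hence
\begin{equation*}
\sum_{i=1}^N |s_i(f)| = \sum_{i=1}^N s_i(f) = \Tr L_f = \frac{1}{\mu_G(A)}\Tr(L_{1_A}^* L_{1_A}).
\end{equation*}
By the definition of the Hilbert-Schmidt inner product and Parseval's theorem,
\begin{equation*}
\Tr(L_{1_A}^* L_{1_A}) = \langle L_{1_A}, L_{1_A}\rangle_{\End(L^2(\mu_G))} = \langle 1_A, 1_A\rangle_{L^2(\mu_G)} = \mu_G(A).
\end{equation*}
Dividing by $\mu_G(A)$ gives $\sum_i |s_i(f)| = 1$, and the explicit formula of Lemma \ref{lem.algnormexp} then yields $\|f\|_{A(G)} = 1$.

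There is no real obstacle here; the only thing to be careful about is recognising that $L_f$ is positive semidefinite so that its trace equals the sum of its singular values (otherwise one only gets $\Tr L_f \leq \sum_i s_i(L_f)$). Everything else is bookkeeping with the identities already developed in \S\ref{sec.ft}.
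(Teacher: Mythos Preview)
Your proof is correct and in fact a bit slicker than the paper's. The paper argues the two inequalities separately: for the upper bound it takes a Fourier basis $v_1,\dots,v_N$ for $1_A$, uses Parseval to get $\sum_i |s_i(1_A)|^2 = \mu_G(A)$, then dualises---choosing $U$ with $\|U\|\leq 1$ realising $\|\widetilde{1_A}\ast\mu_A\|_{A(G)}$ and bounding each $|\langle v_i, Uv_i\rangle|\leq 1$---to obtain $\|\widetilde{1_A}\ast\mu_A\|_{A(G)}\leq 1$; the lower bound comes from $\widetilde{1_A}\ast\mu_A(1_G)=1$ together with Lemma~\ref{lem.linfag}. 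Your route instead exploits the positive semidefiniteness of $L_{\widetilde{1_A}\ast\mu_A}=\mu_G(A)^{-1}L_{1_A}^*L_{1_A}$ to identify $\sum_i |s_i(f)|$ with $\Tr L_f$ directly, after which Parseval gives the exact value $1$ in one stroke. The gain is that you never need the separate $L^\infty$ lower bound; the cost is the (easy) observation that for a positive semidefinite operator the singular values coincide with the eigenvalues, so the Schatten $1$-norm equals the trace. Both arguments rest on the same Parseval identity, so the difference is organisational rather than conceptual.
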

\begin{proof}
This is an easy calculation. Let $v_1,\dots,v_N$ be a Fourier basis of $L^2(\mu_G)$ for $1_A$.  By Parseval's theorem we then have
\begin{equation*}
\sum_{i=1}^N{|s_i(1_A)|^2} = \sum_{i=1}^N{\langle L_{1_A}v_i,L_{1_A}v_i\rangle_{L^2(\mu_G)}} = \langle 1_A, 1_A \rangle_{L^2(\mu_G)} = \mu_G(A).
\end{equation*}
On the other hand, by the definition of the algebra norm there is some $U$ with $\|U\| \leq 1$ such that
\begin{equation*}
\|\widetilde{1_A} \ast \mu_A\|_{A(G)} =\langle L_{\widetilde{1_A} \ast \mu_A},U\rangle_{\End(L^2(\mu_G)}.
\end{equation*}
By the definition of trace we expand this in the basis $v_1,\dots,v_N$ to get that
\begin{equation*}
\|\widetilde{1_A} \ast 1_A\|_{A(G)} =\sum_{i=1}^N{\langle L_{1_A}^*L_{1_A}v_i,Uv_i\rangle_{L^2(\mu_G)}} = \sum_{i=1}^N{|s_i(1_A)|^2\langle v_i,Uv_i\rangle_{L^2(\mu_G)}}.
\end{equation*}
Since $|\langle v_i,Uv_i\rangle_{L^2(\mu_G)}| \leq 1$ we conclude that
\begin{equation*}
\|\widetilde{1_A} \ast 1_A\|_{A(G)} \leq \sum_{i=1}^N{|s_i(1_A)|^2} = \mu_G(A),
\end{equation*}
and hence that $\|\widetilde{1_A} \ast \mu_A\|_{A(G)} \leq 1$. In the other direction we note that $\widetilde{1_A} \ast \mu_A(1_G) = 1$ and the result follows from Lemma \ref{lem.linfag}.
 \end{proof}
 An immediate corollary of the above is that indicator functions of cosets have algebra norm $1$, a fact we claimed in the introduction.
 \begin{corollary}\label{cor.indsmall}
Suppose that $G$ is a finite group, $H \leq G$ and $x \in G$. Then
\begin{equation*}
\|1_{xH}\|_{A(G)} =\|1_{Hx}\|_{A(G)}=1.
\end{equation*}
\end{corollary}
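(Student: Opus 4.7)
The plan is to reduce both $\|1_{xH}\|_{A(G)}$ and $\|1_{Hx}\|_{A(G)}$ to the statement $\|1_H\|_{A(G)}=1$ for a subgroup $H$, and then read the latter directly off Lemma \ref{lem.convag} by identifying $1_H$ with the convolution square $\widetilde{1_H}\ast \mu_H$.

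First I would prove the subgroup case. Since $H$ is a subgroup we have $H=H^{-1}$, so $\widetilde{1_H}(y)=\overline{1_H(y^{-1})}=1_H(y)$, i.e.\ $\widetilde{1_H}=1_H$. Next I would evaluate $1_H\ast \mu_H$ pointwise using the identity $\supp 1_A\ast 1_B=AB$ together with the fact that $\mu_H=\mu_G(H)^{-1}1_H$: for $z\in H$ the integrand $1_H(y)1_H(y^{-1}z)$ equals $1_H(y)$, contributing $\mu_G(H)$, while for $z\notin H$ no $y\in H$ satisfies $y^{-1}z\in H$, so the integral vanishes. After normalising by $\mu_G(H)$ this gives $\widetilde{1_H}\ast \mu_H=1_H$. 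Lemma \ref{lem.convag} then immediately yields $\|1_H\|_{A(G)}=1$.

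For the right coset $Hx$, I would simply observe that $1_{Hx}(z)=1_H(zx^{-1})=\rho_{x^{-1}}(1_H)(z)$, so translation invariance of the $A(G)$-norm (Lemma \ref{lem.invariance}) gives $\|1_{Hx}\|_{A(G)}=\|1_H\|_{A(G)}=1$. For the left coset $xH$ the right translation $\rho$ does not act directly, but one can rewrite $xH=(xHx^{-1})x$ as a right coset of the conjugate subgroup $K:=xHx^{-1}$. Applying the subgroup case to $K$ and then translation invariance as above gives $\|1_{xH}\|_{A(G)}=\|1_K\|_{A(G)}=1$.

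The only mild obstacle is the asymmetry between left cosets and the right regular representation $\rho$ used in Lemma \ref{lem.invariance}; this is resolved by the conjugation trick $xH=(xHx^{-1})x$. Everything else is a direct application of Lemmas \ref{lem.convag} and \ref{lem.invariance}, so the argument should be short.
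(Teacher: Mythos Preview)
Your proof is correct and essentially follows the paper's own argument: reduce to $\|1_H\|_{A(G)}=1$ via the invariances of Lemma~\ref{lem.invariance}, and then read that off from Lemma~\ref{lem.convag} using $\widetilde{1_H}\ast\mu_H=1_H$. The only cosmetic difference is in handling the left coset: the paper uses the involution invariance $\|1_{xH}\|_{A(G)}=\|\widetilde{1_{xH}}\|_{A(G)}=\|1_{Hx^{-1}}\|_{A(G)}$ and then translation invariance, whereas you use the conjugation identity $xH=(xHx^{-1})x$ and only translation invariance; both reductions are equally short.
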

\begin{proof}
First note that by Lemma \ref{lem.invariance} we have
\begin{equation*}
\|1_{xH}\|_{A(G)} = \|\widetilde{1_{xH}}\|_{A(G)} = \|1_{Hx^{-1}}\|_{A(G)} = \|\rho_x(1_H)\|_{A(G)} = \|1_H\|_{A(G)},
\end{equation*}
and similarly (but more easily) $\|1_{Hx}\|_{A(G)} = \|1_H\|_{A(G)}$.  It follows that without loss of generality we may assume that $x=1_G$.  Of course then the lemma is a simple consequence of Lemma \ref{lem.convag} since $1_H \ast \mu_H = 1_H$.
\end{proof}
Convolution squares are most useful for their `positivity in the dual' property in the abelian case which is captured for our setting by the following lemma.
 \begin{lemma}\label{lem.bohrfourier}
 Suppose that $G$ is a finite group and $A \subset G$ is non-empty. Then
 \begin{equation*}
 \|\mu_{\{1_G\}} - \widetilde{\mu_A} \ast \mu_{A}\|_{PM(G)} \leq 1
 \end{equation*}
 \end{lemma}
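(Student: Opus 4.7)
The plan is to reduce the claim to the statement that $L_{\mu_A}$ has operator norm at most $1$, which follows from the Hausdorff-Young inequality recorded earlier.

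First I would unpack both terms in the operator norm. Since $\mu_{\{1_G\}}$ is the point mass at the identity, one checks directly from the definition of convolution that $L_{\mu_{\{1_G\}}}$ is the identity operator $I$ on $L^2(\mu_G)$. On the other hand, since $f \mapsto L_f$ is a $*$-algebra homomorphism (Lemma \ref{lem.triv} together with associativity of convolution), we have
\begin{equation*}
L_{\widetilde{\mu_A} \ast \mu_A} = L_{\widetilde{\mu_A}} L_{\mu_A} = L_{\mu_A}^* L_{\mu_A}.
\end{equation*}
Thus the quantity to bound is $\|I - T^*T\|$ where $T := L_{\mu_A}$.

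Next, note that $T^*T$ is self-adjoint and positive semi-definite, so its spectrum lies in $[0, \|T\|^2]$. The spectrum of $I - T^*T$ therefore lies in $[1 - \|T\|^2, 1]$, and hence
\begin{equation*}
\|I - T^*T\| = \max\bigl(1,\, \|T\|^2 - 1\bigr).
\end{equation*}
It suffices to show $\|T\| \leq 1$, for then the maximum is $1$.

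For this final step I would simply invoke the Hausdorff-Young inequality (Lemma in \S\ref{sec.ft}): $\|T\| = s_1(\mu_A) \leq \|\mu_A\|_{L^1(\mu_G)} = 1$, since $\mu_A$ is a probability measure. There is no real obstacle here; the only mild subtlety is the bookkeeping identification $L_{\mu_{\{1_G\}}} = I$, which amounts to recognising $\mu_{\{1_G\}}$ as the unit of the convolution algebra.
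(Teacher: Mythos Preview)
Your proof is correct and is essentially the same as the paper's: both identify the operator in question as $I - L_{\mu_A}^*L_{\mu_A}$ and bound its spectrum in $[0,1]$ via the Hausdorff--Young inequality, the paper by explicitly passing to a Fourier basis for $\mu_A$ and you more abstractly via the spectral radius. One minor quibble: your displayed formula should read ``$\leq$'' rather than ``$=$'', since $0$ need not lie in the spectrum of $T^*T$, but this does not affect the bound you need.
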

 \begin{proof}
 Let $v_1,\dots,v_N$ be a Fourier basis of $L^2(\mu_G)$ for $\mu_A$.  It follows that
 \begin{equation*}
 (\mu_{\{1_G\}} - \widetilde{\mu_A} \ast \mu_{A}) \ast v_i = v_i(1-|s_i(\mu_A)|^2)
 \end{equation*}
 for all $i$.  Integrating we conclude that
  \begin{equation*}
 \| (\mu_{\{1_G\}} - \widetilde{\mu_A} \ast \mu_{A}) \ast v_i \|_{L^2(\mu_G)}^2 = |1-|s_i(\mu_A)|^2|.
 \end{equation*}
 By the Hausdorff-Young inequality we see that $|s_i(\mu_A)|\leq \|\mu_A\| = 1$, and hence that $1-|s_i(\mu_A)|^2 \geq 0$; thus
 \begin{equation*}
 \|\mu_{\{1_G\}} - \widetilde{\mu_A} \ast \mu_{A}\|_{PM(G)} = \sup_{1 \leq i \leq N}{1-|s_i(\mu_A)|^2} \leq 1.
 \end{equation*}
The result is proved.
 \end{proof}
 Our main argument is an induction on spectral mass so we need a way to hive off portions of spectral mass without destroying the physical space properties of our functions.  The following is the crucial decomposition lemma which is, as usual, much easier to see in the abelian setting.
 \begin{lemma}\label{lem.decompmass}
Suppose that $G$ is a finite group, $H \leq G$ and $f \in A(G)$.  Then
\begin{equation*}
\|f\|_{A(G)} = \|f-f\ast \mu_H\|_{A(G)} + \|f \ast \mu_H\|_{A(G)}.
\end{equation*}
\end{lemma}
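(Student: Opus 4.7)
The upper bound $\|f\|_{A(G)}\leq \|f\ast\mu_H\|_{A(G)}+\|f-f\ast\mu_H\|_{A(G)}$ is immediate from the triangle inequality applied to the decomposition $f=(f\ast\mu_H)+(f-f\ast\mu_H)$, so the substance is the reverse direction.

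The plan is to use the singular-value formula of Lemma~\ref{lem.algnormexp}. The starting observation is that $L_{\mu_H}$ is an orthogonal projection on $L^2(\mu_G)$: it is self-adjoint since $\widetilde{\mu_H}=\mu_H$ gives $L_{\mu_H}^*=L_{\widetilde{\mu_H}}=L_{\mu_H}$ via Lemma~\ref{lem.triv}, and it is idempotent since $\mu_H\ast\mu_H=\mu_H$ (a short computation using that $H$ is a subgroup). Setting $P:=L_{\mu_H}$ and $V:=\mathrm{im}(P)\subseteq L^2(\mu_G)$, we have $L_{f\ast\mu_H}=L_fP$ and $L_{f-f\ast\mu_H}=L_f(I-P)$.

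The key step is to produce an orthonormal Fourier basis $v_1,\ldots,v_N$ for $f$ (so $L_f^*L_f v_i=|s_i(f)|^2 v_i$) with each $v_i\in V\cup V^\perp$. Given such a basis, partition the indices as $I_V:=\{i:v_i\in V\}$ and $I_{V^\perp}:=\{i:v_i\in V^\perp\}$. Using $P^*=P$, one computes
\[L_{f\ast\mu_H}^*L_{f\ast\mu_H}v_i=PL_f^*L_fPv_i=\begin{cases}|s_i(f)|^2 v_i & \text{if } i\in I_V,\\ 0 & \text{if } i\in I_{V^\perp},\end{cases}\]
so the same basis serves as a Fourier basis for $f\ast\mu_H$ whose singular values equal $|s_i(f)|$ on $I_V$ and vanish on $I_{V^\perp}$. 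The symmetric computation for $(I-P)L_f^*L_f(I-P)$ exhibits the same basis as a Fourier basis for $f-f\ast\mu_H$ with singular values supported on $I_{V^\perp}$. Summing via Lemma~\ref{lem.algnormexp} then yields
\[\|f\ast\mu_H\|_{A(G)}+\|f-f\ast\mu_H\|_{A(G)}=\sum_{i\in I_V}|s_i(f)|+\sum_{i\in I_{V^\perp}}|s_i(f)|=\|f\|_{A(G)}.\]

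The main obstacle is producing the joint eigenbasis: this amounts to $L_f^*L_f$ and $P$ commuting, equivalently $\mu_H\ast\tilde f\ast f=\tilde f\ast f\ast\mu_H$. Both operators are convolution operators and therefore lie in the commutant of the right regular representation of $G$, which localises the problem to the isotypic blocks; the argument has to exploit the particular structure of $\mu_H$ as the Haar projection onto a subgroup within each block and then invoke Corollary~\ref{cor.speccor} to produce a basis simultaneously diagonalising the two operators.
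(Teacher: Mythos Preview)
Your reduction to the singular-value formula is the right idea, but the ``main obstacle'' you flag in the last paragraph is a genuine gap, and your proposed resolution does not close it. The commutation $[P,L_f^*L_f]=0$, equivalently $\mu_H\ast\tilde f\ast f=\tilde f\ast f\ast \mu_H$, is \emph{false} in general when $H$ is not normal. Concretely, in $G=S_3$ with $H=\langle(12)\rangle$, on the two-dimensional irreducible block $\pi(\mu_H)$ is a rank-one projection, and for $f=\delta_e+\delta_{(12)}+\delta_{(123)}$ one computes that $\pi(f)^*\pi(f)$ is a positive matrix not commuting with that projection. So there is simply no Fourier basis for $f$ with every $v_i\in V\cup V^\perp$, and appealing to the block structure cannot manufacture one.

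The paper avoids this by reversing the order of the argument. Rather than starting from a Fourier basis for $f$ and trying to split it along $V\oplus V^\perp$, set $M_1:=L_f(I-P)$ and $M_2:=L_fP$ and observe directly that $M_1M_2^*=L_f(I-P)PL_f^*=0$ and $M_2M_1^*=0$. Hence $M_1^*M_1\,M_2^*M_2=M_1^*\,0\,M_2=0$ and symmetrically, so $M_1^*M_1$ and $M_2^*M_2$ are commuting self-adjoint operators and admit a \emph{joint} orthonormal eigenbasis $v_1,\dots,v_N$. For each $i$ at most one of the two eigenvalues is nonzero, so
\[
\langle M_1^*M_1v_i,v_i\rangle^{1/2}+\langle M_2^*M_2v_i,v_i\rangle^{1/2}=\langle (M_1^*M_1+M_2^*M_2)v_i,v_i\rangle^{1/2}.
\]
Since the cross terms $M_1^*M_2$ and $M_2^*M_1$ vanish, $L_f^*L_f=(M_1+M_2)^*(M_1+M_2)=M_1^*M_1+M_2^*M_2$, so the \emph{same} basis is a Fourier basis for $f$. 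Summing the displayed identity over $i$ and applying Lemma~\ref{lem.algnormexp} three times gives the result. The point is that one does not need $P$ to commute with $L_f^*L_f$; it suffices that the two compressions $PL_f^*L_fP$ and $(I-P)L_f^*L_f(I-P)$ have product zero, which is automatic.
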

\begin{proof}
We write $M_1:=L_{f - f\ast \mu_H}$ and $M_2=L_{f\ast\mu_H}$.  Now, 
\begin{equation*}
M_1M_2^* = L_{(f - f\ast \mu_H)\ast \widetilde{\mu_H}\ast \widetilde{f}} = L_{f \ast \widetilde{\mu_H} \ast \widetilde{f} - f \ast \widetilde{\mu_H} \ast \widetilde{f}} = 0
\end{equation*}
since $\mu_H \ast \widetilde{\mu_H} = \widetilde{\mu_H}$.  Similarly $M_2M_1^*=0$ since $\mu_H\ast \widetilde{\mu_H} = \mu_H$, whence $M_1^*M_1$ and $M_2^*M_2$ are commuting self-adjoint operators, indeed
\begin{equation*}
M_1^*M_1M_2^*M_2 = M_1^*.0.M_2 = 0 = M_2^*.0.M_1 = M_2^*M_2M_1^*M_1.
\end{equation*}
It follows that there is a basis $v_1,\dots,v_N$ of $L^2(\mu_G)$ which simultaneously diagonalizes both of them, whence there are permutations $\pi_1$ and $\pi_2$ of $\{1,\dots,N\}$ such that
\begin{equation*}
M_1^*M_1v_i = |s_{\pi_1(i)}(f-f \ast \mu_H)|^2v_i \textrm{ and } M_2^*M_2v_i = |s_{\pi_2(i)}(f \ast \mu_H)|^2v_i.
\end{equation*}
By the explicit formula for the algebra norm we then have
\begin{equation*}
\|f-f \ast \mu_H\|_{A(G)} = \sum_{i=1}^N{\langle M_1^*M_1v_i,v_i\rangle_{L^2(\mu_G)}^{1/2}}
\end{equation*}
and
\begin{equation*}
\|f \ast \mu_H\|_{A(G)} =  \sum_{i=1}^N{\langle M_2^*M_2v_i,v_i\rangle_{L^2(\mu_G)}^{1/2}}.
\end{equation*}
Now, since $M_1^*M_1M_2^*M_2=0$ and the $v_i$s are eigenvectors of both $M_1^*M_1$ and $M_2^*M_2$ we also know that for each $i$ at most one of the two summands $\langle M_1^*M_1v_i,v_i\rangle_{L^2(\mu_G)}^{1/2}$ and $\langle M_2^*M_2v_i,v_i\rangle_{L^2(\mu_G)}^{1/2}$ can be non-zero whence
\begin{equation*}
\langle M_1^*M_1v_i,v_i\rangle_{L^2(\mu_G)}^{1/2}+\langle M_2^*M_2v_i,v_i\rangle_{L^2(\mu_G)}^{1/2} = \langle (M_1^*M_1+M_2^*M_2)v_i,v_i\rangle_{L^2(\mu_G)}^{1/2}.
\end{equation*}
Of course, since $M_1^*M_2=0$ and $M_2^*M_1=0$ we conclude that
\begin{equation*}
\langle M_1^*M_1v_i,v_i\rangle_{L^2(\mu_G)}^{1/2}+\langle M_2^*M_2v_i,v_i\rangle_{L^2(\mu_G)}^{1/2} = \langle (M_1+M_2)^*(M_1+M_2)v_i,v_i\rangle_{L^2(\mu_G)}^{1/2}.
\end{equation*}
However, $L_f=M_1+M_2$ and, furthermore,
\begin{equation*}
L_f^*L_fv_i = (M_1+M_2)^*(M_1+M_2)v_i = M_1^*M_1v_i + M_2^*M_2v_i,
\end{equation*}
which is a scalar multiple of $v_i$ since $v_i$ is an eigenvector of $M_1^*M_1$ and $M_2^*M_2$. It follows that $v_1,\dots,v_N$ also diagonalizes $L_f^*L_f$, and hence
\begin{equation*}
\sum_{i=1}^N{\langle (M_1+M_2)^*(M_1+M_2)v_i,v_i\rangle_{L^2(\mu_G)}^{1/2}}=\|f\|_{A(G)}
\end{equation*}
by the explicit formula for the algebra norm.  The lemma follows combining this with the previous.
\end{proof}

\section{Indicator functions with very small $A(G)$-norm}\label{sec.sag}

Suppose that $G$ is a finite group and $A\subset G$ is not empty.  It follows from Lemma \ref{lem.linfag} that
\begin{equation*}
\|1_A\|_{A(G)} \geq \|1_A\|_{L^\infty(\mu_G)} \geq 1.
\end{equation*}
Our main theorem is to be thought of as describing the structure of $1_A$ when $\|1_A\|_{A(G)}$ tends to infinity very slowly in the size of the group.  If $\|1_A\|_{A(G)}$ is, in fact, close to $1$ then even more can be said.

First recall Corollary \ref{cor.indsmall} where we showed that $\|1_A\|_{A(G)}$ may, in fact, be as small as the above trivial lower bound.  Curiously, it turns out that there is a jump in the possible values of the algebra norm after $1$.  The following proposition is the content of this section and confirms this fact.
\begin{proposition}\label{prop.small}
Suppose that $G$ is a finite group and $A \subset G$ is non-empty and has $\|1_A\|_{A(G)} < 1+1/750$. Then there is a subgroup $H \leq G$ and an element $x \in G$ such that $1_A=1_{Hx}$. 
\end{proposition}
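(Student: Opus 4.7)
The plan is to exploit the fact that the top singular value of $L_{1_A}$ is pinned down exactly by Hausdorff--Young, then identify the associated top eigenspace with the functions constant on cosets of a specific subgroup, and finally use the algebra-norm decomposition lemma to force $A$ to fill that coset completely.

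First I would observe that the constant function $\mathbf{1}_G$ is an eigenvector of $L_{1_A}$ with eigenvalue $\alpha:=\mu_G(A)$, so $s_1(1_A)\geq\alpha$; on the other hand the Hausdorff--Young inequality gives $s_1(1_A)\leq\|1_A\|_{L^1(\mu_G)}=\alpha$; hence $s_1(1_A)=\alpha$ exactly. Writing $L_{1_A}v(x)=\alpha\,\E_{y\sim\mu_A}[v(y^{-1}x)]$ and tracing through the pointwise equality case of Jensen's inequality then shows that the eigenspace $V_\alpha$ of $L_{1_A}^{*}L_{1_A}$ at eigenvalue $\alpha^2$ consists precisely of those $v$ satisfying $v(wz)=v(z)$ for all $w\in A^{-1}A$ and $z\in G$, i.e.\ the functions constant on right cosets of $H:=\langle A^{-1}A\rangle$; in particular $\dim V_\alpha=1/\mu_G(H)$. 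Since $A^{-1}A\subseteq H$, fixing $a\in A$ and writing $x_0:=a$ yields $A\subseteq x_0 H$, and a direct calculation gives $1_A\ast \mu_H=(1/r)\,1_{x_0 H}$ where $r:=\mu_G(H)/\alpha\geq 1$, with $r=1$ if and only if $A=x_0 H$.

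Applying the decomposition Lemma \ref{lem.decompmass} to $f=1_A$ and this $H$ (using Corollary \ref{cor.indsmall} to evaluate $\|1_{x_0 H}\|_{A(G)}=1$) yields
\[
\|1_A\|_{A(G)}=\|1_A-(1/r)1_{x_0 H}\|_{A(G)}+1/r.
\]
If $r=1$ we are done. Assume for contradiction $r>1$, set $B:=x_0H\setminus A$ and note that the residual $1_A-(1/r)1_{x_0 H}$ takes the values $1-1/r$ on $A$, $-1/r$ on $B$, and $0$ outside $x_0 H$. By Lemma \ref{lem.linfag} its $A(G)$-norm is at least $\max(1-1/r,1/r)$, so $\|1_A\|_{A(G)}\geq 1/r+\max(1-1/r,1/r)$; together with the hypothesis $\|1_A\|_{A(G)}<1+1/750$ this already forces $r$ into the very narrow window $r>1500/751$ in the range $r\leq 2$.

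The main obstacle will be closing the argument in the remaining range (essentially $r\geq 1500/751$), where the crude $L^\infty$-bound just used is insufficient, as can be seen in examples where $r=2$ but $\|1_A\|_{A(G)}$ is substantially larger than $1$ (for instance $A=\{e,(12),(123)\}\subset S_3$ gives $\|1_A\|_{A(G)}=4/3$). To close this gap I would apply the same eigenspace machinery to the complementary set $B\subseteq x_0 H$---it has its own top singular value $s_1(1_B)=\mu_G(B)=(r-1)\alpha$, its own generated subgroup $\langle B^{-1}B\rangle\leq H$, and its own decomposition of the form above---and then combine the two decompositions using $1_A+1_B=1_{x_0 H}$ together with algebra-norm identities such as $\|1_A-1_B\|_{A(G)}^2\geq\|(1_A-1_B)^2\|_{A(G)}=\|1_{x_0 H}\|_{A(G)}=1$ (from submultiplicativity). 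A careful numerical optimisation, calibrated to the specific constant $1/750$ appearing in the hypothesis, should then rule out every $r>1$, forcing $r=1$ and hence $A=x_0 H$.
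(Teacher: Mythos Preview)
Your opening is clean and correct: $s_1(1_A)=\alpha$ exactly, the top eigenspace consists of functions constant on cosets of $H=\langle A^{-1}A\rangle$, and Lemma~\ref{lem.decompmass} yields $\|1_A\|_{A(G)}=\|g\|_{A(G)}+1/r$ with $g:=1_A-(1/r)1_{x_0H}$, which does dispose of $1<r\leq 1500/751$.

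But the ``main obstacle'' remains one, and your sketch does not close it. For $r\geq 2$ the $L^\infty$ bound gives only $\|g\|_{A(G)}\geq 1-1/r$ and hence $\|1_A\|_{A(G)}\geq 1$. Passing to $B=x_0H\setminus A$ adds nothing: for $r\geq 2$ one has $|B|\geq|H|/2$, and a short argument (using that $A$ is not itself a coset of an index-$2$ subgroup, since $\langle A^{-1}A\rangle=H$) forces $\langle B^{-1}B\rangle=H$; then $1_B-(1/r')1_{x_0H}=-g$, so the second decomposition is the first one rewritten. Your submultiplicativity move also collapses: since $1_A-1_B=2g+(2/r-1)1_{x_0H}$ and $g\ast\mu_H=0$, Lemma~\ref{lem.decompmass} gives $\|1_A-1_B\|_{A(G)}=2\|g\|_{A(G)}+|2/r-1|$, so the inequality $\|1_A-1_B\|_{A(G)}\geq 1$ only says $\|g\|_{A(G)}\geq\min(1/r,\,1-1/r)$, which you already had. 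There is no optimisation left to perform because you have used only the top singular value, and that information is exhausted. The paper proceeds differently: it applies H\"older to the \emph{full} singular-value sequence to get $\|1_A\ast 1_{A^{-1}}\|_{L^2}^2\geq\mu_G(A)^3/\|1_A\|_{A(G)}^2$, and feeds this energy bound into the Fournier-type Lemma~\ref{lem.fourn}, which directly produces a subgroup $H$ with $\mu_G(H)\geq(19/20)\mu_G(A)$ and $\mu_G(A\cap Hx)\geq(9/10)\mu_G(H)$. The large-$r$ regime is thereby excluded \emph{a priori}, and a short endgame with Lemma~\ref{lem.decompmass} (similar in spirit to yours) forces $A=Hx$. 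The missing idea is that one must \emph{find} a subgroup of the right size rather than take the canonical but possibly far larger $\langle A^{-1}A\rangle$.
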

The proof itself can be seen as a sort of very simplified model for the wider work of the paper.  To begin with we note the following lemma which is essentially due to Fournier \cite{JJFF} and is a sort of Balog-Szemer{\'e}di-Fre{\u\i}man theorem for very large energy sets.

Before beginning the proof it will be useful to recall the symmetry set notation of Tao and Vu \cite{TCTVHV}.  Suppose that $G$ is a finite group, $A \subset G$ and $\eta \in (0,1]$ is a parameter. Then the \emph{symmetry set of $A$ at threshold $\eta$} is 
\begin{equation*}
\Sym_{\eta}(A):=\{x \in G: 1_A \ast 1_{A^{-1}}(x) \geq \eta\mu_G(A)\}.
\end{equation*}
\begin{lemma}\label{lem.fourn}
Suppose that $G$ is a finite group, $A \subset G$ is non-empty with $\|1_A\ast 1_{-A}\|_{L^2(\mu_G)}^2 \geq (1-c)\mu_G(A)^3$ and $\eta \in [12c,1/12)$ is a parameter.  Then there is a subgroup $H \leq G$ and some $x \in G$ such that
\begin{equation*}
\mu_G(H) \geq (1-c\eta^{-1})\mu_G(A) \textrm{ and } \mu_G(A \cap Hx) \geq (1-2\eta)\mu_G(H).
\end{equation*}
\end{lemma}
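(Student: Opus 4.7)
Write $F := 1_A \ast 1_{A^{-1}}$ and $\alpha := \mu_G(A)$, so that $F$ is a non-negative function on $G$ with $\|F\|_{L^\infty(\mu_G)} \leq \alpha$, $\|F\|_{L^1(\mu_G)} = \alpha^2$, $F(1_G) = \alpha$, $F(x^{-1}) = F(x)$, and the pointwise formula $F(x) = \mu_G(A \cap xA)$. The hypothesis rearranges as
\begin{equation*}
\int_G F(x)(\alpha - F(x))\, d\mu_G(x) \;=\; \alpha\|F\|_{L^1(\mu_G)} - \|F\|_{L^2(\mu_G)}^2 \;\leq\; c\alpha^3,
\end{equation*}
so $F$ takes values close to $0$ or $\alpha$ outside a small set.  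In the extreme case $c=0$ one gets $F = \alpha 1_H$ for the stabiliser $H = \{x : xA = A\}$ and $A$ is a coset of $H$ of the same measure; our plan is to establish a quantitative version of this.

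Introduce the near-stabiliser $H_0 := \{x : F(x) \geq (1-\eta)\alpha\}$.  On the complement $\alpha - F > \eta\alpha$, so Markov applied to the integrand gives $\int_{G \setminus H_0} F \leq c\alpha^2/\eta$, whence $\int_{H_0} F \geq (1 - c/\eta)\alpha^2$ and, since $F \leq \alpha$, $\mu_G(H_0) \geq (1 - c/\eta)\alpha$; the trivial bound $\mu_G(H_0) \leq \alpha/(1-\eta)$ follows similarly.  Because $F(x^{-1}) = F(x)$, $H_0$ is symmetric and contains $1_G$, and the reformulation $x \in H_0 \iff \mu_G(xA \triangle A) \leq 2\eta\alpha$, together with two applications of the triangle inequality for symmetric difference, shows $H_0 \cdot H_0 \subseteq \{F \geq (1-2\eta)\alpha\}$, a set of measure at most $\alpha/(1-2\eta)$.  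With $\eta \in [12c, 1/12)$ this forces the doubling $\mu_G(H_0 \cdot H_0)/\mu_G(H_0)$ to be strictly less than $4/3$.

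The main obstacle is to upgrade $H_0$ to an actual subgroup $H$ with $\mu_G(H) \in [(1 - c/\eta)\alpha,\, \alpha/(1 - 2\eta)]$.  My plan is to take $H$ to be the stabilised iterate of the ascending chain $H_0 \subseteq H_0^2 \subseteq H_0^3 \subseteq \dots$: iterating the triangle inequality yields $H_0^n \subseteq \{F \geq (1-n\eta)\alpha\}$ of measure at most $\alpha/(1-n\eta)$ whenever $n\eta < 1$, so the iterates remain of controlled size for $n$ up to $\approx 1/\eta$, and as soon as equality $H_0^N = H_0^{N+1}$ occurs one gets, by induction, $H_0^n = H_0^N$ for all $n \geq N$, making $H := H_0^N$ a (symmetric, identity-containing, closed-under-products) subgroup.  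Verifying that this stabilisation genuinely happens while $\mu_G(H_0^N)$ still lies below $\alpha/(1-2\eta)$ is essentially a very-small-doubling Freiman/Kneser-type claim in the non-abelian setting; this is where I expect the sharpness of the parameter window $\eta \in [12c,1/12)$ to be essential.

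Once such an $H$ is available it satisfies $H \supseteq H_0$ (hence $\mu_G(H) \geq (1-c/\eta)\alpha$), and $H \subseteq \{F \geq (1-2\eta)\alpha\}$ so $\mu_G(A \cap hA) = F(h) \geq (1-2\eta)\alpha$ for every $h \in H$.  A one-line double count, substituting $h = yx_0^{-1}$ and using left-invariance of Haar measure, then gives
\begin{equation*}
\int_A \mu_G(Hx_0 \cap A)\, d\mu_G(x_0) \;=\; \int_H F(h)\, d\mu_G(h) \;\geq\; (1 - 2\eta)\alpha\, \mu_G(H),
\end{equation*}
and averaging in $x_0$ over $A$ (a set of measure $\alpha$) furnishes some $x_0 \in A$ with $\mu_G(Hx_0 \cap A) \geq (1-2\eta)\mu_G(H)$, which is the desired conclusion.
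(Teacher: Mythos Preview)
Your approach is essentially the paper's: define the near-stabiliser $H_0=\Sym_{1-\eta}(A)$, show it has doubling strictly less than $3/2$, upgrade to a genuine subgroup, and then average to locate the coset. The size estimates, the inclusion $H_0^2\subset\Sym_{1-2\eta}(A)$, and the final double count all match the paper's argument (your averaging is a mild rephrasing of the paper's inequality $\alpha\,\|\mu_{H}\ast 1_A\|_{L^\infty}\geq\langle\mu_H,1_A\ast 1_{A^{-1}}\rangle$).

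The one genuine gap is exactly the step you flag: passing from ``$H_0$ has doubling $<3/2$'' to an actual subgroup $H$ with $H\subset\Sym_{1-2\eta}(A)$. Your proposed mechanism---wait for the chain $H_0\subset H_0^2\subset\cdots$ to stabilise while $\mu_G(H_0^n)$ is still controlled---does not close as written: the bound $H_0^n\subset\Sym_{1-n\eta}(A)$ only constrains $O(1/\eta)$ iterates, and nothing in your argument forces stabilisation within that window, nor does a size bound $\mu_G(H_0^N)\leq\alpha/(1-2\eta)$ by itself yield the pointwise inclusion $H_0^N\subset\Sym_{1-2\eta}(A)$ you later use. The paper sidesteps this by invoking an elementary but sharp lemma (cited as \cite[Exercise~2.6.5]{TCTVHV}, going back to \cite{IL}): if $K$ is a finite symmetric neighbourhood of the identity with $\mu_G(K^2)<\tfrac{3}{2}\mu_G(K)$, then $H:=K^2$ is already a subgroup. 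In other words, your chain stabilises at $N=2$, and then $H=H_0^2\subset\Sym_{1-2\eta}(A)$ holds directly from the triangle-inequality inclusion you already proved. Plugging this in, your argument becomes complete and coincides with the paper's.
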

\begin{proof}
Write $\alpha$ for the density of $A$ in $G$ and put $K:=\Sym_{1-\eta}(A)$.  Now, if $x,y \in H$ then
\begin{equation*}
\mu_G(A \cap xA) > (1-\eta)\alpha \textrm{ and } \mu_G(A \cap yA) > (1-\eta)\alpha.
\end{equation*}
It follows that $1_A \ast 1_{A^{-1}}(xy) > (1-2\eta)\alpha$ by the pigeonhole principle (or, more formally, Lemma \ref{lem.sub-mult}) and so we put $K':=\Sym_{1-2\eta}(A)$, and note that $K^2 \subset K'$.  

Now we shall estimate the size of $K$:
\begin{eqnarray*}
\int{(1_A \ast 1_{A^{-1}})^2d\mu_G} & = & \int_{G \setminus K}{(1_A \ast 1_{A^{-1}})^2d\mu_G}+\int_{K}{(1_A \ast 1_{A^{-1}})^2d\mu_G}\\ & \leq & (1-\eta)\alpha. \int_{G \setminus K}{1_A \ast 1_{A^{-1}}d\mu_G}+\alpha.\int_{K}{1_A \ast 1_{A^{-1}}d\mu_G}\\ & = &(1-\eta)\alpha^3 +\eta\alpha\int_{K}{1_A\ast 1_{A^{-1}}d\mu_G},
\end{eqnarray*}
whence
\begin{equation*}
\int_K{1_A \ast 1_{A^{-1}}d\mu_G} \geq (1-\eta^{-1}c)\alpha^2,
\end{equation*}
and it follows that $\mu_G(K) \geq (1-\eta^{-1}c)\alpha$ form the trivial upper bound on the integrand.  On the other hand
\begin{equation*}
\mu_G(K').(1-2\eta)\alpha \leq \int{1_A \ast 1_{A^{-1}}d\mu_G} \leq \alpha^2,
\end{equation*}
so that $\mu_G(K') \leq (1-2\eta)^{-1}\alpha$.  It follows that
\begin{equation*}
\mu_G(K^2) \leq (1-2\eta)^{-1}(1-\eta^{-1}c)^{-1}\mu_G(K)< 3/2\mu_G(K)
\end{equation*}
since $12c< \eta < 1/12$. It follows from \cite[Exercise 2.6.5]{TCTVHV} (which is, in turn, from \cite{IL}) that $H:=K^2$ is a subgroup of $G$, and hence that
\begin{equation*}
\alpha \|\mu_{K^2}\ast 1_A\|_{L^\infty(\mu_G)} \geq \langle \mu_{K^2},1_A \ast 1_{A^{-1}}\rangle_{L^2(\mu_G)} \geq (1-2\eta)\alpha.
\end{equation*}
We conclude that there is some $x$ such that $\mu_G(H\cap xA^{-1}) > (1-2\eta)\mu_G(H)$, and since
\begin{equation*}
\mu_G(H\cap xA^{-1}) =\mu_G(H^{-1}\cap Ax^{-1}) = \mu_G(H \cap Ax^{-1})
\end{equation*}
the result follows. 
\end{proof}
\begin{proof}[Proof of Proposition \ref{prop.small}]
Let $v_1,\dots,v_N$ be a Fourier basis for $\widetilde{1_A}=1_{A^{-1}}$.  It follows from Parseval's theorem that
\begin{eqnarray*}
\|1_A \ast 1_{A^{-1}}\|_{L^2(\mu_G)}^2 & = & \sum_{i=1}^N{\langle L_{1_A\ast\widetilde{1_A}}v_i,L_{1_A\ast\widetilde{1_A}}v_i\rangle_{L^2(\mu_G)}}\\ & = &  \sum_{i=1}^N{\langle L_{\widetilde{1_A}}^*L_{\widetilde{1_A}}v_i,L_{\widetilde{1_A}}^*L_{\widetilde{1_A}}v_i\rangle_{L^2(\mu_G)}}\\ &= & \sum_{i=1}^N{|s_i(\widetilde{1_A})|^4}.
\end{eqnarray*}
It follows from H{\"o}lder's inequality that
\begin{equation*}
\left(\sum_{i=1}^N{|s_i(\widetilde{1_A})|^2}\right)^3 \leq \left(\sum_{i=1}^N{|s_i(\widetilde{1_A})|^4}\right)\left(\sum_{i=1}^N{|s_i(\widetilde{1_A})|}\right)^2.
\end{equation*}
On the other hand by the explicit formula for the $A(G)$-norm and Lemma \ref{lem.invariance} that
\begin{equation*}
\|1_A\|_{A(G)} =\|\widetilde{1_A}\|_{A(G)}= \sum_{i=1}^N{|s_i(\widetilde{1_A})|},
\end{equation*}
and by Parseval's theorem that
\begin{equation*}
\|\widetilde{1_A}\|_{L^2(\mu_G)}^2 = \sum_{i=1}^N{\langle L_{\widetilde{1_A}})v_i,L_{\widetilde{1_A}})v_i\rangle_{L^2(\mu_G)}}=\sum_{i=1}^N{|s_i(\widetilde{1_A})|^2}.
\end{equation*}
Combining all these we see that
\begin{equation*}
\|1_A \ast 1_{A^{-1}}\|_{L^2(\mu_G)}^2 \geq \mu_G(A)^3/\|1_A\|_{A(G)}^2.
\end{equation*}
Since $\|1_A\|_{A(G)} \leq 1+1/750$ we can apply Lemma \ref{lem.fourn} with $\eta=1/20$ to get a group $H \leq G$ such that
\begin{equation*}
\mu_G(H) \geq 19\mu_G(A)/20 \textrm{ and } \mu_G(A \cap Hx) \geq 9\mu_G(H)/10
\end{equation*}
 for some $x \in G$.  By Lemma \ref{lem.invariance} we may translate $A$ without changing the hypotheses of the proposition so without loss of generality we assume that $Hx=H$. It turns out that $A=H$ as we shall now show.

Suppose that $x' \in A \setminus H$.  Then
\begin{eqnarray*}
1_{A} \ast 1_{H}(x')& =& \mu_G(A\cap x'^{-1}H)\\& \leq& \mu_G(A) - \mu_G(A \cap H)\\ &  \leq& \frac{20}{19}\mu_G(H) - \frac{9}{10}\mu_G(H)= 29\mu_G(H)/190.
\end{eqnarray*}
It follows that
\begin{equation*}
\|1_{A} - 1_{A} \ast \mu_{H}\|_{L^\infty(\mu_G)}\geq 161/190.
\end{equation*}
On the other hand $1_{A}\ast \mu_{H}(x^{-1}) \geq 9/10$, whence
\begin{equation*}
\|1_{A} - 1_{A} \ast \mu_{H}\|_{A(G)} \geq 161/190 \textrm{ and } \|1_{A} \ast \mu_{H}\|_{A(G)} \geq 9/10,
\end{equation*}
by Lemma \ref{lem.linfag}. This leads to a contradiction by Lemma \ref{lem.decompmass} and we conclude that $A \subset H$. 

In the other direction, if $x' \in H \setminus A$ then
\begin{equation*}
|(1_A-1_A \ast \mu_H)(x')|\geq \mu_H(A \cap x'^{-1}H)=\mu_H(A \cap H) \geq 9/10.
\end{equation*}
Similarly
\begin{equation*}
|1_A \ast \mu_H(x)| \geq \mu_H(A \cap x'^{-1}H)=\mu_H(A \cap H) \geq 9/10,
\end{equation*}
whence
\begin{equation*}
\|1_{A} - 1_{A} \ast \mu_{H}\|_{A(G)} \geq 9/10 \textrm{ and } \|1_{A} \ast \mu_{H}\|_{A(G)} \geq 9/10,
\end{equation*}
by Lemma \ref{lem.linfag}.  Again this leads to a contradiction by Lemma \ref{lem.decompmass} and we conclude that $A=H$ completing the result.
\end{proof}
Of course with care one can considerably improve the constant $1/750$ in the above, but even then the conclusion is not strong.  In the abelian setting this sort of problem has been considered by Saeki in \cite{SSI} and \cite{SSII} who has given a much stronger answer through the construction of cleverly chosen dual functions.  It does not seem impossible that such an approach would also work here although we have not tried it.

It should also be remarked that there is a parallel in an area of additive combinatorics called the structure theory of set addition (see \cite{GAF}).  There one finds theorems describing the structure of sets with small but slowly increasing doubling, and then much stronger theorems describing sets with doubling at most $3$, say.  See, for example, \cite{GAFEZ,YOHAP} and \cite{JMDGAF}.

\section{Approximate groups: an introduction to multiplicative pairs}\label{sec.ag}

In this section we introduce the notion of `approximate group' which we shall be using in this paper.  There are a number of candidates for such structures in the literature and for a survey the reader may wish to consult \cite{BJGS}.  Our candidate is motivated by some ideas of Bourgain from \cite{JB} and we now turn to its introduction.

One begins by observing that many sets are symmetric neighbourhoods of the identity; a group is such a set which is also closed.  This additional closure requirement can be very restrictive if, for example, $G$ is a cyclic group of prime order.  Bourgain noted that it may be relaxed to an approximate closure condition which may be summarised by saying that if you take a small ball and add it to a large ball, then most of the time you remain in the large ball.

The prototypical examples of the above idea are $\delta$-balls in $\R^d$: let $B_\delta$ be the ball (cube) centred at the orgin of side length $\delta$ in the $\ell^\infty$-norm.  It is easy to see that $0 \in B_\delta$ and $-B_\delta = B_\delta$.  Unfortunately these balls are not closed as $B_\delta +B_\delta = B_{2\delta}$ which is, in general, much larger than $B_\delta$.

This problem is solved by introducing an asymmetry in the group operation: instead of perturbing $B_\delta$ by itself, we perturb it by $B_{\delta'}$ for some $\delta'$ much smaller than $\delta$.  In this case we have
\begin{equation*}
B_\delta+B_{\delta'} \subset B_{\delta+\delta'} \textrm{ and } B_{\delta-\delta'} + B_{\delta'} \subset B_\delta
\end{equation*}
and recover a sort of approximate closure property in the sense that
\begin{equation*}
\frac{\mu(B_{\delta+\delta'}\setminus B_{\delta-\delta'})}{\mu(B_\delta)}= O(d\delta'\delta^{-1}),
\end{equation*}
where $\mu$ denotes Lebesgue measure on $\R^d$.  Fortunately this notion makes sense not just in abelian groups but also in non-abelian groups.

Suppose that $G$ is a finite group and $r \in \N$. We say that $\mathcal{B}=(B,B')$ is an \emph{$r$-multiplicative pair} with \emph{ground set $B$} and \emph{perturbation set $B'$} if
\begin{enumerate}
\item $B$ and $B'$ are symmetric neighourhoods of the identity;
\item there are symmetric neighbourhoods of the identity $B^+$ and $B^-$ such that
\begin{equation*}
B'^rB^-B'^r \subset B \textrm{ and } B'^rBB'^r \subset B^+.
\end{equation*}
\end{enumerate}
We say that $\mathcal{B}$ is \emph{$\epsilon$-closed} if
\begin{equation*}
\mu_G(B^+\setminus B^-) \leq \epsilon\mu_G(B),
\end{equation*}
and \emph{$c$-thick} if
\begin{equation*}
\mu_G(B') \geq c\mu_G(B).
\end{equation*}
Given an $\epsilon$-closed $r$-multiplicative pair $\mathcal{B}$ with ground set $B$, the sets $B^+$ and $B^-$ are not unambiguously defined.  Of course our arguments only ever use the above properties of these sets so this ambiguity does not present a problem.

The parameter $r$ essentially tells us how many times we are `allowed to' multiply elements of $B$ by elements of $B'$, and the level of closure determines the extent to which we remain in $B$ when doing this.  Ideally we should like to be able to scale up $r$ by a factor $k$ at the cost of replacing $\epsilon$ with $O(k\epsilon)$.  We cannot quite do this but in practice it is a good heuristic to keep in mind.

Typically $r$ will be $O(1)$, $\epsilon\rightarrow 0$ very slowly and $c\rightarrow 0$ as $\epsilon \rightarrow 0$ or $r \rightarrow \infty$.   It is instructive to consider a few examples.
\begin{example}[Subgroups]  Suppose that $H \leq G$.  Then $\mathcal{B}=(B,B'):=(H,H)$ is easily seen to be a $0$-closed $1$-thick $\infty$-multiplicative pair on setting $B^+:=B^-:=H$.
\end{example}
\begin{example}[Unions of cosets]  Suppose that $H \leq G$ and $A$ is a symmetric neighbourhood of the identity of size $k$ in the normaliser of $H$, so that $aH=Ha$ for all $a \in A$. Then $\mathcal{B}=(B,B'):=(AH,H)$ is easily seen to be a $0$-closed $k^{-1}$-thick $\infty$-multiplicative pair on setting $B^+:=B^-:=AH$.
\end{example}
\begin{example}[Subpairs] Suppose that $\mathcal{B}=(B,B')$ is an $\epsilon$-closed $c$-thick $r$-multiplicative pair, $B'' \subset B'$ is a symmetric neighbourhood of the identity, $k \in \N$, $\epsilon' \geq \epsilon$, $c' \leq c$ is a non-negative real and $r' \leq r$ is a natural. Then the pair $\mathcal{B}':=(B,B''^k)$ is an $\epsilon'$-closed $c'$-thick $\lfloor r'/k\rfloor$-multiplicative pair.
\end{example}
\begin{example}[Conjugate pairs]
Suppose that $\mathcal{B}=(B,B')$ is an $\epsilon$-closed $c$-thick $r$-multiplicative pair.  Then so is $\mathcal{B}^y:=(yBy^{-1},yB'y^{-1})$ for all $y \in G$.
\end{example}
\begin{example}[Product sets]  Suppose that $A \subset G$ is a symmetric neighbourhood of the identity and $\mu_G(A^3) \leq K\mu_G(A)$.  Then $\mathcal{B}=(B,B'):=(A^{2r},A)$ is an $r$-multiplicative pair on setting $B^+:=A^{4r}$ and $B^-:=\{1_G\}$: all the sets are symmetric neighbourhoods of the identity and 
\begin{equation*}
B'^rBB'^r=A^r.A^{2r}.A^r \subset A^{4r}=B^+
\end{equation*}
and
\begin{equation*}
B'^rB^{-}B'^r = A^r.\{1_G\}.A^r \subset A^{2r}=B.
\end{equation*}
Of course, here the closure parameter will be at least $1/2$ unless $B$ is a subgroup (by \cite[Exercise 2.6.5]{TCTVHV}); the thickness is $K^{-O_r(1)}$ by Lemma \ref{lem.cover}.
\end{example}
\begin{example}[Sets with polynomial growth]\label{eg.pg}  Suppose that $A \subset G$ is a symmetric neighbourhood of the identity such that
\begin{equation*}
\mu_G(A^n) \leq Cn^d\mu_G(A) \textrm{ for all } n \geq 1;
\end{equation*}
$A$ is a set of polynomial growth.  By the pigeonhole principle and the polynomial growth condition there is some $2r \leq n \leq O_{\epsilon,r,d,C}(1)$ such that
\begin{equation*}
\mu_G(A^{n+2r}) \leq (1+\epsilon)\mu_G(A^{n-2r}).
\end{equation*}
Then $\mathcal{B}=(B,B'):=(A^{n},A)$ is clearly seen to be an $r$-multiplicative pair by taking $B^+:=A^{n+2r}$ and $B^-:=A^{n-2r}$.  Moreover,
\begin{eqnarray*}
\mu_G(A^{n+2r} \setminus A^{n-2r}) &\leq &\mu_G(A^{n+2r}) - \mu_G(A^{n-2r})\\ & \leq & \epsilon \mu_G(A^{n-2r}) \leq \epsilon \mu_G(A^{n}),
\end{eqnarray*}
 whence $\mathcal{B}$ is $\epsilon$-closed, and
\begin{equation*}
\mu_{A^n}(A) = \frac{\mu_G(A)}{\mu_G(A^n)} \geq 1/Cn^d = \Omega_{\epsilon,r,d,C}(1)
\end{equation*}
so $\mathcal{B}$ is $\Omega_{\epsilon,r,d,C}(1)$-thick.
\end{example}
This last example behaves like a discrete version of a Bourgain system.  While we shall not work with Bourgain systems explicitly in this paper, we shall consider symmetry sets which are a type of Bourgain system and naturally give rise to multiplicative pairs in the same way.

When $G$ is abelian all multi-dimensional coset progressions and Bohr sets give rise to additive (multiplicative) pairs in a fairly simple and natural way, and it turns out that in that setting the converse is essentially true by the Green-Ruzsa-Fre{\u\i}man theorem from \cite{BJGIZR}. Unfortunately there is no known generalisation of this theorem to arbitrary finite groups, although many attempts have been made: see \cite{BJGEB1,BJGEB2,DFNHKIP,EH} and \cite{TCTFrei} for details.  

Although the Green-Ruzsa-Fre{\u\i}man theorem can be used as above to classify additive (multiplicative) pairs its utility comes not from this, but rather from the fact that it shows that \emph{any} set of small doubling actually correlates with a multiplicative pair.  In fact it shows the much stronger statement that any set with small doubling correlates with a multi-dimensional coset progression but, as mentioned, no such result is known in the non-abelian setting. 

Our programme now is two-fold: we shall prove a weak Fre{\u\i}man-type result which will show that any set with small doubling correlates with a multiplicative pair in general finite groups, and we shall develop some analysis relative to the rather weak structure of a multiplicative pair.  More specifically we have the following sections developing these two goals.
\begin{enumerate}
\item In \S\S\ref{sec.frei},\ref{sec.freimult}\verb!&!\ref{sec.containment} we prove our Fre{\u\i}man-type results.  The first of these sections is the basic result, the second contains a multi-scale generalisation and the third effects the passage between containment (of a multiplicative pair) and correlation (with a multiplicaive pair).
\item \S\ref{sec.locpsa}\verb!&!\ref{sec.norm} contain the basic lemmas for physical space analysis on multiplicative pairs and how to normalise them so that they behave more like normal subgroups.
\item \S\S\ref{sec.locft},\ref{sec.locspec}\verb!&!\ref{sec.anallarge} introduce the techniques for spectral analysis on multiplicative pairs and establish the basic results regarding the large spectrum.  (They all feed into \S\ref{sec.bog} where a Bogolio{\'u}boff-type result is proved, which provides a good example of the application of the ideas from these sections.)
\item \S\ref{sec.specmp} governs the spectral behaviour of the multiplicative pair itself and is arguably the last section on the `general theory of multiplicative pairs'.
\end{enumerate}

\section{Symmetry sets and a Fre{\u\i}man-type theorem}\label{sec.frei}

It is the objective of this section to show how symmetry sets give rise to multiplicative pairs. The importance of symmetry sets has been clear for a while and a good introduction in the abelian setting may be found in \S2.6 of the book \cite{TCTVHV} of Tao and Vu.  

We begin by explaining how they give rise to multiplicative pairs: it is immediate that $\Sym_\eta(A)$ is a symmetric neighbourhood of the identity contained in $AA^{-1}$, and that we have the nesting property
\begin{equation*}
\Sym_\eta(A) \subset \Sym_{\eta'}(A) \textrm{ whenever } \eta \geq \eta'.
\end{equation*}
At this point we can declare our candidate for a multiplicative pair: we shall take a certain set $A$ and define
\begin{equation*}
\mathcal{B}=(B,B'):=(\Sym_\delta(A),\Sym_{1-\eta'}(A))
\end{equation*}
and
\begin{equation*}
B^+:= \Sym_{\delta-2\eta}(A) \textrm{ and } B^-:=\Sym_{\delta + 2\eta}(A)
\end{equation*}
for some suitably chosen $\delta$, much smaller $\eta$, and still smaller $\eta'$.  We now have four things we wish to show:
\begin{enumerate}
\item that $B$ is a large part of $AA^{-1}$ for suitably chosen $\delta$ if $A$ has small doubling, which we show in Lemma \ref{lem.symsize};
\item that $\mathcal{B}$ is an $r$-multiplicative pair for suitably chosen $\eta'$ in terms of $\eta$, which we show in Lemma \ref{lem.sub-mult};
\item that $\mathcal{B}$ is $\epsilon$-closed for suitably chosen $\delta$ and $\eta$, which we show in Lemma \ref{lem.ubr};
\item and that $\mathcal{B}$ is $c$-thick for suitably chosen $\eta'$ and $A$, which will follow from Proposition \ref{prop.intit}.
\end{enumerate}
Of course we should also like the `suitable choices' to be compatible!  We then combine all this in the main result of the section: Proposition \ref{prop.frcon}.

First we show that if $A$ has large multiplicative energy then $\Sym_\delta(A)$ is large for $\delta$ sufficiently small in terms of the energy constant.
\begin{lemma}[Largeness of symmetry sets]\label{lem.symsize}
Suppose that $G$ is a finite group, $A \subset G$ has $\|1_A \ast 1_{A^{-1}}\|_{L^2(\mu_G)}^2 \geq c\mu_G(A)^3$ and $\delta \in (0,1]$.  Then
\begin{equation*}
\min\{\mu_G(AA^{-1}),\delta^{-1}\mu_G(A)\} \geq \mu_G(\Sym_{\delta}(A)) \geq (c-\delta)\mu_G(A).
\end{equation*}
\end{lemma}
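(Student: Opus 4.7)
The plan is to verify the three inequalities separately; each is a short manipulation with the function $f := 1_A \ast 1_{A^{-1}}$, and the only non-trivial ingredient is a dyadic-style splitting to extract the lower bound from the energy hypothesis.

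For the easy upper bounds: first, $\supp f = AA^{-1}$ (from the fact noted in the overview that $\supp 1_A \ast 1_B = AB$), so any $x$ with $f(x) \geq \delta\mu_G(A) > 0$ lies in $AA^{-1}$, giving $\mu_G(\Sym_\delta(A)) \leq \mu_G(AA^{-1})$. Second, $\int f \, d\mu_G = \mu_G(A)^2$ by Fubini, and $f \geq \delta\mu_G(A)$ on $\Sym_\delta(A)$ by definition, whence
\begin{equation*}
\delta\mu_G(A)\cdot \mu_G(\Sym_\delta(A)) \leq \int_{\Sym_\delta(A)} f \,d\mu_G \leq \mu_G(A)^2,
\end{equation*}
yielding the second half of the minimum.

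For the lower bound I would split the $L^2$-mass of $f$ according to the threshold. Using the trivial pointwise bound $f(x) = \mu_G(A \cap xA) \leq \mu_G(A)$ on the superlevel set, and the threshold $f < \delta\mu_G(A)$ off it, one has
\begin{equation*}
\int f^2 \, d\mu_G \leq \mu_G(A)\int_{\Sym_\delta(A)} f \, d\mu_G + \delta\mu_G(A)\int_{G \setminus \Sym_\delta(A)} f \, d\mu_G.
\end{equation*}
Bounding $\int_{\Sym_\delta(A)} f \leq \mu_G(A)\mu_G(\Sym_\delta(A))$ on the first term and $\int_{G \setminus \Sym_\delta(A)} f \leq \mu_G(A)^2$ on the second, then applying the energy hypothesis $\int f^2 \geq c\mu_G(A)^3$, gives
\begin{equation*}
c\mu_G(A)^3 \leq \mu_G(A)^2\mu_G(\Sym_\delta(A)) + \delta\mu_G(A)^3,
\end{equation*}
which rearranges to $\mu_G(\Sym_\delta(A)) \geq (c-\delta)\mu_G(A)$.

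There is no real obstacle here; the only choice to make is which of the two available bounds ($L^\infty$ or threshold) to apply on each piece, and the optimization is forced by wanting $\mu_G(\Sym_\delta(A))$ to appear in the final inequality. Note that when $c \leq \delta$ the lower bound is vacuous, which is consistent with the result being informative only in the regime $\delta \ll c$.
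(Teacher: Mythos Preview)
Your proof is correct and follows essentially the same approach as the paper. Both the upper bounds and the lower bound argument (splitting $\int f^2$ using the pointwise bound $f\leq\mu_G(A)$ on $\Sym_\delta(A)$ and the threshold bound $f<\delta\mu_G(A)$ off it) match the paper's computation, which simply records the resulting inequality $\mu_G(\Sym_\delta(A))\mu_G(A)^2 + \delta\mu_G(A)\int f \geq \int f^2$ in one line.
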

\begin{proof}
The proof is an immediate calculation. First the upper bound: since the set $\Sym_\delta(A)$ is a subset of $AA^{-1}$ the first upper bound is trivial and by its definition we have
\begin{equation*}
\mu_G(\Sym_\delta(A)).\delta \mu_G(A) \leq \int{1_A \ast 1_{A^{-1}}d\mu_G} = \mu_G(A)\mu_G(A^{-1}),
\end{equation*}
from which the second follows immediately.  Now, the lower bound: again by definition of $\Sym_\delta(A)$ we have
\begin{equation*}
\mu_G(\Sym_\delta(A))\mu_G(A)^2 + \delta\mu_G(A).\int{1_A \ast 1_{A^{-1}}d\mu_G} \geq  \int{1_A \ast 1_{A^{-1}}^2d\mu_G}.
\end{equation*}
However, the right hand side is at least $c\mu_G(A)^3$ we get the desired bound since
\begin{equation*}
\int{1_A \ast 1_{A^{-1}}d\mu_G} =\mu_G(A)\mu_G(A^{-1})=\mu_G(A)^2.
\end{equation*}
The lemma is proved.
\end{proof}
Generically $\Sym_{\delta}(A)$ may just contain the element $1_G$ for $\delta>c$: consider, for example, the situation when $A$ is a random subset of $G$ with density $c$.  In this case $1_A \ast 1_{A^{-1}}$ will almost always take the value $\mu_G(A)^2=c\mu_G(A)$. 

The next lemma establishes an iterated containment property for symmetry sets. 
\begin{lemma}[Sub-multiplicativity of symmetry sets]\label{lem.sub-mult}
Suppose that $G$ is a finite group, $A \subset G$ and $\delta,\epsilon\in (0,1]$. Then
\begin{equation*}
\Sym_{\delta}(A)\Sym_{1-\epsilon}(A) \subset \Sym_{\delta-\epsilon}(A).
\end{equation*}
\end{lemma}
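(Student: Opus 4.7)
The plan is to work entirely in physical space, unpacking the symmetry set via the identity noted in the paper: $1_A \ast 1_{A^{-1}}(w) = \mu_G(A \cap wA)$. Thus $w \in \Sym_\eta(A)$ translates to the geometric statement $\mu_G(A \cap wA) \geq \eta \mu_G(A)$, and the claim reduces to showing that if $\mu_G(A \cap xA) \geq \delta \mu_G(A)$ and $\mu_G(A \cap yA) \geq (1-\epsilon)\mu_G(A)$, then $\mu_G(A \cap xyA) \geq (\delta - \epsilon)\mu_G(A)$.

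The key observation is that $A \cap yA \subset yA$, so left-multiplication by $x$ gives $x(A \cap yA) \subset xyA$, whence
\begin{equation*}
A \cap x(A \cap yA) \subset A \cap xyA.
\end{equation*}
It therefore suffices to lower-bound the measure of the set on the left. The intuition is that replacing $A \cap yA$ by $A$ inside the $x(\cdot)$ only loses mass controlled by $\mu_G(A \setminus yA) \leq \epsilon \mu_G(A)$.

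To make this precise I would write the disjoint decomposition $A \cap xA = [A \cap x(A \cap yA)] \sqcup [A \cap x(A \setminus yA)]$ and conclude
\begin{equation*}
\mu_G(A \cap x(A \cap yA)) \geq \mu_G(A \cap xA) - \mu_G(x(A \setminus yA)) \geq \delta\mu_G(A) - \epsilon\mu_G(A),
\end{equation*}
using left-invariance of $\mu_G$ in the last step. Combined with the inclusion above, this yields the required bound.

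There is really no obstacle here; the argument is a direct set-theoretic computation of the sort already alluded to in the paper's remark that $\supp 1_A \ast 1_B = AB$ and $1_A \ast 1_B(x) = \mu_G(A \cap xB^{-1})$. The only thing to be mildly careful about is the order of multiplication — one might worry whether the factor $\Sym_{1-\epsilon}(A)$ should sit on the left or the right — but since the argument localises $A \cap xyA$ via $x(A \cap yA)$, the factor naturally on the right is the ``almost full'' one, matching the statement.
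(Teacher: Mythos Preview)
Your proof is correct and is essentially the same argument as the paper's: both reduce to bounding $\mu_G(A \cap xA \cap xyA)$ from below by $\mu_G(A \cap xA) - \mu_G(xA \setminus xyA)$, the only cosmetic difference being that the paper phrases this via the inclusion $B \cap C \supset (B \cap D) \setminus (D \setminus C)$ with $D = xA$, while you phrase it via the disjoint decomposition of $A \cap xA$.
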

\begin{proof}
Suppose that $s \in \Sym_\delta(A)$ and $t \in \Sym_{1-\epsilon}(A)$ so that
\begin{equation*}
\mu_G(A \cap sA)=1_A \ast 1_{A^{-1}}(s) \geq \delta \mu_G(A)
\end{equation*}
and
\begin{equation*}
\mu_G(A\cap tA) =1_A \ast 1_{A^{-1}}(t) \geq (1-\epsilon)\mu_G(A).
\end{equation*}
Now $B \cap C \supset (B \cap D) \setminus (D \setminus C)$ for all sets $B,C,D$, so 
\begin{equation*}
\mu_G(A\cap stA) \geq \mu_G(A \cap sA)- \mu_G(sA \setminus stA),
\end{equation*}
whence
\begin{equation*}
\mu_G(A\cap stA) \geq \mu_G(A\cap sA) - \mu_G(A \setminus tA) \geq (\delta-\epsilon)\mu_G(A).
\end{equation*}
It follows that $st \in \Sym_{\delta - \epsilon}(A)$ as required.
\end{proof}
Note the symmetry in the statement of the lemma if we write $\delta=1-\epsilon'$; then it is exactly like the first half of \cite[Lemma 2.33]{TCTVHV}.

We now go on to prove that if $A$ has large multiplicative energy then there must be two symmetry sets with close threshold of similar size -- these are the candidates for $B^+$ and $B^-$ in our multiplicative pair.
\begin{lemma}\label{lem.ubr}
Suppose that $G$ is a finite group and $A \subset G$ has $\|1_A \ast 1_{A^{-1}}\|_{L^2(\mu_G)}^2 \geq c\mu_G(A)^3$. Then there is some $c' \in (c/4,c/2]$ such that
\begin{equation*}
\left|\frac{\mu_G(\Sym_{c'(1+\eta)}(A))}{\mu_G(\Sym_{c'}(A))} -1\right| \leq C_{\mathcal{R}}|\eta| (1+\log c^{-1})
\end{equation*}
whenever $|\eta| \leq c_{\mathcal{R}}/(1+\log c^{-1})$ for some absolute constants $C_{\mathcal{R}},c_{\mathcal{R}}>0$.
\end{lemma}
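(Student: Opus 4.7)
The plan is a regularity argument on the monotone step function
\[
f(\delta) := \mu_G(\Sym_\delta(A)),
\]
in the spirit of the Bohr set regularity lemma from additive combinatorics, with the quantity $L := 1+\log c^{-1}$ playing the role of the ``dimension''. First I would use Lemma \ref{lem.symsize} to bracket $f$ at the two reference scales: $f(c/2) \geq (c/2)\mu_G(A)$ and $f(c/4) \leq 4\mu_G(A)/c$, so that $\log(f(c/4)/f(c/2)) \leq \log(8/c^2) = O(L)$.

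Next, pass to the logarithmic variable $s := -\log\delta$ and let $F(s) := \log f(e^{-s})$. Since $f$ is non-increasing in $\delta$, $F$ is non-decreasing on the interval $I := [-\log(c/2), -\log(c/4)]$, which has fixed length $\log 2$, and the total increment of $F$ across $I$ is $O(L)$. Let $\nu$ be the non-negative Borel measure on $I$ whose distribution function is $F$, so that $\nu(I) = O(L)$. Applying the one-dimensional Hardy--Littlewood weak-$(1,1)$ maximal inequality yields, for an absolute constant $C$,
\[
\bigl|\{s \in I : M\nu(s) > \lambda\}\bigr| \leq \frac{C\nu(I)}{\lambda} \leq \frac{CL}{\lambda}.
\]
Choosing $\lambda := C_{\mathcal{R}} L$ with $C_{\mathcal{R}}$ a sufficiently large absolute constant makes the exceptional set have measure at most $|I|/4$. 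After shrinking $c_{\mathcal{R}}$ if necessary to make the buffer $2c_{\mathcal{R}}/L$ smaller than $|I|/4$, one can pick $s^*$ at distance at least $2c_{\mathcal{R}}/L$ from both endpoints of $I$ satisfying $M\nu(s^*) \leq C_{\mathcal{R}} L$. Set $c' := e^{-s^*} \in (c/4, c/2]$.

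For any $\eta$ with $|\eta| \leq c_{\mathcal{R}}/L$, put $s_\eta := s^* - \log(1+\eta)$, so that $c'(1+\eta) = e^{-s_\eta}$ and $|s_\eta - s^*| \leq 2|\eta|$. Monotonicity of $F$ and the maximal bound at $s^*$ then give
\[
|F(s_\eta) - F(s^*)| \leq \nu\bigl([s^*-2|\eta|,\, s^*+2|\eta|]\bigr) \leq 4 C_{\mathcal{R}} L |\eta|.
\]
Exponentiating and using $|e^x - 1| \leq 2|x|$ for $|x|$ small delivers the claimed bound $|f(c'(1+\eta))/f(c') - 1| = O(C_{\mathcal{R}} L |\eta|)$, after harmlessly absorbing constants.

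The main obstacle is arranging for the bound to be \emph{linear} in $|\eta|$. A single-scale pigeonhole partitioning $I$ at width $c_{\mathcal{R}}/L$ only produces a \emph{constant} bound of order $c_{\mathcal{R}}$, which is adequate at the extremal value $|\eta| = c_{\mathcal{R}}/L$ but far too weak for smaller $|\eta|$. The maximal-function step (equivalently, a dyadically iterated pigeonhole over all scales $\leq c_{\mathcal{R}}/L$) is what supplies a single $s^*$ at which the local Stieltjes density of $F$ is controlled simultaneously on every scale below $c_{\mathcal{R}}/L$, and it is this simultaneous multi-scale control that upgrades the crude constant bound to the desired Lipschitz-in-$|\eta|$ bound.
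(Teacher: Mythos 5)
Your proof is correct and is essentially the paper's argument, just packaged differently: where the paper works directly with the Vitali covering lemma to locate a point of the interval at which the Stieltjes measure of $\log\mu_G(\Sym_{c/2^{1+\delta}}(A))$ has bounded local density at every scale, you reach the same point by invoking the weak-$(1,1)$ Hardy--Littlewood maximal inequality (whose proof is that covering lemma). Both are the Bourgain regularity argument, and the two renderings yield the same conclusion with the same bounds.
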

\begin{proof}
Let $f:[0,1]\rightarrow \R$ be the function defined by
\begin{equation*}
f(\delta):=\frac{1}{\log 8c^{-2}}\log \mu_G(\Sym_{c/2^{1+\delta}}(A)).
\end{equation*}
By nesting we have that $f$ is an increasing function of $\delta$, and by Lemma \ref{lem.symsize} we have that
\begin{equation*}
f(0) \geq \frac{1}{\log 8c^{-2}}\log (c\mu_G(A)/2) \textrm{ and } f(1) \leq \frac{1}{\log 8c^{-2}}\log (4c^{-1}\mu_G(A)),
\end{equation*}
so $f(1)-f(0)\leq 1$.  We claim that there is some $\delta \in [1/6,5/6]$ such that
\begin{equation*}
|f(\delta+\delta') - f(\delta)| \leq 3|\delta'| \textrm{ whenever } |\delta'| \leq 1/6.
\end{equation*}
In not, then for every $\delta \in [1/6,5/6]$ there is some interval $I_\delta$ of length at most $1/6$ having one endpoint equal to $\delta$ and
\begin{equation*}
\int_{I_\delta}{df} > \int_{I_\delta}{3dx}.
\end{equation*}
These intervals cover $[1/6,5/6]$ which has length $2/3$, whence by a covering lemma (e.g. \cite[Lemma 3.4]{BJGSVK}) lets us pass to a disjoint collection of intervals $I_{\delta_1},\dots,I_{\delta_n}$ of total length at least $1/3$.  However,
\begin{equation*}
1 \geq \int_0^1{df} \geq \sum_{i=1}^n{\int_{I_{\delta_i}}{df}} > \sum_{i=1}^n{\int_{I_{\delta_i}}{3dx}} \geq 1.
\end{equation*}
This contradiction proves the claim and there is such a $\delta \in [1/6,5/6]$.  Put $c'=c/2^{1+\delta}$ and note that
\begin{equation*}
|\log \mu_G(\Sym_{c'/2^{\delta'}(A))} - \log \mu_G(\Sym_{c'}(A))| \leq 3\delta'\log 8c^{-2}
\end{equation*}
whenever $|\delta'| \leq 1/6$.  It follows that
\begin{equation*}
(8c^{-2})^{-3\delta'}\leq \frac{\mu_G(\Sym_{c'/2^{\delta'}}(A))}{\mu_G(\Sym_{c'}(A)) } \leq (8c^{-2})^{3\delta'},
\end{equation*}
from which we get the result.
\end{proof}
The above proof is the same as the now ubiquitous Bourgain regularity argument from \cite{JB}, and we could have made do with a straightforward pigeonhole argument (see, Example \ref{eg.pg} for an idea of how) as our later results will not be able to preserve the fact that $c'$ does not depend on which particular $\eta$ we choose in the allowed range.

We now turn to the fourth objective of finding a supply of large symmetry sets with threshold close to $1$, so as to ensure that our multiplicative pair is thick.  This is provided by the central result of \cite{TSBG} which we now recall.
\begin{proposition}[{\cite[Proposition 1.3]{TSBG}}]\label{prop.intit}
Suppose that $G$ is a finite group, $A$ is a non-empty subset of $G$ with $\mu_G(A^2) \leq K\mu_G(A)$, and $\epsilon \in (0,1]$ is a parameter. Then there is a non-empty set $A'\subset A$ such that
\begin{equation*}
\mu_G(\Sym_{1-\epsilon}(A'A)) \geq \exp(-K^{O(\epsilon^{-1})})\mu_G(A).
\end{equation*}
\end{proposition}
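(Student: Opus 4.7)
The plan is to combine a non-abelian Plünnecke-Ruzsa-type estimate with a Croot-Sisask style almost-periodicity argument, carried out entirely in physical space. First, from $\mu_G(A^2) \leq K\mu_G(A)$ the non-abelian Ruzsa triangle and covering inequalities give $\mu_G(A^n) \leq K^{O(n)}\mu_G(A)$ for each fixed $n$; this controls the sizes of the various product sets we shall need, and in particular of $A'A$ for any $A' \subset A$.

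The core step is to produce a non-empty $A' \subset A$ such that $1_{A'A}$ is approximately left-translation invariant by a large set $T$, which will then sit inside $\Sym_{1-\epsilon}(A'A)$. I would do this by a non-abelian Croot-Sisask sampling lemma: choose $A' = \{a_1,\dots,a_k\}$ by taking $k$ of order $\epsilon^{-2}\log K$ independent uniform samples from $A$. Concentration of the empirical average $k^{-1}\sum_i 1_A(a_i^{-1}z)$ about its true mean $\mu_A \ast 1_A(z)$, via a Marcinkiewicz-Zygmund moment inequality (with moment logarithmic in $\mu_G(T)/\mu_G(A)$), together with a standard covering/averaging step, shows that for a positive proportion of samplings there is a set $T$ with $\mu_G(T) \geq \exp(-K^{O(\epsilon^{-1})})\mu_G(A)$ such that $\|\tau_x 1_{A'A} - 1_{A'A}\|_{L^1(\mu_G)} \leq \epsilon\mu_G(A'A)$ for every $x \in T$, where $\tau_x$ denotes left translation by $x$. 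This last condition is exactly $T \subset \Sym_{1-\epsilon/2}(A'A)$, and a trivial rescaling of $\epsilon$ then finishes.

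The principal obstacle is that the abelian route via the Fourier transform and Chang's theorem is unavailable: one cannot directly isolate large-spectrum frequencies and pass to a Bohr-type set. Every step has to be done in physical space using $L^p$ moment inequalities, and the random choice of $A'$, the Marcinkiewicz-Zygmund concentration, and the Plünnecke-type bounds on the ambient sets must be interleaved without circularity. Passing from $L^p$ concentration of the continuous empirical occupancy function to genuine set-theoretic near-equality of $A'A$ and $xA'A$ in symmetric difference is also delicate, since the occupancy can be as small as $1/k$ at many points of $A'A$; honest thresholding requires picking the Croot-Sisask parameter $p$ in a specific trade-off with $\epsilon$ and $k$, and each such trade-off loses an absolute factor. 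These nested losses are where the dependence $\exp(-K^{O(\epsilon^{-1})})$ comes from: each inverse power of $\epsilon$ forces another factor of $K^{O(1)}$ in the size of $T$, and the constants multiply rather than add.
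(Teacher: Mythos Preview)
The paper does not prove this proposition at all: it is quoted verbatim as \cite[Proposition~1.3]{TSBG} and used as a black box (``This is provided by the central result of \cite{TSBG} which we now recall''). There is therefore no in-paper proof to compare your sketch against. In fact, the author explicitly remarks (just after Proposition~\ref{prop.wkfr}) that the then-recent Croot--Sisask work \cite{ESCOS} \emph{might} furnish an alternative to \cite{TSBG} ``if their results extend to give large sets when $A$ merely has small doubling'' --- so your proposed route is one the author flags as speculative rather than established.

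On the merits of your sketch, the step you yourself call ``delicate'' is a genuine gap, not a routine trade-off. Croot--Sisask produces a large set of $L^p$-almost-periods for a \emph{convolution} such as $\mu_A \ast 1_A$ or for the empirical occupancy $k^{-1}\sum_i 1_A(a_i^{-1}\,\cdot\,)$; it does not give almost-periods for the \emph{indicator of the support} $1_{A'A}$. The empirical occupancy equals $1/k$ on what may be most of $A'A$, so an $L^p$-perturbation that is small relative to $\|g\|_p$ can flip the support on a set of measure comparable to $\mu_G(A'A)$. ``Honest thresholding'' at level $0$ is simply not controlled by $L^p$-closeness with the parameters you describe, and raising the threshold to a positive level discards exactly the part of $A'A$ you need to keep. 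Absent a concrete mechanism here, the argument does not close; the Marcinkiewicz--Zygmund estimate and the Pl\"unnecke bounds are fine, but they feed into a step that, as written, does not work.
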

Finally we are in position to prove our weak Fre{\u\i}man-type theorem.
\begin{proposition}\label{prop.frcon}
Suppose that $G$ is a finite group, $A \subset G$ is symmetric with $\mu_G(A^2) \leq K\mu_G(A)$ and $r \in \N$ and $\epsilon \in (0,1]$ are parameters. Then there is a positive real $c=\Omega_{K,r,\epsilon}(1)$ and an $\epsilon$-closed, $c$-thick $r$-multiplicative pair $\mathcal{B}$ with ground set $B$ such that
\begin{equation*}
B \subset A^4 \textrm{ and }\mu_G(B)\geq\Omega(K^{-O(1)}\mu_G(A)).
\end{equation*}
\end{proposition}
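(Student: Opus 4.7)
The plan is to carry out the four-step agenda just outlined, centred on a single ``base'' set $\tilde{A}:=A'A$, where $A'\subset A$ is supplied by Proposition \ref{prop.intit}. Note first that $\tilde{A}\supset a'A$ for any $a'\in A'$, whence $\mu_G(\tilde{A})\geq\mu_G(A)$; on the other hand $\tilde{A}^2\subset A^4$ and $\tilde{A}\tilde{A}^{-1}\subset A^4$ (using $A^{-1}=A$ and $A'\subset A$), and the standard Pl\"unnecke--Ruzsa-type estimate for symmetric sets of doubling at most $K$ bounds both these measures by $K^{O(1)}\mu_G(A)$. A short Cauchy--Schwarz on the support of $1_{\tilde A}\ast 1_{\tilde A^{-1}}$ therefore yields
\begin{equation*}
\|1_{\tilde A}\ast 1_{\tilde A^{-1}}\|_{L^2(\mu_G)}^2 \geq \frac{\mu_G(\tilde A)^4}{\mu_G(\tilde A\tilde A^{-1})} \geq K^{-O(1)}\mu_G(\tilde A)^3,
\end{equation*}
so Lemma \ref{lem.ubr} applied to $\tilde A$ with energy constant $c_0=K^{-O(1)}$ furnishes a threshold $c'\in(c_0/4,c_0/2]$ at which the quantity $\mu_G(\Sym_{c'(1+\eta)}(\tilde A))$ is multiplicatively stable under perturbations $|\eta|=O(1/\log K)$.

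I would then fix a small parameter $\eta'$ (to be chosen momentarily), apply Proposition \ref{prop.intit} to $A$ with closure $\eta'$ to produce the witnessing $A'$, and set
\begin{equation*}
B:=\Sym_{c'}(\tilde A),\qquad B':=\Sym_{1-\eta'}(\tilde A),\qquad B^+:=\Sym_{c'-2r\eta'}(\tilde A),\qquad B^-:=\Sym_{c'+2r\eta'}(\tilde A).
\end{equation*}
The two containments $B'^rB^-B'^r\subset B$ and $B'^rBB'^r\subset B^+$ both follow by iterating Lemma \ref{lem.sub-mult} $r$ times on each side (taking inverses to convert right multiplications to left multiplications, since every set in play is symmetric). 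The $\epsilon$-closure is checked by applying Lemma \ref{lem.ubr} with $\eta=\pm 2r\eta'/c'$: this controls $\mu_G(B^+\setminus B^-)$ by roughly $r\eta'(c')^{-1}(\log K)\cdot\mu_G(B)$, and the choice $\eta'=\Theta(\epsilon c'/(r\log K))$ makes this at most $\epsilon\mu_G(B)$. Thickness then comes directly from Proposition \ref{prop.intit}: $\mu_G(B')\geq\exp(-K^{O(1/\eta')})\mu_G(A)$, while Lemma \ref{lem.symsize} gives $\mu_G(B)\leq (c')^{-1}\mu_G(\tilde A)\leq K^{O(1)}\mu_G(A)$, so $c=\mu_G(B')/\mu_G(B)=\Omega_{K,r,\epsilon}(1)$ (the dependence is tower in $K,r,\epsilon^{-1}$, but this is compatible with the statement). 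The required lower bound $\mu_G(B)\geq\Omega(K^{-O(1)}\mu_G(A))$ follows from the first half of Lemma \ref{lem.symsize}, since $c'\leq c_0/2$ forces $\mu_G(B)\geq (c_0/2)\mu_G(\tilde A)\geq K^{-O(1)}\mu_G(A)$, and $B\subset\tilde A\tilde A^{-1}\subset A^4$ is immediate.

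The principal piece of bookkeeping is that $\eta'$ must be chosen \emph{before} invoking Proposition \ref{prop.intit}, since $A'$, and hence $\tilde A$, depends on it, whereas the threshold $c'$ is chosen \emph{after} $\tilde A$ is in hand and $c'$ itself enters the constraint on $\eta'$. Fixing $\eta'$ first (in terms of $\epsilon,r,K$ only), then $A'$, and then $c'$ eliminates the circularity; with that ordering the inequalities above are consistent and the remainder of the argument is mechanical.
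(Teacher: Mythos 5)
Your construction is essentially the one the paper uses (the base set $A'A$, the choice of $B$, $B'$, $B^\pm$ as symmetry sets of $A'A$ at nearby thresholds, Lemma \ref{lem.ubr} for $\epsilon$-closure, Proposition \ref{prop.intit} for thickness, and the ``fix $\eta'$ first using the a priori lower bound $c'>c_0/4$'' resolution of the circularity), but there is one genuine gap at the very first step: the energy lower bound for $\tilde A=A'A$. You run Cauchy--Schwarz over the support of $1_{\tilde A}\ast 1_{\tilde A^{-1}}$ and then bound $\mu_G(\tilde A\tilde A^{-1})\leq\mu_G(A^4)\leq K^{O(1)}\mu_G(A)$ by appeal to ``the standard Pl\"unnecke--Ruzsa-type estimate for symmetric sets of doubling at most $K$''. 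No such estimate exists in non-abelian groups: small doubling of a symmetric set does not control $\mu_G(A^3)$, let alone $\mu_G(A^4)$ (take $A=H\cup\{x,x^{-1}\}$ for a subgroup $H$ and generic $x$; then $\mu_G(A^2)=O(\mu_G(A))$ but $A^3\supset HxH$ can be enormous). This is precisely why the paper's Lemma \ref{lem.cover} assumes small \emph{tripling} as its hypothesis, and why Corollary \ref{cor.doub2trip} exists at all.

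The gap is localized and fixable by the paper's own device: instead of bounding the support of $1_{\tilde A}\ast 1_{\tilde A^{-1}}$, use monotonicity of energy under pointwise domination. For any $a\in A'$ one has $1_{A'A}\geq 1_{aA}$ and $1_{(A'A)^{-1}}\geq 1_{(aA)^{-1}}$ pointwise, whence
\begin{equation*}
\|1_{A'A}\ast 1_{(A'A)^{-1}}\|_{L^2(\mu_G)}^2\;\geq\;\|1_{aA}\ast 1_{(aA)^{-1}}\|_{L^2(\mu_G)}^2\;=\;\|1_{A}\ast 1_{A^{-1}}\|_{L^2(\mu_G)}^2\;\geq\;\frac{\mu_G(A)^4}{\mu_G(A^2)}\;\geq\;\frac{\mu_G(A'A)^3}{K^4},
\end{equation*}
where now the Cauchy--Schwarz is applied to the set $A$ itself, whose product set $AA^{-1}=A^2$ \emph{is} controlled by the hypothesis, and the last step uses $\mu_G(A'A)\leq\mu_G(A^2)\leq K\mu_G(A)$. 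With $c_0:=K^{-4}$ in place of your $K^{-O(1)}$ the rest of your argument goes through unchanged.
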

\begin{proof}
Recall that $C_{\mathcal{R}}$ and $c_{\mathcal{R}}$ are the constants from Lemma \ref{lem.ubr} and put
\begin{equation*}
\eta= \min \{1,C_{\mathcal{R}},c_{\mathcal{R}}\}\epsilon/(1+\log 4K^4) = \Omega(\epsilon/(1+\log K)).
\end{equation*}
Apply Proposition \ref{prop.intit} with parameter $\eta/16rK^4$ to get a non-empty set $A' \subset A$ such that
\begin{equation*}
\mu_G(\Sym_{1-\eta/16rK^4}(A'A)) \geq \Omega_{K,r,\epsilon}(\mu_G(A)).
\end{equation*}
We set $B':=\Sym_{1-\eta/16rK^4}(A'A)$ and by Lemma \ref{lem.sub-mult} we have that
\begin{equation*}
B'^r\subset \Sym_{1-\eta/16K^4}(A'A).
\end{equation*}
It is easy to check that $A'A$ has large energy.  In particular let $a \in A'$ (such exists since $A'$ is non-empty) we have
\begin{equation*}
\|1_{A'A} \ast 1_{(A'A)^{-1}}\|_{L^2(\mu_G)}^2 \geq \|1_{aA} \ast 1_{(aA)^{-1}}\|_{L^2(\mu_G)}^2 =\|1_{A} \ast 1_{A^{-1}}\|_{L^2(\mu_G)}^2,
\end{equation*}
by change of variables $x =aza^{-1}$ in the second integral.  On the other hand
\begin{equation*}
\|1_{A} \ast 1_{A^{-1}}\|_{L^2(\mu_G)}^2\geq \frac{1}{\mu_G(AA^{-1})}\left(\int{1_A \ast 1_{A^{-1}}d\mu_G}\right)^2\geq \mu_G(A'A)^3/K^4
\end{equation*}
since $A$ is symmetric, $\mu_G(A^2) \leq K\mu_G(A)$ and $A'\subset A$.  We conclude that
\begin{equation*}
\|1_{A'A} \ast 1_{(A'A)^{-1}}\|_{L^2(\mu_G)}^2 \geq \mu_G(A'A)^3/K^4.
\end{equation*}
Finally we apply Lemma \ref{lem.ubr} to get $c' \in (1/4K^4,1/2K^4]$ such that
\begin{equation*}
 \mu_G(\Sym_{c'(1+\eta')}(A'A)) \leq (1+C_{\mathcal{R}}\eta' (1+\log 4K^4)) \mu_G(\Sym_{c'}(A'A))
\end{equation*}
for all $\eta' \leq c_{\mathcal{R}}/(1+\log 4K^4)$.  In particular, given our choice of $\eta$ 
\begin{equation}\label{eqn.epsc}
 \mu_G(\Sym_{c'(1+\eta)}(A'A)) \leq (1+\epsilon)\mu_G(\Sym_{c'}(A'A)).
\end{equation}
We set
\begin{equation*}
B^+:=\Sym_{c'}(A'A), B^-:=\Sym_{c'(1+\eta)}(A'A) \textrm{ and }B:=\Sym_{c'(1+\eta/2)}(A'A),
\end{equation*}
and now verify that $\mathcal{B}$ has the desired properties.  By nesting and the lower bound on $c'$ we have that
\begin{equation*}
B'^r\subset \Sym_{1-\eta/16K^4}(A'A)\subset \Sym_{1-c'\eta/4}(A'A),
\end{equation*}
and so it follows from Lemma \ref{lem.sub-mult} that $\mathcal{B}$ is $r$-wide.  By (\ref{eqn.epsc}) we see that $\mathcal{B}$ is $\epsilon$-closed.  By Lemma \ref{lem.symsize} we see that
\begin{equation*}
(c'(1+\eta/2))^{-1}\mu_G(A'A) \geq \mu_G(B) \geq (1/K^4 - c'(1+\eta/2))\mu_G(A'A),
\end{equation*}
whence
\begin{equation*}
\mu_G(B) = O_K(\mu_G(A)) \textrm{ and } \mu_G(B) = \Omega(K^{-4}\mu_G(A)).
\end{equation*}
The first of these coupled with the lower bound on the size of $B'$ shows that $\mathcal{B}$ is $\Omega_{K,r,\epsilon}(1)$-thick.  The second of these establishes the lower bound on the size of $B$ and, finally,
\begin{equation*}
B=\Sym_{c'(1+\eta/2)}(A'A) \subset A'A(A'A)^{-1} \leq A^4
\end{equation*}
since $A' \subset A$ and $A$ is symmetric.  The result has been proved.
\end{proof}
It may be worth recalling Ruzsa's proof of Fre{\u\i}man's theorem \cite{IZRF} at this  point, where he shows that the four-fold sumset of a set with small doubling contains a large multi-dimensional arithmetic progression.  It is a short covering argument to pass from this to Fre{\u\i}man's theorem.  We shall not take this second step, instead proceeding along the lines of \S\ref{sec.containment} to show that $A$ correlates with this multiplicative pair.

It is also worth recording the bounds in this theorem, which follow directly from the application of Proposition \ref{prop.intit}: we may take $c^{-1}$ to be doubly exponential in $O(r\epsilon^{-1}K^{O(1)})$.

\section{A Fre{\u\i}man-type theorem with multiple scale multiplicative pairs}\label{sec.freimult}

As it stands Proposition \ref{prop.frcon} outputs one $\epsilon$-closed $r$-multiplicative pair $\mathcal{B}$ with ground set $B$ and perturbation set $B'$.  However, sometimes it is useful to have another perturbation set $B''$ such that $\mathcal{B}':=(B,B'')$ is an $\epsilon'$-closed $r'$-multiplicative pair where $\epsilon'$ and $r'$ may depend on the thickness of $\mathcal{B}$.  

In fact we shall need a whole system of pairs which behaves somewhat like a restricted range Bourgain system.  Such a result does not follow from repeated applications of Proposition \ref{prop.frcon}, instead we have to use a large pigeonhole argument coupled with the ingredients that went into Proposition \ref{prop.frcon}. 
\begin{proposition}\label{prop.regapp}
Suppose that $G$ is a finite group, $A$ is symmetric and $\mu_G(A^2) \leq K\mu_G(A)$, $r:(0,1]\rightarrow \N$ is a monotone decreasing function, $\epsilon:(0,1]\rightarrow (0,1]$ is a monotone increasing function and $J \in \N$ is a parameter.  Then there are positive reals $(c_i)_{i=1}^J$ with $c_j=\Omega_{K,r,\epsilon,j}(1)$ and a nested sequence of sets $(B_i)_{i=0}^J$ such that $\mathcal{B}_{i,j}=(B_i,B_j)$ is an $\epsilon(c_{j-1})$-closed, $c_j$-thick $r(c_{j-1})$-multiplicative pair whenever $i<j$ and
\begin{equation*}
B_0 \subset A^4\textrm{ and } \mu_G(B_0)=\Omega_K(\mu_G(A)).
\end{equation*}
\end{proposition}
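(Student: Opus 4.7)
My plan is to adapt the proof of Proposition \ref{prop.frcon} by iterating the Bourgain-style regularity argument to produce a nested family of symmetry sets at $J$ compatible scales, with the multiplicative pair parameters chained consistently.

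First I would apply Proposition \ref{prop.intit} with a parameter $\delta^*=\delta^*(K,J,r,\epsilon)$ chosen sufficiently small (to be pinned down below) to obtain a non-empty $A' \subset A$ with $\mu_G(\Sym_{1-\delta^*}(A'A)) = \Omega_{K,J,r,\epsilon}(\mu_G(A))$. Since $A'$ is non-empty and $A$ is symmetric with small doubling, the set $A'A$ satisfies $\|1_{A'A} \ast 1_{(A'A)^{-1}}\|_{L^2(\mu_G)}^2 \geq \mu_G(A'A)^3/K^4$ exactly as in Proposition \ref{prop.frcon}, so Lemma \ref{lem.ubr} supplies a threshold $c' \in (1/4K^4, 1/2K^4]$ enjoying Bourgain regularity. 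I would set $B_0 := \Sym_{c'(1+\eta_0/2)}(A'A)$ for a small $\eta_0$ (to be fixed after the $c_j$ are known); by Lemma \ref{lem.symsize} this satisfies $B_0 \subset A^4$ and $\mu_G(B_0) = \Omega_K(\mu_G(A))$.

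Setting $c_0 := 1$ by convention, I would construct the perturbation thresholds $\delta_1 > \delta_2 > \cdots > \delta_J \geq \delta^*$ inductively. Given $c_1, \ldots, c_{j-1}$ and $\delta_1, \ldots, \delta_{j-1}$, I would apply the pigeonhole argument underlying Lemma \ref{lem.ubr} to the increasing function $F(t) := \log \mu_G(\Sym_{1-t}(A'A))$ on an appropriate dyadic sub-interval of $(\delta^*, \delta_{j-1})$ to find a regular scale $\delta_j$ at which $F$ varies by at most $\log(1 + \epsilon(c_{j-1}))$ in a window wide enough to contain all subsequent perturbation scales $2 r(c_{k-1}) \delta_k$ for $k > j$. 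Defining $B_j := \Sym_{1-\delta_j}(A'A)$ and $c_j := \mu_G(B_j)/\mu_G(B_0)$, the thickness bound $c_j = \Omega_{K,r,\epsilon,j}(1)$ follows because $\mu_G(B_j) \geq \mu_G(\Sym_{1-\delta^*}(A'A))$ is bounded below by Proposition \ref{prop.intit}. Verification of each pair $(B_i, B_j)$ with $i < j$ then proceeds as in Proposition \ref{prop.frcon}: Lemma \ref{lem.sub-mult} yields $B_j^{r(c_{j-1})} \subset \Sym_{1-r(c_{j-1})\delta_j}(A'A)$, whence $B_j^{r(c_{j-1})} B_i B_j^{r(c_{j-1})} \subset B^+_{ij}$ and $B_j^{r(c_{j-1})} B^-_{ij} B_j^{r(c_{j-1})} \subset B_i$ for natural nearby symmetry-set candidates $B^\pm_{ij}$, and the $\epsilon(c_{j-1})$-closure $\mu_G(B^+_{ij} \setminus B^-_{ij}) \leq \epsilon(c_{j-1}) \mu_G(B_i)$ is precisely the regularity arranged above (via Lemma \ref{lem.ubr} at threshold $c'$ when $i=0$, and via the inductive choice at threshold $1-\delta_i$ when $i \geq 1$).

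The main obstacle is calibrating $\delta^*$ and the widths of the dyadic sub-intervals so that all regularity windows and closure constraints fit together consistently. The total variation of $F$ over $[\delta^*, 1]$ is of order $K^{O(1/\delta^*)}$ (combining the lower bound from Proposition \ref{prop.intit} with the trivial upper bound $\mu_G(AA^{-1}) \leq K^2 \mu_G(A)$), so each regular window at $\delta_j$ has relative width depending exponentially on $1/\delta^*$, while the window must accommodate scales $r(c_{k-1}) \delta_k$ that are themselves determined by later $c_k$. Choosing $\delta^*$ and the sub-interval widths as iterated towers in $K$, $J$, $r$ and $\epsilon$ resolves the mutual dependency; since the thicknesses $c_j$ are permitted to depend on these parameters, this bookkeeping is tedious but causes no obstruction in the statement.
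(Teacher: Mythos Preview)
Your approach is genuinely different from the paper's, and it has a real gap that the hand-wave at the end does not repair.

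The problem is the single call to Proposition~\ref{prop.intit}. That call with parameter $\delta^*$ only certifies $\mu_G(\Sym_{1-\delta^*}(A'A)) \geq \exp(-K^{O(1/\delta^*)})\mu_G(A)$, so the only a priori bound on the total variation of $F$ over $[\delta^*,1/2]$ is $V \leq K^{O(1/\delta^*)}$, and the only a priori lower bound on each thickness is $c_j \geq c^* := \exp(-K^{O(1/\delta^*)})$. Now at a Bourgain-regular point $\delta_i$ the multiplicative Lipschitz constant of $F$ is of order $V$, so the closure of $(B_i,B_j)$ is $\approx V \cdot r(c_{j-1})\delta_j/\delta_i$. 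Forcing this below $\epsilon(c_{j-1})$ requires $\delta_j/\delta_i \lesssim \epsilon(c^*)/(r(c^*)V)$, and hence after $J$ steps one needs
\[
\log(1/\delta^*) \;\gtrsim\; (J-1)\,\log\!\bigl(r(c^*)V/\epsilon(c^*)\bigr).
\]
For, say, $\epsilon(x)=x$ the right-hand side is of order $K^{O(1/\delta^*)}$ while the left is $\log(1/\delta^*)$; no choice of $\delta^*$ satisfies this, tower-type or otherwise. The circularity you flag is not merely bookkeeping---it is a genuine mismatch between the number of available dyadic scales ($\sim\log(1/\delta^*)$) and the number the argument consumes ($\sim V$, which can be exponential in $1/\delta^*$).

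The paper avoids this by \emph{not} taking all the $B_j$ to be symmetry sets of a single $A'A$. Instead it applies Proposition~\ref{prop.intit} iteratively, producing a tower $D_0 \supset D_1 \supset \cdots \supset D_J$ in which each $D_{i+1}$ is a high-threshold symmetry set of $D_i'D_i$ for some $D_i' \subset D_i$; Lemma~\ref{lem.sub-mult} then gives $D_{i+1}^{12(k_{i+1}+1)} \subset D_i^4$. The sets $B_i$ are built \emph{backwards} as nested products $B_{i}^{l_i}\cdots D_{j-1}^4 \cdots B_{i}^{l_i}$, with the exponents $l_i$ chosen by a pigeonhole whose range is governed by the local doubling constant $K_{j-1} = \mu_G(D_{j-1}^{12})/\mu_G(D_{j-1})$---a quantity bounded at each stage without any reference to a global $\delta^*$. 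This decoupling is precisely what your single-scale scheme lacks.
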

\begin{proof}
We begin by defining auxiliary sequences of non-empty sets $(D_i)_{i=0}^J$, reals $(c_i)_{i=0}^J$ and $(K_i)_{i=0}^J$, and naturals $(k_i)_{i=1}^J$. The reals are defined directly in terms of these sets by
\begin{equation*}
c_{i}:=\mu_G(D_{i}^4)/\mu_G(D_{0}^{12}) \textrm{ and } K_i:=\mu_G(D_i^{12})/\mu_G(D_i),
\end{equation*}
which then lets us define the naturals by
\begin{equation*}
k_{i+1}:=\lceil(1+ \log K_i)/\epsilon(c_i)\rceil (2r(c_{i})+1).
\end{equation*}
To begin the definition of the sets (which will be inductive) apply Proposition \ref{prop.intit} to the set $A$ to get a non-empty set $A' \subset A$ such that
\begin{equation*}
\mu_G(\Sym_{1-1/13}(A'A))= \Omega_{K}(\mu_G(A)).
\end{equation*}
Set $D_0:=\Sym_{1-1/13}(A'A)$ and note that since $A' \subset A$ and $A$ is symmetric we have $D_0 \subset A^4$, and by Lemma \ref{lem.symsize} and Lemma \ref{lem.sub-mult}
\begin{equation*}
\mu_G(D_0^{12}) \leq 13\mu_G(A'.A) = O(K\mu_G(A)).
\end{equation*}
It follows from the lower bound on $\mu_G(D_0)$ that $K_0 = O_K(1)$ and $c_0=\Omega_K(1)$.  We shall arrange the sets so that they have the following properties:
\begin{enumerate}
\item \label{pty.81} $D_i$ is a symmetric neighbourhood of the identity for all $0 \leq i \leq J$;
\item \label{pty.86} $K_{i} = O_{K,r,\epsilon,i}(1)$ for all $0 \leq i \leq J$;
\item \label{pty.87} $c_i = O_{K,r,\epsilon,i}(1)$ for all $1 \leq i \leq J$;
\item \label{pty.82} $D_{i+1}^{12(k_{i+1}+1)} \subset D_i^4$ for all $0 \leq i \leq J-1$.
\end{enumerate}
It is immediate that $D_0$ satisfies the above ((\ref{pty.81}), (\ref{pty.86}) and (\ref{pty.87}) by design and (\ref{pty.82}) vacuously). Suppose that we have defined $D_i$ satisfying the above.  $D_i$ is a symmetric neighbourhood so $D_i^2 \subset D_i^{12}$, whence $\mu_G(D_i^2) \leq K_i\mu_G(D_i)$ and we may apply Proposition \ref{prop.intit} to get a non-empty set $D_i' \subset D_i$ such that
\begin{equation*}
\mu_G(\Sym_{1-1/12(k_{i+1}+1)}(D_i'D_i)) =\Omega_{K_i,k_{i+1}}(\mu_G(D_i)).
\end{equation*}
Put $D_{i+1}:=\Sym_{1-1/12(k_{i+1}+1)}(D_i'D_i)$ and note that we have property (\ref{pty.81}).  Property (\ref{pty.82}) follows from Lemma \ref{lem.sub-mult}, and that lemma and the lower bound on $\mu_G(D_{i+1})$ tell us that
\begin{equation*}
K_{i+1}=O_{K_i,k_{i+1}}(1)=O_{K,r,\epsilon,i+1}(1).
\end{equation*}
Finally
\begin{eqnarray*}
c_{i+1} = \frac{\mu_G(D_{i+1}^4)}{\mu_G(D_0^{12})} \geq \frac{\mu_G(D_{i+1})}{\mu_G(D_0^{12})}& \geq & \Omega_{K_i,k_{i+1}}\left(\frac{\mu_G(D_{i})}{\mu_G(D_0^{12})}\right)\\&=&\Omega_{K_i,k_{i+1}}\left(\frac{\mu_G(D_{i}^4)}{\mu_G(D_0^{12})}\right)=\Omega_{K_i,k_{i+1}}(c_{i})
\end{eqnarray*}
by the lower bound for $\mu_G(D_{i+1})$ and the fact that
\begin{equation*}
\mu_G(D_i) \geq \mu_G(D_i^{12})/K_i \geq \mu_G(D_i^4)/K_i.
\end{equation*}
The construction is complete.

We shall now define the sets $B_i$, $B_{i,i'}^+$ and $B_{i,i'}^-$ backwards in terms of the $D_i$s: at stage $j \leq J$ we shall have defined $B_i$ for all $j \leq i \leq J$, and $B_{i,i'}^+$ and $B_{i,i'}^-$ for all  $j \leq i < i' \leq J$ such that
\begin{enumerate}
\item \label{pty.1} $B_i$ is a symmetric neighbourhood of the identity;
\item \label{pty.2} $D_i^4 \subset B_i \subset D_i^{12}$ for all $j \leq i \leq J$;
\item \label{pty.5} $\mathcal{B}_{i,i'}:=(B_i,B_{i'})$ is an $\epsilon(c_{i'-1})$-closed, $c_{i'}$-thick $r(c_{i'-1})$-multiplicative pair for all $j \leq i < i' \leq J$.
\end{enumerate}
Notice that setting $B_J:=D_J^4$ certainly satisfies the requirements for $j=J$.  Properties (\ref{pty.1}) and (\ref{pty.2}) are trivially true, and (\ref{pty.5}) vacuously true.

Now, suppose that we are at stage $1 \leq j \leq J$ of the iteration. The following claim is pivotal.
\begin{claim}
Suppose that $j \leq j_0\leq j_1 \leq J$ and $l_{j_0},\dots,l_{j_1}$ are integers such that $0 \leq l_i \leq k_i$ for all $j_0 \leq i \leq j_1 -1$ and $0 \leq l_{j_1} \leq k_{j_1}+1$. Then
\begin{equation*}
B_{j_0}^{l_{j_0}} \dots B_{j_1}^{l_{j_1}} \subset D_{j_0}^{12(k_{j_0}+1)}.
\end{equation*}
\end{claim}
\begin{proof} 
We prove this by downward induction on $j_0$; if $j_0=j_1$ then the result is trivial since $B_j$ is a neighbourhood of the identity and $B_j \subset D_j^{12}$.  Now, suppose that we have proved the claim for some $j \leq j_0 \leq j_1$.  If $(l_i)_{i=j_0-1}^{j_1}$ satisfy the hypotheses of the claim, then by the inductive hypothesis
\begin{equation*}
B_{j_0}^{l_{j_0}} \dots B_{j_1}^{l_{j_1}} \subset D_{j_0}^{12(k_{j_0}+1)}.
\end{equation*}
But then
\begin{equation*}
B_{j_0-1}^{l_{j_0-1}} \dots B_{j_1}^{l_{j_1}} \subset D_{j_0-1}^{12l_{j_0-1}}D_{j_0}^{12(k_{j_0}+1)}.
\end{equation*}
However, $D_{j_0}^{12(k_{j_0}+1)} \subset D_{j_0-1}^4$ by construction of the $D_i$s and we are done since $12l_{j_0-1} + 4 \leq 12(k_{j_0-1}+1)$.  The claim is proved.
\end{proof}
It follows from the claim that if $j \leq i \leq J$ and $(l_{j'})_{j'=j}^i$ is a sequence of integers such that $0 \leq l_{j'} \leq k_{j'}$ for all $j \leq j' \leq i-1$ and $l_{i} \leq k_{i}+1$, then
\begin{equation}\label{eqn.ort}
B_{i}^{l_{i}}\dots B_{j}^{l_{j}} D_{j-1}^4B_{j}^{l_{j}} \dots B_{i}^{l_{i}}  \subset D_{j}^{12(k_{j}+1)}.D_{j-1}^4. D_{j}^{12(k_{j}+1)} \subset D_{j-1}^{12},
\end{equation}
by construction of the $D_i$s. 

We shall now define a sequence of naturals $(l_i)_{i=j}^J$ with
\begin{equation*}
r(c_{i-1}) \leq l_i \leq k_i - (r(c_{i-1})+1) \textrm{ for all }j \leq i \leq J
\end{equation*}
and sequences of sets $(B_{j-1,i}^+)_{i=j}^J$, $(B_{j-1,i})_{i=j}^J$ and $(B_{j-1,i}^-)_{i=j}^J$.  We define the sets in terms of the naturals:
\begin{equation*}
B_{j-1,i}^+:=B_{i+1}^{l_{i+1}+r(c_{i})+1}B_{i}^{l_{i}}\dots B_{j}^{l_{j}} D_{j-1}^4B_{j}^{l_{j}} \dots B_{i}^{l_{i}}B_{i+1}^{l_{i+1}+r(c_{i})+1},
\end{equation*}
\begin{equation*}
B_{j-1,i}:=B_{i}^{l_{i}}\dots B_{j}^{l_{j}} D_{j-1}^4B_{j}^{l_{j}} \dots B_{i}^{l_{i}},
\end{equation*}
and
\begin{equation*}
B_{j-1,i}^-:=B_{i+1}^{l_{i+1}-r(c_{i})}B_{i}^{l_{i}}\dots B_{j}^{l_{j}} D_{j-1}^4B_{j}^{l_{j}} \dots B_{i}^{l_{i}}B_{i+1}^{l_{i+1}-r(c_{i})}.
\end{equation*}
Suppose that we have picked $(l_{j'})_{j'=j}^i$.  In view of (\ref{eqn.ort}) and the definition of $B_{j-1,i}$ we have that
\begin{equation*}
B_{i+1}^{k_{i+1}+1}B_{j-1,i}B_{i+1}^{k_{i+1}+1} \subset D_{j-1}^{12},
\end{equation*}
whence
\begin{equation*}
\mu_G(B_{i+1}^{k_{i+1}+1}B_{j-1,i}B_{i+1}^{k_{i+1}+1} ) \leq \frac{\mu_G(D_{j-1}^{12})}{\mu_G(D_{j-1}^4)}\mu_G(B_{j-1,i}).
\end{equation*}
By construction of the $D_i$s we have that
\begin{equation*}
\mu_G(B_{i+1}^{k_{i+1}+1}B_{j-1,i}B_{i+1}^{k_{i+1}+1} ) \leq K_{j-1}\mu_G(B_{j-1,i}).
\end{equation*}
Thus we may apply the pigeonhole principle to pick $l_{i+1}$ with
\begin{equation*}
r(c_{i}) \leq l_{i+1} \leq k_{i+1} - (r(c_{i})+1)
\end{equation*}
such that
\begin{equation*}
\mu_G(B_{i+1}^{l_{i+1}+r(c_i)+1}B_{j-1,i}B_{i+1}^{l_{i+1}+r(c_i)+1} ) 
\end{equation*}
is at most
\begin{equation*}
 K_{j-1}^{-(2r(c_i)+1)/k_{i+1}}\mu_G(B_{i+1}^{l_{i+1}-r(c_i)}B_{j-1,i}B_{i+1}^{l_{i+1}-r(c_i)} ),
\end{equation*}
which when decoded tells us that
\begin{equation}\label{eqn.doff}
\mu_G(B_{j-1,i+1}^+) \leq (1+\epsilon(c_i))\mu_G(B_{j-1,i+1}^-).
\end{equation}
Having completed the above construction put $B_{j-1}:=B_{j-1,J}$, and note that by design $B_{j-1}$ is a symmetric neighbourhood of the identity containing $D_{j-1}^4$. Furthermore, by (\ref{eqn.ort}) we have
\begin{equation*}
 B_{j-1} \subset D_{j-1}^{12},
\end{equation*}
and to complete the induction it remains to check that $\mathcal{B}_{j-1,i}$ has the desired properties for all $j \leq i \leq J$.
\begin{enumerate}
\item (Symmetry) The sets $B_{j-1}$ and $B_i$ are symmetric neighbourhoods of the identity.
\item (Multiplicative pair) The sets $B_{j-1,i}^+$ and $B_{j-1,i}^-$ are symmetric neighbourhoods of the identity.  To check the necessary inclusions we note that in one direction it is immediate:
\begin{equation*}
 B_{i}^{r(\delta'_{i-1})}B_{j-1,i}^-B_{i}^{r(\delta'_{i-1})} =B_{j-1,i} \subset B_{j-1}.
\end{equation*}
In the other we have
\begin{equation*}
B_{i+1}^{l_{i+1}} \dots B_{J}^{l_{J}} \subset D_{i+1}^{12(k_{i+1}+1)} \subset D_{i}^4 \subset B_i,
\end{equation*}
and by symmetry
\begin{equation*}
B_{J}^{l_{J}}\dots B_{i+1}^{l_{i+1}} \subset D_{i+1}^{12(k_{i+1}+1)} \subset D_{i}^4 \subset B_i.
\end{equation*}
from the claim. Thus
\begin{equation*}
B_{j-1} \subset B_i^{l_i+1}B_{i-1}^{l_{i-1}} \dots B_j^{l_j}D_{j-1}^4 B_j^{l_j}\dots B_{i-1}^{l_{i-1}} B_i^{l_i+1},
\end{equation*}
and so it follows that
\begin{equation*}
B_{i}^{r(c_{i-1})}B_{j-1}B_{i}^{r(c_{i-1})} 
\end{equation*}
is contained in
\begin{equation*}
B_{i}^{l_{i}+r(c_{i-1})+1}B_{i-1}^{l_{i-1}}\dots B_{j}^{l_j}D_{j-1}^4B_{j}^{l_j}\dots B_{i-1}^{l_{i-1}} B_{i}^{l_{i}+r(c_{i-1})+1},
\end{equation*}
which is $B_{j-1,i}^+$ as required.  It follows that $\mathcal{B}_{j,i}$ is $r(c_{i-1})$-wide.
\item (Closure) The closure parameter of $\mathcal{B}_{j-1,i}$ is $\epsilon(c_{i-1})$ by (\ref{eqn.doff}).
\item (Thickness) The thickness of $\mathcal{B}_{j-1,i}$ is
\begin{eqnarray*}
\mu_G(B_i)/\mu_G(B_{j-1})& \geq& \mu_G(D_i^4)/\mu_G(D_{j-1}^{12}) \\ &\geq & \mu_G(D_i^4)/\mu_G(D_0^{12}) = c_i
\end{eqnarray*}
by the properties of the sets $D_{i'}$.
\end{enumerate}
The induction is closed and terminates when $j=0$, when we have a system of multiplicative pairs with the desired properties and it remains to note that $B_0 \subset D_0^{12}\subset A^4$, and 
\begin{equation*}
\mu_G(B_0) \geq \mu_G(D_0^4) = \Omega_K(\mu_G(A)).
\end{equation*}
The result is proved.
\end{proof}
Since the above argument is essentially a regularity construction (\emph{c.f.} \cite{TCTReg}) it will come as little surprise that the bound are tower type in $J$.  Indeed, we shall have $r(x) = O(1)$ and $\epsilon(x)\leq Cx^{O(1)}$ in applications in which case it is easy to read out a lower bound on $c_i$ from the above: it is a tower of height $O(i)$ in $C+O(1)$.  Moreover, $\mu_G(A)/\mu_G(B_0)$ may be taken to be expontial in $K^{O(1)}$.

\section{From containment to correlation with multiplicative pairs}\label{sec.containment}

In this section we show how to pass from the situation of containing a multiplicative pair to correlation with a multiplicative pair.  This shift in perspective with Fre{\u\i}man-type results was introduced by Green and Tao in \cite{BJGTCTF} (although it is heavily foreshadowed in \cite{WTG}) and has since been used fruitfully in many situations.  

We shall prove the following as a consequence of Proposition \ref{prop.frcon}.
\begin{proposition}\label{prop.wkfr}
Suppose that $G$ is a finite group, $A \subset G$ has $\mu_G(A^2) \leq K\mu_G(A)$ and $r \in \N$ and $\epsilon \in (0,1]$ are parameters.  Then there is a positive real $c=\Omega_{K,r,\epsilon}(1)$ and an $\epsilon$-closed, $c$-thick $r$-multplicative pair $\mathcal{B}$ with ground set $B$ such that
\begin{equation*}
\|1_A \ast \mu_B\|_{L^\infty(\mu_G)} = \Omega_K(1) \textrm{ and } \mu_G(B)= \Omega_K(\mu_G(A)).
\end{equation*}
\end{proposition}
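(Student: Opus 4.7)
The plan is to reduce to Proposition~\ref{prop.frcon} by symmetrising and then to convert containment into correlation by an averaging/pigeonhole argument. First, I would set $S := A \cup A^{-1}$, which is symmetric with $\mu_G(S) \geq \mu_G(A)$. By non-abelian Pl\"unnecke--Ruzsa estimates (e.g.\ from \cite{TCTFrei}), the doubling hypothesis $\mu_G(A^2) \leq K\mu_G(A)$ implies $\mu_G(A^{\epsilon_1}\cdots A^{\epsilon_n}) \leq K^{O(n)}\mu_G(A)$ for each fixed $n$ and each sign pattern $(\epsilon_i) \in \{\pm 1\}^n$; in particular $\mu_G(S^2) \leq K^{O(1)}\mu_G(S)$, so $S$ is a symmetric set with bounded doubling.

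Next, apply Proposition~\ref{prop.frcon} to $S$ with parameters $r$ and $\epsilon$ to obtain an $\epsilon$-closed, $\Omega_{K,r,\epsilon}(1)$-thick, $r$-multiplicative pair $\mathcal{B}$ with ground set $B\subset S^4$ and
\begin{equation*}
\mu_G(B) \geq \Omega(K^{-O(1)}\mu_G(S)) = \Omega_K(\mu_G(A)),
\end{equation*}
which is the required measure bound. For the correlation bound, the key observation is that $\supp(1_A\ast\mu_B) \subset AB \subset A\cdot(A\cup A^{-1})^4$, a union of $2^4$ product sets, each of which is a word of length $5$ in the letters $A$ and $A^{-1}$. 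Each such word has measure at most $K^{O(1)}\mu_G(A)$ by the same Pl\"unnecke--Ruzsa estimates, so $\mu_G(AB) = O_K(\mu_G(A))$.

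Finally, since $\int 1_A\ast\mu_B\, d\mu_G = \|1_A\|_{L^1(\mu_G)}\|\mu_B\|_{L^1(\mu_G)} = \mu_G(A)$ and this $L^1$-mass is supported in $AB$, pigeonholing gives
\begin{equation*}
\|1_A\ast\mu_B\|_{L^\infty(\mu_G)} \geq \frac{\mu_G(A)}{\mu_G(AB)} = \Omega_K(1),
\end{equation*}
as required. The only step here that reaches outside the machinery developed in the paper so far is the non-abelian Pl\"unnecke--Ruzsa bound with mixed sign patterns; this is standard but is the main technical input. Everything else is a one-line duality between $L^1$-mass and $L^\infty$-norm combined with a support-size bound.
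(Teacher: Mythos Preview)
There is a genuine gap in the first step. In the non-abelian setting the doubling hypothesis $\mu_G(A^2)\leq K\mu_G(A)$ does \emph{not} by itself yield control over mixed products $A^{\sigma_1}\cdots A^{\sigma_n}$. The result you invoke (Lemma~\ref{lem.cover}, which is the non-abelian Pl\"unnecke--Ruzsa lemma from \cite{TCTNC}) requires the \emph{tripling} hypothesis $\mu_G(A^3)\leq K\mu_G(A)$; there are well-known examples of sets with bounded doubling but unbounded $|A A^{-1}A|$, so neither your bound on $\mu_G(S^2)$ nor your bound on $\mu_G(AB)$ follows from doubling alone. This is precisely why the paper states Corollary~\ref{cor.doub2trip}: to pass from doubling to a large translate-subset with bounded tripling.

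Once you insert this step, your argument does go through. Apply Corollary~\ref{cor.doub2trip} to obtain $A'$ with $xA'\subset A$, $\mu_G(A')=\Omega_K(\mu_G(A))$ and $\mu_G(A'^3)=O_K(\mu_G(A'))$; now Lemma~\ref{lem.cover} is available for $A'$, so the symmetrisation $S:=A'\cup A'^{-1}$ has bounded doubling, Proposition~\ref{prop.frcon} gives $B\subset S^4$ with $\mu_G(B)=\Omega_K(\mu_G(A))$, and your pigeonhole
\[
\|1_A\ast\mu_B\|_{L^\infty(\mu_G)}\geq\|1_{A'}\ast\mu_B\|_{L^\infty(\mu_G)}\geq \mu_G(A')/\mu_G(A'B)=\Omega_K(1)
\]
is now legitimate since $A'B\subset A'S^4$ is a bounded word in $A',A'^{-1}$. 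This corrected route is in fact shorter than the paper's proof, which reaches the correlation via a second application of Proposition~\ref{prop.intit} and the approximate-invariance Lemma~\ref{lem.approxproj} rather than the direct $L^1$--$L^\infty$ averaging you use; the paper's approach is self-contained within the symmetry-set machinery, while yours leans on Lemma~\ref{lem.cover} but is cleaner.
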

It should also be remarked that in recent work of Croot and Sisask \cite{ESCOS} some combinatorial arguments have been developed for showing that if $A$ is dense then $1_A \ast 1_A$ is almost invariant over a large set -- repeated addition of this set can be used to give a multiplicative pair.  If their results extend to give large sets when $A$ merely has small doubling then it seems that it might be used to give another version of the above.

We require the following trivial projection fact for symmetry sets, which shows that if the threshold of a symmetry set of $A$ is very close to $1$ then $\mu_A$ is approximately invariant under convolution by probability measures supported on that set.
\begin{lemma}\label{lem.approxproj}
Suppose that $G$ is a finite group, $A \subset G$ and $\epsilon \in (0,1]$. Then
\begin{equation*}
\int{|1-\mu \ast 1_A|d\mu_A} \leq \epsilon
\end{equation*}
for all probability measures $\mu$ with $\supp \mu \subset \Sym_{1-\epsilon}(A)$.
\end{lemma}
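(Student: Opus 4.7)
The plan is to remove the absolute value, then swap the order of integration and use the defining property of $\Sym_{1-\epsilon}(A)$.

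First I would observe that $\mu \ast 1_A \leq 1$ pointwise: since $\mu$ is a probability measure and $0 \leq 1_A \leq 1$, the convolution $\mu \ast 1_A(y) = \int 1_A(x^{-1}y) d\mu(x)$ is a weighted average bounded above by $1$. Hence $|1 - \mu \ast 1_A| = 1 - \mu \ast 1_A$ and the claim reduces to showing
\begin{equation*}
\int \mu \ast 1_A \, d\mu_A \geq 1-\epsilon.
\end{equation*}

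Next I would expand the left-hand side and swap the order of integration by Fubini, obtaining
\begin{equation*}
\int \mu \ast 1_A \, d\mu_A = \int\left(\int 1_A(x^{-1}y) \, d\mu_A(y)\right) d\mu(x).
\end{equation*}
The inner integral is $|A \cap xA|/|A|$, which is exactly $1_A \ast 1_{A^{-1}}(x)/\mu_G(A)$ by the computation in the paper (using the Haar-normalised convolution). Since $\supp \mu \subset \Sym_{1-\epsilon}(A)$, the inner integral is at least $1-\epsilon$ for every $x$ in the support of $\mu$.

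Integrating against the probability measure $\mu$ then gives
\begin{equation*}
\int \mu \ast 1_A \, d\mu_A \geq (1-\epsilon)\int d\mu = 1-\epsilon,
\end{equation*}
which yields the lemma. There is no real obstacle here; the only point requiring a moment's care is the bookkeeping to convert $\int 1_A(x^{-1}y)\, d\mu_A(y)$ into the Sym-set quantity $1_A \ast 1_{A^{-1}}(x)/\mu_G(A)$, after which the definition of $\Sym_{1-\epsilon}(A)$ immediately closes the proof.
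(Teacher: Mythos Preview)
Your proof is correct and follows essentially the same route as the paper: the paper phrases the key step via the adjoint identity $\langle 1_A \ast 1_{A^{-1}},\mu\rangle_{L^2(\mu_G)} = \langle 1_A,\mu \ast 1_A\rangle_{L^2(\mu_G)}$ (Lemma~\ref{lem.triv}) rather than an explicit Fubini swap, but this is the same computation, and both finish by invoking $0 \leq \mu \ast 1_A \leq 1$ to drop the absolute value.
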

\begin{proof}
Suppose that $\mu$ is a probaility measure with $\supp \mu \subset \Sym_{1-\epsilon}(A)$. Then
\begin{equation*}
\langle 1_A \ast 1_{A^{-1}},\mu\rangle_{L^2(\mu_G)} \geq (1-\epsilon) \mu_G(A)
\end{equation*}
by definition of the symmetry set.  However, it follows from Lemma \ref{lem.triv} that
\begin{equation*}
\langle 1_A , \mu \ast 1_{A} \rangle_{L^2(\mu_G)}\geq (1-\epsilon)\mu_G(A),
\end{equation*}
whence
\begin{equation*}
\langle 1_A,1-\mu \ast 1_A\rangle_{L^2(\mu_G)} \leq \epsilon \mu_G(A).
\end{equation*}
However, $0 \leq \mu\ast 1_A \leq 1$ and so the lemma is proved.
\end{proof}
The proof of Proposition \ref{prop.wkfr} will also use a couple of results from \cite{TCTNC}.  The first is a sort of non-abelian Pl{\"u}nnecke theorem (\emph{c.f.} \cite{HP}). 
\begin{lemma}[{\cite[Lemma 3.4]{TCTNC}}] \label{lem.cover} Suppose that $G$ is a finite group and $A \subset G$ has $\mu_G(A^3) \leq K\mu_G(A)$. Then
\begin{equation*}
\mu_G(A^{\sigma_1}\dots A^{\sigma_n})\leq K^{O_n(1)}\mu_G(A)
\end{equation*}
for any signs $\sigma_1,\dots,\sigma_n \in \{-1,1\}$.
\end{lemma}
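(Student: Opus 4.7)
The plan is to prove Lemma \ref{lem.cover} via an inductive application of a non-abelian Ruzsa triangle inequality. The cornerstone is the assertion that for all finite sets $U, V, W \subset G$ one has
\begin{equation*}
\mu_G(V)\,\mu_G(UW^{-1}) \leq \mu_G(UV^{-1})\,\mu_G(VW^{-1}).
\end{equation*}
I would establish this by the standard injection argument: for each $g \in UW^{-1}$ fix once and for all a representation $g = u_g w_g^{-1}$ with $u_g \in U$, $w_g \in W$, and observe that the map $V \times UW^{-1} \to UV^{-1} \times VW^{-1}$ sending $(v, g) \mapsto (u_g v^{-1},\, v w_g^{-1})$ is injective, since the product of the two image coordinates recovers $g$ (hence $u_g$ and $w_g$), and then the first coordinate determines $v$.

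With the triangle inequality in hand, I would next handle the short words, i.e.\ bound $\mu_G(A^{\sigma_1}A^{\sigma_2})$ and $\mu_G(A^{\sigma_1}A^{\sigma_2}A^{\sigma_3})$ by $K^{O(1)}\mu_G(A)$ for every choice of signs. The hypothesis $\mu_G(A^3) \leq K\mu_G(A)$ directly controls the all-positive patterns, and the remaining patterns are accessed by judicious substitution into the triangle inequality. For instance, with $U = A$, $V = A^{-2}$, $W = A$ one obtains
\begin{equation*}
\mu_G(A^{-2})\,\mu_G(AA^{-1}) \leq \mu_G(A^3)\,\mu_G(A^{-3});
\end{equation*}
combined with the trivial lower bound $\mu_G(A^{-2}) = \mu_G(A^2) \geq \mu_G(A)$ (valid because $aA \subset A^2$ for any $a \in A$), this gives $\mu_G(AA^{-1}) \leq K^2\mu_G(A)$. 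Inversion symmetry $\mu_G(S) = \mu_G(S^{-1})$ and analogous substitutions dispose of the other sign patterns on two or three letters.

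Finally I would run an induction on the word length $n$. For $n \geq 4$, split the word as $X \cdot Y$ with $X := A^{\sigma_1}\cdots A^{\sigma_k}$ and $Y := A^{\sigma_{k+1}}\cdots A^{\sigma_n}$, $k := \lfloor n/2 \rfloor$. Applying the triangle inequality with $U = X$, $V = A$, $W = Y^{-1}$ yields
\begin{equation*}
\mu_G(A)\,\mu_G(A^{\sigma_1}\cdots A^{\sigma_n}) \leq \mu_G(XA^{-1})\,\mu_G(AY).
\end{equation*}
Both $XA^{-1}$ and $AY$ are alternating products of length at most $\lceil n/2\rceil + 1 < n$, so the inductive hypothesis applies to each. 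Writing $\mu_G(A^{\sigma_1}\cdots A^{\sigma_m}) \leq K^{C_m}\mu_G(A)$, we obtain the recurrence $C_n \leq 2C_{\lceil n/2\rceil + 1}$ for $n \geq 4$, which together with the base cases from the previous paragraph gives $C_n = O_n(1)$.

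The main obstacle lies in the short-word step: the non-abelian setting offers no additive rearrangement, so controlling $\mu_G(AA^{-1})$, $\mu_G(A^{-1}A)$, etc.\ from the single hypothesis $\mu_G(A^3) \leq K\mu_G(A)$ is not automatic, and requires the bookkeeping of several different applications of the triangle inequality together with the harmless but essential lower bound $\mu_G(A^k) \geq \mu_G(A)$. Once these base cases are in place, the inductive step is forced because the triangle inequality is the only available tool that breaks a product along a chosen interior point without demanding commutativity.
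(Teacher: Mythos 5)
Your proof is correct, and it is worth noting that the paper itself offers no proof of this lemma: it is quoted verbatim from \cite{TCTNC}, with only the remark that ``it is a covering argument of a type popularised by Ruzsa.'' Your route --- the non-abelian Ruzsa triangle inequality $\mu_G(V)\mu_G(UW^{-1})\leq \mu_G(UV^{-1})\mu_G(VW^{-1})$, established by the injection $(v,g)\mapsto (u_gv^{-1},vw_g^{-1})$, followed by a bisection induction on word length --- is essentially the argument of \cite[Lemma 3.4]{TCTNC} itself, so you have reconstructed the cited proof rather than found a covering argument of the kind the paper's aside gestures at. The injection is valid (the product of the two image coordinates recovers $g$, hence the fixed pair $(u_g,w_g)$, hence $v$), the inductive step is sound since $\lceil n/2\rceil+1<n$ for $n\geq 4$ and the split words $XA^{-1}$, $AY$ remain words in the alphabet $\{A,A^{-1}\}$, and the recurrence $C_n\leq C_{\lfloor n/2\rfloor+1}+C_{\lceil n/2\rceil+1}$ with bounded base cases does give $C_n=O_n(1)$. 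You are right that the only delicate point is the length-$\leq 3$ base cases; up to inversion symmetry there are four sign patterns of length three, and each is reachable by one or two further applications of the triangle inequality (e.g.\ $V=A$, $U=A^{-1}$, $W=A^{-2}$ gives $\mu_G(A^{-1}A^2)\leq K^2\mu_G(A)$, which then feeds the bound for $AA^{-1}A$), together with the lower bound $\mu_G(A^k)\geq\mu_G(A)$ for nonempty $A$. The argument is complete as sketched.
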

The proof is not difficult -- it is a covering argument of a type popularised by Ruzsa \cite{IZRArb} -- although it was a key insight of \cite{TCTNC} that these arguments go through directly in the non-abelian setting.

We shall also require a result which lets us pass from small doubling to a large subset with small tripling.  Again this is from \cite{TCTNC}, but it turns out that this is also a trivial corollary of Proposition \ref{prop.intit}.
\begin{corollary}\label{cor.doub2trip}
Suppose that $G$ is a finite group and $A \subset G$ has $\mu_G(A^2) \leq K\mu_G(A)$. Then there is a set $A'$ and element $x \in G$ such that $xA' \subset A$
\begin{equation*}
\mu_G(A') = \Omega_K(\mu_G(A)) \textrm{ and } \mu_G(A'^3) =O_K(\mu_G(A')).
\end{equation*}
\end{corollary}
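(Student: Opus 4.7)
The plan is to use Proposition \ref{prop.intit} to produce a symmetric neighbourhood of the identity $B$ with small tripling, and then to locate a translate of a large subset of $B$ that lies inside $A$.

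Apply Proposition \ref{prop.intit} to $A$ with $\epsilon:=1/(6K)$: this yields a non-empty $A_0'\subset A$ such that $B:=\Sym_{1-\epsilon}(A_0'A)$ has $\mu_G(B)=\Omega_K(\mu_G(A))$. Sub-multiplicativity of symmetry sets (Lemma \ref{lem.sub-mult}) gives $B^3\subset \Sym_{1-3\epsilon}(A_0'A)$, and Lemma \ref{lem.symsize} bounds $\mu_G(B^3)\leq 2\mu_G(A_0'A)\leq 2K\mu_G(A)=O_K(\mu_G(B))$. So $B$ is a symmetric neighbourhood of the identity whose tripling is controlled by a function of $K$.

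Given such $B$, set $A':=x^{-1}(A\cap xB)$ for an $x$ still to be chosen. By construction $A'\subset B$, hence $\mu_G(A'^3)\leq \mu_G(B^3)=O_K(\mu_G(A))$, and $xA'=A\cap xB\subset A$ is automatic. It therefore suffices to exhibit $x$ with $\mu_G(A\cap xB)=\Omega_K(\mu_G(A))$, since this gives $\mu_G(A')=\Omega_K(\mu_G(A))$ and the required $\mu_G(A'^3)=O_K(\mu_G(A'))$ follows.

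The technical heart of the argument is producing such an $x$. The naive averaging $\int \mu_G(A\cap xB)\,d\mu_G(x)=\mu_G(A)\mu_G(B)$ only yields $\max_x \mu_G(A\cap xB)\geq \mu_G(A)\mu_G(B)$, which is $\Omega_K(\mu_G(A)^2)$ and hence insufficient; improving this requires controlling the support $\mu_G(AB)$, which is not directly available from $\mu_G(A^2)\leq K\mu_G(A)$ alone in the non-abelian setting. I would instead exploit the defining property of $B$: for every $b\in B$ and $a_0\in A_0'$, the inclusion $a_0A\subset A_0'A$ combined with $\mu_G(bA_0'A\setminus A_0'A)\leq\epsilon\mu_G(A_0'A)$ yields $\mu_G(ba_0A\cap A_0'A)\geq (1-\epsilon K)\mu_G(A)\geq \tfrac{5}{6}\mu_G(A)$. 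Double-counting pairs $(a_0,b)\in A_0'\times B$ with $a_0 b\in A$ and pigeonholing over $a_0\in A_0'$ will then produce the required element $x=a_0$. The main obstacle is converting the \emph{left}-symmetry information (involving the product $ba_0$) into the desired \emph{left-translate} condition $a_0b\in A$; I would handle this by also applying Proposition \ref{prop.intit} to $A^{-1}$ to obtain a right-symmetric analogue of $B$ and combining the two, so as to select $a_0\in A_0'$ for which the fibre $\{b\in B: a_0b\in A\}$ has measure $\Omega_K(\mu_G(B))$.
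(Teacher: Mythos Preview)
Your overall plan coincides with the paper's: apply Proposition~\ref{prop.intit} to produce a large symmetric neighbourhood $A_0=\Sym_{1-\epsilon}(A_0'A)$ of the identity whose cube is controlled via Lemmas~\ref{lem.sub-mult} and~\ref{lem.symsize}, and then locate a translate of a large piece of $A_0$ inside $A$. Your choice of $\epsilon$ proportional to $1/K$ is a sensible scale. Where the argument stalls is precisely where you say it does: establishing $\mu_G(A\cap xB)=\Omega_K(\mu_G(A))$ for some $x$.

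Your proposed fix is not a proof. The inequality $\mu_G(ba_0A\cap A_0'A)\geq(1-\epsilon K)\mu_G(A)$ that you derive says only that $ba_0A$ sits mostly inside $A_0'A$; it gives no control whatsoever on the fibre $\{b\in B:a_0b\in A\}$, and invoking a second application of Proposition~\ref{prop.intit} to $A^{-1}$ does not obviously bridge this---you would need to explain how a symmetry set built from $A^{-1}$ interacts with $B$ to force that fibre to be large, and no mechanism is offered. As written this is hand-waving at the one nontrivial step.

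The paper dispatches this step with Lemma~\ref{lem.approxproj}. From $\supp\mu_{A_0}\subset\Sym_{1-\epsilon}(A''A)$ one obtains that $\int|1-\mu_{A_0}\ast 1_A|\,d\mu_A$ is small, so some $x$ satisfies $\mu_{A_0}\ast 1_A(x)\geq 2/3$, i.e.\ $\mu_G(A_0\cap xA^{-1})\geq\tfrac23\mu_G(A_0)$. Setting $A_1:=A_0\cap Ax^{-1}$ (equal to $A_0\cap xA^{-1}$ by symmetry of $A_0$) and $A':=x^{-1}A_1x$ gives $xA'=A_1x\subset A$, while $A_1^3\subset A_0^3\subset\Sym_{1/2}(A''A)$ yields the tripling bound. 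This is a one-line averaging argument in place of your unfinished double-counting; the rest of your write-up is correct and essentially identical to the paper's.
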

\begin{proof}
Apply Proposition \ref{prop.intit} to get a non-empty set $A'' \subset A$ such that
\begin{equation*}
\mu_G(\Sym_{1-1/6}(A''A)) \geq \exp(K^{O(1)})\mu_G(A).
\end{equation*}
Put $A_0 :=\Sym_{1-1/6}(A''A)$, and then note by Lemma \ref{lem.approxproj} that
\begin{equation*}
\int{|1-\mu_{A_0} \ast 1_A|d\mu_A} \leq 1/3,
\end{equation*}
so that there is some $x \in G$ such that
\begin{equation*}
\mu_G( A_0 \cap (Ax^{-1}))=\mu_G(A_0 \cap xA^{-1})\geq 2\mu_G(A_0)/3=\Omega_K(\mu_G(A)) 
\end{equation*}
since $A_0$ is symmetric.  Put $A_1:=A_0 \cap (Ax^{-1})$ so that $\mu_G(A_1) = \Omega_K(\mu_G(A))$.  Furthermore, $A_1^3 \subset \Sym_{1/2}(A''A)$ by Lemma \ref{lem.sub-mult}, and hence by Lemma \ref{lem.symsize} we have that
\begin{equation*}
\mu_G(A_1^3) \leq 2\mu_G(A''A) \leq 2K\mu_G(A).
\end{equation*}
The result follows on putting $A'=x^{-1}A_1x$.
\end{proof}
Of course the bounds in this corollary are immediately seen to be exponentially dependent on $O(K^{O(1)})$. 
\begin{proof}[Proof of Proposition \ref{prop.wkfr}]
First we apply Corollary \ref{cor.doub2trip} to get a symmetric set $A'$ such that $x'A' \subset A$, $\mu_G(A') = \Omega_K(\mu_G(A))$ and $\mu_G(A'^3)=O_K(\mu_G(A'))$. Apply Proposition \ref{prop.intit} to get a set $A'' \subset A'$ such that
\begin{equation*}
\mu_G(\Sym_{1-2^{-4}}(A''A')) =\Omega_K(\mu_G(A)).
\end{equation*}
By Lemma \ref{lem.sub-mult} we have that
\begin{equation*}
\mu_G(\Sym_{1-2^{-4}}(A''A')^2) \leq \mu_G(\Sym_{1-1/8}(A''A')) \leq 2\mu_G(A''.A') \leq 2\mu_G(A).
\end{equation*}
Putting $A''':=\Sym_{1-2^{-4}}(A''A')$ we have that
\begin{equation*}
\mu_G(A''') = \Omega_K(\mu_G(A)) \textrm{ and } \mu_G(A'''^2) = O_K(\mu_G(A''')).
\end{equation*}
We apply Proposition \ref{prop.frcon} to get a positive real $c=\Omega_{K,r,\epsilon}(1)$ and an $\epsilon$-closed $c$-thick $r$-multiplicative pair $\mathcal{B}$ with ground set $B$ with
\begin{equation*}
B \subset A'''^4 \textrm{ and } \mu_G(B) = \Omega_K(\mu_G(A)).
\end{equation*}
On the other hand by Lemma \ref{lem.approxproj}
\begin{equation*}
\int{|1-\mu \ast 1_{A''A'}|d\mu_{A''A'}} \leq 1/2 
\end{equation*}
whenever $\supp \mu \subset \Sym_{1-1/2}(A''A')$.  Since $B \subset A'''^4 \subset \Sym_{1-1/2}(A''A')$ by Lemma \ref{lem.sub-mult} we see that we may put $\mu=\mu_{B}$.  It follows that
\begin{equation*}
\|\mu_B \ast 1_{A''A'}\|_{L^\infty(\mu_G)} \geq 1/2.
\end{equation*}
However, it is easy to see that
\begin{equation*}
1_{A''A'} \leq \mu_{A'} \ast 1_{(A'^{-1}A''A')^{-1}}
\end{equation*}
whence
\begin{equation*}
\|\mu_B \ast \mu_{A'} \ast 1_{(A'^{-1}A''A')^{-1}}\|_{L^\infty(\mu_G)} \geq 1/2
\end{equation*}
by non-negativity. We apply Young's inequality to see that
\begin{equation*}
\mu_G((A'^{-1}A''A')^{-1}).\|\mu_B \ast \mu_{A'}\|_{L^\infty(\mu_G)} \geq 1/2.
\end{equation*}
On the other hand $A'' \subset A'$ and so by Lemma \ref{lem.cover} and the fact that $A'$ has small tripling we get that
\begin{equation*}
\mu_G((A'^{-1}A''A')^{-1})\leq \mu_G(A'^{-2}A') =O_K(\mu_G(A')),
\end{equation*}
whence
\begin{equation*}
\|\mu_{B} \ast 1_{A'}\|_{L^\infty(\mu_G)} = \Omega_K(1).
\end{equation*}
It remains to note that $1_{x'^{-1}A} \geq 1_{A'}$ so that
\begin{equation*}
\|\mu_{B} \ast 1_{A}\|_{L^\infty(\mu_G)}=\|\mu_{B} \ast 1_{x'^{-1}A}\|_{L^\infty(\mu_G)}\geq \|\mu_{B} \ast 1_{A'}\|_{L^\infty(\mu_G)}=\Omega_K(1)
\end{equation*}
by non-negativity and the definition of convolution.  The result is proved.
\end{proof}
Regarding the bounds, $c$ is quadruply exponential in $O(\epsilon^{-1}rK^{O(1)})$ and the correlation bounds are both exponential in $K^{O(1)}$.

\section{Analysis on multiplicative pairs}\label{sec.locpsa}

There is a very general class of problems in combinatorics which involve counting small structures in large structures.  The prototypical example is three-term arithmetic progressions in abelian groups.  Suppose that $G$ is an abelian group and $A \subset G$. A three-term arithmetic progression in $A$ is a triple $x-y,x,x+y \in A$ and there is a natural way to count them:
\begin{equation*}
T(A):=\int{1_A(x-y)1_A(y)1_A(x+y)d\mu_G(x)d\mu_G(y)}.
\end{equation*}
Finding good lower bounds on $T(A)$ in terms of the density of $A$ is essentially the same as finding good bounds in Roth's theorem \cite{KFR} which has received the attention of numerous authors.

If one now has a $1$-additive (multiplicative) pair $\mathcal{B}=(B,B')$ and set $A \subset B$, the question becomes one of how to meaningfully count progressions in $A$ relative to $B$.  One way to do it is to think of $B'$ as being the set we're `allowed to add' to $B$, and thus count
\begin{equation*}
T_\mathcal{B}(A):=\int{1_A(x-y)1_A(y)1_A(x+y)d\mu_B(x)d\mu_{B'}(y)}.
\end{equation*}
Of course with more complicated structures than just three-term progressions, involving more variables and terms we would need to assume that $\mathcal{B}$ was an $r$-additive (multiplicative) pair for some larger natural $r$, but the basic idea is the same.

The advantage of this definition is that many of the properties enjoyed by $A$ on a genuine group are approximately true on a multiplicative pair. For example, when $A$ is roughly the whole of $B$, $T_\mathcal{B}(A)$ is close to $1$; when $A$ is quasi-random in a certain rather nice sense, $T_{\mathcal{B}}(A)$ is close to $\mu_B(A)^3$; and when $\mu_B(A)$ has density bigger than $2/3+\eta$ (for some $\eta \rightarrow 0$ as $\epsilon \rightarrow 0$), $A$ contains a $3$-term progression by the pigeonhole principle.

It is the purpose of this section to extend the straightforward physical space manipulations that work so well for groups to the setting of multiplicative pairs.  The proofs proceed largely as expected and may be omitted by the experts.

We begin with an approximate substitute for the unimodular Haar measure $\mu_G$.  As was hinted at above, the measure $\mu_B$ is our candidate and the desired property is encoded in the next lemma.\begin{lemma}[Approximate Haar measure]\label{lem.approxhaar}  Suppose that $G$ is a finite group and $\mathcal{B}=(B,B')$ is an $\epsilon$-closed $r$-multiplicative pair.  Then
\begin{equation*}
\|\mu \ast \mu_B - \mu\| = \|\mu_B \ast \mu - \mu_B\| \leq \epsilon
\end{equation*}
for all probability measures $\mu$ with $\supp \mu \subset B'^r$.
\end{lemma}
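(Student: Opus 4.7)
The plan is to reduce the statement to the case of point masses $\mu = \delta_y$ with $y \in B'^r$ and then recombine by the triangle inequality. Interpreting $\|\cdot\|$ as the total variation norm on measures (so that for measures with $L^1(\mu_G)$-densities it is the $L^1(\mu_G)$-norm of the density), the measure $\delta_y \ast \mu_B$ is the uniform probability measure on $yB$, whence
\begin{equation*}
\|\delta_y \ast \mu_B - \mu_B\| = \mu_G(yB \triangle B)/\mu_G(B),
\end{equation*}
and analogously for $\mu_B \ast \delta_y - \mu_B$ with $By$ in place of $yB$.

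The content of the lemma then becomes the geometric claim $yB \triangle B \subset B^+ \setminus B^-$. First I would record the nesting $B^- \subset B \subset B^+$, which follows by sticking $1_G \in B'$ into the vacant slots of the defining inclusions $B'^r B^- B'^r \subset B$ and $B'^r B B'^r \subset B^+$. Then, for $y \in B'^r$, the containment $yB \subset B'^r B B'^r \subset B^+$ gives $yB \setminus B \subset B^+ \setminus B^-$. For the reverse containment $B^- \subset yB$, I would observe that $y^{-1} \in B'^r$ as well, since $B'$ is symmetric, so $y^{-1} B^- \subset B'^r B^- B'^r \subset B$; this yields $B \setminus yB \subset B \setminus B^- \subset B^+ \setminus B^-$. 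The $\epsilon$-closure hypothesis now bounds the entire symmetric difference by $\epsilon\mu_G(B)$, so $\|\delta_y \ast \mu_B - \mu_B\| \leq \epsilon$.

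The inequality $\|\mu_B \ast \delta_y - \mu_B\| \leq \epsilon$ is proved by the mirror computation: $By \subset B'^r B B'^r \subset B^+$ and $B^- y^{-1} \subset B'^r B^- B'^r \subset B$, so again $B \triangle By \subset B^+ \setminus B^-$. Having established both Dirac versions, the general case follows by writing
\begin{equation*}
\mu \ast \mu_B - \mu_B = \int (\delta_y \ast \mu_B - \mu_B) \, d\mu(y)
\end{equation*}
and applying the triangle inequality in total variation norm (and similarly on the right). I do not expect any real obstacle here; the only thing to watch is that in the defining inclusions the sets $B^\pm$ are sandwiched on both sides by $B'^r$, so to use them for a one-sided translate one must feed $1_G \in B'$ into the empty slot.
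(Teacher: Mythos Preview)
Your proposal is correct and follows essentially the same route as the paper: reduce to point masses $\delta_y$ via the triangle inequality, then bound $\mu_G(By\triangle B)/\mu_G(B)$ by $\mu_G(B^+\setminus B^-)/\mu_G(B)\leq\epsilon$ using the defining inclusions of the multiplicative pair. The only cosmetic difference is that the paper handles the left-hand expression by the involution $\widetilde{\mu\ast\mu_B-\mu_B}=\mu_B\ast\widetilde{\mu}-\mu_B$ (together with $\widetilde{\mu_B}=\mu_B$ and $\supp\widetilde{\mu}\subset B'^r$), thereby reducing to the right-hand case, whereas you run the mirror computation explicitly; your version is arguably cleaner and makes the containment $yB\triangle B\subset B^+\setminus B^-$ fully transparent.
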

\begin{proof}
The equality is trivial: $\widetilde{\mu \ast \nu} = \widetilde{\nu}\ast \widetilde{\mu}$ for all measure $\mu,\nu$ on $G$, $\widetilde{\mu_{B}}=\mu_B$ since $B$ is symmetric and $\supp \mu \subset B'$ iff $\supp \widetilde{\mu} \subset B'$ since $B'$ is symmetric.

Now, suppose that $\mu$ is a probability measure with $\supp \mu \subset B'$.  Then
\begin{equation*}
\|\mu_B \ast \mu - \mu_B\| \leq \int{\|\rho_{y^{-1}}(\mu_B)-\mu_B\|d\mu(y)}
\end{equation*}
by the triangle inequality.  However,
\begin{equation*}
\|\rho_{y^{-1}}(\mu_B)-\mu_B\| = \frac{\mu_G(By \triangle B)}{\mu_G(B)} \leq \frac{\mu_G(B^+ \setminus B^-)}{\mu_G(B)} \leq \epsilon
\end{equation*}
since $\mathcal{B}$ is an $\epsilon$-closed $r$-multiplicative pair.  The result follows.
\end{proof}
An immediate consequence of this is  a sort of continuity result on convolution with this approximate Haar measure.
 \begin{lemma}\label{lem.appcon}
 Suppose that $G$ is a finite group, $f \in L^\infty(\mu_G)$ and $\mathcal{B}=(B,B')$ is an $\epsilon$-closed $r$-multiplicative pair. Then
 \begin{equation*}
 \sup_{y \in B'^r}{|f \ast \mu_{B}(xy) - f\ast \mu_B(x)|} \leq \epsilon \|f\|_{L^\infty(\mu_G)}.
 \end{equation*}
 \end{lemma}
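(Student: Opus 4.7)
The plan is to deduce this directly from Lemma \ref{lem.approxhaar} by rewriting the pointwise difference as a convolution against a signed measure of small total variation.

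First I would observe that, by the definition of $\rho_y$ and the associativity of convolution,
\begin{equation*}
f \ast \mu_B(xy) = \rho_y(f \ast \mu_B)(x) = (f \ast \mu_B \ast \delta_{y^{-1}})(x) = f \ast (\mu_B \ast \delta_{y^{-1}})(x),
\end{equation*}
so that for every $x \in G$ and every $y \in B'^r$,
\begin{equation*}
f \ast \mu_B(xy) - f \ast \mu_B(x) = f \ast (\mu_B \ast \delta_{y^{-1}} - \mu_B)(x).
\end{equation*}

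Next I would use the trivial bound $|f \ast \nu(x)| \leq \|f\|_{L^\infty(\mu_G)} \|\nu\|$ valid for any signed measure $\nu$ on $G$, where $\|\cdot\|$ denotes the total variation norm used in Lemma \ref{lem.approxhaar}. Applying this with $\nu = \mu_B \ast \delta_{y^{-1}} - \mu_B$ reduces the problem to bounding $\|\mu_B \ast \delta_{y^{-1}} - \mu_B\|$.

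Finally, since $y \in B'^r$ and $B'$ is symmetric we have $y^{-1} \in B'^r$, so $\mu := \delta_{y^{-1}}$ is a probability measure supported on $B'^r$. Lemma \ref{lem.approxhaar} then yields $\|\mu_B \ast \delta_{y^{-1}} - \mu_B\| \leq \epsilon$, which combined with the previous two steps gives the desired uniform bound. There is really no obstacle here: the statement is essentially a rephrasing of the approximate right-invariance of $\mu_B$ under $\delta_{y^{-1}}$ in terms of the convolution operator acting on $f$.
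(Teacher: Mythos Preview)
Your proof is correct and essentially identical to the paper's: both rewrite $f\ast\mu_B(xy)-f\ast\mu_B(x)$ as $f\ast(\mu_B\ast\delta_{y^{-1}}-\mu_B)(x)$ (the paper phrases this as $f\ast(\rho_y(\mu_B)-\mu_B)(x)$), bound it by $\|f\|_{L^\infty(\mu_G)}\|\mu_B\ast\delta_{y^{-1}}-\mu_B\|$ via Young's inequality, and then apply Lemma~\ref{lem.approxhaar} with $\mu=\delta_{y^{-1}}$.
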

 \begin{proof}
First we recall that $f \ast \mu_{B}(xy) =f \ast \rho_y(\mu_B)(x)$ since $\rho_y$ commutes with convolution, so by Young's inequality
\begin{eqnarray*}
|f \ast \mu_{B}(xy) - f\ast \mu_B(x)| & = & |f \ast \rho_y(\mu_{B})(x) - f\ast \mu_B(x)|\\ & \leq & \|f\|_{L^\infty(\mu_G)}\|\rho_y(\mu_B)-\mu_B\|.
\end{eqnarray*}
The lemma then follows from Lemma \ref{lem.approxhaar}.
 \end{proof}
The next argument is a short calculation typical of physical space manipulations with multiplicative pairs.
\begin{lemma}\label{lem.bogcalc}
Suppose that $G$ is a finite group, $\mathcal{B}=(B,B')$ is an $\epsilon$-closed $r$-multiplicative pair, $f  \in L^1(\mu_{B'})$ and $g \in L^\infty(\mu_{BB'^r})$. Then
\begin{equation*}
|\|(fd\mu_{B'}) \ast (g|_B)\|_{L^2(\mu_B)}^2 - \|(fd\mu_{B'}) \ast g\|_{L^2(\mu_B)}^2 | \leq 2\sqrt{\epsilon}\|f\|_{L^1(\mu_{B'})}^2\|g\|_{L^\infty(\mu_{BB'^r})}^2.
\end{equation*}
\end{lemma}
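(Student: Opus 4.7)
The plan is to view this as a perturbation estimate: writing $U:=(fd\mu_{B'})\ast (g|_B)$ and $V:=(fd\mu_{B'})\ast g$, I would compare the squared $L^2(\mu_B)$-norms via the factorisation
\[
\bigl|\|U\|_{L^2(\mu_B)}^2-\|V\|_{L^2(\mu_B)}^2\bigr|
\leq \bigl(\|U\|_{L^2(\mu_B)}+\|V\|_{L^2(\mu_B)}\bigr)\|U-V\|_{L^2(\mu_B)}.
\]
The two ``large'' factors are trivially bounded: since $\mu_{B'}$ and $\mu_B$ are probability measures, $\|U\|_{L^\infty(\mu_B)},\|V\|_{L^\infty(\mu_B)}\leq \|f\|_{L^1(\mu_{B'})}\|g\|_{L^\infty(\mu_{BB'^r})}$, hence the same bound holds in $L^2(\mu_B)$. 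So everything reduces to controlling $\|U-V\|_{L^2(\mu_B)}$ by $\sqrt{\epsilon}\|f\|_{L^1(\mu_{B'})}\|g\|_{L^\infty(\mu_{BB'^r})}$.

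For that, note that $V-U=(fd\mu_{B'})\ast(g\cdot 1_{G\setminus B})$, and since $g$ is supported in $BB'^r$, we may replace $1_{G\setminus B}$ by $1_{BB'^r\setminus B}$. For $x\in B$ and $y\in B'$, the integrand $(V-U)(x)=\int f(y)g(y^{-1}x)1_{BB'^r\setminus B}(y^{-1}x)\,d\mu_{B'}(y)$ forces $y^{-1}x\in B'^{-1}B\subset B'^r B B'^r\subset B^+$ by the defining inclusion of a multiplicative pair. On the other hand $B^-\subset B'^r B^- B'^r\subset B$, so $y^{-1}x\notin B$ implies $y^{-1}x\notin B^-$. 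Therefore $1_{BB'^r\setminus B}(y^{-1}x)\leq 1_{B^+\setminus B^-}(y^{-1}x)$ for all $x\in B$, $y\in B'$.

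Now I would apply Cauchy--Schwarz to $|V-U|(x)\leq \|g\|_{L^\infty(\mu_{BB'^r})}\int |f(y)|1_{B^+\setminus B^-}(y^{-1}x)\,d\mu_{B'}(y)$ in the form $(\int |f|\cdot\phi)^2\leq \|f\|_{L^1(\mu_{B'})}\int |f|\phi^2$ (with $\phi=1_{B^+\setminus B^-}(y^{-1}x)$, which is already $\{0,1\}$-valued so equals its square), integrate over $x\in B$ against $\mu_B$, swap the order of integration, and use
\[
\int_B 1_{B^+\setminus B^-}(y^{-1}x)\,d\mu_B(x)\leq \mu_B(B^+\setminus B^-)\leq \frac{\mu_G(B^+\setminus B^-)}{\mu_G(B)}\leq \epsilon,
\]
which is precisely $\epsilon$-closedness. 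This yields $\|U-V\|_{L^2(\mu_B)}^2\leq \epsilon \|f\|_{L^1(\mu_{B'})}^2\|g\|_{L^\infty(\mu_{BB'^r})}^2$, and combining with the factorisation above gives the claimed bound with constant $2\sqrt{\epsilon}$.

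The only delicate step is the containment chain $y^{-1}x\in B^+\setminus B^-$ for $x\in B$, $y\in B'$, $y^{-1}x\notin B$; everything else is Young-type estimates plus Cauchy--Schwarz. Once that chain is in hand the multiplicative-pair closure hypothesis plugs in directly and the proof is essentially a line.
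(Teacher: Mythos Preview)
Your proof is correct and follows essentially the same route as the paper: both use the factorisation $|\|U\|^2-\|V\|^2|\leq(\|U\|+\|V\|)\|U-V\|$, bound $\|U\|,\|V\|$ trivially by $\|f\|_{L^1(\mu_{B'})}\|g\|_{L^\infty(\mu_{BB'^r})}$, and then show $\|U-V\|_{L^2(\mu_B)}\leq\sqrt{\epsilon}\,\|f\|_{L^1(\mu_{B'})}\|g\|_{L^\infty(\mu_{BB'^r})}$ via $\epsilon$-closedness. The only difference is cosmetic: for the last step the paper applies Young's inequality directly to $f\ast(g-g1_B)$ and uses $\|g-g1_B\|_{L^2(\mu_G)}^2\leq\|g\|_\infty^2\,\mu_G(BB'^r\setminus B)$ together with $BB'^r\setminus B\subset B^+\setminus B^-$, whereas you unwind the convolution pointwise and invoke Cauchy--Schwarz plus Fubini; both land on the same $\epsilon$ bound.
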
 
\begin{proof}
First we note by Young's inequality that we have
\begin{eqnarray*}
\|(fd\mu_{B'}) \ast (g|_B)\|_{L^2(\mu_B)}^2 & =& \frac{1}{\mu_G(B')^2\mu_G(B)}\|f \ast (g1_B)\|_{L^2(\mu_G)}^2\\ & \leq &  \frac{1}{\mu_G(B')^2\mu_G(B)}\|f\|_{L^1(\mu_G)}^2\|g1_B\|_{L^2(\mu_G)}^2\\ & \leq & 
\|f\|_{L^1(\mu_{B'})}^2\|g\|_{L^\infty(\mu_{BB'^r})}^2.
\end{eqnarray*}
Similarly we have
\begin{equation*}
\|(fd\mu_{B'}) \ast g\|_{L^2(\mu_B)}^2 \leq
\|f\|_{L^1(\mu_{B'})}^2\|g\|_{L^\infty(\mu_{BB'^r})}^2,
\end{equation*}
whence
\begin{equation*}
|\|(fd\mu_{B'}) \ast (g|_B)\|_{L^2(\mu_B)}^2 - \|(fd\mu_{B'}) \ast g\|_{L^2(\mu_B)}^2 | 
\end{equation*}
is at most
\begin{equation*}
2\|(fd\mu_{B'}) \ast (g - g|_B)\|_{L^2(\mu_B)}\|f\|_{L^1(\mu_{B'})}\|g\|_{L^\infty(\mu_{BB'^r})}.
\end{equation*}
Of course by Young's inequality again
\begin{eqnarray*}
\|(fd\mu_{B'}) \ast (g - g|_B)\|_{L^2(\mu_B)}^2 & =& \frac{1}{\mu_G(B')^2\mu_G(B)}\|f \ast (g-g1_B)\|_{L^2(\mu_G)}^2\\ & \leq &  \frac{1}{\mu_G(B')^2\mu_G(B)}\|f\|_{L^1(\mu_G)}^2\|g-g1_B\|_{L^2(\mu_G)}^2\\ & \leq & 
\|f\|_{L^1(\mu_{B'})}^2\|g\|_{L^\infty(\mu_{BB'^r})}^2\mu_G(BB'^r\setminus B).
\end{eqnarray*}
The result follows on combining all this.
\end{proof}
 The final result of the section will be used in \S\ref{sec.dis} and while it is a calculation of the type presented here, its utility will probably not be clear without also reading that section.  The result shows how, in a certain situation, to pass from $\|f \ast v\|_{L^2(\mu_G)}$ being large to a properly relativised version being large. 
\begin{lemma}\label{lem.chopup}
Suppose that $G$ is a finite group, $B_0,B_1,B_2$ are symmetric subsets of $G$ such that $\mathcal{B}_{i,j}=(B_i,B_j)$ is a $c_j$-thick, $\epsilon_j$-closed, $4$-multiplicative pair for all $j>i$, $f\in L^1(\mu_{B_2})$, $h \in L^1(\mu_{x_1B_1})$ (not identically zero) and $g \in L^2(\mu_G)$ is an eigenvector of the convolution operator $ L_{hd\mu_{x_1B_1}}^* L_{hd\mu_{x_1B_1}}$ having non-zero eigenvalue $\lambda \|h\|_{L^\infty(\mu_{x_1B_1})}^2$ with
\begin{equation*}
\|fd\mu_{B_2} \ast g\|_{L^2(\mu_G)}^2 > \eta\|f\|_{L^\infty(\mu_{B_2})}^2\|g\|_{L^2(\mu_G)}^2.
\end{equation*}
Then, if $\epsilon_1 \leq 1$ and $\epsilon_2 \leq \eta c_1^2|\lambda|^2/16$, there is some $x' \in G$ such that
\begin{equation*}
\|fd\mu_{B_2} \ast (\rho_{x'}(g)|_{B_0})\|_{L^2(\mu_{B_0})}^2 > \eta \|f\|_{L^\infty(\mu_{B_2})}^2\|\rho_{x'}(g)\|_{L^2(\mu_{B_0})}^2/4,
\end{equation*}
and
\begin{equation*}
\|\rho_{x'}(g)\|_{L^\infty(\mu_{B_0})}\leq 4\eta^{-1/2}|\lambda|^{-1}c_1^{-1}\|\rho_{x'}(g)\|_{L^2(\mu_{B_0})}.
\end{equation*}
\end{lemma}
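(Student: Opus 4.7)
I would find $x'$ by averaging over $G$ followed by a pigeonhole. Set $A(x') := \|fd\mu_{B_2}\ast(\rho_{x'}(g)|_{B_0})\|_{L^2(\mu_{B_0})}^2$, $B(x') := \|\rho_{x'}(g)\|_{L^2(\mu_{B_0})}^2$, and $C(x') := \|\rho_{x'}(g)\|_{L^\infty(\mu_{B_0})}^2$, so the two conclusions read $A(x')>(\eta/4)\|f\|_{L^\infty(\mu_{B_2})}^2 B(x')$ and $C(x')\leq K_0 B(x')$ with $K_0 := 16(\eta|\lambda|^2c_1^2)^{-1}$. Because $\rho_{x'}$ commutes with left convolution and $\mu_G$ is right-invariant, Fubini gives
\begin{equation*}
\int_G B(x')\,d\mu_G(x') = \|g\|_{L^2(\mu_G)}^2 \textrm{ and } \int_G\|fd\mu_{B_2}\ast\rho_{x'}(g)\|_{L^2(\mu_{B_0})}^2\,d\mu_G(x') = \|fd\mu_{B_2}\ast g\|_{L^2(\mu_G)}^2,
\end{equation*}
and by hypothesis the latter exceeds $\eta\|f\|_{L^\infty(\mu_{B_2})}^2\|g\|_{L^2(\mu_G)}^2$.

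For the pointwise $L^\infty$ control I would exploit the eigenvector equation. Writing $H := hd\mu_{x_1B_1}$, the identity $\tilde{H}\ast H\ast g = \lambda\|h\|_{L^\infty(\mu_{x_1B_1})}^2\,g$ combined with the easily checked facts $\supp(\tilde{H}\ast H)\subset B_1^2$ and $\|\tilde{H}\ast H\|_{L^\infty(\mu_G)}\leq \|h\|_{L^\infty(\mu_{x_1B_1})}^2/\mu_G(B_1)$ yields, via Cauchy-Schwarz, the pointwise estimate
\begin{equation*}
|g(z)|^2 \leq \frac{1}{|\lambda|^2\mu_G(B_1)}\int_{B_1^2}|g(yz)|^2\,d\mu_G(y) \textrm{ for every } z\in G.
\end{equation*}
Specialising at $z=wx'$ with $w\in B_0$ and using the $4$-multiplicative containment $B_1^2 B_0\subset B_0^+$ for the pair $(B_0,B_1)$ (together with $\epsilon_1\leq 1$) yields the pointwise bound $C(x') \leq (2/|\lambda|^2 c_1)\|\rho_{x'}(g)\|_{L^2(\mu_{B_0^+})}^2$, which averages to $\int_G C\,d\mu_G \leq (2/|\lambda|^2 c_1)\|g\|_{L^2(\mu_G)}^2$. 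The same eigenvector argument on $B_0 B_2^4$, together with Lemma \ref{lem.bogcalc} applied to the pair $(B_0,B_2)$ and the implicit nesting $B_2\subset B_1$ (so that $B_1^2\cdot B_0 B_2^4\subset B_0^+$), controls the error in the replacement of $A(x')$ by $\|fd\mu_{B_2}\ast\rho_{x'}(g)\|_{L^2(\mu_{B_0})}^2$; invoking the hypothesis $\epsilon_2\leq \eta c_1^2|\lambda|^2/16$ then gives $\int_G A\,d\mu_G \geq (\eta/2)\|f\|_{L^\infty(\mu_{B_2})}^2\|g\|_{L^2(\mu_G)}^2$.

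Young's inequality shows $A(x')\leq\|f\|_{L^\infty(\mu_{B_2})}^2 B(x')$ pointwise, so the bad set $\mathcal{E}_A := \{x':A(x')\leq(\eta/4)\|f\|_{L^\infty(\mu_{B_2})}^2 B(x')\}$ satisfies $\int_{\mathcal{E}_A^c}B\,d\mu_G \geq (\eta/4)\|g\|_{L^2(\mu_G)}^2$. Markov's inequality applied to the other bad set $\mathcal{E}_C := \{x':C(x')>K_0 B(x')\}$ gives $\int_{\mathcal{E}_C}B\,d\mu_G \leq \int_G C\,d\mu_G/K_0 \leq (\eta c_1/8)\|g\|_{L^2(\mu_G)}^2$. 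Inclusion-exclusion on the $B$-weighted measure together with $c_1\leq 1$ then yield $\int_{\mathcal{E}_A^c\cap\mathcal{E}_C^c}B\,d\mu_G > 0$, so $\mathcal{E}_A^c\cap\mathcal{E}_C^c\neq\emptyset$ and any $x'$ in this intersection works. The chief technical obstacle is the error bound in the second paragraph: making the constants from Lemma \ref{lem.bogcalc} match the hypothesis $\epsilon_2\leq\eta c_1^2|\lambda|^2/16$ requires carefully cascading the $4$-multiplicative containments across the three sets $B_2\subset B_1\subset B_0$ so that the eigenvector $L^\infty$ bound extends from $B_0$ to $B_0 B_2^4$ with the same coefficient $2/|\lambda|^2c_1$.
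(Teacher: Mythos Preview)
Your overall strategy---averaging over $G$ and then finding $x'$ as the intersection of two ``good'' sets via a $B$-weighted Markov/pigeonhole argument---is a legitimate alternative to the paper's approach, and with one repair it goes through. But as written there is a genuine quantitative gap.

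\textbf{The gap.} Your appeal to Lemma~\ref{lem.bogcalc} fails on two counts. First, that lemma requires $\supp g\subset BB'^r$ (its proof bounds $\|g-g1_B\|_{L^2(\mu_G)}^2$ by $\|g\|_\infty^2\,\mu_G(BB'^r\setminus B)$, which is only valid under this support hypothesis), and $\rho_{x'}(g)$ is supported on all of $G$. Second, and more seriously, the lemma delivers a $\sqrt{\epsilon_2}$ bound, not an $\epsilon_2$ bound. Tracing your constants, this gives
\[
\int_G |A-\tilde A|\,d\mu_G \;\leq\; \frac{4\sqrt{\epsilon_2}}{|\lambda|^2 c_1}\,\|f\|_\infty^2\|g\|_{L^2(\mu_G)}^2,
\]
and under the hypothesis $\epsilon_2\leq \eta c_1^2|\lambda|^2/16$ the right-hand side is only $\leq(\sqrt{\eta}/|\lambda|)\|f\|_\infty^2\|g\|_{L^2}^2$. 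Since $|\lambda|$ may be arbitrarily small, this does not absorb into $(\eta/2)\|f\|_\infty^2\|g\|_{L^2}^2$, and your claimed lower bound $\int A\geq(\eta/2)\|f\|_\infty^2\|g\|_{L^2}^2$ does not follow. Your diagnosis of the ``chief technical obstacle'' as a matter of cascading containments is off target: the containments are fine; the problem is the square-root loss intrinsic to the $L^2$/Young route through Lemma~\ref{lem.bogcalc}.

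\textbf{The fix.} Drop Lemma~\ref{lem.bogcalc} and argue pointwise instead. For $x\in B_{0,2}^-$ one has $B_2x\subset B_0$ (from $B_2^4 B_{0,2}^- B_2^4\subset B_0$), so $(fd\mu_{B_2})\ast(\rho_{x'}(g)|_{B_0})(x)=(fd\mu_{B_2})\ast\rho_{x'}(g)(x)$ exactly. Hence the integrands of $A(x')$ and $\tilde A(x')$ agree on $B_{0,2}^-$, and bounding both crudely by $\|f\|_\infty^2\|\rho_{x'}(g)\|_{L^\infty(B_2B_0)}^2$ on $B_0\setminus B_{0,2}^-$ yields
\[
|A(x')-\tilde A(x')|\;\leq\; 2\epsilon_2\,\|f\|_\infty^2\|\rho_{x'}(g)\|_{L^\infty(B_2B_0)}^2,
\]
linear in $\epsilon_2$. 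Combined with your eigenvector bound $\|\rho_{x'}(g)\|_{L^\infty(B_2B_0)}^2\leq(2/|\lambda|^2c_1)\|\rho_{x'}(g)\|_{L^2(\mu_{B_{0,1}^+})}^2$ and the hypothesis on $\epsilon_2$, this integrates to at most $(\eta c_1/4)\|f\|_\infty^2\|g\|_{L^2}^2$, and your pigeonhole then works exactly as you wrote it.

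\textbf{Comparison with the paper.} The paper takes a different route: it performs a \emph{single} averaging step, but with deliberately mismatched sets---$B_0x$ in the numerator and the larger set $B_1^2B_2^2B_0x$ in the denominator---so that after choosing $x'$ the right-hand side already carries $\|g\|_{L^2(\mu_{B_1^2B_2^2B_0})}^2$. The eigenvector $L^\infty$-to-$L^2$ bound then applies directly at that fixed $x'$, and the two error terms ($D_1$, $D_2$) are each linear in $\epsilon_2$ by exactly the ``agree on $B_{0,2}^-$'' mechanism above. Your two-bad-set pigeonhole is a clean alternative that decouples the two conclusions, but it needs the same linear-in-$\epsilon_2$ estimate to close; Lemma~\ref{lem.bogcalc} is simply the wrong tool for that step.
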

\begin{proof}
First, we note that
\begin{eqnarray*}
\|fd\mu_{B_2} \ast g\|_{L^2(\mu_G)}^2&=&\langle \widetilde{fd\mu_{B_2}} \ast fd\mu_{B_2} \ast g,g\rangle_{L^2(\mu_G)}\\ & =&  \int{\langle \widetilde{fd\mu_{B_2}} \ast fd\mu_{B_2} \ast g, gd\mu_{B_0x}\rangle_{L^2(\mu_G)}d\mu_G(x)}
\end{eqnarray*}
by linearity, and similarly
\begin{equation*}
\|g\|_{L^2(\mu_G)}^2=\int{\langle g,gd\mu_{B_1^2B_2^2B_0x}\rangle_{L^2(\mu_G)}d\mu_G(x)},
\end{equation*}
whence, by averaging, there is an $x' \in G$ such that
\begin{equation*}
\langle \widetilde{fd\mu_{B_2}} \ast fd\mu_{B_2} \ast g, gd\mu_{B_0x'}\rangle_{L^2(\mu_G)}> \eta \|f\|_{L^\infty(\mu_{B_2})}^2\|g\|_{L^2(\mu_{B_1^2B_2^2B_0x'})}^2.
\end{equation*}
Now, $\rho_{x'}(\mu_{Ax'}) = \mu_A$ for all sets $A$, and since $\rho_{x'}$ is unitary we conclude that
\begin{equation*}
\langle \rho_{x'}(\widetilde{fd\mu_{B_2}} \ast fd\mu_{B_2} \ast g), \rho_{x'}(g)d\mu_{B_0}\rangle_{L^2(\mu_G)}
\end{equation*}
is bigger than
\begin{equation*}
\eta  \|f\|_{L^\infty(\mu_{B_2})}^2\|\rho_{x'}(g)\|_{L^2(\mu_{B_1^2B_2^2B_0})}^2.
\end{equation*}
On the other hand right translation commutes with left convolution hence
\begin{equation*}
\rho_{x'}(\widetilde{fd\mu_{B_2}} \ast fd\mu_{B_2} \ast g)=\widetilde{fd\mu_{B_2}}\ast fd\mu_{B_2} \ast \rho_{x'}(g),
\end{equation*}
and
\begin{eqnarray*}
\widetilde{hd\mu_{x_1B_1}}\ast hd\mu_{x_1B_1}  \ast \rho_{x'}(g) &=& \rho_{x'}(\widetilde{hd\mu_{x_1B_1}}\ast hd\mu_{x_1B_1}  \ast g)\\& =& \lambda \|h\|_{L^\infty(\mu_{x_1B_1})}^2\rho_{x'}(g).
\end{eqnarray*}
Thus we may assume, by translating $g$ if necessary, that $x'=1_G$.

The situation now is that
\begin{equation}\label{eqn.sitn}
\langle \widetilde{fd\mu_{B_2}} \ast fd\mu_{B_2} \ast g, g\rangle_{L^2(\mu_{B_0})}> \eta  \|f\|_{L^\infty(\mu_{B_2})}^2 \|g\|_{L^2(\mu_{B_1^2B_2^2B_0})}^2.
\end{equation}
We examine the difference $D_1$, defined to be
\begin{equation*}
|\langle \widetilde{fd\mu_{B_2}} \ast fd\mu_{B_2} \ast g, g\rangle_{L^2(\mu_{B_0})}-\langle \widetilde{fd\mu_{B_2}} \ast fd\mu_{B_2} \ast (g|_{B_0}), g\rangle_{L^2(\mu_{B_0})}|
\end{equation*}
in the first instance.  We begin by noting that $\supp \widetilde{fd\mu_{B_2}} \ast fd\mu_{B_2} \ast (g|_{B_0}) \subset B_2^2B_0 \subset B_{0,2}^+$, and
\begin{equation*}
 \widetilde{fd\mu_{B_2}} \ast fd\mu_{B_2} \ast (g|_{B_0})(x) =  \widetilde{fd\mu_{B_2}} \ast fd\mu_{B_2} \ast g(x)
\end{equation*}
for all $x \in B_{0,2}^-$.  It follows that
\begin{eqnarray*}
D_1& \leq &\|g\|_{L^\infty(\mu_{B_2^2B_0})}^2\|f\|_{L^1(\mu_{B_2})}^2\int{1_{B_{0,2}^+\setminus B_{0,2}^-}d\mu_{B_0}}\\ & \leq &  \epsilon_2\|f\|_{L^\infty(\mu_{B_2})}^2\|g\|_{L^\infty(\mu_{B_2^2B_0})}^2.
\end{eqnarray*}
Next we examine the difference $D_2$, defined to be
\begin{equation*}
|\frac{1}{\mu_G(B_0)}.\|fd\mu_{B_2} \ast (g|_{B_0})\|_{L^2(\mu_G)}^2 - \|fd\mu_{B_2} \ast (g|_{B_0})\|_{L^2(\mu_{B_0})}^2|
\end{equation*}
The integrands are the same inside $B_0$, so we have that
\begin{eqnarray*}
D_2 & \leq & \frac{1}{\mu_G(B_0)}\int{|fd\mu_{B_2} \ast (g|_{B_0})|^21_{B_2B_0\setminus B_0}d\mu_G}\\ & \leq & \epsilon_2\|f\|_{L^\infty(\mu_{B_2})}^2\|g\|_{L^\infty(B_0)}^2 
\end{eqnarray*}
by Young's inequality.

By the triangle inequality and the estimates for $D_1$ and $D_2$ applied to (\ref{eqn.sitn}) we get that
\begin{eqnarray}
\nonumber  \|fd\mu_{B_2} \ast (g|_{B_0})\|_{L^2(\mu_{B_0})}^2& >& \eta  \|f\|_{L^\infty(\mu_{B_2})}^2\|g\|_{L^2(\mu_{B_1^2B_2^2B_0})}^2\\ \label{eqn.fg}& & - 2\epsilon_2\|f\|_{L^\infty(\mu_{B_2})}^2\|g\|_{L^\infty(B_2^2B_0)}^2.
\end{eqnarray}

Now we need to bound $\|g\|_{L^\infty(B_2^2B_0)}$.  Recall that
\begin{equation*}
\widetilde{hd\mu_{x_1B_1}}\ast hd\mu_{x_1B_1}  \ast g = \lambda \|h\|_{L^\infty(\mu_{x_1B_1})}^2g,
\end{equation*}
thus
\begin{eqnarray*}
|g(x)||\lambda| \|h\|_{L^\infty(\mu_{x_1B_1})}^2 & \leq & |\widetilde{hd\mu_{x_1B_1}}\ast hd\mu_{x_1B_1}  \ast g(x)|\\ & \leq & \int{\|h\|_{L^\infty(\mu_{x_1B_1})}^2|g(y^{-1}x)|d\widetilde{\mu_{x_1B_1}}\ast \mu_{x_1B_1}(y)}\\ & =& \|h\|_{L^\infty(\mu_{x_1B_1})}^2\int{|g(y^{-1}x)|d\mu_{B_1} \ast \mu_{B_1}(y)}.
\end{eqnarray*}
Since $h$ is not identically zero, it follows that if $x \in B_2^2B_0$, then
\begin{eqnarray*}
|g(x)||\lambda| & \leq &\frac{\mu_G(B_1^2B_2^2B_0)}{\mu_G(B_1)}\int{|g(z)|d\mu_{B_1^2B_2^2B_0}(z)}\\ & \leq &  2\frac{\mu_G(B_0)}{\mu_G(B_1)}\|g\|_{L^2(\mu_{B_1^2B_2^2B_0})}
\end{eqnarray*}
by the Cauchy-Schwarz inequality and the fact that $B_2^2 \subset B_1^2$ and $(B_0,B_1)$ is $1$-closed $4$-multiplicative pair.  We have shown that
\begin{equation}\label{eqn.ods}
\|g\|_{L^\infty(\mu_{B_2^2B_0})} \leq 2|\lambda|^{-1}c_1^{-1}\|g\|_{L^2(\mu_{B_1^2B_2^2B_0})}.
\end{equation}
Inserting this and the upper bound on $\epsilon_2$ into (\ref{eqn.fg}) we get that
\begin{equation}\label{eqn.kr}
 \|fd\mu_{B_2} \ast (g|_{B_0})\|_{L^2(\mu_{B_0})}^2 >\eta \|f\|_{L^\infty(\mu_{B_2})}^2\|g\|_{L^2(\mu_{B_1^2B_2^2B_0})}^2/2.
\end{equation}
The first conclusion then follows since $B_0 \subset B_1^2B_2^2B_0$ and
\begin{equation*}
\mu_G(B_1^2B_2^2B_0) \leq 2\mu_G(B_0)
\end{equation*}
since $(B_0,B_1)$ is a $c_1$-thick $1$-closed $4$-multiplictive pair.

For the second conclusion note on combining (\ref{eqn.ods}) with (\ref{eqn.kr}), that
\begin{equation*}
\|g\|_{L^\infty(\mu_{B_2^2B_0})} \leq 4\eta^{-1/2}|\lambda|^{-1}c_1^{-1} \|f\|_{L^\infty(\mu_{B_2})}^{-1}\|(fd\mu_{B_2}) \ast (g|_{B_0})\|_{L^2(\mu_{B_0})},
\end{equation*}
as required.
\end{proof}
 
\section{Normalising a multiplicative pair}\label{sec.norm}
 
Given a group $G$ and a subgroup $H$ of bounded index, it is relatively easy to find a subgroup $K\lhd G$ such that $K \subset H$ and $K$ is also of bounded index.  The idea is to let $K$ be the kernel of the natural embedding of $G$ into the symmetry group on the cosets of $H$:
\begin{equation*}
G \mapsto \Sym(G/H); x \mapsto yH \mapsto xyH.
\end{equation*}
Normal subgroups are often much easier to work with than subgroups and we shall at times want an approximate analogue for multiplicative pairs and it is the purpose of this section to prove such a result.

Our argument is essentially the natural extension of the non-approximate situation via a covering argument.  It works in reasonable generality so we include a version not specific to multiplicative pairs for the benefit of the reader.
\begin{lemma}
Suppose that $A,B,X \subset G$, $\mu_G(BB^{-1}X^{-1}XBB^{-1})\leq K \mu_G(BB^{-1})$, and $X$ has size $M$.  Then there is a symmetric neighbourhood of the identity, $S$, with $\mu_G(S) \geq K^{1-M}\mu_G(BB^{-1})$ such that $uSu^{-1} \subset ABB^{-1}A^{-1}$ for all $u \in AX$. 
\end{lemma}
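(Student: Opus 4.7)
The plan is to take
\[
S := \bigcap_{x \in X} x^{-1}(BB^{-1})x,
\]
a symmetric neighbourhood of the identity (it is visibly symmetric, and contains $1_G$ since $1_G \in BB^{-1}$). The conjugation condition follows at once: for any $u = ax \in AX$ with $a \in A$ and $x \in X$, the defining inclusion $xSx^{-1}\subset BB^{-1}$ gives
\[
uSu^{-1} = a(xSx^{-1})a^{-1} \subset a(BB^{-1})a^{-1} \subset ABB^{-1}A^{-1},
\]
as required. So the entire content of the lemma is the measure bound $\mu_G(S) \geq K^{1-M}\mu_G(BB^{-1})$.

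To prove this bound, I would enumerate $X = \{x_1,\dots,x_M\}$, write $V := BB^{-1}$, and introduce the chain of partial intersections $S_j := \bigcap_{i\leq j} x_i^{-1}V x_i$. Thus $S_1 = x_1^{-1}V x_1$ has measure exactly $\mu_G(V)$, $S_M = S$, and one wants the per-step estimate $\mu_G(S_j) \geq \mu_G(S_{j-1})/K$ for $j \geq 2$; iterating the $M-1$ steps then produces $\mu_G(S) \geq \mu_G(V)/K^{M-1}$, which is the claim. Note that the factor $K^{1-M}$ rather than $K^{-M}$ reflects precisely that the first conjugate $S_1$ incurs no loss.

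The per-step estimate reduces, after conjugating by $x_1$ and changing variables, to a lower bound on $\mu_G(W \cap y^{-1}V y)$ where $W = x_1 S_{j-1} x_1^{-1} \subset V$ and $y = x_1 x_j^{-1} \in XX^{-1}$. A standard Cauchy--Schwarz manipulation converts this into an \emph{upper} bound on a product set of the form $\mu_G(VyV)$, and the hypothesis $\mu_G(VX^{-1}XV) \leq K\mu_G(V)$ enters through the key identity $VX^{-1}XV = (VX^{-1})(VX^{-1})^{-1}$, which shows that $Y := VX^{-1}$ has Pl\"unnecke constant at most $K$. Tao's non-abelian Pl\"unnecke machinery from \cite{TCTNC} then gives bounds of the form $\mu_G(Y^{\sigma_1}\cdots Y^{\sigma_n}) \leq K^{O(n)}\mu_G(Y)$, which I would use to control $\mu_G(VyV)$ for $y \in XX^{-1}$ and close the induction.

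The main obstacle is that the iterated intersections of conjugates naturally involve $XX^{-1}$-translates of $V$, whereas the hypothesis constrains $X^{-1}X$-products; the non-abelian Pl\"unnecke theory is precisely the tool for bridging this gap, but doing so cleanly while keeping the per-step loss down to a single factor of $K$ (rather than $K^{O(1)}$) requires some care in choosing which intermediate products to bound.
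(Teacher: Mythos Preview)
Your choice of $S=\bigcap_{x\in X}x^{-1}BB^{-1}x$ and the verification that $uSu^{-1}\subset ABB^{-1}A^{-1}$ for $u\in AX$ are correct and match the paper (up to the harmless $x\leftrightarrow x^{-1}$ convention). The gap is in the measure bound.

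Your per-step claim asks for $\mu_G(W\cap y^{-1}Vy)\geq \mu_G(W)/K$ with $W=x_1S_{j-1}x_1^{-1}\subset V$ fixed. There is no ``standard Cauchy--Schwarz manipulation'' that converts this into an upper bound on $\mu_G(VyV)$: the averaging/convolution argument only produces a \emph{translate} $t$ with $\mu_G(W\cap t\,y^{-1}Vy)$ large, and there is no mechanism forcing $t=1_G$. Invoking the non-abelian Pl{\"u}nnecke machinery does not help here either --- it controls sizes of iterated products, not intersections at a prescribed point --- and in any case would only deliver $K^{O(1)}$ per step, not the single factor of $K$ you need. As you yourself note at the end, getting exactly $K$ per step ``requires some care''; in fact the direct-intersection route does not seem to close at all.

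The paper avoids this obstacle by \emph{not} bounding $\mu_G(S_j)$ directly. Instead it builds a nested sequence $D_1\supset D_2\supset\cdots\supset D_M$ with $D_1=B$, maintaining only the weaker invariant $D_iD_i^{-1}\subset x_iBB^{-1}x_i^{-1}$. At each step one considers the convolution $1_{D_i}\ast 1_{x_{i+1}BB^{-1}x_{i+1}^{-1}}$, whose support lies in a (left-right) translate of $BB^{-1}X^{-1}XBB^{-1}$; the hypothesis then gives, by averaging, some $z$ with
\[
\mu_G\bigl(D_i\cap z\,x_{i+1}BB^{-1}x_{i+1}^{-1}\bigr)\geq \mu_G(D_i)/K,
\]
and one takes $D_{i+1}:=D_i\cap z\,x_{i+1}BB^{-1}x_{i+1}^{-1}$. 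The point is that the stray translate $z$ cancels in $D_{i+1}D_{i+1}^{-1}$, so the invariant is preserved. At the end $D_MD_M^{-1}\subset S$ and $\mu_G(D_MD_M^{-1})\geq\mu_G(D_M)\geq K^{1-M}\mu_G(B)$, but actually one wants $K^{1-M}\mu_G(BB^{-1})$; this comes from noting $\mu_G(D_M D_M^{-1}) \geq \mu_G(D_M)$ and tracking the first step more carefully, or simply observing that $\mu_G(S) \geq \mu_G(D_MD_M^{-1})$. No Pl{\"u}nnecke is needed: the hypothesis on $BB^{-1}X^{-1}XBB^{-1}$ is used verbatim.
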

\begin{proof}
We let $S:=\bigcap_{x \in X}{xBB^{-1}x^{-1}}$, so that $S$ is certainly a symmetric neighbourhood of the identity.  Moreover, if $u \in AX$ then $u \in Ax$ for some $x \in X$, whence
\begin{equation*}
uSu^{-1} \subset AxSx^{-1}A^{-1} \subset Axx^{-1}BB^{-1}xx^{-1}A^{-1} = ABB^{-1}A^{-1}.
\end{equation*}
It remains to show that $S$ is large: enumerate $X$ as $(x_i)_i$ and define sets $(D_i)_i$ inductively such that
\begin{equation*}
D_i \subset D_{i-1}, \mu_G(D_i) = \Omega_{K}(\mu_G(D_{i-1})) \textrm{ and } D_{i}D_{i}^{-1}\subset x_{i}BB^{-1}x_{i}^{-1}.
\end{equation*}
Set $D_1:=B$ and note that it trivially satisfies the above.  Now, suppose that we have defined $D_i$.  Note that
\begin{equation*}
\supp{1_{D_i} \ast 1_{x_iBB^{-1}x_i^{-1}}} = D_ix_{i+1}BB^{-1}x_{i+1}^{-1} \subset x_1BB^{-1}X^{-1}XBB^{-1}x_{i+1}^{-1},
\end{equation*}
since $D_i \subset D_1 \subset x_1BB^{-1}x_1^{-1}$.  It follows by averaging that there is some $x$ such that
\begin{equation*}
\mu_G(D_i \cap xx_iBB^{-1}x_i^{-1})=1_{D_i} \ast 1_{x_iBB^{-1}x_i^{-1}}(x) \geq \mu_G(D_i)/K.
\end{equation*}
Let $D_{i+1}:=D_i \cap xx_iBB^{-1}x_i^{-1}$. The sequence $D_i$ clearly has the desired properties and $D_{M}D_{M}^{-1} \subset S$ from which the result follows.
\end{proof}
We shall need the following immediate corollary.
\begin{corollary}\label{cor.apnorm}
Suppose that $G$ is a finite group and $B_0,B_1,B_2$ are such that $\mathcal{B}_{0,1}=(B_0,B_1)$ is a $c_1$-thick $1$-multiplicative pair, and $\mathcal{B}_{1,2}=(B_1,B_2)$ is a $c_2$-thick, $1$-closed $1$-multiplicative pair.  Then there is a symmetric neighbourhood of the identity $B_3$ such that $\mu_G(B_3)=\Omega_{c_1,c_2}(\mu_G(B_2))$ and $xB_3x^{-1} \subset B_2^6$ for all $x \in B_1$.
\end{corollary}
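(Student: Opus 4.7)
The plan is to apply the preceding lemma with the choices $B=B_2$ and $A=B_2^2$; these give $BB^{-1}=B_2^2$ and $ABB^{-1}A^{-1}=B_2^6$, matching the conclusion we want. The remaining tasks are to supply a small set $X\subset B_1$ such that $B_1\subset AX=B_2^2X$, and to verify the lemma's Ruzsa-type hypothesis.

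To produce $X$, I would use a greedy covering: pick points $x_1,x_2,\dots$ of $B_1$ one at a time, demanding that the translates $x_iB_2$ be pairwise disjoint, and stop when this is no longer possible. Since $1_G\in B_2$, the union $\bigcup_i x_iB_2$ is contained in $B_1B_2\subset B_2B_1B_2\subset B_1^+$, and the $1$-closure of $\mathcal{B}_{1,2}$ combined with $B_1^-\subset B_1\subset B_1^+$ yields $\mu_G(B_1^+)\leq 2\mu_G(B_1)$; disjointness and the $c_2$-thickness then bound the number of selected points by $N\leq 2/c_2$. Maximality of the selection forces $B_1\subset\bigcup_i x_iB_2B_2^{-1}$, so $B_1\subset X_0B_2^2$ with $X_0=\{x_1,\dots,x_N\}$, and taking inverses gives $B_1\subset B_2^2X$ for $X:=X_0^{-1}\subset B_1$ of cardinality $N$.

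The Ruzsa-type hypothesis to verify is $\mu_G(B_2^2X^{-1}XB_2^2)\leq K\mu_G(B_2^2)$. Since $X\subset B_1$, one has $X^{-1}X\subset B_1^2\subset B_0$, the last inclusion coming from $B_1B_0^-B_1\subset B_0$ applied with $1_G\in B_0^-$. Because $B_2\subset B_1$ (similarly from $B_2B_1^-B_2\subset B_1$), we deduce
\begin{equation*}
B_2^2X^{-1}XB_2^2\subset B_1B_0B_1\subset B_0^+.
\end{equation*}
Combining a bound on $\mu_G(B_0^+)$ with the thickness chain $\mu_G(B_0)\leq\mu_G(B_1)/c_1\leq\mu_G(B_2)/(c_1c_2)$ then gives $K=O_{c_1,c_2}(1)$. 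The preceding lemma now yields a symmetric neighbourhood of the identity $S$ with
\begin{equation*}
\mu_G(S)\geq K^{1-N}\mu_G(B_2^2)=\Omega_{c_1,c_2}(\mu_G(B_2))
\end{equation*}
and $uSu^{-1}\subset ABB^{-1}A^{-1}=B_2^6$ for every $u\in B_2^2X\supset B_1$. Setting $B_3:=S$ completes the proof.

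The step I expect to be the main obstacle is controlling $\mu_G(B_0^+)$ relative to $\mu_G(B_0)$: this is precisely a closure-type bound on $\mathcal{B}_{0,1}$, so if it is not available directly from the hypotheses one would need either to absorb it into the hidden constants or to pass first to a slightly shrunken pair whose closure can be arranged by an application of the ideas from \S\ref{sec.frei}.
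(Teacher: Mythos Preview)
Your approach is exactly the one the paper has in mind: apply the preceding lemma with $B=B_2$, $A=B_2^2$, and a covering set $X\subset B_1$ of size $O(c_2^{-1})$ obtained by the greedy argument you describe. The containments $B_2^2\subset B_1$ and $B_1^2\subset B_0$ (both from $1_G\in B_i^-$) are what drive the Ruzsa-type hypothesis, and your chain $B_2^2X^{-1}XB_2^2\subset B_1B_0B_1\subset B_0^+$ is correct. One small slip: you write ``$B_2\subset B_1$'' where you need (and in fact use) $B_2^2\subset B_1$; this does hold, since $B_2\cdot 1_G\cdot B_2\subset B_2 B_1^- B_2\subset B_1$.

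Your worry about $\mu_G(B_0^+)$ is well-founded and not an artifact of your argument. The corollary as stated gives no closure parameter for $\mathcal{B}_{0,1}$, so there is no way to bound $\mu_G(B_0^+)/\mu_G(B_0)$ from the listed hypotheses alone; any route through the lemma will hit the same obstruction, since $B_2^2X^{-1}XB_2^2$ genuinely lives in $B_1^4\subset B_0^+$ and nowhere smaller in general. In the paper's sole application of this corollary (inside the proof of Proposition~\ref{prop.mpls}) the sets $B_0,B_1,\dots$ come from Proposition~\ref{prop.regapp}, so $\mathcal{B}_{0,1}$ is in fact $\epsilon_1$-closed with $\epsilon_1\leq 1$, and then $\mu_G(B_0^+)\leq 2\mu_G(B_0)$ gives $K\leq 2(c_1c_2)^{-1}$ as needed. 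So the honest reading is that the corollary tacitly assumes $\mathcal{B}_{0,1}$ is $1$-closed (or that one absorbs $\mu_G(B_0^+)/\mu_G(B_0)$ into the implied constant); with that addition your proof is complete and matches the paper's intended derivation, including the quoted bound of $c_3^{-1}$ exponential in $c_1^{-O(1)}c_2^{-O(1)}$.
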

It is easy to see that the bounds on $c_3^{-1}$ inherited from the earlier proof are exponential in $c_1^{-O(1)}c_2^{-O(1)}$.

We shall use the above lemma to facilitate the replacement of expressions like $g \ast \mu_B$ with their conjugates $\mu_B \ast g$, and in particular it will be done through the following lemma.
\begin{lemma}\label{lem.apnormcalc}
Suppose that $G$ is a finite group, $B_0,B_1,B_2,B_3$ are symmetric neighbourhoods of the identity such that the pair $\mathcal{B}_{1,2}:=(B_1,B_2)$ is an $\epsilon$-closed $1$-multiplicative pair, and $xB_3x^{-1} \subset B_2$ for all $x \in B_0$, $f \in L^\infty(\mu_{B_1})$, $g \in L^2(\mu_{B_0})$ and $\mu$ is a probability measure with $\supp \mu \subset B_3$. Then if
\begin{equation*}
\epsilon \leq \|(fd\mu_{B_1}) \ast (g \ast \mu)\|_{L^2(\mu_{B_0})}/2\sqrt{3}\|f\|_{L^\infty(\mu_{B_1})}\|g\|_{L^\infty(\mu_{B_0})}
\end{equation*}
we have
\begin{equation*}
\|(fd\mu_{B_1}) \ast (g \ast \mu)\|_{L^2(\mu_{B_0})}^2/2 \leq \sup_{u \in {B_0}}{\sup_{y \in B_{1,2}^-u}{|\rho_{u^{-1}}(f) \ast \mu(y)|^2}}\|g\|_{L^2(\mu_{B_0})}^2.
\end{equation*}
\end{lemma}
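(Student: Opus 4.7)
The plan is to exploit the approximate normalizer property $uB_3u^{-1}\subset B_2$ (for $u\in B_0$) in order to commute $\mu$ through the convolution, converting $g\ast\mu$ (up to a small error) into a left-multiplier of $g$ by an averaged version of $f$. A final Young's inequality on $B_0$ then yields the claimed estimate.

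By associativity, for $u\in B_0$,
\[
 (fd\mu_{B_1})\ast(g\ast\mu)(u)=\int (fd\mu_{B_1})\ast g(us^{-1})\,d\mu(s).
\]
Writing $us^{-1}=\sigma u$ with $\sigma:=us^{-1}u^{-1}\in uB_3u^{-1}\subset B_2$, and letting $\xi_u$ be the pushforward of $\mu$ under $s\mapsto\sigma$ (a probability measure on $B_2$), the right-hand side becomes $\int (fd\mu_{B_1})\ast g(\sigma u)\,d\xi_u(\sigma)$. Inside, $(fd\mu_{B_1})\ast g(\sigma u)=\int f(y)g(y^{-1}\sigma u)\,d\mu_{B_1}(y)$, and substituting $y=\sigma y'$ produces $y^{-1}\sigma u=y'^{-1}u$ at the cost of replacing $d\mu_{B_1}$ with $d\mu_{\sigma^{-1}B_1}$. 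Since $\sigma\in B_2$ and $\mathcal{B}_{1,2}$ is $\epsilon$-closed, Lemma~\ref{lem.approxhaar} gives $\|d\mu_{\sigma^{-1}B_1}-d\mu_{B_1}\|_{\mathrm{TV}}\leq\epsilon$, so the swap is legal at a per-point cost of at most $\epsilon\|f\|_{L^\infty(\mu_{B_1})}\|g\|_{L^\infty(\mu_{B_0})}$. Averaging over $\xi_u$ and interchanging integrals yields
\[
 (fd\mu_{B_1})\ast(g\ast\mu)(u)=(\Phi_u d\mu_{B_1})\ast g(u)+E(u),
\]
with $\Phi_u(y'):=\int f(\sigma y')\,d\xi_u(\sigma)$ and $\|E\|_{L^\infty(\mu_{B_0})}\leq \epsilon\|f\|_{L^\infty(\mu_{B_1})}\|g\|_{L^\infty(\mu_{B_0})}$.

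Unwinding the definition $\sigma=us^{-1}u^{-1}$ identifies $\Phi_u(y')$ with a value of $\rho_{u^{-1}}(f)\ast\mu$ at a point of $B_{1,2}^- u$, so $|\Phi_u(y')|\leq M$ where $M:=\sup_{u\in B_0}\sup_{y\in B_{1,2}^- u}|\rho_{u^{-1}}(f)\ast\mu(y)|$. Since $g$ is supported on $B_0$, Young's inequality in the form $\|(hd\mu_{B_1})\ast g\|_{L^2(\mu_{B_0})}\leq \|h\|_{L^\infty}\|g\|_{L^2(\mu_{B_0})}$ gives $\|(\Phi_u d\mu_{B_1})\ast g\|_{L^2(\mu_{B_0})}^2\leq M^2\|g\|_{L^2(\mu_{B_0})}^2$. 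Combining with the $E$-contribution via $(a+b)^2\leq 2a^2+2b^2$ and invoking the hypothesis $\epsilon\leq \|(fd\mu_{B_1})\ast(g\ast\mu)\|_{L^2(\mu_{B_0})}/2\sqrt{3}\|f\|_{L^\infty(\mu_{B_1})}\|g\|_{L^\infty(\mu_{B_0})}$ absorbs the cross-term, producing the factor $1/2$ in the conclusion. The principal technical obstacle is the identification step: the substitution $y=\sigma y'$ naturally places $\sigma$ on the \emph{left} of $y'$ inside $f$, whereas $\rho_{u^{-1}}(f)\ast\mu(bu)=\int f(b\sigma)\,d\xi_u(\sigma)$ places $\sigma$ on the \emph{right} of $b$. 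Reconciling these orderings in a non-abelian setting -- either via a further conjugation substitution introducing additional $\epsilon$-error terms controlled again by the closedness of $\mathcal{B}_{1,2}$, or by restricting $y'$ to the range $B_{1,2}^-$ where both orderings give the same $f$-values up to absorbed error -- is the delicate part of the argument and is the likely source of the constant $\sqrt{3}$ in the hypothesis on $\epsilon$.
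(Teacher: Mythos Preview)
Your proof has a genuine gap at exactly the place you flag. After the substitution $y=\sigma y'$ you arrive at $\Phi_u(y')=\int f(\sigma y')\,d\xi_u(\sigma)$ with $\sigma$ on the \emph{left} of $y'$, whereas the target quantity $\rho_{u^{-1}}(f)\ast\mu(bu)=\int f(b\sigma)\,d\xi_u(\sigma)$ has $\sigma$ on the \emph{right}. In a non-abelian group these are genuinely different, and neither of your proposed repairs is carried out; the second one (restrict $y'$ to $B_{1,2}^-$ and control the remainder) is in fact the heart of the paper's argument, so even if you adopt it you still have all of the work ahead of you. A further loose end: your ``Young's inequality'' step is not standard since $\Phi_u$ depends on $u$, so $u\mapsto(\Phi_u d\mu_{B_1})\ast g(u)$ is not a convolution and needs a separate Schur-type estimate.

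The paper avoids the ordering problem entirely by a different unpacking of the triple convolution. It writes
\[
f\ast(g\ast\mu)(u)\;=\;\int_G h_u(y)\,g(y)\,d\mu_G(y),\qquad h_u(y):=\int f(us^{-1}y^{-1})\,d\mu(s),
\]
choosing the dummy variable to be the argument of $g$ rather than of $f$. With this choice the quantity in the supremum is \emph{exactly} $|h_u(y)|$, and no left/right reconciliation is ever needed. The approximate-normaliser hypothesis is used just once, to give $\supp h_u\subset B_1uB_3\subset B_1B_2u\subset B_{1,2}^+u$. One then compares $I_1(u)=\int h_u g\,d\mu_G$ with its restriction $I_2(u)=\int_{B_{1,2}^-u} h_u g\,d\mu_G$: their difference is supported on a set of $\mu_G$-measure at most $\epsilon\,\mu_G(B_1)$ and is bounded crudely by $\epsilon\,\mu_G(B_1)\|f\|_\infty\|g\|_\infty$. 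For $I_2$ one uses $|h_u|\le Q$ pointwise and bounds $\int|I_2|^2\,d\mu_{B_0}$ by $Q^2\,\mu_G(B_1)^2\,\|g\|_{L^2(\mu_{B_0})}^2$ via AM--GM and Fubini. The constant $2\sqrt{3}$ arises from combining these two pieces through Cauchy--Schwarz applied to $\int(|I_1|^2-|I_2|^2)\,d\mu_{B_0}$, not from any commutation trick.
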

\begin{proof}
Begin by putting
\begin{equation*}
h_u(y):=\int{f(z)d\mu(y^{-1}z^{-1}u)} = \rho_{y^{-1}}(f) \ast \mu(u),
\end{equation*}
and note that
\begin{equation*}
\int{h_u(y)g(y)d\mu_G(y)}=f \ast (g \ast \mu)(u),
\end{equation*}
so that writing
\begin{equation*}
S:=\|(fd\mu_{B_1}) \ast (g \ast \mu)\|_{L^2(\mu_{B_0})}^2
\end{equation*}
we have
\begin{equation*}
S=\frac{1}{\mu_G(B_1)^2}\int{|\int{h_u(y)g(y)d\mu_G(y)}|^2d\mu_{B_0}(u)}.
\end{equation*}
Now, $y \in \supp h_u$ implies $u \in B_1yB_3$, and hence $\supp h_u \subset B_1uB_3\subset B_1B_2u$.  We want to estimate a quantity which is quite cumbersome to write down and so we shall have to introduce a lot of auxiliary notation.  Begin by writing
\begin{equation*}
I_1(u):=\int{h_u(y)g(y)d\mu_G(y)} ,
\end{equation*}
\begin{equation*}
I_2(u):=\int{h_u(y)g(y)1_{B_{1,2}^-u}(y)d\mu_G(y)},
\end{equation*}
and
\begin{equation*}
D_1:=\int{|I_1|^2d\mu_{B_0}} \textrm{ and }D_1:=\int{|I_2|^2d\mu_{B_0}} .
\end{equation*}
We want to estimate $D:=|D_1 -D_2|$ from above and below.  First, unpacking the notation one sees that
\begin{equation*}
D_1=\mu_G(B_1)^2\|(fd\mu_{B_1}) \ast (g \ast \mu)\|_{L^2(\mu_{B_0})}^2
\end{equation*}
from our earlier calculations.  To estimate $D$ from below, we shall estimate $D_2$ from above.  Write
\begin{equation*}
Q:=\sup_{u \in B_0}\sup_{y \in B_{1,2}^-u}{|\rho_{y^{-1}}(f) \ast \mu(u)|},
\end{equation*}
(our eventual quantity of interest) and note that
\begin{eqnarray*}
D_2 & \leq &Q^2\int{\left(\int{|g(y)|1_{B_{1,2}^-u}(y)d\mu_G(y)}\right)^2d\mu_{B_0}(u)}\\ 
& = & Q^2\int{|g(y)||g(y')|\mu_G(B_{1,2}^-y \cap B_{1,2}^-y')d\mu_G(y)d\mu_G(y')}\\& \leq & \frac{1}{2}Q^2\int{(|g(y)|^2+|g(y')|^2)\mu_G(B_{1,2}^-y \cap B_{1,2}^-y')d\mu_G(y)d\mu_G(y')}\\
& = & \mu_G(B_{1,2}^-)^2Q^2\|g\|_{L^2(\mu_{B_0})}^2 \leq \mu_G(B_1)^2Q^2\|g\|_{L^2(\mu_{B_0})}^2.
\end{eqnarray*}
Now we turn to bounding $D$ from above.  We have
\begin{equation}\label{eqn.eaty}
D =| \int{|I_1|^2 - |I_2|^2d\mu_{B_0}}| \leq \left(\int{|I_1 - I_2|^2d\mu_{B_0}}\int{(|I_1|+|I_2|)^2d\mu_{B_0}}\right)^{1/2}
\end{equation}
by the Cauchy-Schwarz inequality and the triangle inequality.  Now, since $\supp h_u \subset B_1B_2u$ we have that
\begin{equation*}
I_1(u)=\int{h_u(y)g(y)1_{B_{1,2}^+u}(y)d\mu_G(y)},
\end{equation*}
whence
\begin{equation*}
|I_1(u) - I_2(u)| \leq \|h_u\|_{L^\infty(\mu_G)}\|g\|_{L^\infty(\mu_G)}\epsilon \mu_{G}(B_1)
\end{equation*}
since $(B_1,B_2)$ is an $\epsilon$-closed $1$-multiplicative pair. By Young's inequality and the support of$f$ and $g$ we have that
\begin{equation*}
 \|h_u\|_{L^\infty(\mu_G)}\leq \|f\|_{L^\infty(\mu_{B_1})} \textrm{ and } \|g\|_{L^\infty(\mu_G)} \leq \|g\|_{L^\infty(\mu_{B_0})}.
\end{equation*}
Inserting this into (\ref{eqn.eaty}) we get that
\begin{eqnarray*}
D& \leq& \epsilon\mu_{G}(B_1)\|f\|_{L^\infty(\mu_{B_1})}\|g\|_{L^\infty(\mu_{B_0})}\left(2\int{|I_1|^2+|I_2|^2d\mu_{B_0}}\right)^{1/2}\\ & = & \epsilon \mu_G(B_1)\|f\|_{L^\infty(\mu_{B_1})}\|g\|_{L^\infty(\mu_{B_0})}(2\mu_G(B_1)^2(S + Q^2\|g\|_{L^2(\mu_{B_0})}^2))^{1/2}.
\end{eqnarray*}
On the other hand
\begin{equation*}
D \geq \mu_G(B_1)^2(S - Q^2\|g\|_{L^2(\mu_{B_0})}^2),
\end{equation*}
whence
\begin{equation*}
S - Q^2\|g\|_{L^2(\mu_{B_0})}^2 \leq \epsilon\|f\|_{L^\infty(\mu_{B_1})}\|g\|_{L^\infty(\mu_{B_0})} \sqrt{2(S+Q^2\|g\|_{L^2(\mu_{B_0})}^2)}.
\end{equation*}
Now, either we are done, or
\begin{equation*}
2Q^2\|g\|_{L^2(\mu_{B_0})}^2 \leq S,
\end{equation*}
whence
\begin{equation*}
S - Q^2\|g\|_{L^2(\mu_{B_0})}^2 \leq \epsilon\|f\|_{L^\infty(\mu_{B_1})}\|g\|_{L^\infty(\mu_{B_0})} \sqrt{3S},
\end{equation*}
and the result is proved in light of the upper bound on $\epsilon$.
\end{proof}

\section{Fourier analysis on multiplicative pairs}\label{sec.locft}

In this section we develop Fourier analysis on multiplicative pairs.  In particular, we shall try to extend as many of the results from \S\ref{sec.ft} to this approximate setting as possible.

We have previously defined a Haar measure and with this we can formulate the analogue of the transform $f \mapsto L_f$.  Suppose that $\mathcal{B}=(B,B')$ is a multiplicative pair and $f \in L^1(\mu_{B'})$.  We define the operator $L_{\mathcal{B},f}$ as follows:
\begin{equation*}
L_{\mathcal{B},f}:L^2(\mu_B) \rightarrow L^2(\mu_B); v \mapsto ((fd\mu_{B'}) \ast v)|_B.
\end{equation*}
The map is \emph{not} an algebra homomorphism, although it functions approximately as such, but it does  preserve adjoints.
\begin{lemma}
Suppose that $G$ is a finite group, $\mathcal{B}=(B,B')$ is a multiplicative pair and $f \in L^1(\mu_{B'})$. Then $L_{\mathcal{B},f}^*=L_{\mathcal{B},\tilde{f}}$.
\end{lemma}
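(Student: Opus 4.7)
The plan is to reduce everything to the ambient (non-relative) Lemma \ref{lem.triv}. The key observation is that elements $v, w$ of $L^2(\mu_B)$ are really just functions on $G$ supported in $B$, and once we expand the inner products $\langle \cdot, \cdot\rangle_{L^2(\mu_B)}$ as $\frac{1}{\mu_G(B)}\int_B(\cdot)\overline{(\cdot)}d\mu_G$, the restrictions $|_B$ that appear in the definition of $L_{\mathcal{B},f}$ and $L_{\mathcal{B},\tilde f}$ become invisible: the integrand is already zero outside $B$ because the other factor is supported on $B$.

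First I would identify the measure $fd\mu_{B'}$ with the ordinary $L^1(\mu_G)$ function $F := \frac{f\cdot 1_{B'}}{\mu_G(B')}$; then the convolution $(fd\mu_{B'})\ast v$ is literally $F\ast v$ in the sense of \S\ref{sec.ft}. Since $B'$ is symmetric, a direct computation gives $\tilde F(x)=\overline{F(x^{-1})}=\tilde f(x)\cdot \frac{1_{B'}(x)}{\mu_G(B')}$, i.e.\ $\tilde F$ is the function corresponding to the measure $\tilde f d\mu_{B'}$.

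Next, for $v,w\in L^2(\mu_B)$ (viewed as functions on $G$ supported in $B$), I would write
\begin{equation*}
\langle L_{\mathcal{B},f}v,w\rangle_{L^2(\mu_B)}=\frac{1}{\mu_G(B)}\int_B ((fd\mu_{B'})\ast v)\overline{w}\,d\mu_G=\frac{1}{\mu_G(B)}\langle F\ast v,w\rangle_{L^2(\mu_G)},
\end{equation*}
where the last equality uses that $w$ vanishes outside $B$, so the restriction $|_B$ in the definition of $L_{\mathcal{B},f}v$ contributes nothing. Applying Lemma \ref{lem.triv} on $G$ gives $\langle F\ast v,w\rangle_{L^2(\mu_G)}=\langle v,\tilde F\ast w\rangle_{L^2(\mu_G)}$. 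Running the same identification backwards, and using that $v$ is supported in $B$ so that the restriction in the definition of $L_{\mathcal{B},\tilde f}w$ is again invisible, yields
\begin{equation*}
\frac{1}{\mu_G(B)}\langle v,\tilde F\ast w\rangle_{L^2(\mu_G)}=\langle v,L_{\mathcal{B},\tilde f}w\rangle_{L^2(\mu_B)}.
\end{equation*}

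Combining these chains of equalities gives $\langle L_{\mathcal{B},f}v,w\rangle_{L^2(\mu_B)}=\langle v,L_{\mathcal{B},\tilde f}w\rangle_{L^2(\mu_B)}$ for all $v,w\in L^2(\mu_B)$, which by uniqueness of the adjoint is precisely $L_{\mathcal{B},f}^*=L_{\mathcal{B},\tilde f}$. There is really no obstacle here; the only thing to be a little careful about is the bookkeeping between measures and their density functions on $G$, and the observation that the two restrictions $|_B$ in the pair $\langle L_{\mathcal{B},f}v,w\rangle$ and $\langle v,L_{\mathcal{B},\tilde f}w\rangle$ are redundant in the respective inner products.
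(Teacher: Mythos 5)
Your proposal is correct and is essentially the paper's own proof: both expand the $L^2(\mu_B)$ inner product as $\mu_G(B)^{-1}$ times the ambient $L^2(\mu_G)$ inner product (using that $w$, resp.\ $v$, is supported in $B$ so the restrictions $|_B$ drop out), apply Lemma \ref{lem.triv}, and use the symmetry of $B'$ to identify $\widetilde{fd\mu_{B'}}$ with $\tilde f d\mu_{B'}$. Your extra bookkeeping with the density $F=f\cdot 1_{B'}/\mu_G(B')$ is just a more explicit rendering of the same steps.
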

\begin{proof}
This is simply a calculation.  Suppose that $v,w \in L^2(\mu_B)$ and note that
\begin{equation*}
\langle L_{\mathcal{B},f}v,w\rangle_{L^2(\mu_B)} = \mu_G(B)^{-1}\langle (fd\mu_{B'}) \ast v,w \rangle_{L^2(\mu_G)}.
\end{equation*}
We apply Lemma \ref{lem.triv} to see that
\begin{equation*}
\langle L_{\mathcal{B},f}v,w\rangle_{L^2(\mu_B)} = \mu_G(B)^{-1}\langle v,\widetilde{(fd\mu_{B'})} \ast w \rangle_{L^2(\mu_G)}.
\end{equation*}
However, $\widetilde{fd\mu_{B'}} = \tilde{f}d\mu_{B'}$, whence
\begin{equation*}
\langle L_{\mathcal{B},f}v,w\rangle_{L^2(\mu_B)} = \langle v,L_{\mathcal{B},\tilde{f}} \ast w \rangle_{L^2(\mu_B)}
\end{equation*}
since $\supp v \subset B$.  Since $v$ and $w$ were arbitrary we conclude that $L_{\mathcal{B},f}^*=L_{\mathcal{B},\tilde{f}}$.
\end{proof}
We do not have a direct analogue of Parseval's theorem, however we do have an analogue of Bessel's inequality (Parseval's theorem polarised and with an inequality instead of equality) which is all we shall need for applications.

The observation that a Bessel-type inequality is often sufficient was made fact by Green and Tao in \cite{BJGTCTU3} where they prove a Bessel inequality relative to Bohr sets in the abelian setting.
\begin{proposition}[Local Bessel inequality]\label{prop.locbes}
Suppose that $G$ is a finite group, $\mathcal{B}=(B,B')$ is a $c$-thick multiplicative pair and $f \in L^2(\mu_{B'})$.  Then
\begin{equation*}
\|L_{\mathcal{B},f}\|_{\End(L^2(\mu_B))}^2 \leq c^{-1}\|f\|_{L^2(\mu_{B'})}^2
\end{equation*}
\end{proposition}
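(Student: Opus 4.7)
The plan is to realise $L_{\mathcal{B},f}$ as an integral operator on $L^2(\mu_B)$ and then compute its Hilbert--Schmidt norm in the usual way, using the identity $\|T\|_{\End(L^2(\mu_B))}^2=\iint |K(x,z)|^2 d\mu_B(x)d\mu_B(z)$ whenever $Tv(x)=\int K(x,z)v(z)d\mu_B(z)$. The thickness hypothesis is then invoked only at the very end, to pass from $\mu_G(B)/\mu_G(B')$ to $c^{-1}$.

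First I would derive the kernel explicitly. Starting from
\begin{equation*}
L_{\mathcal{B},f}v(x)=1_B(x)\int f(y)v(y^{-1}x)d\mu_{B'}(y),
\end{equation*}
I make the change of variables $y=xz^{-1}$ in the convolution and use that $v$ is supported in $B$ to rewrite the integrand against $d\mu_B(z)$. Since $d\mu_B(z)=\mu_G(B)^{-1}1_B(z)d\mu_G(z)$, this produces
\begin{equation*}
K(x,z)=\frac{\mu_G(B)}{\mu_G(B')}1_B(x)1_{B'}(xz^{-1})f(xz^{-1}),
\end{equation*}
so that $L_{\mathcal{B},f}v(x)=\int K(x,z)v(z)d\mu_B(z)$ for every $v\in L^2(\mu_B)$.

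Next I would plug this kernel into the Hilbert--Schmidt formula and change variables again. After unfolding the two factors of $d\mu_B$ into $d\mu_G$ (which cancels the $\mu_G(B)^2$ prefactor), setting $w=xz^{-1}$ and integrating out $z$ via Fubini yields
\begin{equation*}
\|L_{\mathcal{B},f}\|_{\End(L^2(\mu_B))}^2=\frac{1}{\mu_G(B')^2}\int_{B'}|f(w)|^2\,\mu_G(B\cap w^{-1}B)\,d\mu_G(w).
\end{equation*}

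Finally I use the trivial bound $\mu_G(B\cap w^{-1}B)\leq \mu_G(B)$, which converts the inner integral into $\mu_G(B)\mu_G(B')\|f\|_{L^2(\mu_{B'})}^2$; substituting and cancelling gives
\begin{equation*}
\|L_{\mathcal{B},f}\|_{\End(L^2(\mu_B))}^2\leq \frac{\mu_G(B)}{\mu_G(B')}\|f\|_{L^2(\mu_{B'})}^2,
\end{equation*}
and the $c$-thickness $\mu_G(B')\geq c\mu_G(B)$ finishes the proof. There is no real obstacle here: the whole argument is essentially bookkeeping with the competing normalisations $\mu_G,\mu_B,\mu_{B'}$, and the only inequality used is the crude $\mu_G(B\cap w^{-1}B)\leq \mu_G(B)$ (one could alternatively replace it by the sharper $\mu_G(B\cap w^{-1}B)\leq \mu_G(B^+)$ for $w\in B'\subset B'^r$, which would improve the constant by at most a factor of $1+\epsilon$ but is not needed for the statement as given).
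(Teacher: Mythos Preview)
Your argument is correct and is essentially the same as the paper's, just packaged differently: the paper computes the trace of $L_{\mathcal{B},f}^*L_{\mathcal{B},f}$ by summing $\|L_{\mathcal{B},f}e_x\|_{L^2(\mu_B)}^2$ over the orthonormal basis of normalised point masses $e_x$, which is exactly the kernel formula $\iint|K|^2\,d\mu_B\,d\mu_B$ you invoke. Your trivial bound $\mu_G(B\cap w^{-1}B)\leq\mu_G(B)$ corresponds precisely to the paper's step $\|L_{\mathcal{B},f}e_x\|_{L^2(\mu_B)}^2\leq\|f\|_{L^2(\mu_{B'})}^2|B'|^{-1}$, and both finish with the thickness bound $\mu_G(B)/\mu_G(B')\leq c^{-1}$.
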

\begin{proof}
The left hand side is just the trace of $L_{\mathcal{B},f}^*L_{\mathcal{B},f}$ which is basis invariance, thus for any orthonormal basis $e_1,\dots,e_n$ of $L^2(\mu_{B})$ we have
\begin{equation*}
\|L_{\mathcal{B},f}\|_{\End(L^2(\mu_B))}^2 = \sum_{i=1}^n{\langle L_{\mathcal{B},f}^*L_{\mathcal{B},f}e_i,e_i\rangle_{L^2(\mu_{B})}}.
\end{equation*}
Now, we choose $e_1,\dots,e_n$ judiciously as we did in the proof of Parseval's theorem.  For each $x \in B$, define the function
\begin{equation*}
e_x(y):=\begin{cases} |B|^{1/2} & \textrm{ if } y=x\\ 0 & \textrm{ otherwise.} \end{cases}
\end{equation*}
It is easy to see that $(e_x)_{x \in B}$ is an orthonormal basis for $L^2(\mu_{B})$ and, furthermore, we have
\begin{equation*}
L_{\mathcal{B},f}(e_x)(y)=f(yx^{-1})1_B(y)|B|^{1/2}|B'|^{-1} \textrm{ for all } y \in B.
\end{equation*}
It follows that
\begin{equation*}
\langle L_{\mathcal{B},f}^*L_{\mathcal{B},f}e_x,e_x\rangle_{L^2(\mu_{B})} = \|L_{\mathcal{B},f}e_x\|_{L^2(\mu_B)}^2 \leq \|f\|_{L^2(\mu_{B'})}^2|B'|^{-1}
\end{equation*}
whence, on summing, we get the result.
\end{proof}
A key property of the operators $L_f$ was that they commuted with right translation; this is only true approximately when set relative to multiplicative pairs setting.  We begin by setting some notation for right translation to help make some of our results more suggestive.

Suppose that $G$ is a finite group and $\mathcal{B}$ is an $r$-multiplicative pair with ground set $B$ and perturbation set $B'$. We write
\begin{equation*}
\rho_{\mathcal{B},y}:L^2(\mu_B) \rightarrow L^2(\mu_B);v \mapsto \rho_y(v)|_B,
\end{equation*}
for each $y \in B'^r$.  First we should remark that the maps $\rho_{\mathcal{B},y}$ are no longer unitary and recovering the situation in a useful way is a major part of our work in \S\ref{sec.anallarge}.  For now we have the following lemma which says that $\rho_{\mathcal{B},y}$ is approximately unitary.
\begin{lemma}[Approximate unitarity]\label{lem.apun}
Suppose that $G$ is a finite group and $\mathcal{B}=(B,B')$ is an $\epsilon$-closed $r$-multiplicative pair. Then
\begin{equation*}
0 \leq \|v\|_{L^2(\mu_B)}^2 - \|\rho_{\mathcal{B},y}(v)\|_{L^2(\mu_B)}^2 \leq \epsilon\|v\|_{L^\infty(\mu_B)}^2,
\end{equation*}
whenever $y \in B'^r$ and $v \in L^\infty(\mu_B)$.
\end{lemma}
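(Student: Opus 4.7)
The plan is to unfold the definitions so that both $L^2(\mu_B)$-norms become integrals of $|v|^2$ against $\mu_B$, with the difference localized to the symmetric difference of $B$ and a translate $By$, which will sit inside the annulus $B^+ \setminus B^-$.

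First I would use the change of variables $x \mapsto xy^{-1}$ on $G$ (which preserves $\mu_G$) to rewrite
\[
\|\rho_{\mathcal{B},y}(v)\|_{L^2(\mu_B)}^2 = \int_B |v(xy)|^2 d\mu_B(x) = \int_B |v(x)|^2 1_{By}(x)d\mu_B(x),
\]
where in the last step I also used that $v$ is (to be treated as) supported on $B$. Subtracting from $\|v\|_{L^2(\mu_B)}^2 = \int_B |v|^2 d\mu_B$, the difference becomes
\[
\|v\|_{L^2(\mu_B)}^2 - \|\rho_{\mathcal{B},y}(v)\|_{L^2(\mu_B)}^2 = \int_{B\setminus By} |v(x)|^2 d\mu_B(x).
\]
The lower bound $\geq 0$ is then immediate, and the upper bound reduces to showing $\mu_B(B\setminus By) \leq \epsilon$.

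The key containment is $B\setminus By \subset B^+ \setminus B^-$. For this, note that $B \subset B^+$ trivially (the identity lies in $B'^r$), and I claim $B^- \subset By$. Indeed, since $y \in B'^r$ and $B'^r B^- B'^r \subset B$, multiplying on the right by $B'^r$ gives $B^- B'^r \subset B$; in particular $B^- y^{-1} \subset B$, i.e. $B^- \subset By$. Hence $B\setminus By \subset B \setminus B^- \subset B^+ \setminus B^-$, and the $\epsilon$-closure hypothesis of $\mathcal{B}$ gives $\mu_G(B^+ \setminus B^-) \leq \epsilon \mu_G(B)$, so $\mu_B(B\setminus By) \leq \epsilon$. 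Combined with the trivial bound $|v(x)|^2 \leq \|v\|_{L^\infty(\mu_B)}^2$ this yields the claimed inequality.

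There is no real obstacle here; the only thing to keep straight is which side of $B$ the translate $y$ acts on and how the inclusions $B'^r B^- B'^r \subset B$ and $B'^r B B'^r \subset B^+$ produce the sandwich $B^- \subset By \subset B \subset B^+$. Everything else is a one-line change of variables plus Hölder.
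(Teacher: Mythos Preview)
Your proof is correct and follows essentially the same approach as the paper: both perform the change of variables $z=xy$ to rewrite $\|\rho_{\mathcal{B},y}(v)\|_{L^2(\mu_B)}^2$ as an integral of $|v|^2$ against a translated indicator, and then bound the discrepancy by $\epsilon$. The only cosmetic difference is that the paper packages the final step as an appeal to the approximate Haar measure lemma (Lemma~\ref{lem.approxhaar}), whereas you unfold that lemma's content directly via the containment $B\setminus By \subset B^+\setminus B^-$.
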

\begin{proof}
We evaluate the expression on the left
\begin{eqnarray*}
\|\rho_{\mathcal{B},y}(v)\|_{L^2(\mu_B)}^2 &=& \frac{1}{\mu_G(B)}\int{|v(xy)|^21_B(x)d\mu_{G}(x)} \\ &= &\frac{1}{\mu_G(B)} \int{|v(z)|^21_B(zy^{-1})d\mu_G(z)}.
\end{eqnarray*}
The lower bound follows immediately by non-negativity of the integrand.  For the upper bound just note that
\begin{equation*}
|\frac{1}{\mu_G(B)} \int{|v(z)|^21_B(zy^{-1})d\mu_G(z)} -\frac{1}{\mu_G(B)} \int{|v(z)|^21_B(z)\mu_G(z)}|\end{equation*}
is then at most
\begin{equation*}
 \|v\|_{L^\infty(\mu_B)}^2\|\rho_{y^{-1}}(\mu_B) - \mu_B\|
\end{equation*}
by the triangle inequality.  The result follows from Lemma \ref{lem.approxhaar} since $y^{-1} \in B'^{-r}=B'^r$.
\end{proof}
Now we turn to showing that $\rho_{\mathcal{B}',y}$ approximately commutes with $L_{\mathcal{B},f}$ if $\mathcal{B}'$ and $\mathcal{B}$ are suitably related. The following lemma encodes this fact.
\begin{lemma}[Approximate commuting]\label{lem.approxtrans}
Suppose that $G$ is a finite group, $\mathcal{B}=(B,B')$ is a $c$-thick multiplicative pair, $\mathcal{B}'=(B,B'')$ is an $\epsilon'$-closed $r$-multiplicative pair and $f \in L^\infty(\mu_{B'})$.  Then
\begin{equation*}
\|\rho_{\mathcal{B}',y}L_{\mathcal{B},f}v - L_{\mathcal{B},f}\rho_{\mathcal{B}',y}v\|_{L^2(\mu_B)}^2=O(\epsilon'c^{-2}\|v\|_{L^\infty(\mu_B)}^2\|f\|_{L^\infty(\mu_{B'})}^2)
\end{equation*}
for all $y \in B''^r$ and $v \in L^\infty(\mu_B)$.
\end{lemma}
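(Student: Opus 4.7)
The plan is to exploit the fact that on the ambient group $G$ convolution commutes with right translation: $(fd\mu_{B'}) \ast \rho_y(v) = \rho_y((fd\mu_{B'}) \ast v)$ as functions on $G$. All of the discrepancy between $\rho_{\mathcal{B}',y}L_{\mathcal{B},f}v$ and $L_{\mathcal{B},f}\rho_{\mathcal{B}',y}v$ on $L^2(\mu_B)$ therefore comes from the two restrictions to $B$ built into the operators --- one truncates the output of the convolution, while the other truncates $v$ before translating, so that the mass of $\rho_y(v)$ sitting outside $B$ is lost.

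I would make this quantitative by inserting the auxiliary function $\phi(x) := ((fd\mu_{B'}) \ast v)(xy) \cdot 1_B(x)$ and splitting the commutator, via the triangle inequality, into $\phi - \rho_{\mathcal{B}',y}L_{\mathcal{B},f}v$ and $\phi - L_{\mathcal{B},f}\rho_{\mathcal{B}',y}v$. The first difference is $\phi$ multiplied by $(1 - 1_B(\cdot y))$; for $y \in B''^r$ and $x \in B$, the multiplicative-pair condition $B''^r B^- B''^r \subset B$ for $\mathcal{B}'$ implies that $xy \in B$ whenever $x \in B^-$, so this piece is supported on $B \setminus B^- \subset B^+ \setminus B^-$, which has $\mu_G$-measure at most $\epsilon' \mu_G(B)$. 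Since $|\phi| \leq \|f\|_{L^\infty(\mu_{B'})}\|v\|_{L^\infty(\mu_B)}$ by Young's inequality, this contributes $O(\epsilon' \|f\|_\infty^2 \|v\|_\infty^2)$ to the squared $L^2(\mu_B)$-norm. The second difference equals $((fd\mu_{B'}) \ast (\rho_y(v) \cdot (1 - 1_B)))\cdot 1_B$; here $\rho_y(v)(1-1_B)$ is supported on $By^{-1} \setminus B \subset B^+ \setminus B^-$ by the same multiplicative-pair reasoning, so has $L^2(\mu_G)$-squared mass at most $\epsilon' \mu_G(B) \|v\|_\infty^2$. Young's inequality applied with $\|fd\mu_{B'}\|_{L^1(\mu_G)} \leq \|f\|_{L^\infty(\mu_{B'})}$ yields the same order of bound after the renormalisation from $L^2(\mu_G)$ to $L^2(\mu_B)$ (division by $\mu_G(B)$).

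Combining the two contributions gives the commutator estimate $O(\epsilon' \|f\|_{L^\infty(\mu_{B'})}^2 \|v\|_{L^\infty(\mu_B)}^2)$, which is already strictly stronger than the claimed bound, the factor $c^{-2}$ being absorbed for free since $c \leq 1$. There is no serious obstacle: the whole argument reduces to tracking indicator functions and recognising that both error pieces are supported in the ``annulus'' $B^+ \setminus B^-$ controlled by the closure parameter of $\mathcal{B}'$, in close parallel with the approximate-Haar manipulations of Lemma \ref{lem.approxhaar}.
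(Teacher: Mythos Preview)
Your argument is correct and in fact yields a sharper bound than the paper's. The paper proceeds by expanding both $\rho_{\mathcal{B}',y}L_{\mathcal{B},f}v(x)$ and $L_{\mathcal{B},f}\rho_{\mathcal{B}',y}v(x)$ as integrals, making the change of variables $u=z^{-1}xy$, and then invoking Lemma~\ref{lem.approxhaar} to compare $\rho_{y^{-1}}(\mu_B)$ with $\mu_B$. That expansion produces a prefactor $\mu_G(B)/\mu_G(B')\le c^{-1}$, which after squaring gives the $c^{-2}$ in the stated bound. Your approach avoids this entirely: by introducing the intermediate $\phi$ and never unpacking the convolution against $fd\mu_{B'}$, you may apply Young's inequality with the total-variation bound $\|fd\mu_{B'}\|\le\|f\|_{L^\infty(\mu_{B'})}$ directly, and both error pieces are governed purely by $\mu_G(B^+\setminus B^-)\le\epsilon'\mu_G(B)$. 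The result is the cleaner estimate $O(\epsilon'\|f\|_{L^\infty(\mu_{B'})}^2\|v\|_{L^\infty(\mu_B)}^2)$, which of course implies the paper's version since $c\le 1$. Your decomposition is essentially the observation that the commutator equals a ``boundary term from truncating the output'' plus a ``boundary term from truncating the input'', and both live in the annulus $B^+\setminus B^-$; this is arguably more transparent than the paper's change-of-variables computation.
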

\begin{proof}
We examine the two terms on the left individually.  With the first term we have
\begin{equation*}
\rho_{\mathcal{B}',y}L_{\mathcal{B},f}v(x)= \frac{1}{\mu_G(B')}\int{f(z)v(z^{-1}xy)d\mu_G(z)}1_B(xy)1_B(x).
\end{equation*}
By change of variables $z^{-1}xy = u$ this gives
\begin{equation}\label{eqn.uselater}
\rho_{\mathcal{B}',y}L_{\mathcal{B},f}v(x)= \frac{\mu_G(B)}{\mu_G(B')}\int{f(xyu^{-1})v(u)d\mu_B(u)}1_B(xy)1_B(x).
\end{equation}
On the other hand the second term has 
\begin{equation*}
L_{\mathcal{B},f}\rho_{\mathcal{B}',y}v(x)=\frac{1}{\mu_G(B')} \int{f(z)v(z^{-1}xy)1_{B}(z^{-1}x)d\mu_G(z)}1_B(x).
\end{equation*}
Now, make the change of variables $u=z^{-1}xy$ to get
\begin{equation*}
L_{\mathcal{B},f}\rho_{\mathcal{B}',y}v(x)=\frac{\mu_G(B)}{\mu_G(B')} \int{f(xyu^{-1})v(u)d\rho_{y^{-1}}(\mu_{B})(u)}1_B(x).
\end{equation*}
However $\|\rho_{y^{-1}}(\mu_{B}) - \mu_B\| \leq \epsilon'$ by Lemma \ref{lem.approxhaar} since $y \in B''^r$, whence
\begin{equation*}
|L_{\mathcal{B},f}\rho_{\mathcal{B}',y}v(x) - \frac{\mu_G(B)}{\mu_G(B')} \int{f(xyu^{-1})v(u)d\mu_{B}(u)}1_B(x)|
\end{equation*}
is at most
\begin{equation*}
\epsilon'c^{-1} \|v\|_{L^\infty(\mu_B)}\|f\|_{L^\infty(\mu_{B'})}1_B(x).
\end{equation*}
Combining this with (\ref{eqn.uselater}) we get that
\begin{equation*}
|\rho_{\mathcal{B}',y}L_{\mathcal{B},f}v(x) - L_{\mathcal{B},f}\rho_{\mathcal{B}',y}v(x)|
\end{equation*}
is at most
\begin{equation*}
\frac{\mu_G(B)}{\mu_G(B')}|f \ast v(xy)||1_B(xy)1_B(x)-1_B(x)| + \epsilon'c^{-1} \|v\|_{L^\infty(\mu_B)}\|f\|_{L^\infty(\mu_{B'})}1_B(x).
\end{equation*}
Integrating the square of this against $d\mu_B$ and applying the Cauchy-Schwarz inequality we see that
\begin{equation*}
\|\rho_{\mathcal{B}',y}L_{\mathcal{B},f}v - L_{\mathcal{B},f}\rho_{\mathcal{B}',y}v\|_{L^2(\mu_B)}^2
\end{equation*}
is at most
\begin{equation*}
2(c^{-2}\|f \ast v\|_{L^\infty(\mu_G)}^2\|\rho_y(1_{B})-1_B\|_{L^2(\mu_B)}^2 + \epsilon'^2c^{-2} \|v\|_{L^\infty(\mu_B)}^2\|f\|_{L^\infty(\mu_{B'})}^2).
\end{equation*}
Furthermore, we trivially have $\|f \ast v\|_{L^\infty(\mu_G)} \leq \|f\|_{L^\infty(\mu_{B'})}\|v\|_{L^\infty(\mu_{B})}$ since $\supp f \subset B'$ and $\supp v \subset B$, whence
\begin{equation*}
\|\rho_{\mathcal{B}',y}L_{\mathcal{B},f}v - L_{\mathcal{B},f}\rho_{\mathcal{B}',y}v\|_{L^2(\mu_B)}^2
\end{equation*}
is at most
\begin{equation*}
2c^{-2} \|v\|_{L^\infty(\mu_B)}^2\|f\|_{L^\infty(\mu_{B'})}^2(\|\rho_y(1_{B})-1_B\|_{L^2(\mu_B)}^2 + \epsilon'^2).
\end{equation*}
It remains to note that
\begin{eqnarray*}
\|\rho_y(1_{B})-1_B\|_{L^2(\mu_B)}^2& =& \int{|1_{B}(xy)1_B(x)-1_B(x)|^2d\mu_B(x)}\\ & =&\frac{1}{\mu_G(B)} \int{|1_{B}(xy) - 1_B(x)|1_B(x)d\mu_G(x)}\\ & \leq & \frac{1}{\mu_G(B)} \int{|1_{B}(xy) - 1_B(x)|d\mu_G(x)}\\  & = & \|\rho_y(\mu_B) - \mu_B\| \leq \epsilon'.
\end{eqnarray*}
The result is proved.
\end{proof}
As with Parseval's theorem we do not have an analogue of the inversion formula relative to multiplicative pairs.  We do, however, have an analogue of Fourier bases.  Suppose that $G$ is a finite group, $\mathcal{B}=(B,B')$ is a multiplicative pair and $f \in L^1(\mu_{B'})$. We write
\begin{equation*}
s_i(\mathcal{B},f):=s_i(L_{\mathcal{B},f})
\end{equation*}
for the singular values of $L_{\mathcal{B},f}$ and call an orthonormal basis $v_1,\dots,v_n$ of $L^2(\mu_B)$ a \emph{Fourier basis of $L^2(\mu_B)$ for $f$} if
\begin{equation*}
L_{\mathcal{B},f}^*L_{\mathcal{B},f}v_i=|s_i(\mathcal{B},f)|^2v_i \textrm{ for all } 1 \leq i \leq n.
\end{equation*}
The following is an immediate consequence of Corollary \ref{cor.speccor}.
\begin{proposition}[Local Fourier bases]\label{prop.diag}
Suppose that $G$ is a finite group, $\mathcal{B}:=(B,B')$ is a multiplicative pair and $f\in L^1(\mu_{B'})$. Then there is a Fourier basis of $L^2(\mu_B)$ for $f$.
\end{proposition}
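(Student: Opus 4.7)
The plan is essentially to invoke Corollary \ref{cor.speccor} verbatim, as indicated by the author's remark that the statement is an immediate consequence. The key point is that $L^2(\mu_B)$ is a finite-dimensional complex Hilbert space (its dimension is $|B|$), and $L_{\mathcal{B},f}$ is by construction an endomorphism of this space, so the general linear-algebraic spectral machinery from \S\ref{sec.lon} applies without any modification from the approximate-group setting.

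Concretely, I would proceed as follows. First, note that $L_{\mathcal{B},f} \in \End(L^2(\mu_B))$ since by definition $L_{\mathcal{B},f}v = ((fd\mu_{B'}) \ast v)|_B$ takes values in $L^2(\mu_B)$. Next, apply Corollary \ref{cor.speccor} to the operator $M := L_{\mathcal{B},f}$ acting on the $|B|$-dimensional Hilbert space $H := L^2(\mu_B)$. This yields an orthonormal basis $v_1,\dots,v_n$ of $L^2(\mu_B)$ (where $n=|B|$) satisfying
\begin{equation*}
L_{\mathcal{B},f}^* L_{\mathcal{B},f} v_i = |s_i(L_{\mathcal{B},f})|^2 v_i \textrm{ for all } 1 \leq i \leq n.
\end{equation*}
Finally, one just unravels the definitions: by our notational convention $s_i(\mathcal{B},f) := s_i(L_{\mathcal{B},f})$, so the displayed identity is exactly the defining property of a Fourier basis of $L^2(\mu_B)$ for $f$.

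There is no genuine obstacle here, and in particular no need to invoke any of the subtler properties of multiplicative pairs (such as approximate unitarity or approximate commutation developed in Lemmas \ref{lem.apun} and \ref{lem.approxtrans}); those facts become important only when one wants the basis to interact well with translations $\rho_{\mathcal{B}',y}$, which is not asserted in this proposition. The only thing worth double-checking is that the self-adjoint operator $L_{\mathcal{B},f}^* L_{\mathcal{B},f}$ really is an element of an adjoint-closed commuting family, which is trivial since any single self-adjoint operator constitutes such a family; this is precisely how Corollary \ref{cor.speccor} was derived from the spectral theorem in \S\ref{sec.lon}. Hence the proof amounts to a one-line citation.
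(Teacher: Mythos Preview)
Your proposal is correct and matches the paper's approach exactly: the paper simply states that the proposition is an immediate consequence of Corollary \ref{cor.speccor}, which is precisely what you have spelled out.
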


\section{The spectrum of convolution operators on multiplicative pairs}\label{sec.locspec}

In this section we define the spectrum of a function on a multiplicative pair and develop some of the basic facts.  To analyse functions on a multiplicative pair in a Fourier theoretic spirit we shall analyse the spectrum of the function so it will be a very important structure.

We begin with the Hausdorff-Young inequality.
\begin{lemma}[Hausdorff-Young]\label{lem.hdy}
Suppose that $G$ is a finite group, $\mathcal{B}=(B,B')$ is a multiplicative pair and $f \in L^1(\mu_{B'})$.  Then
\begin{equation*}
|s_1(\mathcal{B},f)| \leq \|f\|_{L^1(\mu_{B'})}.
\end{equation*}
\end{lemma}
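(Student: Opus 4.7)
The plan is to recognise that $s_1(\mathcal{B},f) = \|L_{\mathcal{B},f}\|$ by the singular value characterisation of the operator norm already recorded in \S\ref{sec.lon}, and then to bound this operator norm by a direct application of Young's convolution inequality for measures on $G$.

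Concretely, I would take any $v \in L^2(\mu_B)$ with $\|v\|_{L^2(\mu_B)} \leq 1$ and estimate $\|L_{\mathcal{B},f} v\|_{L^2(\mu_B)}$. Since the restriction to $B$ only decreases the $L^2(\mu_G)$ mass, we have
\begin{equation*}
\|L_{\mathcal{B},f} v\|_{L^2(\mu_B)}^2 = \frac{1}{\mu_G(B)}\int_B |(fd\mu_{B'}) \ast v|^2 \, d\mu_G \leq \frac{1}{\mu_G(B)}\|(fd\mu_{B'}) \ast v\|_{L^2(\mu_G)}^2.
\end{equation*}
Now $fd\mu_{B'}$ is a (complex) measure on $G$ of total variation $\|f\|_{L^1(\mu_{B'})}$, so Young's convolution inequality (equivalently, Minkowski's integral inequality applied to $(fd\mu_{B'}) \ast v(x) = \int v(y^{-1}x) f(y)\, d\mu_{B'}(y)$) gives $\|(fd\mu_{B'}) \ast v\|_{L^2(\mu_G)} \leq \|f\|_{L^1(\mu_{B'})} \|v\|_{L^2(\mu_G)}$.

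Finally, the relation between the two $L^2$ norms comes out cleanly because $v$ is supported in $B$: we have $\|v\|_{L^2(\mu_G)}^2 = \mu_G(B) \|v\|_{L^2(\mu_B)}^2$. Substituting back yields
\begin{equation*}
\|L_{\mathcal{B},f} v\|_{L^2(\mu_B)}^2 \leq \|f\|_{L^1(\mu_{B'})}^2 \|v\|_{L^2(\mu_B)}^2,
\end{equation*}
and the conclusion $|s_1(\mathcal{B},f)| = \|L_{\mathcal{B},f}\| \leq \|f\|_{L^1(\mu_{B'})}$ follows. There is no real obstacle here; the only minor bookkeeping point is keeping track of the normalising factors $\mu_G(B)$ and $\mu_G(B')$ that come from working with the Haar probability $\mu_G$ simultaneously with the normalised restrictions $\mu_B$ and $\mu_{B'}$, but these cancel exactly because $v$ is supported in $B$.
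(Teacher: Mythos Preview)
Your proof is correct and is essentially the same as the paper's: both drop the restriction to $B$ by non-negativity, apply Young's inequality to bound the convolution in $L^2(\mu_G)$, and then track the normalising factors $\mu_G(B)$ and $\mu_G(B')$ using that $v$ is supported in $B$. The only cosmetic difference is that you phrase Young's inequality in terms of the measure $f\,d\mu_{B'}$ with total variation $\|f\|_{L^1(\mu_{B'})}$, whereas the paper writes out the convolution $f \ast v$ with respect to $\mu_G$ and carries the factor $\mu_G(B')^{-2}$ explicitly.
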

\begin{proof}
Let $v \in L^2(\mu_B)$ be a unit vector such that
\begin{equation*}
\|L_{\mathcal{B},f}v\|_{L^2(\mu_B)}^2=\|L_{\mathcal{B},f}\|^2=|s_1(\mathcal{B},f)|^2.
\end{equation*}
Writing out the first of these terms we get that it is equal to
\begin{equation*}
\frac{1}{\mu_G(B)\mu_G(B')^2}\int{\left|\int{f(z)v(z^{-1}y)d\mu_G(z)}\right|^21_B(y)d\mu_G(y)}.
\end{equation*}
By non-negativity of the integrand, the integral is at most
\begin{equation*}
\int{\left|\int{f(z)v(z^{-1}y)d\mu_G(z)}\right|^2d\mu_G(y)}=  \|f \ast v\|_{L^2(\mu_G)}^2.
\end{equation*}
Young's inequality tells us that
\begin{equation*}
\|f \ast v\|_{L^2(\mu_G)}^2 \leq \|f\|_{L^1(\mu_G)}^2\|v\|_{L^2(\mu_G)}^2 =  \|f\|_{L^1(\mu_G)}^2.\mu_G(B),
\end{equation*}
whence
\begin{equation*}
|s_1(\mathcal{B},f)| ^2 \leq \frac{1}{\mu_G(B)\mu_G(B')^2}\|f\|_{L^1(\mu_G)}^2.\mu_G(B) = \|f\|_{L^1(\mu_{B'})}^2.
\end{equation*}
The result is proved.
\end{proof}
In light of the preceeding we make the following definition.  Suppose that $G$ is a finite group, $\mathcal{B}=(B,B')$ is a multiplicative pair and $f \in L^1(\mu_{B'})$. The \emph{$\delta$-spectrum of $f$} is then defined to be the space
\begin{equation*}
\Spec_\delta(\mathcal{B},f):=\bigoplus_{i: |s_i(\mathcal{B},f)| \geq \delta \|f\|_{L^1(\mu_{B'})}}\{v \in L^2(\mu_B) : L_{\mathcal{B},f}^*L_{\mathcal{B},f}v=|s_i(\mathcal{B},f)|^2v\}.
\end{equation*}
Note that the spaces on the right are eigenspaces and hence vector spaces, so the definition makes sense, and is a subspace of $L^2(\mu_B)$.

Again, when $G$ is abelian and $\mathcal{B}=(G,G)$ then the characters $\gamma$ for which $|\wh{f}(\gamma)| \geq \delta \|f\|_{L^1(\mu_G)}$ form a basis for $\Spec_\delta(\mathcal{B},f)$ and we are often interested in bounding the number of such characters.  There is a relatively easy bound sometimes called the Parseval bound which follows from Bessel's inequality (or Parseval's theorem).  We now prove an analogue in our setting.
\begin{lemma}[The Parseval bound]\label{lem.pbd}
Suppose that $G$ is a finite group, $\mathcal{B}=(B,B')$ is a $c$-thick multiplicative pair, $f \in L^2(\mu_B)$ is not identically zero and $\delta \in (0,1]$ is a parameter. Then
\begin{equation*}
\dim \Spec_\delta(\mathcal{B},f) \leq c^{-1}\delta^{-2}\|f\|_{L^1(\mu_{B'})}^{-2}\|f\|_{L^2(\mu_{B'})}^2.
\end{equation*}
\end{lemma}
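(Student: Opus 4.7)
The plan is to combine the local Bessel inequality (Proposition \ref{prop.locbes}) with the singular-value characterisation of the Hilbert--Schmidt norm, as in the familiar Parseval bound for the large spectrum in the abelian setting.

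First I would recall that, by the singular value characterisation of $\|\cdot\|_{\End(L^2(\mu_B))}$, one has
\begin{equation*}
\sum_{i=1}^{n}{|s_i(\mathcal{B},f)|^2} = \|L_{\mathcal{B},f}\|_{\End(L^2(\mu_B))}^2,
\end{equation*}
where $n=\dim L^2(\mu_B)$. Applying Proposition \ref{prop.locbes} then gives
\begin{equation*}
\sum_{i=1}^{n}{|s_i(\mathcal{B},f)|^2} \leq c^{-1}\|f\|_{L^2(\mu_{B'})}^2.
\end{equation*}

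Next, by the definition of $\Spec_\delta(\mathcal{B},f)$ as the direct sum of eigenspaces of $L_{\mathcal{B},f}^*L_{\mathcal{B},f}$ corresponding to those indices $i$ with $|s_i(\mathcal{B},f)| \geq \delta\|f\|_{L^1(\mu_{B'})}$, the dimension equals precisely the number of such indices (counted with multiplicity). Throwing away all other terms in the sum above and using this lower bound on each surviving summand yields
\begin{equation*}
\dim\Spec_\delta(\mathcal{B},f)\cdot \delta^2\|f\|_{L^1(\mu_{B'})}^2 \leq \sum_{i=1}^{n}{|s_i(\mathcal{B},f)|^2} \leq c^{-1}\|f\|_{L^2(\mu_{B'})}^2.
\end{equation*}
Rearranging and using that $\|f\|_{L^1(\mu_{B'})} \neq 0$ (which follows from $f$ not being identically zero, together with the implicit assumption that $f$ is supported on $B'$) gives the claimed bound. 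No step here looks hard; the only subtlety is the bookkeeping that $\dim\Spec_\delta(\mathcal{B},f)$ really does count singular values above the threshold with multiplicity, which is immediate from the direct sum definition and the fact that $L_{\mathcal{B},f}^*L_{\mathcal{B},f}$ is diagonalised by any Fourier basis for $f$ provided by Proposition \ref{prop.diag}.
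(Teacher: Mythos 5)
Your argument is correct and takes essentially the same route as the paper: both bound $\dim\Spec_\delta(\mathcal{B},f)\cdot\delta^2\|f\|_{L^1(\mu_{B'})}^2$ by $\sum_i|s_i(\mathcal{B},f)|^2 = \Tr L_{\mathcal{B},f}^*L_{\mathcal{B},f}$ and then invoke the local Bessel inequality; you simply cite the singular-value characterisation of the Hilbert--Schmidt norm where the paper recomputes the trace in a Fourier basis. (You are also right that the hypothesis $f\in L^2(\mu_B)$ should read $f\in L^2(\mu_{B'})$ for the statement to parse.)
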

\begin{proof}
Let $v_1,\dots,v_n$ be a Fourier basis of $L^2(\mu_B)$ for $f$ as afforded by Proposition \ref{prop.diag}.  Writing $d$ for the dimension of $\Spec_\delta(\mathcal{B},f)$, we have
\begin{equation*}
|s_i(\mathcal{B},f)| \geq \delta \|f\|_{L^1(\mu_{B'})} \textrm{ whenever } i \leq d.
\end{equation*}
In view of this
\begin{eqnarray*}
\|f\|_{L^1(\mu_B)}^2\delta^2d&\leq &\sum_{i=1}^d{\| L_{\mathcal{B},f}v_i\|_{L^2(\mu_{B'})}^2}\\& \leq& \sum_{i=1}^n{\| L_{\mathcal{B},f}v_i\|_{L^2(\mu_{B'})}^2} = \sum_{i=1}^n{\langle L_{\mathcal{B},f}^*L_{\mathcal{B},f}v_i,v_i\rangle_{L^2(\mu_B)}}.
\end{eqnarray*}
The right hand side of this is just the trace of $L_{\mathcal{B},f}^*L_{\mathcal{B},f}$ which we bound using the Bessel-type inequality in Proposition \ref{prop.locbes}; we get
\begin{equation*}
\|f\|_{L^1(\mu_{B'})}^2\delta^2d \leq c^{-1}\|f\|_{L^2(\mu_{B'})}^2,
\end{equation*}
and the lemma is proved after some rearranging.
\end{proof}

In the abelian setting, thanks to the Fourier transform, we are able to take all the vectors in a Fourier basis to be characters of the group, and these have the nice property that they are bounded in $L^\infty$.  This need not be true in the more general setting, but we do have the following bound.
\begin{lemma}\label{lem.evalbd}
Suppose that $G$ is a finite group, $\mathcal{B}=(B,B')$ is a $c$-thick multiplicative pair, $f \in L^2(\mu_{B'})$ and $v$ is a unit eigenvector of $L_{\mathcal{B},f}^*L_{\mathcal{B},f}$ with non-zero eigenvalue $|\lambda|^2$. Then
\begin{equation*}
\|v\|_{L^\infty(\mu_{B})} \leq |\lambda|^{-2}c^{-1/2}\|f\|_{L^1(\mu_{B'})}\|f\|_{L^2(\mu_{B'})}.
\end{equation*}
\end{lemma}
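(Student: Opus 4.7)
The plan is to exploit the eigenvalue identity $L_{\mathcal{B},\tilde f}L_{\mathcal{B},f}v = |\lambda|^2 v$ (using that $L_{\mathcal{B},f}^* = L_{\mathcal{B},\tilde f}$) by bounding the right-hand side in $L^\infty(\mu_B)$ through a two-step estimate, each step losing a different norm of $f$.

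First, I would produce a pointwise bound on $L_{\mathcal{B},f}v$. Writing
\begin{equation*}
L_{\mathcal{B},f}v(y) = \frac{1}{\mu_G(B')}\int f(z)v(z^{-1}y)d\mu_G(z)
\end{equation*}
for $y \in B$ and applying the Cauchy-Schwarz inequality, the integral is at most $\|f\|_{L^2(\mu_G)}\|v\|_{L^2(\mu_G)}$. Converting these to the normalised norms introduces factors of $\mu_G(B')^{1/2}$ and $\mu_G(B)^{1/2}$, which combine with the prefactor to give
\begin{equation*}
\|L_{\mathcal{B},f}v\|_{L^\infty(\mu_B)} \leq \left(\mu_G(B)/\mu_G(B')\right)^{1/2}\|f\|_{L^2(\mu_{B'})} \leq c^{-1/2}\|f\|_{L^2(\mu_{B'})},
\end{equation*}
where the final inequality uses the $c$-thickness hypothesis $\mu_G(B') \geq c\mu_G(B)$ and the fact that $v$ is a unit vector in $L^2(\mu_B)$.

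Second, I would apply $L_{\mathcal{B},\tilde f}$ to the resulting function and use the trivial Young-type bound $\|g \ast h\|_{L^\infty} \leq \|g\|_{L^1}\|h\|_{L^\infty}$. Since $\|\tilde f\|_{L^1(\mu_{B'})} = \|f\|_{L^1(\mu_{B'})}$, this yields
\begin{equation*}
\|L_{\mathcal{B},\tilde f}L_{\mathcal{B},f}v\|_{L^\infty(\mu_B)} \leq \|f\|_{L^1(\mu_{B'})}\cdot\|L_{\mathcal{B},f}v\|_{L^\infty(\mu_B)} \leq c^{-1/2}\|f\|_{L^1(\mu_{B'})}\|f\|_{L^2(\mu_{B'})}.
\end{equation*}
The eigenvalue equation then gives $|\lambda|^2\|v\|_{L^\infty(\mu_B)} \leq c^{-1/2}\|f\|_{L^1(\mu_{B'})}\|f\|_{L^2(\mu_{B'})}$, from which the lemma follows by dividing through by $|\lambda|^2$ (valid since $\lambda \neq 0$).

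There is no real obstacle here; the whole argument is a standard Cauchy-Schwarz followed by a Young-type inequality. The only mild care required is in tracking the normalisations on the measures $\mu_{B'}$ and $\mu_B$ versus $\mu_G$, to ensure the $c^{-1/2}$ factor appears with the correct exponent. The factorisation of $L_{\mathcal{B},f}^*L_{\mathcal{B},f}$ is crucial, as a one-shot estimate would only bound $\|v\|_{L^\infty(\mu_B)}$ by $|\lambda|^{-1}c^{-1/2}\|f\|_{L^2(\mu_{B'})}$ (which is actually sharper via Hausdorff-Young, Lemma \ref{lem.hdy}, but the $|\lambda|^{-2}$ form stated is what is needed for later applications involving the largeness of $\lambda$).
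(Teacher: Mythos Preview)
Your proof is correct and is essentially the same as the paper's: both use the eigenvalue identity and bound the two convolutions by a Cauchy--Schwarz step (losing $\|f\|_{L^2(\mu_{B'})}$ and the factor $c^{-1/2}$) and an $L^1$--$L^\infty$ Young step (losing $\|f\|_{L^1(\mu_{B'})}$). The paper merely presents the outer bound first and then the inner, whereas you do the reverse, but the computations are identical.
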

\begin{proof}
Since $v$ is an eigenvector of $L_{\mathcal{B},f}^*L_{\mathcal{B},f}$ we have that
\begin{equation*}
((\tilde{f}d\mu_{B'})\ast (((fd\mu_{B'}) \ast v)|_B))|_B = |\lambda|^2 v,
\end{equation*}
so that
\begin{eqnarray*}
|\lambda|^2 \|v\|_{L^\infty(\mu_B)} & =& \mu_G(B')^{-2}\| \tilde{f}\ast((f\ast v)1_B)\|_{L^\infty(\mu_B)} \\ &\leq & \mu_G(B')^{-2}\|\tilde{f}\|_{L^1(\mu_G)}\| f\ast v\|_{L^\infty(\mu_G)}
\end{eqnarray*}
by a trivial instance of Young's inequality.  By a different instance of Young's inequality we also have
\begin{eqnarray*}
\| f\ast v\|_{L^\infty(\mu_G)}& \leq &\|f\|_{L^2(\mu_G)}\|v\|_{L^2(\mu_G)}\\& = &(\mu_G(B)\mu_G(B'))^{1/2}\|f\|_{L^2(\mu_{B'})}\|v\|_{L^2(\mu_B)}.
\end{eqnarray*}
Inserting this in the previous and noting that $\|\tilde{f}\|_{L^1(\mu_G)} = \mu_G(B')\|f\|_{L^1(\mu_{B'})}$ we get the result.
\end{proof}
A useful corollary of this is that all unit vectors in the large spectrum have well controlled $L^\infty$-norm.
\begin{corollary}
\label{cor.genl8bd}
Suppose that $G$ is a finite group, $\mathcal{B}=(B,B')$ is a $c$-thick multiplicative pair, $f \in L^2(\mu_{B'})$ is not identically zero and $\delta\in (0,1]$ is a parameter. Then
\begin{equation*}
\|v\|_{L^\infty(\mu_{B})} \leq \delta^{-3}c^{-1}\|f\|_{L^1(\mu_{B'})}^{-2}\|f\|_{L^2(\mu_{B'})}^2
\end{equation*}
for all unit vectors $v \in \Spec_\delta(\mathcal{B},f)$.
\end{corollary}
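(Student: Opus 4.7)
The plan is to combine the two preceding lemmas by decomposing $v$ in an orthonormal basis of $\Spec_\delta(\mathcal{B},f)$ consisting of eigenvectors of $L_{\mathcal{B},f}^*L_{\mathcal{B},f}$.

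First I would invoke Proposition \ref{prop.diag} (or just Corollary \ref{cor.speccor}) to pick an orthonormal basis $v_1,\dots,v_d$ of $\Spec_\delta(\mathcal{B},f)$ where each $v_i$ is a unit eigenvector of $L_{\mathcal{B},f}^*L_{\mathcal{B},f}$ with eigenvalue $|\lambda_i|^2 \geq \delta^2\|f\|_{L^1(\mu_{B'})}^2$; by Lemma \ref{lem.pbd} we have $d \leq c^{-1}\delta^{-2}\|f\|_{L^1(\mu_{B'})}^{-2}\|f\|_{L^2(\mu_{B'})}^2$. Applying Lemma \ref{lem.evalbd} to each $v_i$ and using the lower bound $|\lambda_i|^2 \geq \delta^2 \|f\|_{L^1(\mu_{B'})}^2$ gives the uniform estimate
\begin{equation*}
\|v_i\|_{L^\infty(\mu_B)} \leq \delta^{-2}c^{-1/2}\|f\|_{L^1(\mu_{B'})}^{-1}\|f\|_{L^2(\mu_{B'})}
\end{equation*}
for every $i$.

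Next, given a unit vector $v \in \Spec_\delta(\mathcal{B},f)$ write $v=\sum_{i=1}^d c_iv_i$ with $\sum_i|c_i|^2=1$. For each $x \in B$, the triangle inequality followed by Cauchy--Schwarz yields
\begin{equation*}
|v(x)| \leq \sum_{i=1}^d |c_i|\,|v_i(x)| \leq \sqrt{d}\cdot \max_{1 \leq i \leq d}\|v_i\|_{L^\infty(\mu_B)}.
\end{equation*}
Substituting the Parseval bound on $d$ and the per-eigenvector bound from the previous paragraph, the factors multiply to give
\begin{equation*}
\|v\|_{L^\infty(\mu_B)} \leq c^{-1/2}\delta^{-1}\|f\|_{L^1(\mu_{B'})}^{-1}\|f\|_{L^2(\mu_{B'})} \cdot \delta^{-2}c^{-1/2}\|f\|_{L^1(\mu_{B'})}^{-1}\|f\|_{L^2(\mu_{B'})},
\end{equation*}
which is exactly the claimed estimate. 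There is no real obstacle here: the corollary is essentially a packaging of the two previous lemmas, and the only mildly delicate point is making sure the eigenvalue lower bound $|\lambda_i| \geq \delta\|f\|_{L^1(\mu_{B'})}$ is used both to apply Lemma \ref{lem.evalbd} uniformly and to bound the dimension of $\Spec_\delta(\mathcal{B},f)$ via Lemma \ref{lem.pbd}.
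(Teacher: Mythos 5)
Your proof is correct and matches the paper's argument essentially step for step: decompose $v$ in a Fourier basis of eigenvectors spanning $\Spec_\delta(\mathcal{B},f)$, apply the triangle inequality, bound $\sum|c_i|$ by $\sqrt{d}$ via Cauchy--Schwarz, then control $d$ with Lemma \ref{lem.pbd} and the per-eigenvector sup-norm with Lemma \ref{lem.evalbd}. The factor bookkeeping is also correct, so there is nothing to add.
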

\begin{proof}
Let $v_1,\dots,v_n$ be a Fourier basis of $L^2(\mu_B)$ for $f$ as afforded by Proposition \ref{prop.diag}.  Writing $d$ for the dimension of $\Spec_{\delta}(\mathcal{B},f)$ we have that $v_1,\dots,v_d \in \Spec_{\delta}(\mathcal{B},f)$, and we may decompose $v \in \Spec_\delta(\mathcal{B},f)$ as
\begin{equation*}
v=\sum_{i=1}^d{\mu_iv_i} \textrm{ where } \sum_{i=1}^d{|\mu_i|^2}=1.
\end{equation*}
By the triangle inequality we have that
\begin{equation}\label{eqn.www}
\|v\|_{L^\infty(\mu_B)} \leq \sum_{i=1}^d{|\mu_i|}\sup_{1 \leq i \leq d}{\|v_i\|_{L^\infty(P_B)}}.
\end{equation}
We estimate the left hand term on the right by the Cauchy-Schwarz inequality which shows that it is at most $\sqrt{d}$.  This may, in turn, be bounded by the Parseval bound from Lemma \ref{lem.pbd} to get that
\begin{equation*}
d \leq c^{-1}\delta^{-2}\|f\|_{L^1(\mu_{B'})}^{-2}\|f\|_{L^2(\mu_{B'})}^2.
\end{equation*}
We now estimate the right most term in (\ref{eqn.www}) by Lemma \ref{lem.evalbd}:
\begin{equation*}
\sup_{1 \leq i \leq d}{\|v_i\|_{L^\infty(P_B)}} \leq \delta^{-2}c^{-1/2} \|f\|_{L^1(\mu_{B'})}^{-1}\|f\|_{L^2(\mu_{B'})}.
\end{equation*}
Combining all this gives the result.
\end{proof}

\section{Analysis of the large spectrum}\label{sec.anallarge}

The large spectrum determines the average behaviour of a function. Indeed, given a multiplicative pair $\mathcal{B}=(B,B')$ and a function $f \in L^1(\mu_{B'})$ we should like to find a large set $B''$ such that $\mathcal{B}'=(B,B'')$ is a multiplicative pair and
\begin{equation*}
\|\rho_{\mathcal{B}',y}(v) - v\|_{L^2(\mu_B)} \leq \epsilon \|v\|_{L^2(\mu_B)} \textrm{ for all }v \in \Spec_\delta(\mathcal{B},f),
\end{equation*}
and then decompose $f$ as
\begin{equation*}
f = f \ast \mu_{B''} + (f- f \ast \mu_{B''} ).
\end{equation*}
In the abelian setting the set $B''$ is just the Bohr set corresponding to the characters in $\Spec_\delta(\mathcal{B},f)$, which is large since the spectrum has bounded dimension by the Parseval bound.  In this section we show that there is a large set $B''$ such that
\begin{equation*}
\Spec_\delta(\mathcal{B},f)\rightarrow \Spec_\delta(\mathcal{B},f); v \mapsto \rho_{\mathcal{B}',y}(v)|_{\Spec_\delta(\mathcal{B},f)}
\end{equation*}
is close to a representation in a certain sense, and then this can be combined with the theory of non-abelian Bohr sets from the next section to get our desired decomposition.

The above is just a sketch and we now turn to the business of realising a version of it.  To state our results we shall find it useful to have one extra piece of notation: given a set $B$ and a function $f \in L^1(\mu_B)$, it will be useful to define the \emph{width} of $f$ to be
\begin{equation*}
w(f):=\|f\|_{L^1(\mu_B)}\|f\|_{L^\infty(\mu_B)}^{-1}.
\end{equation*}
This is just a notational convenience, but to help intuition, think of the case when $f$ is an indicator function. Then $w(f)$ is just the density of its support.

We begin by showing that the whole large spectrum is almost closed under translation by elements in a sufficiently small ball. 
\begin{lemma}\label{lem.trans}
Suppose that $G$ is a finite group, $\mathcal{B}=(B,B')$ is a $c$-thick multiplicative pair, $\mathcal{B}'=(B,B'')$ is an $\epsilon'$-closed $r$-multiplicative pair, $f \in L^1(\mu_{B'})$ is not identically zero, and $\delta,\eta \in (0,1]$ are parameters. Then
\begin{equation*}
d(\rho_{\mathcal{B}',y}v,\Spec_{\delta-\eta}(\mathcal{B},f))^2 =O( \epsilon'\eta^{-O(1)}c^{-O(1)}\delta^{-O(1)}w(f)^{-O(1)}\|v\|_{L^2(\mu_B)}^2)
\end{equation*}
for all $v \in \Spec_\delta(\mathcal{B},f)$ and $y \in B''^r$.
\end{lemma}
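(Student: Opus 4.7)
The plan is to reduce to the case of a single unit eigenvector of $T := L_{\mathcal{B},f}^*L_{\mathcal{B},f}$ and then use the approximate commuting result in Lemma \ref{lem.approxtrans}. Fix a Fourier basis $v_1,\dots,v_n$ of $L^2(\mu_B)$ for $f$ (Proposition \ref{prop.diag}), let $I:=\{i : |s_i(\mathcal{B},f)| \geq \delta\|f\|_{L^1(\mu_{B'})}\}$ and write $v=\sum_{i\in I}\mu_iv_i$. We may assume $\eta<\delta$, as otherwise $\Spec_{\delta-\eta}(\mathcal{B},f)=L^2(\mu_B)$ and the claim is trivial. The triangle inequality followed by Cauchy--Schwarz gives
\begin{equation*}
d(\rho_{\mathcal{B}',y}v,\Spec_{\delta-\eta}(\mathcal{B},f))^2 \leq \|v\|_{L^2(\mu_B)}^2 \cdot |I| \cdot \max_{i\in I}d(\rho_{\mathcal{B}',y}v_i,\Spec_{\delta-\eta}(\mathcal{B},f))^2,
\end{equation*}
and Lemma \ref{lem.pbd} bounds $|I|$ by $c^{-1}\delta^{-2}w(f)^{-1}$. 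It therefore suffices to establish the conclusion (without the $\|v\|^2$ factor) for a unit eigenvector $v_i$ of $T$ with eigenvalue $|s|^2 \geq \delta^2\|f\|_{L^1(\mu_{B'})}^2$.

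For such a $v_i$, expand $\rho_{\mathcal{B}',y}v_i=\sum_j\lambda_jv_j$; then $(T-|s|^2I)\rho_{\mathcal{B}',y}v_i=\sum_j\lambda_j(|s_j|^2-|s|^2)v_j$. Whenever $|s_j|<(\delta-\eta)\|f\|_{L^1(\mu_{B'})}$ one has $|s|^2-|s_j|^2 \geq \eta(2\delta-\eta)\|f\|_{L^1(\mu_{B'})}^2 \geq \delta\eta\|f\|_{L^1(\mu_{B'})}^2$, hence
\begin{equation*}
d(\rho_{\mathcal{B}',y}v_i,\Spec_{\delta-\eta}(\mathcal{B},f))^2 \leq \|(T-|s|^2I)\rho_{\mathcal{B}',y}v_i\|_{L^2(\mu_B)}^2 \big/ (\delta\eta\|f\|_{L^1(\mu_{B'})}^2)^2.
\end{equation*}
Writing $L$ for $L_{\mathcal{B},f}$ and $\rho$ for $\rho_{\mathcal{B}',y}$, use $Tv_i=|s|^2v_i$ to split
\begin{equation*}
(T-|s|^2I)\rho v_i = L^*L\rho v_i - \rho L^*L v_i = L^*(L\rho - \rho L)v_i + (L^*\rho - \rho L^*)(Lv_i).
\end{equation*}
The two commutators are each controlled by Lemma \ref{lem.approxtrans}: the first requires an $L^\infty$ bound on $v_i$, which is supplied by Lemma \ref{lem.evalbd} in the form $\|v_i\|_{L^\infty(\mu_B)} \leq \delta^{-2}c^{-1/2}w(f)^{-1/2}$; the second requires an $L^\infty$ bound on $Lv_i$, which follows from the observation that $\|Lv_i\|_{L^2(\mu_B)}=|s|$ and $LL^*(Lv_i)=|s|^2Lv_i$, so $Lv_i/|s|$ is a unit eigenvector of $LL^*=L_{\mathcal{B},\tilde f}^*L_{\mathcal{B},\tilde f}$, and applying Lemma \ref{lem.evalbd} with $\tilde f$ in place of $f$ (noting $\|\tilde f\|_{L^p}=\|f\|_{L^p}$) yields the same $L^\infty$ bound. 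Finally, $\|L^*\|=\|L\|\leq \|f\|_{L^1(\mu_{B'})}$ by Lemma \ref{lem.hdy}.

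Combining these estimates produces a bound of the shape
\begin{equation*}
\|(T-|s|^2I)\rho v_i\|_{L^2(\mu_B)}^2 = O\bigl(\epsilon' c^{-3}\delta^{-4}w(f)^{-1}\|f\|_{L^1(\mu_{B'})}^2\|f\|_{L^\infty(\mu_{B'})}^2\bigr),
\end{equation*}
and dividing by $(\delta\eta\|f\|_{L^1(\mu_{B'})}^2)^2$ and using $\|f\|_{L^\infty}^2/\|f\|_{L^1}^2 = w(f)^{-2}$ yields the desired polynomial dependence in $\epsilon'$, $c^{-1}$, $\delta^{-1}$, $\eta^{-1}$ and $w(f)^{-1}$. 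The main obstacle is not conceptual but arithmetic: each appeal to Lemma \ref{lem.evalbd} injects negative powers of $\delta$, $c$ and $w(f)$, and these compound with the $(\delta\eta\|f\|_{L^1})^{-2}$ divisor and the $|I|$ factor from the reduction step, so care is needed to verify that all exponents remain of order $O(1)$.
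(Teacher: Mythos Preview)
Your proposal is correct and follows essentially the same route as the paper: reduce to a single unit eigenvector via Cauchy--Schwarz and the Parseval bound (Lemma \ref{lem.pbd}), then control $\|(T-|s|^2I)\rho_{\mathcal{B}',y}v_i\|$ by splitting the commutator $[T,\rho]$ into two pieces, each handled by Lemma \ref{lem.approxtrans}, and finally read off the distance to $\Spec_{\delta-\eta}$ via the eigenvalue gap. The only noteworthy difference is in how you bound $\|Lv_i\|_{L^\infty(\mu_B)}$: the paper uses the crude Young inequality $\|Lv_i\|_{L^\infty}\leq\|f\|_{L^1(\mu_{B'})}\|v_i\|_{L^\infty(\mu_B)}$ and then Lemma \ref{lem.evalbd} on $v_i$, whereas you observe that $Lv_i/|s|$ is itself a unit eigenvector of $LL^*=L_{\mathcal{B},\tilde f}^*L_{\mathcal{B},\tilde f}$ and apply Lemma \ref{lem.evalbd} to it directly; this is a touch cleaner and in fact saves a harmless factor, but both lead to the same $O(\cdot)$ conclusion.
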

We remind the reader that if $H$ is a finite dimensional Hilbert space, $V \leq H$ and $v \in H$ then
\begin{equation*}
d(v,V) = \inf{\{\|v-v'\| : v' \in V\}}.
\end{equation*}
Before proving this lemma we establish the result for eigenvectors.
\begin{lemma}
Suppose that $G$ is a finite group, $\mathcal{B}=(B,B')$ is a $c$-thick multiplicative pair, $\mathcal{B}'=(B,B'')$ is an $\epsilon'$-closed $r$-multiplicative pair, $f \in L^1(\mu_{B'})$ is not identically zero, $v \in \Spec_\delta(\mathcal{B},f)$ is a unit eigenvector of $L_{\mathcal{B},f}^*L_{\mathcal{B},f}$ and $0<\eta \leq \delta \leq 1$ are parameters.  Then
\begin{equation*}
d(\rho_{\mathcal{B}',y}v,\Spec_{\delta-\eta}(\mathcal{B},f))^2 =O( \epsilon'\eta^{-O(1)}c^{-O(1)}\delta^{-O(1)}w(f)^{-O(1)})
\end{equation*}
for all $y \in B''^r$.
\end{lemma}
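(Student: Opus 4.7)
The plan is to exploit the fact that $\rho := \rho_{\mathcal{B}',y}$ approximately commutes with $T := L_{\mathcal{B},f}^*L_{\mathcal{B},f}$ (by Lemma \ref{lem.approxtrans}), and then convert ``approximate eigenvector'' into ``close to the spectral subspace'' by a standard commutator trick. Concretely, writing $|\lambda|^2$ for the eigenvalue of $v$, note the identity
\begin{equation*}
(T\rho - \rho T)v = (T - |\lambda|^2 I)\rho v.
\end{equation*}
Now expand $\rho v = \sum_i \mu_i v_i$ in a Fourier basis for $f$ (Proposition \ref{prop.diag}). Then the left side equals $E := (T\rho - \rho T)v$, while the right side equals $\sum_i (|s_i(\mathcal{B},f)|^2 - |\lambda|^2)\mu_i v_i$. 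For any index $i$ with $v_i \notin \Spec_{\delta-\eta}(\mathcal{B},f)$ we have $|s_i(\mathcal{B},f)| < (\delta-\eta)\|f\|_{L^1(\mu_{B'})}$, while $|\lambda| \geq \delta \|f\|_{L^1(\mu_{B'})}$, so $||\lambda|^2 - |s_i(\mathcal{B},f)|^2| \geq \eta(2\delta - \eta)\|f\|_{L^1(\mu_{B'})}^2 \geq \eta\delta\|f\|_{L^1(\mu_{B'})}^2$ using $\eta \leq \delta$. Consequently
\begin{equation*}
d(\rho v, \Spec_{\delta-\eta}(\mathcal{B},f))^2 = \sum_{i \notin \Spec_{\delta-\eta}}|\mu_i|^2 \leq \frac{\|E\|_{L^2(\mu_B)}^2}{\eta^2\delta^2\|f\|_{L^1(\mu_{B'})}^4}.
\end{equation*}

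Next I would bound $\|E\|_{L^2(\mu_B)}$ by splitting
\begin{equation*}
T\rho - \rho T = L_{\mathcal{B},f}^*(L_{\mathcal{B},f}\rho - \rho L_{\mathcal{B},f}) + (L_{\mathcal{B},f}^*\rho - \rho L_{\mathcal{B},f}^*)L_{\mathcal{B},f}
\end{equation*}
and estimating each commutator via Lemma \ref{lem.approxtrans} (applied to $f$ for the first piece and to $\tilde{f}$ for the second, noting $L_{\mathcal{B},f}^* = L_{\mathcal{B},\tilde{f}}$). Together with the Hausdorff--Young bound $\|L_{\mathcal{B},f}^*\|\leq \|f\|_{L^1(\mu_{B'})}$ from Lemma \ref{lem.hdy}, this gives
\begin{equation*}
\|E\|_{L^2(\mu_B)}^2 = O\bigl(\epsilon' c^{-2}\|f\|_{L^\infty(\mu_{B'})}^2 \bigl(\|f\|_{L^1(\mu_{B'})}^2\|v\|_{L^\infty(\mu_B)}^2 + \|L_{\mathcal{B},f}v\|_{L^\infty(\mu_B)}^2\bigr)\bigr).
\end{equation*}

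The two $L^\infty$-norms are now controlled by Lemma \ref{lem.evalbd}: directly for $\|v\|_{L^\infty(\mu_B)}$, and by observing that $L_{\mathcal{B},f}v/|\lambda|$ is a unit eigenvector of $L_{\mathcal{B},f}L_{\mathcal{B},f}^* = L_{\mathcal{B},\tilde{f}}^*L_{\mathcal{B},\tilde{f}}$ with the same eigenvalue $|\lambda|^2$, to which the lemma applies with $\tilde{f}$ replacing $f$. Using $|\lambda| \geq \delta\|f\|_{L^1(\mu_{B'})}$ this yields
\begin{equation*}
\|v\|_{L^\infty(\mu_B)}^2 \leq \delta^{-4}c^{-1}\|f\|_{L^1(\mu_{B'})}^{-2}\|f\|_{L^2(\mu_{B'})}^2, \qquad \|L_{\mathcal{B},f}v\|_{L^\infty(\mu_B)}^2 \leq \delta^{-2}c^{-1}\|f\|_{L^2(\mu_{B'})}^2.
\end{equation*}
Finally I would translate the resulting estimate into the stated shape via the log-convexity bound $\|f\|_{L^2(\mu_{B'})}^2 \leq \|f\|_{L^1(\mu_{B'})}\|f\|_{L^\infty(\mu_{B'})} = w(f)\|f\|_{L^\infty(\mu_{B'})}^2$ and the identity $\|f\|_{L^\infty(\mu_{B'})} = w(f)^{-1}\|f\|_{L^1(\mu_{B'})}$, so that $\|E\|^2 = O(\epsilon' c^{-3}\delta^{-4} w(f)^{-3}\|f\|_{L^1(\mu_{B'})}^4)$ and the stated bound follows after dividing.

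There is no real obstacle in this argument; it is essentially bookkeeping. The only subtle point is keeping track of which quantities should be measured in $L^\infty$ (where the approximate-commutation estimate of Lemma \ref{lem.approxtrans} is phrased) versus $L^2$ (where the eigenvalue equation lives), and remembering that $L^\infty$-control on spectral eigenvectors is exactly what Lemma \ref{lem.evalbd} was designed to provide. The extension from a single eigenvector to an arbitrary element of $\Spec_\delta(\mathcal{B},f)$ in Lemma \ref{lem.trans} will then be a triangle-inequality argument using the Parseval bound of Lemma \ref{lem.pbd} to bound the number of eigenvectors in play.
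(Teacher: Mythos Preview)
Your proposal is correct and follows essentially the same route as the paper: both arguments use the commutator identity $(T\rho-\rho T)v=(T-|\lambda|^2 I)\rho v$, bound the commutator by two applications of Lemma~\ref{lem.approxtrans} together with the Hausdorff--Young bound, expand $\rho v$ in a Fourier basis, and use the eigenvalue gap $||\lambda|^2-|s_i|^2|\geq \eta\delta\|f\|_{L^1(\mu_{B'})}^2$ to control the distance. The only cosmetic difference is that the paper bounds $\|L_{\mathcal{B},f}v\|_{L^\infty(\mu_B)}$ by the trivial Young estimate $\|f\|_{L^1(\mu_{B'})}\|v\|_{L^\infty(\mu_B)}$ rather than by a second appeal to Lemma~\ref{lem.evalbd}; both lead to the same final shape $O(\epsilon'\eta^{-2}c^{-3}\delta^{-6}w(f)^{-3})$.
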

\begin{proof}
Since $L_{\mathcal{B},f}^*=L_{\mathcal{B},\tilde{f}}$ we may apply Lemma \ref{lem.approxtrans} to get that
\begin{equation*}
\|\rho_{\mathcal{B}',y}L_{\mathcal{B},f}^*L_{\mathcal{B},f}v - L_{\mathcal{B},f}^*\rho_{\mathcal{B}',y}L_{\mathcal{B},f}v\|_{L^2(\mu_B)}^2=O(\epsilon'c^{-2}\|L_{\mathcal{B},f}v\|_{L^\infty(\mu_B)}^2\|\tilde{f}\|_{L^\infty(\mu_{B'})}^2).
\end{equation*}
Of course $\|\tilde{f}\|_{L^\infty(\mu_{B'})} = \|f\|_{L^\infty(\mu_{B'})}$ and
\begin{equation*}
\|L_{\mathcal{B},f}v\|_{L^\infty(\mu_B)} = \|((fd\mu_{B'}) \ast v)1_B\|_{L^\infty(\mu_G)} \leq \|f\|_{L^1(\mu_{B'})}\|v\|_{L^\infty(\mu_B)}
\end{equation*}
by Young's inequality, whence
\begin{equation}\label{eqn.uyy}
\|\rho_{\mathcal{B}',y}L_{\mathcal{B},f}^*L_{\mathcal{B},f}v - L_{\mathcal{B},f}^*\rho_{\mathcal{B}',y}L_{\mathcal{B},f}v\|_{L^2(\mu_B)}^2
\end{equation}
is at most
\begin{equation*}
O(\epsilon'c^{-2}\|v\|_{L^\infty(\mu_B)}^2\|f\|_{L^1(\mu_{B'})}^2\|f\|_{L^\infty(\mu_{B'})}^2).
\end{equation*}
On the other hand, by the Hausdorff-Young bound (Lemma \ref{lem.hdy}) we have that
\begin{equation*}
\|L_{\mathcal{B},f}^*\rho_{\mathcal{B},y}L_{\mathcal{B},f}v - L_{\mathcal{B},f}^*L_{\mathcal{B},f}\rho_{\mathcal{B},y}v\|_{L^2(\mu_B)}^2
\end{equation*}
is at most
\begin{equation*}
\|f\|_{L^1(\mu_{B'})}^2.\|\rho_{\mathcal{B},y}L_{\mathcal{B},f}v - L_{\mathcal{B},f}\rho_{\mathcal{B},y}v\|_{L^2(\mu_B)}^2.
\end{equation*}
We estimate the second of these terms by Lemma \ref{lem.approxtrans} to get that
\begin{equation*}
\|L_{\mathcal{B},f}^*\rho_{\mathcal{B},y}L_{\mathcal{B},f}v - L_{\mathcal{B},f}^*L_{\mathcal{B},f}\rho_{\mathcal{B},y}v\|_{L^2(\mu_B)}^2
\end{equation*}
is at most
\begin{equation*}
O(\epsilon'c^{-2}\|v\|_{L^\infty(\mu_B)}^2\|f\|_{L^1(\mu_{B'})}^2\|f\|_{L^\infty(\mu_{B'})}^2).
\end{equation*}
Combining this with the upper bound for (\ref{eqn.uyy}) by the triangle inequality it follows that
\begin{equation*}
\|\rho_{\mathcal{B}',y}L_{\mathcal{B},f}^*L_{\mathcal{B},f}v - L_{\mathcal{B},f}^*L_{\mathcal{B},f}\rho_{\mathcal{B}',y}v\|_{L^2(\mu_B)}^2
\end{equation*}
is at most
\begin{equation*}
O(\epsilon'c^{-2}\|v\|_{L^\infty(\mu_B)}^2\|f\|_{L^1(\mu_{B'})}^2\|f\|_{L^\infty(\mu_{B'})}^2).
\end{equation*}

Now, $L_{\mathcal{B},f}^*L_{\mathcal{B},f}v=|\lambda|^2 v$ for some $\lambda$ with $|\lambda| \geq \delta \|f\|_{L^1(\mu_{B'})}>0$, whence
\begin{equation*}
\||\lambda|^2 \rho_{\mathcal{B}',y}v - L_{\mathcal{B},f}^*L_{\mathcal{B},f}\rho_{\mathcal{B}',y}v\|_{L^2(\mu_B)}^2  =O(\epsilon'c^{-2} \|v\|_{L^\infty(\mu_B)}^2\|f\|_{L^1(\mu_G)}^2\|f\|_{L^\infty(\mu_{B'})}^2).
\end{equation*}
Apply Lemma \ref{lem.evalbd} to bound $\|v\|_{L^\infty(\mu_{B})}$ from above so that
\begin{equation*}
\||\lambda|^2 \rho_{\mathcal{B}',y}v - L_{\mathcal{B},f}^*L_{\mathcal{B},f}\rho_{\mathcal{B}',y}v\|_{L^2(\mu_B)}^2
\end{equation*}
is at most
\begin{equation*}
O( \epsilon'c^{-3}|\lambda|^{-4}\|f\|_{L^2(\mu_{B'})}^{2}\|f\|_{L^1(\mu_{B'})}^{4}\|f\|_{L^\infty(\mu_{B'})}^{2}).
\end{equation*}
Since
\begin{equation*}
\|f\|_{L^2(\mu_{B'})}^2 \leq \|f\|_{L^1(\mu_{B'})}\|f\|_{L^\infty(\mu_{B'})} \textrm{ and } |\lambda| \geq \delta\|f\|_{L^1(\mu_{B'})}
\end{equation*}
we can simplify this to give
\begin{equation*}
\||\lambda|^2 \rho_{\mathcal{B}',y}v - L_{\mathcal{B},f}^*L_{\mathcal{B},f}\rho_{\mathcal{B}',y}v\|_{L^2(\mu_B)}^2  =O( \epsilon'c^{-3}\delta^{-4}w(f)\|f\|_{L^\infty(\mu_{B'})}^4).
\end{equation*}
Now, let $v_1,\dots,v_n$ be a Fourier basis of $L^2(\mu_B)$ for $f$  -- such a basis is afforded by Proposition \ref{prop.diag}. Decompose
\begin{equation*}
\rho_{\mathcal{B}',y}v=\sum_{i=1}^n{\mu_iv_i},
\end{equation*}
and insert this into the previous bound to get that
\begin{equation*}
\sum_{i=1}^n{|\mu_i|^2(|\lambda|^2-|s_i(\mathcal{B},f)|^2)^2} =O( \epsilon'c^{-3}\delta^{-4}w(f)\|f\|_{L^\infty(\mu_{B'})}^4).
\end{equation*}
Thus, if $|s_i(\mathcal{B},f)| \leq (\delta-\eta)\|f\|_{L^1(\mu_{B'})}$ then
\begin{equation*}
(|\lambda|^2 - |\lambda_i|^2)^2 \geq \delta^2\eta^2 \|f\|_{L^1(\mu_{B'})}^4.
\end{equation*}
It follows that
\begin{equation*}
\sum_{i: |\lambda_i| \leq (\delta-\eta)\|f\|_{L^1(\mu_{B'})}}{|\mu_i|^2} =O( \epsilon'\eta^{-2}c^{-3}\delta^{-6}w(f)^{-3}).
\end{equation*}
On the other hand $v_i \in \Spec_{\delta-\eta}(\mathcal{B},f)$ for all $i$ with $|s_i(\mathcal{B},f)| \geq (\delta - \eta)\|f\|_{L^1(\mu_{B'})}$ whence
\begin{equation*}
\sum_{i: |\lambda_i| > (\delta-\eta)\|f\|_{L^1(\mu_{B'})}}{\mu_iv_i} \in \Spec_{\delta-\eta}(\mathcal{B},f),
\end{equation*}
and so
\begin{equation*}
d(\rho_{\mathcal{B}',y}v, \Spec_{\delta-\eta}(\mathcal{B},f))^2 =O( \epsilon'\eta^{-2}c^{-3}\delta^{-6}w(f)^{-3})
\end{equation*}
and we have the result.
\end{proof}
The proof of our desired lemma is now a straightforward corollary.
\begin{proof}[Proof of Lemma \ref{lem.trans}]
Let $v_1,\dots,v_n$ be a Fourier basis of $L^2(\mu_B)$ for $f$.  We may rescale $v$ so that it is a unit vector and writing $d$ for the dimension of $\Spec_{\delta}(\mathcal{B},f)$ we there are complex numbers $\mu_1,\dots,\mu_d$ such that
\begin{equation*}
v=\sum_{i=1}^d{\mu_iv_i} \textrm{ and } \sum_{i=1}^d{|\mu_i|^2}=1.
\end{equation*}
By the triangle inequality we have that
\begin{equation*}
d(\rho_{\mathcal{B}',y}v,\Spec_{\delta-\eta}(\mathcal{B},f))^2 \leq (\sum_{i=1}^d{|\mu_i|})^2\sup_{1 \leq i \leq d}{d(\rho_{\mathcal{B}',y}v_i,\Spec_{\delta-\eta}(\mathcal{B},f))^2}.
\end{equation*}
On the other hand the first term on the right is at most $d$ by the Cauchy-Schwarz inequality and we can bound this by the Parseval bound in Lemma \ref{lem.pbd}:
\begin{equation*}
d \leq c^{-1}\delta^{-2}\|f\|_{L^1(\mu_{B'})}^{-2}\|f\|_{L^2(\mu_{B'})}^2 \leq c^{-1}\delta^{-2}w(f)^{-1}.
\end{equation*}
The second term is bounded by the preceeding lemma and we get the result.
\end{proof}
We know from Lemma \ref{lem.apun} that the maps $\rho_{\mathcal{B}',y}$ have $\|\rho_{\mathcal{B},y}v\|_{L^2(\mu_B)}$ close to $\|v\|_{L^2(\mu_B)}$, however they are not necessarily endomorphisms.  The preceeding lemma shows that they are close to almost endomorphisms of $\Spec_\delta(\mathcal{B},f)$, and we can now apply the pigeonhole principle to see that there is a point when they are close to actual endomorphisms.
\begin{lemma}\label{lem.pcore}
Suppose that $G$ is a finite group, $\mathcal{B}=(B,B')$ is a $c$-thick multiplicative pair, $f \in L^1(\mu_{B'})$ is not identically zero, and $\delta \in (0,1]$ is a parameter.  Then there is some $\delta' \in (\delta/2,\delta]$ and an $\eta \in (0,1]$ with 
\begin{equation*}
\eta = \Omega(\delta^3c w(f))
\end{equation*}
such that
\begin{equation*}
\Spec_{\delta'}(\mathcal{B},f) = \Spec_{\delta'-\eta}(\mathcal{B},f).
\end{equation*}
\end{lemma}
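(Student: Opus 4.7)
The idea is a pigeonhole argument on the step function $\delta' \mapsto \dim \Spec_{\delta'}(\mathcal{B},f)$. First I would observe that as $\delta'$ decreases, the spectrum can only grow: if $\delta_1 \geq \delta_2$ then $\Spec_{\delta_1}(\mathcal{B},f) \subseteq \Spec_{\delta_2}(\mathcal{B},f)$, because the set of indices $i$ with $|s_i(\mathcal{B},f)| \geq \delta \|f\|_{L^1(\mu_{B'})}$ expands. Consequently the function $\delta' \mapsto \dim \Spec_{\delta'}(\mathcal{B},f)$ is a non-increasing, integer-valued function of $\delta' \in (0,1]$, and for any $a < b$ the inclusion $\Spec_b(\mathcal{B},f) \subseteq \Spec_a(\mathcal{B},f)$ becomes an equality as soon as the two spaces have the same dimension.

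Next I would apply the Parseval bound (Lemma \ref{lem.pbd}) at threshold $\delta/2$. Since $\|f\|_{L^2(\mu_{B'})}^2 \leq \|f\|_{L^1(\mu_{B'})}\|f\|_{L^\infty(\mu_{B'})}$, this gives
\begin{equation*}
D := \dim \Spec_{\delta/2}(\mathcal{B},f) \leq 4c^{-1}\delta^{-2}w(f)^{-1}.
\end{equation*}
In particular, $\dim \Spec_{\delta'}(\mathcal{B},f) \leq D$ for every $\delta' \in (\delta/2, \delta]$.

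Now I would partition the half-open interval $(\delta/2, \delta]$ into $D+1$ equal sub-intervals, each of length $\eta := \delta / (2(D+1))$. Write them as $(a_k, a_k + \eta]$ for $k=0,\dots,D$ with $a_0 = \delta/2$. On the endpoints the dimensions form a non-increasing sequence of integers lying between $0$ and $D$, so their total decrease across the whole interval is at most $D$. Since there are $D+1$ sub-intervals, the pigeonhole principle produces some $k$ for which
\begin{equation*}
\dim \Spec_{a_k}(\mathcal{B},f) = \dim \Spec_{a_k+\eta}(\mathcal{B},f).
\end{equation*}
Setting $\delta' := a_k + \eta \in (\delta/2,\delta]$, the inclusion $\Spec_{\delta'}(\mathcal{B},f) \subseteq \Spec_{\delta'-\eta}(\mathcal{B},f)$ is an equality of vector spaces.

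Finally, the lower bound on $\eta$ is immediate from the bound on $D$: we have
\begin{equation*}
\eta = \frac{\delta}{2(D+1)} \geq \frac{\delta}{2(4c^{-1}\delta^{-2}w(f)^{-1}+1)} = \Omega(\delta^3 c\, w(f)),
\end{equation*}
where we use that $c, w(f), \delta \in (0,1]$ forces $c^{-1}\delta^{-2}w(f)^{-1} \geq 1$, so the denominator is $O(c^{-1}\delta^{-2}w(f)^{-1})$. There is no real obstacle here — the argument is purely a monotonicity-plus-pigeonhole consequence of the Parseval-type dimension bound from the previous section; the only minor point to be careful about is that the step function's total drop is controlled by its value at the lower endpoint, so one can genuinely use only $D$ rather than $2D$ in the pigeonhole count.
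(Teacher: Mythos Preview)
Your proof is correct and is essentially the same argument as the paper's: both apply the Parseval bound (Lemma~\ref{lem.pbd}) at threshold $\delta/2$, combine it with $\|f\|_{L^2(\mu_{B'})}^2 \leq \|f\|_{L^1(\mu_{B'})}\|f\|_{L^\infty(\mu_{B'})}$ to bound $\dim\Spec_{\delta/2}(\mathcal{B},f)$, and then pigeonhole over equally spaced thresholds in $(\delta/2,\delta]$ to locate a stable value. The only cosmetic difference is that the paper writes down the explicit chain of $k+2$ spaces $\Spec_{\delta - j\delta/(2k+2)}(\mathcal{B},f)$, while you phrase the same count in terms of sub-intervals and the total drop of the dimension function.
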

\begin{proof}
By the Parseval bound in Lemma \ref{lem.pbd} we have that
\begin{equation*}
\dim \Spec_{\delta/2}(f) \leq k:=\lfloor 4\delta^{-2}c^{-1}\|f\|_{L^1(\mu_{B'})}^{-2}\|f\|_{L^2(\mu_{B'})}^2\rfloor.
\end{equation*}
Consider the sequence of $k+2$ spaces
\begin{eqnarray*}
\Spec_{\delta}(\mathcal{B},f) & \leq & \Spec_{\delta - \delta/(2k+2)}(\mathcal{B},f) \\ & \leq& \Spec_{\delta - 2\delta/(2k+2)}(\mathcal{B},f)\\&  \leq&  \dots \\ & \leq & \Spec_{\delta - (k+1)\delta/(2k+2)}(\mathcal{B},f).
\end{eqnarray*}
Since $ \Spec_{\delta-(k+1)\delta/(2k+2)}(\mathcal{B},f)=\Spec_{\delta/2}(\mathcal{B},f)$ has dimension at most $k$ it follows that some two of the spaces in the sequence must have the same dimension and hence be equal.

The fact that $\|f\|_{L^2(\mu_{B'})}^2 \leq \|f\|_{L^1(\mu_{B'})}\|f\|_{L^\infty(\mu_{B'})}$ now completes the lemma.
\end{proof}
The next lemma combines our work so far to give us a ball of almost unitary endomorphisms.
\begin{lemma}\label{lem.nr}
Suppose that $G$ is a finite group, $\mathcal{B}=(B,B')$ is a $c$-thick multiplicative pair, $f \in L^1(\mu_{B'})$ is not identically zero, and $\delta \in (0,1]$ is a parameter. Then there is some $\delta' \in (\delta/2,\delta]$ such that if  $\mathcal{B}'=(B,B'')$ is an $\epsilon'$-closed $r$-multiplicative pair, then
\begin{equation*}
d(\rho_{\mathcal{B}',y}v,\Spec_{\delta'}(\mathcal{B},f))^2 = O(\epsilon'\delta'^{-O(1)}c^{-O(1)}w(f)^{-O(1)}\|v\|_{L^2(\mu_B)}^2)
\end{equation*}
for all $v \in \Spec_{\delta'}(\mathcal{B},f)$ and $y \in B''^r$.
\end{lemma}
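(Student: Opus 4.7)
The plan is to combine Lemma \ref{lem.pcore} and Lemma \ref{lem.trans} directly. Lemma \ref{lem.pcore} gives us precisely the room we need: a threshold $\delta' \in (\delta/2, \delta]$ and a gap $\eta = \Omega(\delta^3 c w(f))$ such that the $\delta'$-spectrum and the $(\delta'-\eta)$-spectrum coincide. Lemma \ref{lem.trans} tells us that applying $\rho_{\mathcal{B}',y}$ to a vector in $\Spec_{\delta'}(\mathcal{B},f)$ produces a vector very close to $\Spec_{\delta'-\eta}(\mathcal{B},f)$, and with the $\delta'$ from Lemma \ref{lem.pcore} in hand, this latter space is exactly $\Spec_{\delta'}(\mathcal{B},f)$.

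More concretely, I would proceed as follows. First, apply Lemma \ref{lem.pcore} to the multiplicative pair $\mathcal{B}$, the function $f$, and parameter $\delta$ to obtain some $\delta' \in (\delta/2,\delta]$ and $\eta = \Omega(\delta^3 c w(f))$ with
\begin{equation*}
\Spec_{\delta'}(\mathcal{B},f) = \Spec_{\delta'-\eta}(\mathcal{B},f).
\end{equation*}
This $\delta'$ is the one we claim satisfies the conclusion of the lemma. Next, given any $\epsilon'$-closed $r$-multiplicative pair $\mathcal{B}'=(B,B'')$, any $v \in \Spec_{\delta'}(\mathcal{B},f)$ and any $y \in B''^r$, apply Lemma \ref{lem.trans} (with $\delta'$ in place of $\delta$ and the $\eta$ above) to conclude
\begin{equation*}
d(\rho_{\mathcal{B}',y}v, \Spec_{\delta'-\eta}(\mathcal{B},f))^2 = O\bigl(\epsilon' \eta^{-O(1)} c^{-O(1)} \delta'^{-O(1)} w(f)^{-O(1)} \|v\|_{L^2(\mu_B)}^2\bigr).
\end{equation*}
Using the equality of spectra from Lemma \ref{lem.pcore}, the left-hand side equals $d(\rho_{\mathcal{B}',y}v, \Spec_{\delta'}(\mathcal{B},f))^2$.

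Finally, the only cleanup is absorbing the $\eta^{-O(1)}$ factor. Since $\eta^{-1} = O(\delta^{-3} c^{-1} w(f)^{-1}) = O(\delta'^{-3} c^{-1} w(f)^{-1})$ (as $\delta' > \delta/2$), we have $\eta^{-O(1)} = O(\delta'^{-O(1)} c^{-O(1)} w(f)^{-O(1)})$ and the bound rolls up into exactly the form claimed. There is no substantive obstacle here — all of the work has already been done in Lemmas \ref{lem.pcore} and \ref{lem.trans}; this lemma is merely the packaging step that turns "approximately invariant up to a small spectral slack" into "approximately invariant on the spectrum itself," which is what the downstream quantitative continuity argument will need.
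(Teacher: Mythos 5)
Your proposal is correct and is exactly the paper's argument: apply Lemma \ref{lem.pcore} to obtain $\delta'$ and $\eta$ with $\Spec_{\delta'}(\mathcal{B},f)=\Spec_{\delta'-\eta}(\mathcal{B},f)$, then invoke Lemma \ref{lem.trans} and absorb the $\eta^{-O(1)}$ factor using $\eta=\Omega(\delta^3cw(f))$. Nothing further is needed.
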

\begin{proof}
Apply the previous lemma to get $\delta'$ and an $\eta$, and then Lemma \ref{lem.trans} to complete.
\end{proof}
We make two new definitions which will be convenient.  Let $C_{\mathcal{S}\mathcal{R}}>0$ be some absolute constant such that one has the bound
\begin{equation*}
d(\rho_{\mathcal{B}',y}v,\Spec_{\delta'}(\mathcal{B},f))^2 \leq\epsilon'(2\delta^{-1}c^{-1}w(f)^{-1})^{C_{\mathcal{S}\mathcal{R}}}\|v\|_{L^2(\mu_B)}^2
\end{equation*}
in Lemma \ref{lem.nr}.

Now, suppose that $G$ is a finite group, $\mathcal{B}=(B,B')$ is a $c$-thick multiplicative pair and $f \in L^1(\mu_{B'})$ is not identically zero.  We say that $\delta$ is \emph{regular} for $(\mathcal{B},f)$ if for every $\epsilon'$-closed $r$-multiplicative pair $\mathcal{B}'=(B,B'')$ we have
\begin{equation*}
d(\rho_{\mathcal{B}',y}v,\Spec_{\delta'}(\mathcal{B},f))^2 \leq \epsilon'(2\delta^{-1}c^{-1}w(f)^{-1})^{C_{\mathcal{S}\mathcal{R}}}\|v\|_{L^2(\mu_B)}^2
\end{equation*}
for all $v \in \Spec_{\delta}(\mathcal{B},f)$ and $y \in B''^r$.  Lemma \ref{lem.nr} guarantees a plentiful supply of regular values.

Furthermore, for each $y \in G$ we define the map
\begin{equation*}
T_{\mathcal{B},f,\delta,y}:\Spec_\delta(\mathcal{B},f) \rightarrow \Spec_\delta(\mathcal{B},f); v \mapsto \pi_{\Spec_{\delta}(\mathcal{B},f)}((\rho_yv)|_B),
\end{equation*}
where $\pi_{\Spec_{\delta}(\mathcal{B},f)}$ denotes the usual orthogonal projection of $L^2(\mu_B)$ onto the subspace $\Spec_{\delta}(\mathcal{B},f)$.

If $\delta$ is regular for $(\mathcal{B},f)$ then it turns out that the map $y \mapsto T_{\mathcal{B},f,\delta,y}$ is approximately a homomorphism -- the next lemma makes this precise. 
\begin{lemma}\label{lem.apxhom}
Suppose that $G$ is a finite group, $\mathcal{B}=(B,B')$ is a $c$-thick multiplicative pair, $f \in L^1(\mu_{B'})$ is not identically zero, and $\delta \in (0,1]$ is regular for $(\mathcal{B},f)$. Then for every $\epsilon'$-closed $r$-multiplicative pair $\mathcal{B}''=(B,B'')$ we have
\begin{equation*}
\|T_{\mathcal{B},f,\delta,yz}- T_{\mathcal{B},f,\delta,y}T_{\mathcal{B},f,\delta,z}\|^2 = O(\epsilon'\delta^{-O(1)}c^{-O(1)}w(f)^{-O(1)})
\end{equation*}
for all $z,y,yz \in B''^r$, and $T_{\mathcal{B},f,\delta,1_G}= I_{\Spec_\delta(\mathcal{B},f)}$, the identity on $\Spec_\delta(\mathcal{B},f)$.
\end{lemma}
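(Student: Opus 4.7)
The identity $T_{\mathcal{B},f,\delta,1_G}=I_{\Spec_\delta(\mathcal{B},f)}$ is immediate: $\rho_{1_G}$ is the identity on $L^2(\mu_G)$, restriction to $B$ leaves $v\in\Spec_\delta(\mathcal{B},f)\subset L^2(\mu_B)$ unchanged, and $\pi_{\Spec_\delta(\mathcal{B},f)}$ fixes $\Spec_\delta(\mathcal{B},f)$ pointwise. For the approximate-homomorphism bound, I would fix a unit vector $v\in\Spec_\delta(\mathcal{B},f)$, write $\pi$ for $\pi_{\Spec_\delta(\mathcal{B},f)}$, and view every element of $L^2(\mu_B)$ as living in $L^2(\mu_G)$ by extension by zero. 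Using the genuine identity $\rho_y\rho_z=\rho_{yz}$ on $L^2(\mu_G)$ and inserting $\rho_y(\rho_z(v)|_B)|_B$, I would split
\[
T_{\mathcal{B},f,\delta,yz}v-T_{\mathcal{B},f,\delta,y}T_{\mathcal{B},f,\delta,z}v=\pi\left(\rho_{yz}(v)|_B-\rho_y(\rho_z(v)|_B)|_B\right)+\pi\left(\rho_y(\rho_z(v)|_B-T_{\mathcal{B},f,\delta,z}v)|_B\right).
\]
Since $\pi$ is a contraction, it suffices to bound the two inner $L^2(\mu_B)$-differences.

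The second term measures the leakage of $\rho_{\mathcal{B}'',z}v$ out of $\Spec_\delta(\mathcal{B},f)$. The function $e_z:=\rho_z(v)|_B-T_{\mathcal{B},f,\delta,z}v$ is the orthogonal complement of $\rho_{\mathcal{B}'',z}v$ in $\Spec_\delta(\mathcal{B},f)$, so the regularity hypothesis gives
\[
\|e_z\|_{L^2(\mu_B)}^2\leq \epsilon'\bigl(2\delta^{-1}c^{-1}w(f)^{-1}\bigr)^{C_{\mathcal{S}\mathcal{R}}}.
\]
A one-line change-of-variables argument shows that "extend by zero, translate by $y$, restrict to $B$" cannot increase the $L^2(\mu_B)$-norm of a function supported on $B$, so this piece contributes at most $\|e_z\|_{L^2(\mu_B)}$.

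The first term measures the failure of restriction to $B$ to commute with translation. Unpacking the definitions gives the pointwise expression $v(xyz)\,1_B(xyz)\,1_B(x)\,(1-1_B(xy))$, whose support is contained in $\{x\in B:xy\notin B\}=B\setminus By^{-1}$. Because $y\in B''^r$ and $\mathcal{B}''$ is $\epsilon'$-closed, the inclusions $B^-\subset By^{-1}$ and $B\subset B^+$ yield $\mu_G(B\setminus By^{-1})\leq \mu_G(B^+\setminus B^-)\leq \epsilon'\mu_G(B)$. Combining this measure bound with the pointwise bound $\|v\|_{L^\infty(\mu_B)}\leq \delta^{-3}c^{-1}w(f)^{-1}$ coming from Corollary \ref{cor.genl8bd} (via $\|f\|_{L^2(\mu_{B'})}^2\leq\|f\|_{L^1(\mu_{B'})}\|f\|_{L^\infty(\mu_{B'})}$) gives an $L^2(\mu_B)$ contribution of $O(\sqrt{\epsilon'}\,\delta^{-3}c^{-1}w(f)^{-1})$.

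Adding the two estimates and taking a supremum over unit $v$ produces the claimed bound on $\|T_{\mathcal{B},f,\delta,yz}-T_{\mathcal{B},f,\delta,y}T_{\mathcal{B},f,\delta,z}\|^2$. The only point where any real care is needed is bookkeeping: cleanly separating the obstruction into spectral leakage (controlled by regularity) and the non-commutativity of restriction with translation (controlled by $\epsilon'$-closure), since all of the harmonic-analytic content has already been packaged into the definition of regularity and the $L^\infty$-bound of Corollary \ref{cor.genl8bd}.
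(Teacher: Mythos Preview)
Your proof is correct and uses the same ingredients as the paper's: the regularity hypothesis to control spectral leakage, the $\epsilon'$-closure of $\mathcal{B}''$ to control the failure of restriction to commute with translation, and Corollary \ref{cor.genl8bd} for the $L^\infty$-bound on $v$. The paper organises things slightly differently, first establishing the single estimate $\|\rho_z v - T_{\mathcal{B},f,\delta,z}v\|_{L^2(\mu_G)}^2 = O(\epsilon'\delta^{-O(1)}c^{-O(1)}w(f)^{-O(1)}\mu_G(B))$ for any $z\in B''^r$ and then telescoping through $\rho_{yz}v$, $\rho_y T_{\mathcal{B},f,\delta,z}v$, and $T_{\mathcal{B},f,\delta,y}T_{\mathcal{B},f,\delta,z}v$ using unitarity of $\rho_y$; your direct two-term split is a bit more economical but not materially different.
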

\begin{proof}
Note that if $v \in \Spec_{\delta}(\mathcal{B},f)$ is a unit vector, and $z \in B''^r$ then
\begin{eqnarray*}
 \|\rho_zv - \rho_{\mathcal{B}',z}v\|_{L^2(\mu_G)}^2& =& \int{|v(xz) - v(xz)1_B(x)|^2d\mu_G(x)}\\  & = & \int{|v(xz)|^2|1_{B}(xz) - 1_B(x)|d\mu_G(x)}\\ & \leq & \|v\|_{L^\infty(\mu_B)}^2\epsilon'\mu_G(B)
\end{eqnarray*}
by Lemma \ref{lem.approxhaar} since $\mathcal{B}'=(B,B'')$ is an $\epsilon'$-closed $r$-multiplicative pair. We estimate $\|v\|_{L^\infty(\mu_B)}$ via Corollary \ref{cor.genl8bd} and the fact that $\|f\|_{L^2(\mu_{B'})}^2 \leq \|f\|_{L^1(\mu_{B'})}\|f\|_{L^\infty(\mu_{B'})}$ to get that
\begin{equation*}
\|v\|_{L^\infty(\mu_B)} =O( \delta^{-O(1)}c^{-O(1)}w(f)^{-O(1)}).
\end{equation*}
Inserting this in the previous we get that
\begin{equation*}
\|\rho_zv - \rho_{\mathcal{B}',z}v\|_{L^2(\mu_G)}^2=O(\epsilon'\delta^{-O(1)}c^{-O(1)}w(f)^{-O(1)}\mu_G(B)).
\end{equation*}
Moreover, since $\delta$ is regular
\begin{eqnarray*}
\|\rho_{\mathcal{B}',z}v - T_{\mathcal{B},f,\delta,z}v\|_{L^2(\mu_G)}^2 & = & \|\rho_{\mathcal{B}',z}v - T_{\mathcal{B},f,\delta,z}v\|_{L^2(\mu_B)}^2\mu_G(B)\\ & = & d(\rho_{\mathcal{B}',z}v,\Spec_{\delta}(\mathcal{B},f))^2\mu_G(B)\\ & = & O(\epsilon'\delta^{-O(1)}c^{-O(1)}w(f)^{-O(1)}\mu_G(B)).
\end{eqnarray*}
It follows from the triangle inequality that
\begin{equation}\label{eqn.apx}
\|\rho_zv - T_{\mathcal{B},f,\delta,z}v\|_{L^2(\mu_G)}^2 =O(\epsilon'\delta^{-O(1)}c^{-O(1)}w(f)^{-O(1)}\mu_G(B)),
\end{equation}
whenever $z \in B''^r$ and $v \in \Spec_\delta(\mathcal{B},f)$ is a unit vector.  In particular we also have
\begin{equation}\label{eqn.apx2}
\|\rho_yv - T_{\mathcal{B},f,\delta,y}v\|_{L^2(\mu_G)}^2 =O(\epsilon'\delta^{-O(1)}c^{-O(1)}w(f)^{-O(1)}\mu_G(B))
\end{equation}
whenever $y \in B''^r$ and $v \in \Spec_\delta(\mathcal{B},f)$ is a unit vector, and
\begin{equation}\label{eqn.apx3}
\|\rho_{yz}v - T_{\mathcal{B},f,\delta,yz}v\|_{L^2(\mu_G)}^2 =O(\epsilon'\delta^{-O(1)}c^{-O(1)}w(f)^{-O(1)}\mu_G(B)),
\end{equation}
whenever $yz \in B''^r$ and $v \in \Spec_\delta(\mathcal{B},f)$ is a unit vector.

Now, the map $\rho_y$ is unitary and $\rho_{yz}=\rho_y \rho_z$, so (\ref{eqn.apx}) gives that
\begin{eqnarray*}
\|\rho_{yz}v - \rho_yT_{\mathcal{B},f,\delta,z}v\|_{L^2(\mu_G)}^2 & = & \|\rho_y(\rho_z - T_{\mathcal{B},f,\delta,z})v\|_{L^2(\mu_G)}^2\\ & = &O(\epsilon'\delta^{-O(1)}c^{-O(1)}w(f)^{-O(1)}\mu_G(B)).
\end{eqnarray*}
Thus by the triangle inequality and (\ref{eqn.apx3}) we have that
\begin{equation*}
\|T_{\mathcal{B},f,\delta,yz}v - \rho_yT_{\mathcal{B},f,\delta,z}v\|_{L^2(\mu_G)}^2 = O(\epsilon'\delta^{-O(1)}c^{-O(1)}w(f)^{-O(1)}\mu_G(B)).
\end{equation*}
On the other hand applying (\ref{eqn.apx2}) to the vector $T_{\mathcal{B},f,\delta,z}v$ appropriately rescaled we get that
\begin{equation*}
\|\rho_yT_{\mathcal{B},f,\delta,z}v - T_{\mathcal{B},f,\delta,y}T_{\mathcal{B},f,\delta,z}v\|_{L^2(\mu_G)}^2 = O(\epsilon'\delta^{-O(1)}c^{-O(1)}w(f)^{-O(1)}\mu_G(B)),
\end{equation*}
since $\|T_{\mathcal{B},f,\delta,z}v\|_{L^2(\mu_B)} \leq \|v\|_{L^2(\mu_B)}$.  Finally the first conclusion of the lemma follows from the triangle inequality.  The second conclusion is immediate.
\end{proof}
The maps $T_{\mathcal{B},f,\delta,y}$ can also be well approximated by unitary maps, as the following lemma confirms.  To prove this we use a general operator theoretic result which says that if a map $M \in \End(H)$ has $\|Mv\|_H \approx 1$ for all unit vectors $v \in H$, then it is close to a unitary map.
\begin{lemma}\label{lem.approxunitary}
Suppose that $H$ is a $d$-dimensional complex Hilbert space, $M:H \rightarrow H$ and $\epsilon \in (0,1)$ is a parameter such that
\begin{equation*}
|\|Mv\|_H-1| \leq \epsilon \textrm{ for all unit } v \in H.
\end{equation*}
Then there is a unitary matrix $U$ such that $\|M-U\| \leq \epsilon$.
\end{lemma}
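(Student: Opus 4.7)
The plan is to use the singular value decomposition of $M$. By Corollary \ref{cor.speccor} there is an orthonormal basis $v_1, \dots, v_d$ of $H$ such that $M^*M v_i = |s_i(M)|^2 v_i$ for all $i$. The hypothesis $|\|Mv\|_H - 1| \leq \epsilon$ applied to each $v_i$ gives
\begin{equation*}
|s_i(M)|^2 = \langle M^*Mv_i, v_i\rangle_H = \|Mv_i\|_H^2 \in [(1-\epsilon)^2, (1+\epsilon)^2],
\end{equation*}
so in particular each $|s_i(M)| \geq 1 - \epsilon > 0$ and $||s_i(M)| - 1| \leq \epsilon$.

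Next I would construct the candidate unitary $U$ explicitly, in the spirit of the construction in the proof of Lemma \ref{lem.algnormexp}. Define $U$ on the basis by
\begin{equation*}
Uv_i := Mv_i/|s_i(M)|,
\end{equation*}
and extend by linearity. The vectors $Uv_1, \dots, Uv_d$ form an orthonormal system because
\begin{equation*}
\langle Mv_i, Mv_j\rangle_H = \langle M^*Mv_i, v_j\rangle_H = |s_i(M)|^2 \delta_{ij},
\end{equation*}
so $U$ sends one orthonormal basis to another and is therefore unitary.

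Finally I would estimate $\|M-U\|$ directly on the basis. We have $(M-U)v_i = (|s_i(M)|-1)Uv_i$, and for any unit vector $w = \sum_i \mu_i v_i$ (so $\sum |\mu_i|^2 = 1$), orthonormality of $Uv_1, \dots, Uv_d$ gives
\begin{equation*}
\|(M-U)w\|_H^2 = \sum_{i=1}^d |\mu_i|^2 (|s_i(M)|-1)^2 \leq \epsilon^2 \sum_{i=1}^d |\mu_i|^2 = \epsilon^2.
\end{equation*}
Hence $\|M-U\| \leq \epsilon$, as required. There is no real obstacle here: the only thing to check carefully is that the hypothesis translates to a two-sided bound on every singular value, and this is immediate from applying it to each $v_i$ in the diagonalizing basis. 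The rest is the standard observation that polar decomposition is available even when $M$ is not a priori assumed invertible, since the hypothesis forces invertibility.
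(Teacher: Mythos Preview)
Your proof is correct and follows essentially the same route as the paper: both diagonalize $M^*M$ via Corollary \ref{cor.speccor}, use the hypothesis on the basis vectors to bound each singular value within $\epsilon$ of $1$, define $U$ by $Uv_i = Mv_i/s_i(M)$, and check unitarity and the norm bound directly. Your final step, expanding a general unit vector in the basis and using orthonormality of the $Uv_i$, is in fact slightly more explicit than the paper's corresponding line.
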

\begin{proof}
Let $v_1,\dots,v_d$ be an orthonormal basis of the type afforded by Corollary \ref{cor.speccor}, and note that
\begin{equation*}
\|Mv_i\|_H^2=\langle Mv_i,Mv_i\rangle_H = \langle M^*Mv_i,v_i\rangle_H = |s_i(M)|^2.
\end{equation*}
It follows from the hypothesis that $|s_i(M)-1| \leq \epsilon<1$ for all $i$.  In particular $s_i(M)>0$ for all $i$, whence we can define $U$ on the basis by $Uv_i:=Mv_i/s_i(M)$, extending to $H$ by linearity. 

First we check that
\begin{equation*}
\|Mv_i-Uv_i\|_H^2=\langle Mv_i,Mv_i\rangle_H + \langle Uv_i,Uv_i\rangle_H - 2\Re \langle Mv_i,Uv_i\rangle_H.
\end{equation*}
By construction of $U$ it follows that
\begin{equation*}
\|Mv_i-Uv_i\|_H^2=(|s_i(M)|^2+1-2|s_i(M)|)\leq \epsilon^2.
\end{equation*}
Since $v_1,\dots,v_d$ is an orthonormal basis for $H$ it follows that $\|M-U\| \leq \epsilon$ as claimed.

To complete the lemma we check that $U$ is unitary.  Again it suffices to check this on the basis:
\begin{eqnarray*}
\langle Uv_i,Uv_j\rangle_H& =& \frac{1}{|s_i(M)||s_j(M)|}\langle Mv_i,Mv_j\rangle_H\\ & =& \frac{1}{|s_i(M)||s_j(M)|}\langle M^*Mv_i,v_j\rangle_H =\frac{|s_i(M)|}{|s_j(M)|}\langle v_i,v_j\rangle_H.
\end{eqnarray*}
This quantity is $1$ is $i=j$ and $0$ otherwise.  It follows that $U$ is unitary and we are done.
\end{proof}
\begin{lemma}\label{lem.un}
Suppose that $G$ is a finite group, $\mathcal{B}=(B,B')$ is a $c$-thick multiplicative pair, $f \in L^1(\mu_{B'})$ is not identically zero, and $\delta$ is regular for $(\mathcal{B},f)$. Then for every $\epsilon'$-closed $r$-multiplicative pair $\mathcal{B}'=(B,B'')$, and every $y \in B''^r$ there is a unitary map $U_y \in U(\Spec_\delta(\mathcal{B},f))$ such that
\begin{equation*}
\|T_{\mathcal{B},f,\delta,y} - U_y\|^2 = O(\epsilon'\delta^{-O(1)}c^{-O(1)}w(f)^{-O(1)}).
\end{equation*}
\end{lemma}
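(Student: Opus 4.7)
The plan is to verify the hypothesis of Lemma \ref{lem.approxunitary} for the operator $T_{\mathcal{B},f,\delta,y}$ acting on the finite dimensional Hilbert space $\Spec_\delta(\mathcal{B},f)$ (equipped with the $L^2(\mu_B)$ inner product), and then invoke that lemma to produce the desired unitary $U_y$.

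First I would fix a unit vector $v \in \Spec_\delta(\mathcal{B},f)$. Since $T_{\mathcal{B},f,\delta,y}v$ is by definition the orthogonal projection of $\rho_{\mathcal{B}',y}v$ onto $\Spec_\delta(\mathcal{B},f)$, the Pythagorean identity gives
\begin{equation*}
\|\rho_{\mathcal{B}',y}v\|_{L^2(\mu_B)}^2 = \|T_{\mathcal{B},f,\delta,y}v\|_{L^2(\mu_B)}^2 + d(\rho_{\mathcal{B}',y}v,\Spec_\delta(\mathcal{B},f))^2.
\end{equation*}
By regularity of $\delta$ the final term is bounded by $\epsilon'(2\delta^{-1}c^{-1}w(f)^{-1})^{C_{\mathcal{S}\mathcal{R}}}$, while Lemma \ref{lem.apun} gives
\begin{equation*}
0 \leq 1 - \|\rho_{\mathcal{B}',y}v\|_{L^2(\mu_B)}^2 \leq \epsilon'\|v\|_{L^\infty(\mu_B)}^2.
\end{equation*}

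The next step is to control $\|v\|_{L^\infty(\mu_B)}$. Since $v \in \Spec_\delta(\mathcal{B},f)$ is a unit vector, Corollary \ref{cor.genl8bd} yields
\begin{equation*}
\|v\|_{L^\infty(\mu_B)} \leq \delta^{-3}c^{-1}\|f\|_{L^1(\mu_{B'})}^{-2}\|f\|_{L^2(\mu_{B'})}^2 = O(\delta^{-O(1)}c^{-O(1)}w(f)^{-O(1)}),
\end{equation*}
using $\|f\|_{L^2(\mu_{B'})}^2 \leq \|f\|_{L^1(\mu_{B'})}\|f\|_{L^\infty(\mu_{B'})}$. Combining the three displays above gives a bound of the form
\begin{equation*}
\bigl| 1 - \|T_{\mathcal{B},f,\delta,y}v\|_{L^2(\mu_B)}^2\bigr| \leq \epsilon'\delta^{-O(1)}c^{-O(1)}w(f)^{-O(1)} =: \eta.
\end{equation*}
Since $\|T_{\mathcal{B},f,\delta,y}v\|_{L^2(\mu_B)} \leq \|\rho_{\mathcal{B}',y}v\|_{L^2(\mu_B)} \leq 1$, taking square roots gives $|\,\|T_{\mathcal{B},f,\delta,y}v\|_{L^2(\mu_B)} - 1\,| \leq \sqrt{\eta}$ (one may assume $\eta \leq 1$, else the conclusion of the lemma is trivial on taking $U_y=0$ replaced by, say, any unitary, after absorbing constants).

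Finally, since the bound above holds for every unit vector $v \in \Spec_\delta(\mathcal{B},f)$, Lemma \ref{lem.approxunitary} applied to $M = T_{\mathcal{B},f,\delta,y}$ and $H = \Spec_\delta(\mathcal{B},f)$ produces a unitary $U_y \in U(\Spec_\delta(\mathcal{B},f))$ with $\|T_{\mathcal{B},f,\delta,y} - U_y\| \leq \sqrt{\eta}$, which on squaring is exactly the claimed bound. There is no real obstacle here: all the hard work of showing $\rho_{\mathcal{B}',y}$ approximately preserves $L^2$-norm and approximately preserves $\Spec_\delta(\mathcal{B},f)$ has already been done in Lemmas \ref{lem.apun} and \ref{lem.nr}, and Lemma \ref{lem.approxunitary} is a general operator-theoretic fact packaged precisely for this use.
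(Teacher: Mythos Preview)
Your proof is correct and follows essentially the same approach as the paper: bound $\|v\|_{L^\infty(\mu_B)}$ via Corollary~\ref{cor.genl8bd}, use Lemma~\ref{lem.apun} and regularity to show $\|T_{\mathcal{B},f,\delta,y}v\|_{L^2(\mu_B)}$ is close to $1$ for every unit $v\in\Spec_\delta(\mathcal{B},f)$, then invoke Lemma~\ref{lem.approxunitary}. The only cosmetic difference is that you use the Pythagorean identity to pass from $\|\rho_{\mathcal{B}',y}v\|$ to $\|T_{\mathcal{B},f,\delta,y}v\|$, whereas the paper uses the regularity bound on $\|\rho_{\mathcal{B}',y}v-T_{\mathcal{B},f,\delta,y}v\|$ directly via the triangle inequality; the two are equivalent here.
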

\begin{proof}
Suppose that $v \in \Spec_\delta(\mathcal{B},f)$ is a unit vector.  By Lemma \ref{lem.apun} we have that
\begin{equation*}
|\|\rho_{\mathcal{B},y}v\|_{L^2(\mu_B)}^2 - 1| \leq \epsilon' \|v\|_{L^\infty(\mu_B)}^2
\end{equation*}
whenever $y \in B''^r$.  Now, by Corollary \ref{cor.genl8bd} we have that
\begin{equation*}
|\|\rho_{\mathcal{B},y}v\|_{L^2(\mu_B)}^2 - 1| =O(\epsilon' \delta^{-O(1)}c^{-O(1)}w(f)^{-O(1)}).
\end{equation*}
On the other hand, by regularity of $\delta$ we have
\begin{equation*}
\|\rho_{\mathcal{B},y}v - T_{\mathcal{B},f,\delta,y}v\|_{L^2(\mu_B)}^2 = O(\epsilon' \delta^{-O(1)}c^{-O(1)}w(f)^{-O(1)}),
\end{equation*}
whence
\begin{equation*}
|\|T_{\mathcal{B},f,\delta,y}v\|_{L^2(\mu_B)}^2 -1 | = O(\epsilon' \delta^{-O(1)}c^{-O(1)}w(f)^{-O(1)}).
\end{equation*}
The conclusion follows from Lemma \ref{lem.approxunitary}.
\end{proof}

\section{Bohr sets and balls in unitary groups}\label{sec.bohrandunball}

In this section we develop some basic size estimates for non-abelian Bohr sets.  We begin by recalling the traditional abelian definition of a Bohr set:  suppose that $G$ is a finite abelian group, $\Gamma=\{\gamma_1,\dots,\gamma_d\}$ is a set of homomorphisms $G \rightarrow S^1$, and $\delta \in (0,2]$. Then the \emph{Bohr set} with \emph{frequency set} $\Gamma$ and \emph{width} $\delta$ is
\begin{equation*}
\Bohr(\Gamma,\delta):=\{x \in G: |\gamma_i(x) - 1| \leq \delta \textrm{ for all } 1 \leq i \leq d\}.
\end{equation*}
There is, of course, an ever so slightly different (and more common) definition where we ask that $|\arg \gamma_i(x)| \leq \delta$ instead of $|\gamma_i(x)-1| \leq \delta$, but since the $\gamma_i$s are locally linear this difference plays no material r\^{o}le.

Now we shall present an equivalent definition which generalises to the non-abelian setting more easily.
\begin{lemma}[Alternative definition of Bohr sets]
Suppose that $G$ is a finite abelian group, $H$ is a $d$-dimensional Hilbert space and $\delta \in (0,2]$.  Then
\begin{enumerate}
\item given a set $\Gamma:=\{\gamma_1,\dots,\gamma_d\}$ of homomorphisms $G \rightarrow S^1$, there is a homomorphism $\gamma:G \rightarrow U(H)$ such that
\begin{equation*}
\Bohr(\Gamma,\delta)=\{x \in G: \|\gamma(x) -I\|\leq \delta\};
\end{equation*}
\item and conversely given a homomorphism $\gamma:G \rightarrow U(H)$ we get a set $\Gamma=\{\gamma_1,\dots,\gamma_d\}$ of homomorphisms $G \rightarrow S^1$ such that
\begin{equation*}
\Bohr(\Gamma,\delta)=\{x \in G:  \|\gamma(x) -I\| \leq \delta\}.
\end{equation*}
\end{enumerate}
\end{lemma}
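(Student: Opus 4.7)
The plan is to produce in both directions an orthonormal basis of $H$ that simultaneously diagonalises the relevant unitary operators; once this is in hand, the equivalence of the two descriptions of a Bohr set reduces to the fact that the operator norm of a diagonal matrix equals the maximum absolute value of its entries.

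For part (i), given $\Gamma = \{\gamma_1,\dots,\gamma_d\}$, I would fix any orthonormal basis $e_1,\dots,e_d$ of $H$ and define $\gamma:G \rightarrow U(H)$ by extending linearly from $\gamma(x)e_i := \gamma_i(x)e_i$. Each $\gamma(x)$ is unitary since its diagonal entries lie in $S^1$, and $\gamma$ is a homomorphism because each $\gamma_i$ is. The operator $\gamma(x)-I$ is then diagonal with entries $\gamma_i(x)-1$, so $\|\gamma(x)-I\| = \max_i |\gamma_i(x)-1|$, and hence $\|\gamma(x)-I\| \leq \delta$ if and only if $|\gamma_i(x)-1|\leq \delta$ for every $i$, which is exactly the defining condition of $\Bohr(\Gamma,\delta)$.

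For part (ii), given a homomorphism $\gamma:G \rightarrow U(H)$, the family $\mathcal{M}:=\{\gamma(x):x \in G\}$ is commuting (because $G$ is abelian) and adjoint closed (because $\gamma(x)^*=\gamma(x)^{-1}=\gamma(x^{-1}) \in \mathcal{M}$ since $\gamma(x)$ is unitary). The spectral theorem for adjoint closed families of commuting operators quoted earlier in the paper then furnishes an orthonormal basis $e_1,\dots,e_d$ simultaneously diagonalising all the $\gamma(x)$. For each $i$ I would define $\gamma_i:G\rightarrow\C$ by letting $\gamma_i(x)$ be the eigenvalue of $\gamma(x)$ on $e_i$; since $\gamma(x)$ is unitary these eigenvalues lie in $S^1$, and since $\gamma(xy)=\gamma(x)\gamma(y)$ acts on $e_i$ as scalar multiplication by $\gamma_i(x)\gamma_i(y)$, each $\gamma_i$ is a homomorphism $G\rightarrow S^1$. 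Putting $\Gamma:=\{\gamma_1,\dots,\gamma_d\}$, the operator $\gamma(x)-I$ is diagonal in the basis $e_1,\dots,e_d$ with entries $\gamma_i(x)-1$, so its operator norm equals $\max_i|\gamma_i(x)-1|$ and the two descriptions of the Bohr set coincide.

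There is no real obstacle here: both directions are immediate once one invokes the spectral theorem in (ii), and the content of the lemma is simply the observation that the operator-norm formulation is the natural encoding of the Bohr set condition which generalises to the non-abelian setting (where a single homomorphism into $U(H)$ replaces a collection of scalar characters, and one can no longer diagonalise simultaneously).
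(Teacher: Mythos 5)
Your proof is correct and follows essentially the same route as the paper: in part (i) defining $\gamma$ diagonally in a chosen orthonormal basis, and in part (ii) using the fact that $\gamma(G)$ is a commuting, adjoint-closed family and appealing to the quoted spectral theorem to simultaneously diagonalise, then reading off the characters as eigenvalues. Nothing to add.
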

\begin{proof}
Both parts are easy, but the first perhaps slightly more so.  Begin by letting $v_1,\dots,v_d$ be an orthonormal basis of $H$ and define a map $\gamma:G \rightarrow U(H)$ by
\begin{equation*}
x \mapsto \gamma(x):H \rightarrow H; \sum_{i=1}^d{\mu_iv_i} \mapsto \sum_{i=1}^d{\mu_i\gamma_i(x)v_i}.
\end{equation*}
It is easy to check that this is a well-defined homomorphism, and we also see that
\begin{equation*}
\|\gamma(x)-I\| \leq \delta \textrm{ if and only if } |\gamma_i(x) - 1| \leq \delta \textrm{ for all }1 \leq i \leq d.
\end{equation*}
The first part then follows immediately.

On the other hand given a homomorphism $\gamma:G \rightarrow U(H)$ we construct a frequency set as follows.  Since $G$ is abelian, $\gamma(G)$ is abelian, and $x \in G$ iff $x^{-1} \in G$ whence $\gamma(G)$ is an adjoint closed commuting set of operators.  It follows from the spectral theorem that there is an orthonormal basis$v_1,\dots,v_d$ simultaneously diagonalizing all of $\gamma(G)$.  Let
\begin{equation*}
\gamma_i:G \rightarrow S^1; x \mapsto \langle \gamma(x)v_i,v_i\rangle.
\end{equation*}
It is easy to see that all the $\gamma_i$ are well-defined homomorphisms and that
\begin{equation*}
\|\gamma(x)-I\|\leq \delta \textrm{ if and only if } |\gamma_i(x) - 1| \leq \delta \textrm{ for all }1 \leq i \leq d,
\end{equation*}
from which the result follows immediately on setting $\Gamma:=\{\gamma_1,\dots,\gamma_d\}$.
\end{proof}

In light of the above lemma we make the following definitions.  Suppose that $H$ is a $d$-dimensional Hilbert space and $\delta \in (0,2]$.  Then we write
\begin{equation*}
B(U(H),\delta):=\{M \in U(H): \|M - I \| \leq \delta\}
\end{equation*}
which is the usual $\delta$-ball around the identity.  Now, suppose that $G$ is a finite (not necessarily abelian) group and $\gamma:G \rightarrow U(H)$ is a homomorphism, then we write
\begin{equation*}
\Bohr(\gamma,\delta):=\{x \in G: \gamma(x) \in B(U(H),\delta)\}.
\end{equation*}

In the abelian setting there is a very useful pigeonhole argument which gives an estimate for the size of a Bohr set by pulling back an estimate for the size of balls in $(S^1)^d$.
\begin{lemma}[Size of abelian Bohr sets, {\cite[Lemma 4.19]{TCTVHV}}]
Suppose that $G$ is a finite abelian group, $\Gamma=\{\gamma_1,\dots,\gamma_d\}$ is a set of homomorphisms $G \rightarrow S^1$, and $\delta \in (0,2]$. Then
\begin{equation*}
\mu_G(\Bohr(\Gamma,\delta)) \geq \Omega(\delta)^d.
\end{equation*}
\end{lemma}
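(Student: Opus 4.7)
The plan is to use the standard pigeonhole/covering argument applied through the map
\begin{equation*}
\Phi: G \to (S^1)^d; \quad x \mapsto (\gamma_1(x), \dots, \gamma_d(x)).
\end{equation*}
Since each $\gamma_i$ is a homomorphism, $\Phi$ is a homomorphism from $G$ to the compact abelian group $(S^1)^d$, and $\Bohr(\Gamma,\delta)=\Phi^{-1}(Q)$ where $Q:=\{(z_1,\dots,z_d)\in(S^1)^d: |z_i-1|\leq \delta\text{ for all }i\}$ is the closed box of ``radius'' $\delta$ around the identity. The strategy is to cover $(S^1)^d$ by a small number of translates of a box of half this size, and apply pigeonhole in $G$.

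First, I would partition $S^1$ into $N=\lceil 4/\delta\rceil$ arcs each of diameter at most $\delta/2$ (using that the chord length of an arc of angular length $\theta$ is at most $\theta$). Taking products gives a partition of $(S^1)^d$ into $N^d$ boxes, each of diameter at most $\delta/2$ coordinatewise. Pulling back via $\Phi$ partitions $G$ into $N^d$ sets (some possibly empty), so by the pigeonhole principle there is a box $Q'\subset (S^1)^d$ such that the set $S:=\Phi^{-1}(Q')\subset G$ satisfies $|S|\geq |G|/N^d$.

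Next, fix any $y\in S$. For every $x\in S$ we have $|\gamma_i(x)-\gamma_i(y)|\leq\delta/2$ for all $i$, and using that $|\gamma_i(y)|=1$ and $\gamma_i$ is a homomorphism, this gives $|\gamma_i(xy^{-1})-1|=|\gamma_i(x)-\gamma_i(y)|\leq\delta/2\leq\delta$. Hence $Sy^{-1}\subset \Bohr(\Gamma,\delta)$, and since right translation by $y^{-1}$ is a bijection,
\begin{equation*}
\mu_G(\Bohr(\Gamma,\delta))\geq \mu_G(Sy^{-1}) = \frac{|S|}{|G|}\geq \frac{1}{N^d}=\Omega(\delta)^d,
\end{equation*}
which is the claimed bound.

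There is no real obstacle here: the argument is entirely elementary, relying only on the fact that the $\gamma_i$ take values in $S^1$ and on the cardinality of a finite cover of $S^1$ by short arcs. The only mildly delicate point is to distinguish the chord metric $|z-1|$ from the arc-length metric $|\arg z|$ when choosing the arc length, but these two metrics are comparable up to absolute constants, which is absorbed into the $\Omega$-notation.
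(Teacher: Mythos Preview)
Your proof is correct and is precisely the standard pigeonhole/covering argument that the paper has in mind: the paper does not give its own proof of this lemma but simply cites \cite[Lemma~4.19]{TCTVHV}, and later remarks that the proof of Lemma~\ref{lem.bohraverage} ``is the same as for \cite[Lemma~4.19]{TCTVHV}, namely a covering argument.'' The minor slip in the constant (with $N=\lceil 4/\delta\rceil$ the arcs have angular length $2\pi/N$, so one really wants $N\geq 4\pi/\delta$ to guarantee chord-length at most $\delta/2$) is harmless exactly as you note, since it is absorbed into the $\Omega$-notation.
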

We should remark that technically the lemma in \cite{TCTVHV} is for the more common definition of Bohr set but it is easy to pass between the two by replacing $\delta$ with some quantity of size $\Omega(\delta)$.

The is an analogue of the previous lemma in the non-abelian setting.
\begin{lemma}[Size of non-abelian Bohr sets]\label{lem.bohrs}
Suppose that $G$ is a finite group, $H$ is a $d$-dimensional Hilbert space, $\gamma:G \rightarrow U(H)$ is a homomorphism and $\delta \in (0,2]$. Then
\begin{equation*}
\mu_G(\Bohr(\gamma,\delta)) \geq \Omega(\delta)^{d^2}.
\end{equation*}
\end{lemma}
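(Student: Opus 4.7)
The plan is to combine an $\epsilon$-net construction on $U(H)$ with the standard pigeonhole argument that underlies the abelian case. The key observation is that although $U(H)$ sits inside the $2d^2$-real-dimensional space $\End(H)$, it is only a $d^2$-dimensional real submanifold, and it is this that is responsible for the exponent $d^2$ (rather than $2d^2$) in the bound.

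First I would show that $U(H)$ admits an operator-norm $(\delta/2)$-net of size $N = O(1/\delta)^{d^2}$. Every unitary can be written as $\exp(iA)$ for some Hermitian $A$ with $\|A\| \leq \pi$ (take eigenvalue logarithms in $(-\pi,\pi]$ via the spectral theorem). The space of Hermitian endomorphisms of $H$ is a real vector space of dimension $d^2$, so its operator-norm ball of radius $\pi$ has an operator-norm $(\delta/2)$-net of size $O(1/\delta)^{d^2}$ by the usual volume-packing argument: $(\delta/2)$-balls around distinct net points are disjoint and all lie inside the $(\pi+\delta/2)$-ball, while an $\epsilon$-ball has volume $\epsilon^{d^2}$ times a $1$-ball with respect to any fixed Euclidean normalisation on $\operatorname{Herm}(H)$. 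Writing
\begin{equation*}
e^{iA} - e^{iB} = \int_0^1 e^{isA}(iA - iB)e^{i(1-s)B}\,ds,
\end{equation*}
and using that $\|e^{isA}\| = \|e^{i(1-s)B}\| = 1$, we see that the map $A \mapsto e^{iA}$ is $1$-Lipschitz in the operator norm; pushing forward the Hermitian net gives the required $(\delta/2)$-net of $U(H)$.

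The rest of the argument is pigeonhole followed by a difference step. Since $\gamma(G) \subset U(H)$, there is some $M \in U(H)$ such that the set $S := \{x \in G : \|\gamma(x) - M\| \leq \delta/2\}$ satisfies $|S| \geq |G|/N$. For any $x,y \in S$, since $\gamma(y)^{-1}$ is unitary and hence an isometry in the operator norm,
\begin{equation*}
\|\gamma(xy^{-1}) - I\| = \|(\gamma(x) - \gamma(y))\gamma(y)^{-1}\| = \|\gamma(x) - \gamma(y)\| \leq \delta,
\end{equation*}
so that $SS^{-1} \subset \Bohr(\gamma,\delta)$. Since $|SS^{-1}| \geq |S|$, we conclude $\mu_G(\Bohr(\gamma,\delta)) \geq |S|/|G| \geq 1/N = \Omega(\delta)^{d^2}$.

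The only nontrivial ingredient is the covering estimate in the first paragraph; everything else is direct. The Lie-theoretic parametrisation $\exp \colon \operatorname{Herm}(H) \to U(H)$ is what converts the naive bound $O(1/\delta)^{2d^2}$ (from enclosing $U(H)$ in $\End(H)$) into the sharper $O(1/\delta)^{d^2}$, and this is precisely the feature that the abelian proof exploits in the special case where $U(H)$ is replaced by a maximal torus of dimension $d$.
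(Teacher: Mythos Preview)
Your proof is correct. The overall shape---a pigeonhole on $U(H)$ followed by a difference step $SS^{-1}\subset\Bohr(\gamma,\delta)$---matches the paper exactly; the difference is in how each side quantifies ``$U(H)$ is $d^2$-dimensional.'' The paper invokes Haar measure on $U(H)$ and the volume bound $\mu_{U(H)}(B(U(H),\delta))\geq\Omega(\delta)^{d^2}$ (Lemma~\ref{lem.unitaryballsize}, quoted without proof), then averages the indicator of a translated $\delta/2$-ball over Haar measure to locate a translate containing $\Omega(\delta)^{d^2}|G|$ images. You instead bound the covering number of $U(H)$ directly via the $1$-Lipschitz surjection $\exp:\{A\ \text{Hermitian},\ \|A\|\leq\pi\}\to U(H)$ and a packing argument in the $d^2$-dimensional real space of Hermitian operators. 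These are dual formulations of the same fact, and your route has the minor advantage of not needing to set up Haar measure on $U(H)$.

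One small remark: in your packing argument you say ``$(\delta/2)$-balls around distinct net points are disjoint,'' which would require the net to be $\delta$-separated rather than $(\delta/2)$-separated; the standard argument takes a maximal $(\delta/2)$-separated set (automatically a $(\delta/2)$-net) and observes that the $(\delta/4)$-balls are disjoint. This only affects constants and is absorbed into the $O(\cdot)$. Also note that the paper actually proves the more general Lemma~\ref{lem.bohraverage}, where $\phi$ need not be a homomorphism, and deduces Lemma~\ref{lem.bohrs} as a corollary; your argument, as written, uses the homomorphism property only in the difference step $\gamma(xy^{-1})=\gamma(x)\gamma(y)^{-1}$, but the same computation with $\phi(x)\phi(y)^{-1}$ in place of $\gamma(xy^{-1})$ recovers that generalisation verbatim.
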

This can be easily proved using the usual volume argument for unitary balls see, for example, the proof of \cite[Theorem 4.7]{WTG}.  Write $\mu_{U(H)}$ for the unique left and right invariant probability measure on $U(H)$ -- consult \cite{HW, PRM} or \cite{MLM} for a proof that such exists.
\begin{lemma}[Size of unitary balls]\label{lem.unitaryballsize}
Suppose that $H$ is a $d$-dimensional Hilbert space and $\delta \in (0,2]$. Then
\begin{equation*}
 \mu_{U(H)}(B(U(H),\delta)) \geq \Omega(\delta)^{d^2}.
\end{equation*}
\end{lemma}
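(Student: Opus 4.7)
The plan is to exploit the Gram--Schmidt description of $U(H)$. Fix an orthonormal basis $v_1,\dots,v_d$ of $H$, so that any $U\in U(H)$ is uniquely encoded by the nested sequence $(Uv_1,\dots,Uv_d)$, with $Uv_k$ lying on the unit sphere $S_k$ of $W_k:=\langle Uv_1,\dots,Uv_{k-1}\rangle^\perp$; this $S_k$ is a real $(2(d-k+1)-1)$-dimensional sphere. Under this parametrisation $\mu_{U(H)}$ corresponds to the product of the rotation-invariant probability measures on $S_1,\dots,S_d$, each factor being conditional on the earlier choices. This is the standard ``Haar via Gram--Schmidt'' description, and is the only non-elementary ingredient I would invoke.

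The reduction step is the observation that if $\|Uv_k-v_k\|\leq \delta/\sqrt d$ for every $k$, then
\begin{equation*}
\|U-I\|^2 \leq \|U-I\|_{\mathrm{HS}}^2 = \sum_{k=1}^d \|Uv_k-v_k\|^2 \leq \delta^2,
\end{equation*}
so $U\in B(U(H),\delta)$. It therefore suffices to lower bound the Haar measure of
\begin{equation*}
A_\delta\;:=\;\bigl\{U\in U(H)\,:\,\|Uv_k-v_k\|\leq \delta/\sqrt d \text{ for each } 1\leq k\leq d\bigr\}.
\end{equation*}

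The main calculation is then to bound $\mu_{U(H)}(A_\delta)$ as a product of conditional probabilities. Given $Uv_1,\dots,Uv_{k-1}$, the vector $Uv_k$ is uniformly distributed on $S_k$; let $w_k$ be the normalised orthogonal projection of $v_k$ onto $W_k$. The standard estimate that the uniform measure on a real $(n-1)$-sphere assigns mass at least $\Omega(\epsilon)^{n-1}$ to any chord-ball of radius $\epsilon$ gives
\begin{equation*}
\Pr\bigl[\|Uv_k-w_k\|\leq \tfrac{\delta}{2\sqrt d}\bigr]\;\geq\;\Omega\!\bigl(\tfrac{\delta}{\sqrt d}\bigr)^{2(d-k+1)-1}.
\end{equation*}
Since $\sum_{k=1}^d\bigl(2(d-k+1)-1\bigr)=d^2$, multiplying these bounds over $k$ produces the desired lower bound of the shape $\Omega(\delta)^{d^2}$ (where, as elsewhere in the paper, the implicit constant is allowed to absorb unimportant powers of $d$).

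The main obstacle is that the natural target $v_k$ for $Uv_k$ does not actually lie in $W_k$, so one cannot apply the cap bound around $v_k$ directly; one has to work with $w_k$ and then close the loop by verifying that $\|Uv_k-w_k\|\leq \delta/(2\sqrt d)$ together with the earlier bounds $\|Uv_j-v_j\|\leq \delta/\sqrt d$ for $j<k$ still forces $\|Uv_k-v_k\|\leq \delta/\sqrt d$. A short inductive estimate shows that $\|w_k-v_k\|=O(\delta)$, since $W_k$ differs from $\langle v_1,\dots,v_{k-1}\rangle^\perp$ by a rotation of size $O(\delta)$, so the triangle inequality closes the argument with only a worse absolute constant. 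Everything beyond this is routine spherical-cap geometry.
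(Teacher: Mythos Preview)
The paper does not prove this lemma; it simply invokes the ``usual volume argument for unitary balls'' and refers to Gowers~[WTG]. The standard such argument works via the Weyl integration formula (or equivalently, the exponential map from the Lie algebra $\mathfrak{u}(d)\cong\mathbb{R}^{d^2}$), and yields the bound with an absolute constant directly.

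Your Gram--Schmidt approach is natural, but the ``obstacle'' step contains a genuine gap. You claim that $\|Uv_k-w_k\|\leq\delta/(2\sqrt d)$ together with $\|Uv_j-v_j\|\leq\delta/\sqrt d$ for $j<k$ forces $\|Uv_k-v_k\|\leq\delta/\sqrt d$, justified by $\|w_k-v_k\|=O(\delta)$. But $O(\delta)$ is a factor of $\sqrt d$ too weak for the triangle inequality to recover $\delta/\sqrt d$, and this bound is sharp rather than slack: since $\langle v_k,Uv_j\rangle=\langle v_k,Uv_j-v_j\rangle$, one has
\[
\|{\rm proj}_{W_k^\perp}v_k\|^2=\sum_{j<k}|\langle v_k,Uv_j\rangle|^2\leq (k-1)\,\delta^2/d,
\]
which for $k$ near $d$ is $\Theta(\delta^2)$ (achieved, for instance, by perturbing each $Uv_j$ by $\delta/\sqrt d$ in the $v_k$-direction, subject to orthonormality). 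So the induction does not close at the scale $\delta/\sqrt d$. If instead one relaxes the inductive target to $\|Uv_k-v_k\|=O(\delta)$, the errors compound geometrically in $k$ and one only lands $U$ in $B(U(H),C^d\delta)$, giving a final bound of shape $(c\delta/C^d)^{d^2}$ --- far worse than claimed.

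There is a second, smaller issue: even if the induction closed, the Hilbert--Schmidt reduction costs a $\sqrt d$, giving $(c\delta/\sqrt d)^{d^2}$ rather than $(c\delta)^{d^2}$. The resulting factor $d^{-d^2/2}$ cannot be absorbed into an absolute $\Omega(\delta)^{d^2}$; the remark that ``powers of $d$ can be absorbed'' is not correct for the lemma as stated (though it would be harmless for the paper's applications, where $d$ is bounded by the other parameters).

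A clean route that avoids both problems: by the Weyl integration formula,
\[
\mu\bigl(B(U(H),\delta)\bigr)=\frac{1}{d!(2\pi)^d}\int_{[-\theta_0,\theta_0]^d}\prod_{j<k}|e^{i\theta_j}-e^{i\theta_k}|^2\,d\theta,\qquad \theta_0=2\arcsin(\delta/2)\geq\delta.
\]
Using $|e^{i\theta_j}-e^{i\theta_k}|\leq|\theta_j-\theta_k|$ on all of $[-\pi,\pi]^d$ gives the normalisation $\frac{1}{d!(2\pi)^d}\int_{[-\pi,\pi]^d}\prod_{j<k}|\theta_j-\theta_k|^2\,d\theta\geq 1$; rescaling $\theta\mapsto(\theta_0/\pi)\theta$ and using $|e^{i\theta_j}-e^{i\theta_k}|\geq c_0|\theta_j-\theta_k|$ on $[-\theta_0,\theta_0]^d$ (valid for $\delta\leq 1$, say) then yields $\mu(B(U(H),\delta))\geq(c_0\delta/\pi)^{d^2}$ directly.
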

We could prove Lemma \ref{lem.bohrs} directly now, but in fact we shall need the following more robust version which immediately yields the lemma as a corollary.  The proof method is the same as for \cite[Lemma 4.19]{TCTVHV}, namely a covering argument.
\begin{lemma}\label{lem.bohraverage}
Suppose that $G$ is a finite group, $H$ is a $d$-dimensional Hilbert space, $B \subset G$, $\phi:B \rightarrow U(H)$ is a map and $\delta \in (0,2]$ is a parameter. Then there is a subset $B' \subset B$ with $\mu_B(B') \geq \Omega(\delta)^{d^2}$ such that
\begin{equation*}
\|\phi(x)^{-1}\phi(x') - I\| \leq \delta \textrm{ for all } x,x' \in B'.
\end{equation*}
\end{lemma}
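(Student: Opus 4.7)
The plan is to run a standard covering (pigeonhole) argument in $U(H)$, using the volume bound supplied by Lemma \ref{lem.unitaryballsize}.

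First I would produce an efficient cover of $U(H)$ by translates of $B(U(H),\delta/2)$. To do this, pick a maximal collection $U_1,\dots,U_N \in U(H)$ of unitaries with the property that the sets $U_iB(U(H),\delta/4)$ are pairwise disjoint. Since $U_i^{-1}$ is unitary for each $i$, the map $M \mapsto U_i^{-1}M$ is an isometry in the operator norm, so $U_iB(U(H),\delta/4) = \{M \in U(H): \|U_i^{-1}M - I\| \leq \delta/4\}$, and by left-invariance of Haar measure on $U(H)$,
\begin{equation*}
\mu_{U(H)}(U_iB(U(H),\delta/4)) = \mu_{U(H)}(B(U(H),\delta/4)) \geq \Omega(\delta)^{d^2}
\end{equation*}
by Lemma \ref{lem.unitaryballsize}. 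Since these sets are disjoint and live in a probability space, this forces $N \leq O(\delta)^{-d^2}$. Moreover, by maximality, the balls $U_iB(U(H),\delta/2)$ cover $U(H)$: for any $M \in U(H)$, either $M = U_i$ for some $i$ or adding $M$ would violate the disjointness condition, which gives an $i$ with $U_iB(U(H),\delta/4) \cap MB(U(H),\delta/4) \neq \emptyset$, whence $\|U_i^{-1}M - I\| \leq \delta/2$ by the triangle inequality.

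Now the pigeonhole step. Because $U(H) = \bigcup_{i=1}^N U_iB(U(H),\delta/2)$, the sets $\phi^{-1}(U_iB(U(H),\delta/2)) \cap B$ cover $B$, so there is some $i$ for which the set
\begin{equation*}
B' := \phi^{-1}(U_iB(U(H),\delta/2)) \cap B
\end{equation*}
satisfies $\mu_B(B') \geq 1/N \geq \Omega(\delta)^{d^2}$. For any $x,x' \in B'$ we have $\|U_i^{-1}\phi(x) - I\| \leq \delta/2$ and $\|U_i^{-1}\phi(x') - I\| \leq \delta/2$, so by the triangle inequality $\|U_i^{-1}(\phi(x') - \phi(x))\| \leq \delta$, that is, $\|\phi(x') - \phi(x)\| \leq \delta$ (again using that $U_i^{-1}$ is an operator-norm isometry). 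Finally, multiplying on the left by $\phi(x)^{-1}$ (which is also unitary hence an isometry) gives
\begin{equation*}
\|\phi(x)^{-1}\phi(x') - I\| = \|\phi(x') - \phi(x)\| \leq \delta,
\end{equation*}
as required.

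There is no real obstacle here — the only mildly subtle point is verifying that translates of $B(U(H),\delta/4)$ by elements of $U(H)$ have the same Haar measure as $B(U(H),\delta/4)$ itself, which is immediate from left-invariance of $\mu_{U(H)}$ once one rewrites the ball as $\{M : \|U^{-1}M - I\| \leq \delta/4\}$. Everything else is the standard volume-packing pigeonhole.
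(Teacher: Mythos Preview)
Your proof is correct. Both your argument and the paper's rely on Lemma~\ref{lem.unitaryballsize}, but the mechanics differ: the paper does not build a net at all. Instead it computes the average
\[
\int_{U(H)} \sum_{x\in B} 1_{B(U(H),\delta/2)N^{-1}}\bigl(\phi(x)^{-1}\bigr)\,d\mu_{U(H)}(N),
\]
swaps the order of summation and integration, and uses left-invariance of $\mu_{U(H)}$ to see this equals $|B|\cdot\mu_{U(H)}(B(U(H),\delta/2)) \geq \Omega(\delta)^{d^2}|B|$; an optimal $N$ then yields $B'=\{x\in B:\phi(x)\in NB(U(H),\delta/2)\}$ of the right density. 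Your route---maximal $\delta/4$-packing, volume bound on the net size, pigeonhole into a $\delta/2$-ball---reaches exactly the same conclusion. The averaging version is marginally slicker (one invocation of the ball-volume bound at radius $\delta/2$ rather than $\delta/4$, and no explicit net), but your argument is entirely standard and arguably more ``constructive'' in that the set $B'$ comes from a finite partition of $B$ rather than an existential choice over the continuum $U(H)$.
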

\begin{proof}
Consider the following average
\begin{equation*}
\int{\sum_{x \in B}{1_{B(U(H),\delta/2)N^{-1}}(\phi(x)^{-1})}d\mu_{U(H)}(N)}
\end{equation*}
which is equal to
\begin{equation*}
 \sum_{ x \in B}{\int{1_{\phi(x)B(U(H),\delta/2)}(N)d\mu_{U(H)}(N)}}
\end{equation*}
since integration is linear.  However the measure $\mu_{U(H)}$ is left invariant so 
\begin{eqnarray*}
\int{1_{\phi(x)B(U(H),\delta/2)}(N)d\mu_{U(H)}(N)}&=&\mu_{U(H)}(\phi(x)B(U(H),\delta/2)) \\&= &\mu_{U(H)}(B(U(H),\delta/2)) \geq \Omega(\delta)^{d^2}
\end{eqnarray*}
by Lemma \ref{lem.unitaryballsize}, whence
\begin{equation*}
\int{\sum_{x \in B}{1_{B(U(H),\delta/2).N^{-1}}(\phi(x)^{-1})}d\mu_{U(H)}(N)}\geq \Omega(\delta)^{d^2}|B|.
\end{equation*}
It follows by averaging that there is some $N \in U(H)$ such that
\begin{equation*}
\sum_{x \in B}{1_{B(U(H),\delta/2).N^{-1}}(\phi(x)^{-1})}\geq \Omega(\delta)^{d^2}|B|.
\end{equation*}
However, $B(U(H),\delta/2)^{-1}=B(U(H),\delta/2)$ whence
\begin{equation*}
\sum_{x \in B}{1_{NB(U(H),\delta/2)}(\phi(x))}\geq \Omega(\delta)^{d^2}|B|.
\end{equation*}
Now, letting $B':=\{x \in G: \phi(x) \in NB(U(H),\delta/2)\}$ and we see that
\begin{equation*}
|B'| \geq \Omega(\delta)^{d^2}|B|.
\end{equation*}
Finally if $x,x' \in B'$ then there are operators $M,M' \in B(U(H),\delta/2)$ such that $\phi(x)=NM^{-1}$ and $\phi(x') = NM'$.  (The asymmetry is possible since $B(U(H),\delta/2)$ is symmetric.)  Then
\begin{eqnarray*}
\|\phi(x)^{-1}\phi(x') - I\|& = &\|MM' - I\| \\ & \leq & \|(M-I)M'\| + \|M'-I\|\\ & = & \|M-I\| + \|M'-I\|\leq \delta
\end{eqnarray*}
by the triangle inequality and unitarity of $M'$.  The result follows.
\end{proof}

\section{From large multiplicative energy to correlation with a multiplicative pair}\label{sec.bog}

In this section we shall prove a result which lets us pass from large multiplicative energy to correlation with a multiplicative pair.  This can be seen as a sort of weak asymmetric non-abelian Bogolio{\`u}boff theorem relative to multiplicative pairs (\emph{c.f.} \cite{NNB}).  
\begin{proposition}\label{prop.mpls}
Suppose that $G$ is a finite group, $B_0,B_1,B_2,B_3$ are sets such that $\mathcal{B}_{i,j}=(B_i,B_j)$ is a $c_j$-thick, $\epsilon_j$-closed $r_j$-multiplicative pair for each $i<j$, $f \in L^1(\mu_{B_2})$ and $g \in L^2(\mu_{B_1})$, not identically zero, are such that
\begin{equation*}
\|(fd\mu_{B_2})\ast g\|_{L^2(\mu_{B_1})}^2 \geq \nu \|g\|_{L^2(\mu_{B_1})}^2\|f\|_{L^\infty(\mu_{B_2})}^2,
\end{equation*}
and $\eta \in (0,1]$ is a parameter. Then there is an absolute constant $C_{\rm{Bog}}>0$ such that if 
\begin{equation*}
r_3 \geq 32, \epsilon_3 \leq \left(\frac{c_2\nu}{2}\right)^{C_{\rm{Bog}}} \textrm{ and } \epsilon_3 \leq \frac{\|g\|_{L^2(\mu_{B_1})}^4\nu^2}{256\|g\|_{L^\infty(\mu_{B_1})}^4},
\end{equation*}
then there is a positive real $c=\Omega_{\eta,\nu,c_1,c_2,c_3}(\mu_G(B_1))$ and some $c$-thick, $\eta$-closed and $4$-multiplicative pairs $\mathcal{B}_{9,10}=(B_9,B_{10})$ and $\mathcal{B}_{10,11}=(B_{10},B_{11})$ such that $B_9^2 \subset B_3^3$ and 
\begin{equation*}
\sup_{x \in B_{2,3}^-}{|f \ast \widetilde{\mu_{B_9}}\ast \mu_{B_9}(x)|} =\Omega(\sqrt{\nu}\|f\|_{L^\infty(\mu_{B_2})}).
\end{equation*}
\end{proposition}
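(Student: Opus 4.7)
The plan is a non-abelian, multiplicative-pair version of the classical Bogolyubov argument. The hypothesis says that the convolution operator $L := L_{\mathcal{B}_{1,2},f}$ transmits a $\nu$-fraction of $g$'s squared norm, which in the abelian picture forces $g$ to correlate with the large Fourier coefficients of $f$, and those coefficients are then visible on a suitable Bohr set. I would mimic this in three moves: isolate the large spectrum, upgrade it to a regular threshold admitting an approximate unitary homomorphism, and carve a nested multiplicative pair out of a Bohr set in $U(V')$ whose convolution-square acts as an approximate identity on the spectrum.

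First, choose $\delta_0 \asymp \sqrt{\nu}/w(f)$ and split $g = g_V + g_{V^\perp}$ with $V := \Spec_{\delta_0}(\mathcal{B}_{1,2},f)$. Because $V$ is an eigenspace of $L^*L$, the vectors $Lg_V, Lg_{V^\perp}$ are orthogonal in $L^2(\mu_{B_1})$, and Lemma \ref{lem.hdy} on $V^\perp$ gives $\|Lg_{V^\perp}\|^2 \leq \delta_0^2 \|f\|_{L^1(\mu_{B_2})}^2 \|g\|^2 \leq \tfrac{\nu}{2}\|f\|_\infty^2\|g\|^2$, using the choice of $\delta_0$ together with $\|f\|_{L^1}\leq w(f)\|f\|_\infty$. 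Combined with the hypothesis this forces $\|Lg_V\|^2 \geq \tfrac{\nu}{2}\|f\|_\infty^2\|g\|^2$ and $\|g_V\|^2 \geq \tfrac{\nu}{2}\|g\|^2$. Lemma \ref{lem.pcore} then supplies a regular $\delta' \in (\delta_0/2,\delta_0]$, and Lemmas \ref{lem.un} and \ref{lem.apxhom} produce a map $y \mapsto U_y \in U(V')$ approximating right translation on $V' := \Spec_{\delta'}$ for $y \in B_3^{r_3}$ and approximately a homomorphism there; Lemma \ref{lem.pbd} controls $d := \dim V' = O(\delta'^{-2}c_2^{-1}w(f)^{-1})$.

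Next I construct the triple $B_9, B_{10}, B_{11}$. Applying Lemma \ref{lem.bohraverage} to $\phi := U|_{B_3}$ at radius $\eta_\ast$ and translating by a base-point yields a symmetric neighbourhood of the identity $B_9 \subseteq B_3^2 \subseteq B_3^3$ on which $\|U_y - I\|$ is tiny for every $y \in B_9\cdot B_9$, with $\mu_G(B_9) \geq \Omega(\eta_\ast)^{d^2}\mu_G(B_3)$. Iterating the construction at scales $\eta_\ast^2, \eta_\ast^3$, using the approximate-homomorphism property so that the Bohr-type sets nest correctly, produces $B_{11}\subseteq B_{10}\subseteq B_9$ with $\mathcal{B}_{9,10}, \mathcal{B}_{10,11}$ being $c$-thick, $\eta$-closed, $4$-multiplicative pairs for an appropriate $c = \Omega_{\eta,\nu,c_1,c_2,c_3}(\mu_G(B_1))$. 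The convolution square $\widetilde{\mu_{B_9}}\ast\mu_{B_9}$ is then an approximate identity on $V'$: expanding $v\ast\widetilde{\mu_{B_9}}\ast\mu_{B_9} = \int\!\!\int \rho_{zz'}v\,d\mu_{B_9}(z)\,d\mu_{B_9}(z')$ and using $\rho_{zz'}v|_{B_1} \approx T_{\mathcal{B}_{1,2},f,\delta',zz'}v \approx U_{zz'}v \approx v$ for $z,z'\in B_9$ shows that $\|(v\ast\widetilde{\mu_{B_9}}\ast\mu_{B_9})|_{B_1} - v\|_{L^2(\mu_{B_1})}$ is negligible.

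By associativity $(fd\mu_{B_2})\ast g_V\ast\widetilde{\mu_{B_9}}\ast\mu_{B_9} = (fd\mu_{B_2})\ast(g_V\ast\widetilde{\mu_{B_9}}\ast\mu_{B_9})$, which by the approximate-identity property is close to $Lg_V$ in $L^2(\mu_{B_1})$; its squared norm is therefore at least $\tfrac{\nu}{4}\|f\|_\infty^2\|g\|^2$. Applying Lemma \ref{lem.apnormcalc} (whose conjugation hypothesis is furnished by Corollary \ref{cor.apnorm}) pulls the convolution with $\widetilde{\mu_{B_9}}\ast\mu_{B_9}$ past $g_V$, turning this lower bound into a supremum bound on $|f\ast\widetilde{\mu_{B_9}}\ast\mu_{B_9}|$ on translates; Lemma \ref{lem.appcon} then localises this supremum inside $B_{2,3}^-$, yielding $\sup_{x\in B_{2,3}^-}|f\ast\widetilde{\mu_{B_9}}\ast\mu_{B_9}(x)| = \Omega(\sqrt{\nu}\|f\|_\infty)$. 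The main obstacle is the step in the previous paragraph: producing the triple $B_9,B_{10},B_{11}$ as genuine nested $4$-multiplicative pairs with controlled thickness and closure from the merely approximate homomorphism $U$. This is where the hypotheses $r_3 \geq 32$ and the smallness assumptions on $\epsilon_3$ are used, giving enough room in $B_3^{r_3}$ for the cascade of Bohr-type sets and absorbing the errors accumulated through successive applications of Lemmas \ref{lem.un} and \ref{lem.apxhom}.
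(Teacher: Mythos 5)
The high-level plan -- isolate the large spectrum, regularise the threshold, pass to approximate unitaries, and convolve by an approximate identity -- is essentially the paper's strategy, and the opening moves (the projection $g = g_V + g_{V^\perp}$, the use of Lemma~\ref{lem.hdy} and \ref{lem.pcore}, \ref{lem.un}, \ref{lem.apxhom}, \ref{lem.pbd}) are all sound; the projection decomposition is a cleaner rephrasing of the Fourier-basis expansion the paper uses.

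The genuine gap is precisely the step you flag as ``the main obstacle'' and then do not close. Producing the nested $c$-thick $\eta$-closed $4$-multiplicative pairs $\mathcal{B}_{9,10}$, $\mathcal{B}_{10,11}$ is not achieved by iterating Lemma~\ref{lem.bohraverage} at scales $\eta_\ast,\eta_\ast^2,\eta_\ast^3$. Two separate things are missing. First, the $\eta$-closure of a multiplicative pair is a statement that $\mu_G(B^+\setminus B^-)\leq \eta\mu_G(B)$, and this requires a Bourgain-type pigeonhole to select the \emph{right} radius -- fixed geometric scales give containment but no control on the measure difference. Second, the sets Lemma~\ref{lem.bohraverage} returns are pullbacks of unitary $\delta$-balls under $U$, and $U$ is only an \emph{approximate} homomorphism with error controlled by $\epsilon_3$; consequently they are not genuine Bohr sets of a representation, so one cannot appeal to the usual polynomial-growth argument (Example~\ref{eg.pg}) to manufacture closure. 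The paper circumvents both difficulties by first extracting from the spectral analysis a genuine symmetric neighbourhood $B_4$ with $B_4^{16}\subset B_3^4$ and with $\widetilde{\mu_{B_4}}\ast\mu_{B_4}$ an approximate identity on the spectrum (Lemma~\ref{lem.level}), then passing to a conjugation-stable subset $B_5\subset B_4^6\subset B_1$ (Corollary~\ref{cor.apnorm}) of bounded doubling, and finally feeding $B_5$ into the regularity machine Proposition~\ref{prop.regapp}, which is designed exactly to output a nested system of multiplicative pairs with prescribed closure from a set of small doubling. The triple $B_9,B_{10},B_{11}$ is then a conjugate of that output.

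A secondary issue in the same vein: to apply Lemma~\ref{lem.apnormcalc} you need conjugation stability of your $B_9$ (the hypothesis $xB_3x^{-1}\subset B_2$ for $x\in B_0$ in that lemma's notation). If $B_9$ is built directly inside $B_3$ by Bohr averaging, this containment is not automatic, whereas the paper's choice $B_6\subset B_5^4$ with $B_5$ supplied by Corollary~\ref{cor.apnorm} gives $xB_6^2x^{-1}\subset B_2$ for $x\in B_1$ by construction. To repair your argument, insert the Corollary~\ref{cor.apnorm} step before constructing the multiplicative pairs and replace the ad hoc iteration of Bohr balls with an invocation of Proposition~\ref{prop.regapp} on the conjugation-stable neighbourhood; the scales, $r_3\geq 32$, and the two upper bounds on $\epsilon_3$ then play exactly the roles you anticipated.
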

It may be useful at a first reading to think of the special case of $B_0=B_1=B_2=B_3=G$, when we see that $f$ (which may be signed) correlates with a product set with small doubling. 

Our argument is inspired by Bogolio{\`u}boff's result (popularised by Ruzsa \cite{IZRF}) although the details are rather different.  The energy hypothesis implies that the large spectrum supports a large chunk of the mass of $f$, and then combine the work in \S\ref{sec.anallarge}\verb!&!\ref{sec.bohrandunball} to find a suitable multiplicative pair to project onto, leading to the correlation.

The first result draws draws together the work of \S\ref{sec.anallarge} -- it may be worth recalling the definition of regular for $(\mathcal{B},f)$ from that section -- and \S\ref{sec.bohrandunball}.
\begin{lemma}\label{lem.level}
Suppose that $G$ is a finite group, $\mathcal{B}=(B,B')$ is a $c$-thick multiplicative pair, $f \in L^1(\mu_{B'})$ is not identically zero, $\delta$ is regular for $(\mathcal{B},f)$, and $\eta\in (0,1]$ is a parameter. Then there is an absolute constant $C>0$ such that if 
\begin{equation*}
\epsilon' \leq (\eta\delta c w(f)/2)^{C}
\end{equation*}
and $\mathcal{B}'=(B,B'')$ is a $c'$-thick, $\epsilon'$-closed $32$-multiplicative pair, then there is a symmetric neighbourhood of the identity $B'''$ with
\begin{equation*}
B'''^{16} \subset B''^4 \textrm{ and } \mu_{G}(B''') = \Omega_{\eta,\delta,c,c',w(f)}(\mu_G(B)),
\end{equation*}
and such that for any probability measure $\mu$ with $\supp \mu \subset B'''^{16}$ we have
\begin{equation*}
\|(v \ast \mu)|_B -v\|_{L^2(\mu_B)}^2 \leq \eta^2\|v\|_{L^2(\mu_B)}^2
 \end{equation*}
 for all $v \in \Spec_\delta(\mathcal{B},f)$.
\end{lemma}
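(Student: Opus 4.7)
The plan is to execute a non-abelian Bohr set construction, exploiting the fact that on $B''^{32}$ the map $y \mapsto T_{\mathcal{B},f,\delta,y}$ behaves like an approximate homomorphism into the unitary group of the (low-dimensional) space $\Spec_\delta(\mathcal{B},f)$. First, by the Parseval bound (Lemma \ref{lem.pbd}), $d:=\dim \Spec_\delta(\mathcal{B},f)\leq c^{-1}\delta^{-2}w(f)^{-1}$. Since $\delta$ is regular for $(\mathcal{B},f)$, Lemma \ref{lem.un} supplies for each $y\in B''^{32}$ a unitary $U_y \in U(\Spec_\delta(\mathcal{B},f))$ with
\begin{equation*}
\|T_{\mathcal{B},f,\delta,y}-U_y\|^2 = O\bigl(\epsilon'\delta^{-O(1)}c^{-O(1)}w(f)^{-O(1)}\bigr),
\end{equation*}
and Lemma \ref{lem.apxhom} gives approximate multiplicativity of the $T$'s on the same range, with an error of the same order.

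Next I apply the volume/pigeonhole argument of Lemma \ref{lem.bohraverage} to the map $\phi:B''\to U(\Spec_\delta(\mathcal{B},f));\; y \mapsto U_y$ with width $\tau=c_0\eta$ for a small absolute constant $c_0$. This yields $\tilde{B}\subset B''$ with $\mu_G(\tilde{B})\geq \Omega(\tau)^{d^2}\mu_G(B'')\geq\Omega_{\eta,\delta,c,c',w(f)}(\mu_G(B))$ (invoking $c'$-thickness of $\mathcal{B}'$), satisfying $\|U_y^{-1}U_{y'}-I\|\leq \tau$ for all $y,y'\in\tilde{B}$. Define $B''':=\tilde{B}\tilde{B}^{-1}$, a symmetric neighbourhood of the identity with $\mu_G(B''')\geq\mu_G(\tilde{B})$ and $B'''\subset B''^2$. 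Writing any $y\in B'''$ as $y_1y_2^{-1}$, the approximate homomorphism property combined with $\|U_{y_1}-U_{y_2}\|\leq \tau$ forces $\|U_y - I\|\leq \tau+O(\sqrt{\epsilon'}\,\delta^{-O(1)}c^{-O(1)}w(f)^{-O(1)})$. Writing a general element of $B'''^{16}$ as a product of $32$ elements of $\tilde{B}\cup\tilde{B}^{-1}\subset B''$ and telescoping through 32-fold compositions with Lemma \ref{lem.apxhom}, one obtains
\begin{equation*}
\|U_y-I\|\leq 32\tau+O\bigl(\sqrt{\epsilon'}\,\delta^{-O(1)}c^{-O(1)}w(f)^{-O(1)}\bigr)\leq \eta/2
\end{equation*}
provided $\epsilon'\leq(\eta\delta c w(f)/2)^C$ for sufficiently large absolute $C$.

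Finally, to obtain the $L^2$ estimate, I combine three triangle-inequality steps for any unit $v\in \Spec_\delta(\mathcal{B},f)$ and $y\in B'''^{16}\subset B''^{32}$: Lemma \ref{lem.approxhaar}/\ref{lem.appcon}-type control passes between $(\rho_yv)|_B$ and $\rho_{\mathcal{B}',y}v$; regularity of $\delta$ controls $\|\rho_{\mathcal{B}',y}v - T_{\mathcal{B},f,\delta,y}v\|_{L^2(\mu_B)}$; Lemma \ref{lem.un} controls $\|T_{\mathcal{B},f,\delta,y}v - U_yv\|_{L^2(\mu_B)}$; and Step 2 controls $\|U_yv-v\|_{L^2(\mu_B)}\leq \|U_y-I\|\leq\eta/2$. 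After combining, $\|(\rho_yv)|_B - v\|_{L^2(\mu_B)}\leq \eta\|v\|_{L^2(\mu_B)}$ for every $y\in \supp\mu\subset B'''^{16}$, and integrating against $\mu$ (via Minkowski) gives the required squared bound $\eta^2\|v\|_{L^2(\mu_B)}^2$.

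The main obstacle is the careful bookkeeping of the accumulated error through the $32$-fold telescoping: each invocation of Lemma \ref{lem.apxhom} contributes an error of the form $\sqrt{\epsilon'}\,(\delta c w(f))^{-O(1)}$, and although the number of compositions is $O(1)$, the resulting total error must be forced below $\eta$, which dictates that $\epsilon'$ be a sufficiently high (absolute) polynomial power of $\eta\delta cw(f)$. A secondary point needing care is the containment $B'''^{16}\subset B''^4$: the naive construction above gives $B'''\subset B''^2$, hence only $B'''^{16}\subset B''^{32}$, so to achieve the tighter containment one either restricts $\tilde{B}$ to an even smaller Bohr-type shell or iterates the pigeonhole, at the cost of absorbing a further polynomial loss into the $d^2$ exponent of the density bound—this is harmless since the dependence is already of the stated $\Omega_{\eta,\delta,c,c',w(f)}(\mu_G(B))$ type.
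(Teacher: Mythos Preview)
Your proposal is correct and follows essentially the same approach as the paper: bound $\dim\Spec_\delta(\mathcal{B},f)$ via the Parseval bound, apply Lemma \ref{lem.bohraverage} to the map $y\mapsto U_y$ on $B''$ to find a dense subset $B_1$ on which the $U_y$'s are mutually close, set $B''':=B_1^{-1}B_1$, telescope through Lemma \ref{lem.apxhom} and Lemma \ref{lem.un} to bound $\|T_{\mathcal{B},f,\delta,u}-I\|$ for $u\in B'''^{16}$, pass to $\|\rho_{\mathcal{B}',u}v-v\|_{L^2(\mu_B)}$ via regularity, and integrate against $\mu$.

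Your flagging of the containment $B'''^{16}\subset B''^4$ is apt: the paper's own proof, exactly like your naive construction, only yields $B'''\subset B''^2$ and hence $B'''^{16}\subset B''^{32}$, so the tighter containment in the lemma statement appears to be a typo (the $32$-multiplicative-pair hypothesis and the downstream application in Proposition \ref{prop.mpls} are both consistent with $B'''^{16}\subset B''^{32}$).
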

\begin{proof}
Let $C_{\mathcal{A}\mathcal{H}}$ be an absolute constant such that
\begin{equation*}
\|T_{\mathcal{B},f,\delta,yz}- T_{\mathcal{B},f,\delta,y}T_{\mathcal{B},f,\delta,z}\|^2 \leq \epsilon'(2\delta^{-1}c^{-1}w(f)^{-1})^{C_{\mathcal{A}\mathcal{H}}}
\end{equation*}
holds in the conclusion of Lemma \ref{lem.apxhom}, and similarly $C'$ be a constant such that
\begin{equation*}
\|T_{\mathcal{B},f,\delta,y} - U_y\|^2 \leq \epsilon'(2\delta^{-1}c^{-1}w(f)^{-1})^{C'}
\end{equation*}
holds in the conclusion of Lemma \ref{lem.un}.  Put
\begin{equation*}
C:=8+\max\{C_{\mathcal{S}\mathcal{R}},C_{\mathcal{A}\mathcal{H}},C'\}.
\end{equation*}

Write $d$ for $\dim \Spec_\delta(\mathcal{B},f)$ and recall from the Parseval bound that
\begin{equation*}
d \leq c^{-1}\delta^{-2}w(f)^{-1}
\end{equation*}
since $\|f\|_{L^2(\mu_{B'})}^2 \leq\|f\|_{L^1(\mu_{B'})}\|f\|_{L^\infty(\mu_{B'})}$.

Consider the map $B'' \rightarrow U(\Spec_\delta(\mathcal{B},f))$ such that $y \mapsto U_y$, given by Lemma \ref{lem.un}.  By Lemma \ref{lem.bohraverage} there is a set $B_1 \subset B''$ with
\begin{equation*}
\mu_{B''}(B_1) = \Omega(\eta)^{d^2}=\Omega_{\eta,\delta,c,w(f)}(1),
\end{equation*}
such that
\begin{equation}\label{eqn.early}
\|U_{y}^{-1}U_z - I\| \leq \eta/256 \textrm{ for all } y,z \in B_1.
\end{equation}
Given the size of $\epsilon'$, Lemma \ref{lem.apxhom} tells us that if $y,z,yz \in B''^{32}$ then
\begin{equation*}
\|T_{\mathcal{B},f,\delta,yz} -T_{\mathcal{B},f,\delta,y}T_{\mathcal{B},f,\delta,z}\| \leq \eta/256.
\end{equation*}
Moreover,
\begin{equation*}
\|T_{\mathcal{B},f,\delta,z}\| \leq \| \pi_{\Spec_\delta(\mathcal{B},f)}\|\|\rho_{\mathcal{B}',y}\| \leq 1
\end{equation*}
whenever $z \in B''^{32}$ by Lemma \ref{lem.apun} and the fact that projections have operator norm at most $1$. Now, if $u \in (B_1^{-1}B_1)^{16}$ then there are elements $y_1,\dots,y_{16}, z_1,\dots,z_{16} \in B_1$ such that
\begin{equation*}
u=y_1^{-1}z_1\dots y_{16}^{-1}z_{16}.
\end{equation*}
Combining the preceding bounds on the operator norm using the triangle inequality (by the telescoping sum method) we get that
\begin{equation*}
\|T_{\mathcal{B},f,\delta,u}-T_{\mathcal{B},f,\delta,y_1^{-1}}T_{\mathcal{B},f,\delta,z_1}\dots T_{\mathcal{B},f,\delta,y_{16}^{-1}}T_{\mathcal{B},f,\delta,z_{16}}\| \leq \eta/8.
\end{equation*}
On the other hand by Lemma \ref{lem.un} we have
\begin{equation}\label{eqn.if}
\|T_{\mathcal{B},f,\delta,z} -U_{z}\| \leq \eta/256
\end{equation}
for all $z \in B_1$.  Again by the triangle inequality we get that
\begin{equation*}
\|T_{\mathcal{B},f,\delta,u}-U_{y_1^{-1}}U_{z_1}\dots U_{y_{16}^{-1}}U_{z_{16}}\|\leq \eta/4
\end{equation*}
Now, by Lemma \ref{lem.apxhom} we have
\begin{equation*}
\|T_{\mathcal{B},f,\delta,y^{-1}} T_{\mathcal{B},f,\delta,y} -I\| \leq \eta/256
\end{equation*}
for all $y \in B_1$. Combining this with (\ref{eqn.if}) (and using the fact that the operators $U_y$ are unitary) we get that
\begin{equation*}
\|U_y^{-1} -U_{y^{-1}}\| \leq 3\eta/256
\end{equation*}
for all $y \in B_1$.  Hence the triangle inequality again gives
\begin{equation*}
\|T_{\mathcal{B},f,\delta,u}-U_{y_1}^{-1}U_{z_1}\dots U_{y_{16}}^{-1}U_{z_{16}}\|\leq 7\eta/16
\end{equation*}
On the other hand we may now use (\ref{eqn.early}) coupled with the triangle inequality to get that
\begin{equation*}
\|T_{\mathcal{B},f,\delta,u}-I\| \leq \eta/2.
\end{equation*}
Now, since $\delta$ is regular and $\epsilon'$ is small by design, we have
\begin{equation*}
\|\rho_{\mathcal{B}',u}v-T_{\mathcal{B},f,\delta,u}v\|_{L^2(\mu_B)} \leq \eta/2,
\end{equation*}
for all unit vectors $v \in \Spec_\delta(\mathcal{B},f)$, whence
\begin{equation*}
\|\rho_{\mathcal{B}',u}v-v\|_{L^2(\mu_B)}\leq \eta
\end{equation*}
for all unit vectors $v \in \Spec_\delta(\mathcal{B},f)$.

It remains to put $B''':=B_1^{-1}B_1$ and note that if $\mu$ is a probability measure with $\supp \mu \subset B'''^{16}$ then
\begin{eqnarray*}
\|(v \ast \mu_{B''''})|_B-v\|_{L^2(\mu_B)}^2& =& \|\int{\rho_{u^{-1}}(v)d\mu_{B'''}(u)}-v\|_{L^2(\mu_B)}^2\\ & \leq & \int{\|\rho_{u^{-1}}(v) -v\|_{L^2(\mu_B)}^2d\mu(u)}\leq \eta^2
\end{eqnarray*}
since $u \in B'''^{16}$ if and only if $u \in B'''^{-16}$.  The result is proved.
\end{proof}
Now we are in a position to prove the main result of this section.
\begin{proof}[Proof of Proposition \ref{prop.mpls}]
First we note that
\begin{eqnarray*}
\|(fd\mu_{B_2}) \ast g \|_{L^2(\mu_{B_1})}^2&\leq& \frac{\mu_G({B_1})}{\mu_G(B_2)^2}\|f \ast g\|_{L^2(\mu_G)}^2 \\ &\leq &\frac{\mu_G({B_1})}{\mu_G(B_2)^2}\|f\|_{L^1(\mu_G)}^2\|g\|_{L^2(\mu_G)}^2,
\end{eqnarray*}
by positivity and Young's inequality.  It follows that
\begin{equation*}
\|(fd\mu_{B_2}) \ast g \|_{L^2(\mu_{B_1})}^2 \leq \|f\|_{L^1(\mu_{B_2})}^2\|g\|_{L^2(\mu_{B_1})}^2,
\end{equation*}
and so given the lower bound of
\begin{equation*}
\|(fd\mu_{B_2}) \ast g \|_{L^2(\mu_{B_1})}^2 \geq \nu \|f\|_{L^\infty(\mu_{B_2})}^2\|g\|_{L^2(\mu_{B_1})}^2,
\end{equation*}
we conclude that $w(f)\geq \sqrt{\nu}$ since $g$ is not identically zero.

Let $v_1,\dots,v_n$ be a Fourier basis of $L^2(\mu_{B_1})$ for $f$ as provided by Proposition \ref{prop.diag}.  We have that
\begin{equation*}
\|(fd\mu_{B_2}) \ast g \|_{L^2(\mu_{B_1})}^2 = \sum_{i=1}^n{|s_i(\mathcal{B}_{1,2},f)|^2|\langle g,v_i\rangle_{L^2(\mu_{B_1})}|^2}.
\end{equation*}
However, the left hand side is at least $\nu \|f\|_{L^\infty(\mu_{B_2})}^2\|g\|_{L^2(\mu_{B_1})}^2$ and, of course,
\begin{equation*}
\|g\|_{L^2(\mu_{B_1})}^2 = \sum_{i=1}^n{|\langle g,v_i\rangle_{L^2(\mu_{B_1})}|^2}.
\end{equation*} 
It follows from this and the triangle inequality that 
\begin{equation}\label{eqn.slp}
\sum_{i: v_i \in \Spec_\delta(\mathcal{B}_{1,2},f)}{|s_i(\mathcal{B}_{1,2},f)|^2|\langle g,v_i\rangle_{L^2(\mu_{B_1})}|^2} \geq \|(fd\mu_{B_2}) \ast g \|_{L^2(\mu_{B_1})}^2/2
\end{equation}
for any $\delta \leq \sqrt{\nu}/2$. Pick a $\delta\in (\sqrt{\nu}/4,\sqrt{\nu}/2]$ regular for $(\mathcal{B}_{1,2},f)$ (possible by Lemma \ref{lem.nr}). Note by the Parseval bound (Lemma \ref{lem.pbd}) that
\begin{equation}\label{eqn.bdp}
\dim \Spec_\delta(\mathcal{B}_{1,2},f)\leq c_2^{-1}\delta^{-2}\|f\|_{L^1(\mu_{B_2})}^{-2}\|f\|_{L^2(\mu_{B_2})}^2 \leq 16c_2^{-1}\nu^{-3/2}
\end{equation}
since
\begin{equation*}
\|f\|_{L^1(\mu_{B_2})}^{-2}\|f\|_{L^2(\mu_{B_2})}^2 \leq w(f)^{-1} \leq \nu^{-1/2}.
\end{equation*}
Apply Lemma \ref{lem.level} to $f$ and $\mathcal{B}_{1,2}$, with parameter $\eta':=\nu^{5/2}c_2/64$ (this determines the necessary value of $C_{\rm{Bog}}$ and entails the requirement that $r_3 \geq 32$) to get $B_4$, a symmetric neighbourhood of the identity with
\begin{equation*}
B_4^{16} \subset B_3^4\textrm{ and }\mu_G(B_4) = \Omega_{\nu,c_2,c_3}(\mu_G(B_1))
\end{equation*}
and such that for any probability measure $\mu$ with $\supp \mu \subset B_4^{16}$ we have
\begin{equation}\label{eqn.ooop}
\|(v \ast \mu)|_{B_1} - v \|_{L^2(\mu_{B_1})}^2 \leq \eta' \|v\|_{L^2(\mu_G)}^2
\end{equation}
for all $v \in \Spec_\delta(\mathcal{B},f)$.  We should like to apply Corollary \ref{cor.apnorm} to the sets $B_0,B_1,B_4$; we can on noting that the pair $(B_1,B_4)$ is certainly an $\Omega_{\nu,c_2,c_3}(1)$-thick $1$-closed $1$-multiplicative pair since $r_3 \geq 4$ and $\epsilon_3 \leq 1$. It follows there is a symmetric neighbourhood of the identity $B_5$ such that
\begin{equation*}
\mu_G(B_5) = \Omega_{\nu,c_1,c_2,c_3}(\mu_G(B_4)) =\Omega_{\nu,c_1,c_2,c_3}(\mu_G(B_1))
\end{equation*}
and
\begin{equation*}
xB_5x^{-1} \subset B_4^6 \textrm{ for all } x \in B_1.
\end{equation*}
Specifically $B_5 \subset B_4^6 \subset B_3^{24} \subset B_1$ whence (from the lower bound on the size of $B_5$) it has doubling $O_{\nu,c_1,c_2,c_3}(1)$.  We now apply Proposition \ref{prop.regapp} to get a positive real $c=O_{\eta,\nu,c_1,c_2,c_3}(1)$ and sets $B_6,B_7,B_8$ such that $B_6 \subset B_5^4$ and $\mathcal{B}_{6,7}$ and $\mathcal{B}_{7,8}$ are $c$-thick, $\eta$-closed $4$-multiplicative pairs and
\begin{equation*}
B_6 \subset B_5^4 \textrm{ and } \mu_G(B_6) = \Omega_{\nu,c_1,c_2,c_3}(\mu_G(B_1)).
\end{equation*}
In view of this
\begin{equation*}
xB_6^2x^{-1} \subset xB_5^8x^{-1} \subset B_4^{48} \subset B_3^{3}\subset B_2 \textrm{ for all } x\in B_1
\end{equation*}
since $r_3\geq 2$.  Since $B_6 \subset B_4^{8}$ we may leverage (\ref{eqn.ooop}) as follows:
\begin{equation*}
|\langle g,v_i \ast \widetilde{\mu_{B_6}} \ast \mu_{B_6}\rangle_{L^2(\mu_{B_1})}-\langle g,v_i\rangle_{L^2(\mu_{B_1})}|\leq \|g\|_{L^2(\mu_{B_1})}\eta',\end{equation*}
and hence
\begin{equation*}
||\langle g,v_i \ast \widetilde{\mu_{B_6}} \ast \mu_{B_6}\rangle_{L^2(\mu_{B_1})}|^2-|\langle g,v_i\rangle_{L^2(\mu_{B_1})}|^2|\leq 2\|g\|_{L^2(\mu_{B_1})}^2\eta'
\end{equation*}
by the triangle inequality.  Inserting this in (\ref{eqn.slp}) and using the bound (\ref{eqn.bdp}) and the definition of $\eta'$, we get that
\begin{equation*}
\sum_{i: v_i \in \Spec_\delta(\mathcal{B}_{1,2},f)}{|s_i(\mathcal{B}_{1,2},f)|^2|\langle g,v_i\ast\widetilde{\mu_{B_6}} \ast \mu_{B_6}\rangle_{L^2(\mu_{B_1})}|^2} \geq \|(fd\mu_{B_2}) \ast g \|_{L^2(\mu_{B_1})}^2/4.
\end{equation*}
This rearranges to give
\begin{equation*}
\sum_{i: v_i \in \Spec_\delta(\mathcal{B}_{1,2},f)}{|s_i(\mathcal{B}_{1,2},f)|^2|\langle g\ast \widetilde{\mu_{B_6}} \ast \mu_{B_6},v_i\rangle_{L^2(\mu_{B_1})}|^2} \geq \|(fd\mu_{B_2}) \ast g \|_{L^2(\mu_{B_1})}^2/4.
\end{equation*}
By positivity and the definition of the basis $(v_i)_{i=1}^n$ we conclude that
\begin{equation*}
\|(fd\mu_{B_2}) \ast ((g \ast \widetilde{\mu_{B_6}} \ast \mu_{B_6})|_{B_1})\|_{L^2(\mu_{B_1})}^2\geq \|(fd\mu_{B_2}) \ast g \|_{L^2(\mu_{B_1})}^2/4.
\end{equation*}
Now $\supp g \ast \widetilde{\mu_{B_6}} \ast \mu_{B_6}\subset B_1B_3^4 \subset B_1B_2$ since $r_3 \geq 4$ whence by Lemma \ref{lem.bogcalc} we have that
\begin{equation*}
|\|(fd\mu_{B_2}) \ast ((g \ast \widetilde{\mu_{B_6}} \ast \mu_{B_6})|_{B_1})\|_{L^2(\mu_{B_1})}^2-\|(fd\mu_{B_2}) \ast (g \ast \widetilde{\mu_{B_6}} \ast \mu_{B_6})\|_{L^2(\mu_{B_1})}^2|
\end{equation*}
is at most
\begin{equation*}
\|(fd\mu_{B_2}) \ast g \|_{L^2(\mu_{B_1})}^2/8
\end{equation*}
in view of the second upper bound on $\epsilon_2$.  We conclude that
\begin{equation*}
\|(fd\mu_{B_2}) \ast (g \ast \widetilde{\mu_{B_6}} \ast \mu_{B_6})\|_{L^2(\mu_{G})}^2\geq \|(fd\mu_{B_2}) \ast g \|_{L^2(\mu_{B_1})}^2/8
\end{equation*}
and it remains to apply Lemma \ref{lem.apnormcalc} with the sets $B_1,B_2,B_3,B_6^2$ which can be done since $B_6^2\subset B_3^{3}$ so $(B_2,B_6^2)$ is a $1$-closed $\epsilon_3$-multiplicative pair and
\begin{eqnarray*}
\epsilon_3 & \leq & \sqrt{\nu}\|g\|_{L^2(\mu_{B_1})}/2^5\|g\|_{L^\infty(\mu_{B_1})}\\ & \leq & \|(fd\mu_{B_2}) \ast (g \ast \widetilde{\mu_{B_6}} \ast \mu_{B_6})\|_{L^2(\mu_{G})}/2\sqrt{3}\|f\|_{L^\infty(\mu_{B_2})}\|g\|_{L^\infty(\mu_{B_1})}.
\end{eqnarray*}
Doing this tells us that
\begin{equation*}
\sup_{y \in {B_1}}{\sup_{x \in B_{2,3}^-y}{|\rho_{y^{-1}}(f) \ast \widetilde{\mu_{B_6}} \ast \mu_{B_6}(x)|^2}} =\Omega(\nu \|f\|_{L^\infty(\mu_G)}^2).
\end{equation*}
It remains to pick $y \in B_1$ and $x \in B_{2,3}^-$ such that the supremum is attained and set $B_9:=yB_6y^{-1}$, $B_{10}:=yB_7y^{-1}$ and $B_{11}:=yB_8y^{-1}$ and we have our multiplicative pairs.  Now, note that
\begin{equation*}
\rho_{y^{-1}}(f) \ast \widetilde{\mu_{B_6}} \ast \mu_{B_6}(x) = f \ast \widetilde{\mu_{B_9}} \ast \mu_{B_9}(xy^{-1}),
\end{equation*}
and the result is proved.
\end{proof}
It is fairly easy to see that the bound on $c^{-1}$ is a bounded tower of exponentials in $\eta^{-1},\nu^{-1},c_1^{-1},c_2^{-1}$ and $c_3^{-1}$.  With more effort it can be pinned down more precisely.

\section{The spectrum of multiplicative pairs}\label{sec.specmp}

As well as having good behaviour in physical space, we should also like multiplicative pairs to have good spectral behaviour.  There are various results of this flavour in the abelian setting (\emph{e.g.} \cite[Lemma 3.6]{BJGSVK}) which characterise the characters at which $\wh{\mu_B}$ is large.  In the non-abelian setting we are given a basis to work with respect to, and this does not necessarily diagonalise the operator $L_{\mu_B}^*L_{\mu_B}$.

We have the following lemma which is fit for purpose.  It shows how correlation with a multiplicative pair corresponds to spectral mass in the dual object, in this case the set of basis vectors which are large under convolution with the ground set of the multiplicative pair.  The result can be used without loss in place of the usual abelian arguments for collecting spectral mass.
\begin{lemma}\label{lem.spectralcollection}
Suppose that $G$ is a finite group, $f \in A(G)$ has $\|f\|_{A(G)} \leq M$, and $\mathcal{B}=(B,B')$ is an $\epsilon$-closed $1$-multiplicative pair, $v_1,\dots,v_N$ is a Fourier basis of $L^2(\mu_G)$ for $f$ and 
\begin{equation*}
\|f \ast(\widetilde{\mu_B}\ast \mu_B) - f \ast (\widetilde{\mu_{B'}}\ast \mu_{B'}) \|_{L^\infty(\mu_G)} \geq \nu.
\end{equation*}
Then
\begin{equation*}
\sum_{i=1}^N{|s_i(f)|\|\mu_{B'} \ast v_i\|_{L^2(\mu_G)}^2} \geq \sum_{i=1}^N{|s_i(f)|\|\mu_{B} \ast v_i\|_{L^2(\mu_G)}^2} +\nu^2/M-4\epsilon M.
\end{equation*}
\end{lemma}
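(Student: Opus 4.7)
Write $\pi := \widetilde{\mu_B}\ast\mu_B - \widetilde{\mu_{B'}}\ast\mu_{B'}$ and $g := f\ast\pi$, so the hypothesis is $\|g\|_{L^\infty(\mu_G)}\geq\nu$. Since $B$ and $B'$ are symmetric, $\widetilde{\pi}=\pi$ and hence $L_\pi = L_{\mu_B}^2 - L_{\mu_{B'}}^2$ is self-adjoint; in particular $\alpha_i-\beta_i=\langle L_\pi v_i,v_i\rangle$, so the desired conclusion can be rewritten as $-\sum_i|s_i(f)|\langle L_\pi v_i,v_i\rangle\geq\nu^2/M-4\epsilon M$, where $\alpha_i:=\|\mu_B\ast v_i\|_{L^2(\mu_G)}^2$ and $\beta_i:=\|\mu_{B'}\ast v_i\|_{L^2(\mu_G)}^2$. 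Pick $x_0$ with $|g(x_0)|\geq\nu$, and let $f_*(y):=f(x_0 y)$. Left translation is unitary, so $L_{f_*}^*L_{f_*}=L_f^*L_f$, showing that $v_1,\dots,v_N$ is also a Fourier basis for $f_*$ with the same singular values (so $\|L_{f_*}v_i\|=|s_i(f)|$). A change of variables $z=x_0 y$ in $(f_*\ast\pi)(1_G)=\int f(x_0 y)\pi(y^{-1})d\mu_G(y)$ recovers $(f\ast\pi)(x_0)=g(x_0)$, and $(f_*\ast\pi)(1_G)=(\pi\ast f_*)(1_G)=\Tr(L_\pi L_{f_*})$ by the identity $\Tr L_h=h(1_G)$ (itself immediate from the orthonormal basis $(\delta_y/\sqrt{|G|})_{y\in G}$).

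Expand the trace in the basis $v_i$ using $L_\pi^*=L_\pi$ to move the adjoint, then apply Cauchy--Schwarz to each summand and once more across $i$ with weights $|s_i(f)|$:
\[\nu\leq \bigl|\sum_i\langle L_{f_*}v_i,L_\pi v_i\rangle\bigr|\leq\sum_i|s_i(f)|\,\|L_\pi v_i\|\leq\sqrt{M}\cdot\sqrt{\textstyle\sum_i|s_i(f)|\,\|L_\pi v_i\|^2},\]
using $\sum|s_i(f)|\leq M$. Hence $\sum_i|s_i(f)|\,\|L_\pi v_i\|^2\geq\nu^2/M$.

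The crucial step is the per-$i$ estimate $\|L_\pi v_i\|^2\leq(\beta_i-\alpha_i)+4\epsilon$, equivalently $\langle(L_\pi+L_\pi^2)v_i,v_i\rangle\leq 4\epsilon$. This is where the $\epsilon$-closed hypothesis on $\mathcal{B}$ enters. Two iterated applications of Lemma~\ref{lem.approxhaar} give $\|\mu_B\ast\mu_B\ast\mu_{B'}\ast\mu_{B'}-\mu_B\ast\mu_B\|\leq 2\epsilon$, and similarly with the factors reversed, so $L_{\mu_B}^2 L_{\mu_{B'}}^2=L_{\mu_B}^2+E_1$ and $L_{\mu_{B'}}^2 L_{\mu_B}^2=L_{\mu_B}^2+E_2$ with $\|E_j\|\leq 2\epsilon$. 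Substituting into $L_\pi^2=(L_{\mu_B}^2-L_{\mu_{B'}}^2)^2$ and collecting terms yields
\[L_\pi+L_\pi^2=L_{\mu_B}^2(L_{\mu_B}^2-I)+L_{\mu_{B'}}^2(L_{\mu_{B'}}^2-I)-E_1-E_2.\]
The Hausdorff--Young bound gives $\|L_{\mu_B}\|,\|L_{\mu_{B'}}\|\leq 1$, so $L_{\mu_B}^2$ and $L_{\mu_{B'}}^2$ are self-adjoint with spectrum in $[0,1]$, whence the two bracketed summands are negative semi-definite; evaluating at $v_i$ discards them and only the error contribution of size $4\epsilon$ survives. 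Weighting the per-$i$ bound by $|s_i(f)|$, summing, and combining with the spectral inequality above yields $\sum_i|s_i(f)|(\beta_i-\alpha_i)\geq\nu^2/M-4\epsilon M$, as required.

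The only mildly nonobvious point is the operator identity $L_\pi+L_\pi^2\preceq 4\epsilon\cdot I$: it is this ``linearisation'' of $\|L_\pi v_i\|^2$ into the signed quantity $\beta_i-\alpha_i$, powered by the $\epsilon$-closed approximate Haar property iterated twice and by the positivity bookkeeping for $L_{\mu_B}^2$ and $L_{\mu_{B'}}^2$, that is the genuine content of the lemma; everything else is standard Parseval/Cauchy--Schwarz machinery.
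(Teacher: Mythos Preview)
Your proof is correct and follows the same two-step skeleton as the paper: first establish
\[
\nu^2/M \leq \sum_i |s_i(f)|\,\|L_\pi v_i\|_{L^2(\mu_G)}^2,
\]
then show the per-$i$ bound $\|L_\pi v_i\|^2 \leq \beta_i - \alpha_i + 4\epsilon$.

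The packaging differs in two places. For the first step, the paper extracts the point value $g(x')$ via the $\rho_y$-averaging device from Lemma~\ref{lem.linfag} (expand in the basis $\rho_y v_i$, then integrate over $y$), whereas you left-translate $f$ to $f_*$, reduce to evaluation at $1_G$, and invoke $\Tr L_h = h(1_G)$; your route is more operator-theoretic and arguably cleaner. For the second step, the paper expands $\|L_\pi v_i\|^2$ directly into three terms, bounds the squares by Young's inequality, and handles the cross term via Lemma~\ref{lem.approxhaar}; you instead prove the equivalent operator inequality $L_\pi + L_\pi^2 \preceq 4\epsilon\cdot I$ by writing $L_\pi + L_\pi^2 = L_{\mu_B}^2(L_{\mu_B}^2 - I) + L_{\mu_{B'}}^2(L_{\mu_{B'}}^2 - I) - E_1 - E_2$ and discarding the negative semi-definite pieces. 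Both are the same computation with the same $4\epsilon$ error budget, just organised differently; your positivity bookkeeping is a nice way to see why the $\beta_i - \alpha_i$ sign comes out right.
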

\begin{proof}
Let $x' \in G$ be such that the $L^\infty(\mu_G)$-norm is attained, \emph{i.e.} such that
\begin{equation*}
|(f \ast(\widetilde{\mu_B}\ast \mu_B) - f \ast (\widetilde{\mu_{B'}}\ast \mu_{B'}))(x')|
\end{equation*}
is maximal, and note that the term inside the mod signs is equal to
\begin{equation}\label{eqn.rui}
g(x'):=L_f(\widetilde{\mu_B}\ast \mu_B - \widetilde{\mu_{B'}}\ast \mu_{B'})(x').
\end{equation}
We now recall the proof of Lemma \ref{lem.linfag}.  As usual since $v_1,\dots,v_N$ is a Fourier basis of $L^2(\mu_G)$ for $f$, so is $\rho_y(v_1),\dots,\rho_y(v_N)$ for all $y \in G$.  Thus we may write
\begin{equation*}
g(x') = \sum_{i=1}^N{\langle \widetilde{\mu_B}\ast \mu_B - \widetilde{\mu_{B'}}\ast \mu_{B'},\rho_yv_i\rangle_{L^2(\mu_G)}L_f\rho_yv_i(x')}.
\end{equation*}
On the other hand $L_f\rho_yv_i(x') = L_fv_i(x'y)$ since left convolution commutes with right translation, whence
\begin{equation*}
g(x')= \sum_{i=1}^N{\langle \widetilde{\mu_B}\ast \mu_B - \widetilde{\mu_{B'}}\ast \mu_{B'},\rho_yv_i\rangle_{L^2(\mu_G)}L_fv_i(x'y)}.
\end{equation*}
However,
\begin{equation*}
\langle \widetilde{\mu_B}\ast \mu_B - \widetilde{\mu_{B'}}\ast \mu_{B'},\rho_yv_i\rangle_{L^2(\mu_G)}= \widetilde{( \widetilde{\mu_B}\ast \mu_B - \widetilde{\mu_{B'}}\ast \mu_{B'})} \ast v_i(y)
\end{equation*}
Of course the first term is self-adjoint, whence
\begin{equation*}
|g(x')| \leq \sum_{i=1}^N{| (\widetilde{\mu_B}\ast \mu_B - \widetilde{\mu_{B'}}\ast \mu_{B'}) \ast v_i(y)||L_fv_i(x'y)|}.
\end{equation*}
Now integrate $y$ against $\mu_G$ and apply the Cauchy-Schwarz inequality term-wise so that
\begin{equation*}
\nu \leq |g(x')| \leq  \sum_{i=1}^N{|s_i(f)|\|(\widetilde{\mu_B}\ast \mu_B - \widetilde{\mu_{B'}}\ast \mu_{B'}) \ast v_i\|_{L^2(\mu_G)}},
\end{equation*}
since $\|L_fv_i\|_{L^2(\mu_G)}=|s_i(f)|$ for all $i \in \{1,\dots,N\}$.  Finally we apply Cauchy-Schwarz to this to get that
\begin{equation*}
\nu^2 \leq \left(\sum_{i=1}^N{|s_i(f)|\|(\widetilde{\mu_B}\ast \mu_B - \widetilde{\mu_{B'}}\ast \mu_{B'}) \ast v_i\|_{L^2(\mu_G)}^2}\right)\left(\sum_{i=1}^N{|s_i(f)|}\right),
\end{equation*}
which rearranges by the explicit formula for $A(G)$ to give
\begin{equation}\label{eqn.kuh}
\nu^2/M \leq\sum_{i=1}^N{|s_i(f)|\|(\widetilde{\mu_B}\ast \mu_B - \widetilde{\mu_{B'}}\ast \mu_{B'}) \ast v_i\|_{L^2(\mu_G)}^2}.
\end{equation}
To estimate the summands on the right we expand them:
\begin{equation}\label{eqn.weight}
\|(\widetilde{\mu_B}\ast \mu_B - \widetilde{\mu_{B'}}\ast \mu_{B'}) \ast v_i\|_{L^2(\mu_G)}^2
\end{equation}
is equal to
\begin{eqnarray*}
\|\widetilde{\mu_B}\ast \mu_B\ast v_i\|_{L^2(\mu_G)}^2& +& \|\widetilde{\mu_{B'}}\ast \mu_{B'} \ast v_i\|_{L^2(\mu_G)}^2\\ &-&2\Re \langle \widetilde{\mu_B}\ast \mu_B\ast v_i,\widetilde{\mu_{B'}}\ast \mu_{B'} \ast v_i\rangle_{L^2(\mu_G)}.
\end{eqnarray*}
The first two terms can be simplified by Young's inequality so that
\begin{equation*}
\|\widetilde{\mu_B}\ast \mu_B\ast v_i\|_{L^2(\mu_G)}^2\leq\|\mu_B\ast v_i\|_{L^2(\mu_G)}^2
\end{equation*}
and
\begin{equation*}
\|\widetilde{\mu_{B'}}\ast \mu_{B'}\ast v_i\|_{L^2(\mu_G)}^2\leq\|\mu_{B'}\ast v_i\|_{L^2(\mu_G)}^2.
\end{equation*}
The inner product is dealt with slightly differently: recall that $\mathcal{B}$ is an $\epsilon$-closed and $1$-multiplicative pair so
\begin{equation*}
| \langle \widetilde{\mu_B}\ast \mu_B\ast v_i,\widetilde{\mu_{B'}}\ast \mu_{B'} \ast v_i\rangle_{L^2(\mu_G)}- \langle \widetilde{\mu_B}\ast \mu_B\ast v_i,v_i\rangle_{L^2(\mu_G)}|
\end{equation*}
is at most
\begin{equation*}
|\langle\widetilde{\mu_{B'}}\ast \mu_{B'} \ast \widetilde{\mu_B}\ast \mu_B- \widetilde{\mu_B}\ast \mu_B,v_i \ast \widetilde{v_i}\rangle_{L^2(\mu_G)}| \leq 2\epsilon
\end{equation*}
by Lemma \ref{lem.approxhaar} since $\|v_i \ast \widetilde{v_i}\|_{L^\infty(\mu_G)} \leq \|v_i\|_{L^2(\mu_G)}^2$ by Young's inequality.  It follows that (\ref{eqn.weight}) is at most
\begin{equation*}
\|\mu_B\ast v_i\|_{L^2(\mu_G)}^2 + \|\mu_{B'} \ast v_i\|_{L^2(\mu_G)}^2 -2\Re \langle \widetilde{\mu_B}\ast \mu_B\ast v_i,v_i\rangle_{L^2(\mu_G)}+4\epsilon,
\end{equation*}
which in turn is equal to
\begin{equation*}
\|\mu_{B'} \ast v_i\|_{L^2(\mu_G)}^2-\|\mu_B\ast v_i\|_{L^2(\mu_G)}^2  +4\epsilon.
\end{equation*}
Inserting this into (\ref{eqn.kuh}) we conclude that
\begin{equation*}
\nu^2/M \leq \sum_{i=1}^N{|s_i(f)|(\|\mu_{B'} \ast v_i\|_{L^2(\mu_G)}^2-\|\mu_B\ast v_i\|_{L^2(\mu_G)}^2)} + 4\epsilon M
\end{equation*}
by the explicit formula for $A(G)$. The result follows.
\end{proof}

\section{Quantitative continuity of functions in $A(G)$}\label{sec.qc}

We showed in \S\ref{sec.algnormprop} that the $L^\infty(\mu_G)$ norm is dominated by the $A(G)$-norm and, indeed, it is relatively easy to show in the infinitary setting that if $f\in A(G)$ then $f=g$ almost everywhere for some continuous function $g$.  In this section we make this notion quantitative.  

Our main result is the following.
\begin{proposition}[{Quantitative continuity in $A(G)$}]\label{prop.quantcon}
Suppose that $G$ is a finite group, $f \in L^1(\mu_G)$ has $\|f\|_{A(G)} \leq M$, $A$ is symmetric and $\mu_G(A^4) \leq K\mu_G(A)$ and $\nu \in (0,1]$ is a parameter. Then there are symmetric neighbourhoods of the identity $B' \subset B \subset A^4$ such that $\mu_G(B')=\Omega_{K,\nu,M}(\mu_G(A))$,
\begin{equation*}
\sup_{x \in G}{\|f\ast\widetilde{\mu_B}\ast  \mu_B-f\ast\widetilde{\mu_B}\ast  \mu_B(x)\|_{L^\infty(\mu_{xB'})}}\leq \nu
\end{equation*}
and
\begin{equation*}
\sup_{x \in G}{\|f - f\ast\widetilde{\mu_B}\ast  \mu_B\|_{L^2(\mu_{xB'})}}\leq \nu.
\end{equation*}
\end{proposition}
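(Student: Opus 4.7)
My plan is to iterate on the spectral mass $\Phi(B) := \sum_{i=1}^N |s_i(f)| \|\mu_B \ast v_i\|_{L^2(\mu_G)}^2$, where $v_1,\dots,v_N$ is a Fourier basis of $L^2(\mu_G)$ for $f$. Since $\|\mu_B \ast v_i\|_{L^2(\mu_G)} \leq 1$, one has $\Phi(B) \leq \sum_i|s_i(f)| = \|f\|_{A(G)} \leq M$, which bounds the number of iterations. The key inputs are Proposition \ref{prop.regapp} (to build a rich hierarchy of multiplicative pairs inside $A^4$), Proposition \ref{prop.mpls} (to convert a local $L^2$ excess into correlation with a smaller pair), and Lemma \ref{lem.spectralcollection} (to convert that correlation into a spectral-mass gain).

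I begin by applying Proposition \ref{prop.regapp} to $A$ with a sufficiently small monotone increasing $\epsilon(\cdot)$ (small enough both to satisfy $\epsilon\leq\nu/M$ for the eventual condition 1 and to absorb the $4\epsilon M$ error term in Lemma \ref{lem.spectralcollection}) and $r(\cdot)\geq 4$, producing symmetric nested sets $B_0\supset B_1\supset\cdots\supset B_{J_0}$ with $B_0\subset A^4$, $\mu_G(B_0)=\Omega_K(\mu_G(A))$, each $(B_i,B_j)$ for $i<j$ an $\epsilon(c_{j-1})$-closed, $c_j$-thick, $r(c_{j-1})$-multiplicative pair; the wideness $r\geq 4$ ensures $B_j^4\subset B_i$ whenever $j>i$. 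At each outer step the current pair is $(B,B'):=(B_0,B_1)$ for some such hierarchy. If the second condition is already satisfied I return; otherwise there is some $x$ with $\|h\|_{L^2(\mu_{xB'})}^2>\nu^2$ for $h:=f-f\ast\widetilde{\mu_B}\ast\mu_B$, a function with $\|h\|_{A(G)}\leq 2M$ by the triangle inequality and Lemma \ref{lem.a-ap} (since $\|\widetilde{\mu_B}\ast\mu_B\|_{PM(G)}\leq 1$). Localising, the function $g:=h\cdot 1_{xB'}$ satisfies $\|g\|_{L^2(\mu_G)}^2>\nu^2\mu_G(B')$ and $\|g\|_{L^\infty(\mu_G)}\leq 2M$; after a left translation placing it inside $B'$, $g$ provides the multiplicative-energy datum needed by the hypothesis of Proposition \ref{prop.mpls}. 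Feeding Proposition \ref{prop.mpls} the deeper reserve scales $B_2,B_3,B_4$ as its $B_1,B_2,B_3$ yields a small multiplicative pair $(\hat B,\hat B')$ with $\hat B\subset B_2^3\subset B_0\subset A^4$ and a point $y^*$ at which $|h\ast\widetilde{\mu_{\hat B}}\ast\mu_{\hat B}(y^*)|=\Omega(\nu^{O(1)})$. Because $\hat B$ sits well inside $B$, Lemma \ref{lem.approxhaar} gives $\widetilde{\mu_B}\ast\mu_B\ast\widetilde{\mu_{\hat B}}\ast\mu_{\hat B}\approx\widetilde{\mu_B}\ast\mu_B$ in total variation, so $\|f\ast\widetilde{\mu_{\hat B}}\ast\mu_{\hat B}-f\ast\widetilde{\mu_B}\ast\mu_B\|_{L^\infty(\mu_G)}=\Omega(\nu^{O(1)})$, and Lemma \ref{lem.spectralcollection} delivers $\Phi(\hat B)-\Phi(B)=\Omega(\nu^{O(1)}/M)$. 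I then reinvoke Proposition \ref{prop.regapp} on $\hat B$ (which still has small doubling) to build a fresh hierarchy and resume.

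Because $0\leq\Phi\leq M$, the iteration terminates after $O(M^2/\nu^{O(1)})$ outer steps at a pair $(B,B')$ with $B\subset A^4$ and $\mu_G(B')=\Omega_{K,M,\nu}(\mu_G(A))$ inherited through the nested hierarchies. The first condition is then automatic, because for $y\in B'$,
\[
|f\ast\widetilde{\mu_B}\ast\mu_B(xy)-f\ast\widetilde{\mu_B}\ast\mu_B(x)|\leq\|f\|_{L^\infty(\mu_G)}\cdot\|\rho_y\mu_B-\mu_B\|\leq M\epsilon\leq\nu
\]
by Lemma \ref{lem.approxhaar} and the closure parameter built into the hierarchy. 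The main obstacle is the conversion sketched above—packaging the local $L^2$ failure of the second condition into a clean multiplicative-energy input that Proposition \ref{prop.mpls} can digest, while simultaneously tracking enough deep reserve scales that Lemma \ref{lem.approxhaar} can neutralise the double-smoothing in the resulting $L^\infty$ bound and keep the first condition intact; the tower-type losses in this proposition come from compounding these nested applications of Proposition \ref{prop.regapp}.
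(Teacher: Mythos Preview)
Your overall iteration scheme---tracking the spectral mass $\Phi(B)=\sum_i|s_i(f)|\|\mu_B\ast v_i\|_{L^2(\mu_G)}^2$ and using Lemma \ref{lem.spectralcollection} to gain a fixed amount each step---is exactly what the paper does. The paper also applies Proposition \ref{prop.regapp} to set up hierarchies and uses Lemma \ref{lem.approxhaar} to get the first ($L^\infty$-continuity) conclusion for free from closure. So the framework matches.

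The gap is in the step you yourself flag as ``the main obstacle'': feeding Proposition \ref{prop.mpls}. That proposition requires a \emph{cross-convolution-energy} hypothesis
\[
\|(fd\mu_{B_2})\ast g\|_{L^2(\mu_{B_1})}^2\;\geq\;\nu\,\|g\|_{L^2(\mu_{B_1})}^2\|f\|_{L^\infty(\mu_{B_2})}^2
\]
between \emph{two} functions, with the first supported on the small ball $B_2$. What you have after localising is only that $g=h\cdot 1_{xB'}$ has large $L^2$ mass; this is a one-function $L^2$ bound, not a cross-energy, and there is no obvious candidate for the $f$-slot. Moreover the conclusion of Proposition \ref{prop.mpls} is a correlation statement for that locally supported $f$, not for the global $h$, so even granting the hypothesis you would still need to unpack the conclusion back to $h$.

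The paper handles this entire passage in a separate result (Proposition \ref{prop.disc2}), and its proof is substantially more than packaging: one takes a Fourier basis of $L^2(\mu_G)$ for the localised function, runs a pigeonhole argument on the singular values to isolate an eigenvector $v_i$ for which $\|g\ast v_i\|_{L^2}$ is large \emph{and} the corresponding singular value lies in a controlled dyadic range, then uses Lemma \ref{lem.chopup} to relativise that eigenvector to the small ball. Only after all this does one obtain the genuine cross-energy input that Proposition \ref{prop.mpls} needs, and the output is then translated back to a statement about $f$ itself. Your sketch elides precisely this Fourier-basis / pigeonhole / Lemma \ref{lem.chopup} mechanism, which is where the actual work lives; without it the appeal to Proposition \ref{prop.mpls} does not go through.
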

There is an analogous result in \cite[Proposition 5.1]{BJGTS2}, and the inspiration for that proof came, in turn, from the idea of relativizing the main argument in \cite{BJGSVK}.  In this paper the argument is rather different because we have a weaker structure to which we need to relativize and the non-abelian Fourier transform is not equal to the task.  

The proposition will be proved by iterating the next result which is a dichotomy between good average behaviour and correlation with a structured sub-object -- a type of dichotomy frequently found in additive combinatorics.
\begin{proposition}[{Proposition \ref{prop.disc2}}]\label{prop.disclarge}
Suppose that $G$ is a finite group, $f \in A(G)$ has $\|f\|_{A(G)} \leq M$, $A$ is symmetric and $\mu_G(A^4) \leq K\mu_G(A)$ and $\nu,\eta \in (0,1]$ are parameters.  Then either
\begin{enumerate}
\item there are symmetric neighbourhoods of the identity $B' \subset B \subset A^4$ such that $\mu_G(B')=\Omega_{K,\nu,\eta,M}(\mu_G(A))$,
\begin{equation*}
\sup_{x \in G}{\|f\ast\widetilde{\mu_B}\ast  \mu_B-f\ast\widetilde{\mu_B}\ast  \mu_B(x)\|_{L^\infty(\mu_{xB'})}}\leq \nu
\end{equation*}
and
\begin{equation*}
\sup_{x \in G}{\|f - f\ast\widetilde{\mu_B}\ast  \mu_B\|_{L^2(\\mu_{xB'})}}\leq \nu.
\end{equation*}
\item or there are symmetric neighbourhoods of the identity $B''\subset B' \subset B \subset A^4$ such that  $\mu_G(B'')=\Omega_{K,\nu,\eta,M}(\mu_G(A))$, $\mathcal{B}':=(B,B')$ and $\mathcal{B}'':=(B',B'')$ are $\eta$-closed $4$-multiplicative pairs and
\begin{equation*}
\|f \ast \widetilde{\mu_B}\ast  \mu_B - f \ast \widetilde{\mu_{B'}}\ast  \mu_{B'}\|_{L^\infty(\mu_G)}= \Omega_{\nu,M}(1).
\end{equation*}
\begin{equation*}
\end{equation*}
\end{enumerate}
\end{proposition}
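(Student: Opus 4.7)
The strategy is to apply Proposition~\ref{prop.regapp} to build a long nested chain of multiplicative pairs from $A$ and then run a dichotomy along the chain, using Lemma~\ref{lem.spectralcollection} for spectral bookkeeping. Specifically, apply Proposition~\ref{prop.regapp} with width function $r \equiv 4$, closure function $\epsilon(\cdot)$ chosen so small that $\epsilon(c_{j-1})M \leq \nu/3$ for every $j \leq J$ (and in particular $\epsilon(c_{j-1}) \leq \eta$), and depth $J$ large in terms of $\nu$ and $M$, to produce symmetric neighbourhoods of the identity $B_0 \supset B_1 \supset \cdots \supset B_J$ with $B_0 \subset A^4$, $\mu_G(B_0) = \Omega_K(\mu_G(A))$, and each pair $(B_i, B_j)$ for $i < j$ a $c_j$-thick, $\epsilon(c_{j-1})$-closed, $4$-multiplicative pair. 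Set $g_j := f \ast \widetilde{\mu_{B_j}} \ast \mu_{B_j}$. Since $\|f\|_{L^\infty(\mu_G)} \leq M$ by Lemma~\ref{lem.linfag}, Lemma~\ref{lem.appcon} applied to $f \ast \widetilde{\mu_{B_j}}$ and the pair $(B_j, B_{j+1})$ yields $|g_j(xy) - g_j(x)| \leq \epsilon(c_j)M \leq \nu/3$ for $y \in B_{j+1}$, making the oscillation condition of case~(1) automatic for every adjacent pair.

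If the $L^2$ condition $\sup_x \|f - g_j\|_{L^2(\mu_{xB_{j+1}})} \leq \nu$ also holds for some $j < J$, output conclusion~(1) with $(B_j, B_{j+1})$. Otherwise, for every $j < J$ there is $x_j \in G$ with $\|f - g_j\|_{L^2(\mu_{x_j B_{j+1}})} > \nu$, and one produces case~(2) by a spectral mass argument. Let $v_i$ be a Fourier basis for $f$ and put $S_j := \sum_i |s_i(f)| \|\mu_{B_j} \ast v_i\|_{L^2(\mu_G)}^2 \in [0, M]$. Summing Lemma~\ref{lem.spectralcollection} over adjacent pairs gives
\[
\sum_{j=0}^{J-1} \|g_j - g_{j+1}\|_{L^\infty(\mu_G)}^2 \leq M(S_J - S_0) + 4J\epsilon M^2 \leq 2M^2
\]
provided the closures are chosen with $J\epsilon \leq 1/(4M^2)$. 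If some adjacent pair satisfies $\|g_j - g_{j+1}\|_{L^\infty} \geq \nu'$ for an appropriate $\nu' = \Omega_{\nu,M}(1)$, case~(2) follows immediately with the triple $(B_j, B_{j+1}, B_{j+2})$, whose multiplicative-pair structure is supplied by Proposition~\ref{prop.regapp}.

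In the remaining sub-case all adjacent $L^\infty$-differences are small. One then produces a non-adjacent pair $j^* < k^*$ with $\|g_{j^*} - g_{k^*}\|_{L^\infty(\mu_G)} \geq \nu/2$ as follows: decomposing $f$ along a Fourier basis $w_i$ for $\mu_{B_k}$ with singular values $\lambda_i \in [0,1]$ gives $\|f - g_k\|_{L^2(\mu_G)}^2 = \sum_i |\langle f, w_i \rangle|^2 (1-\lambda_i)^2$, and a pigeonhole combined with the Parseval-type bound of Lemma~\ref{lem.pbd} applied to $\mu_{B_k}$ shows that at sufficient depth $k^*$ the quantity $\|f - g_{k^*}\|_{L^2(\mu_{x_{j^*} B_{j^*+1}})}$ drops below $\nu/2$ for some earlier index $j^*$. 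Then by the triangle inequality and the trivial bound $\|\cdot\|_{L^2(\mu_{xB'})} \leq \|\cdot\|_{L^\infty(\mu_G)}$,
\[
\|g_{j^*} - g_{k^*}\|_{L^\infty(\mu_G)} \geq \bigl|\|f - g_{j^*}\|_{L^2(\mu_{x_{j^*}B_{j^*+1}})} - \|f - g_{k^*}\|_{L^2(\mu_{x_{j^*}B_{j^*+1}})}\bigr| \geq \nu/2,
\]
producing case~(2) with triple $(B_{j^*}, B_{k^*}, B_{k^*+1})$.

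The main obstacle is this last step: transferring global $L^2$-approximation of $f$ by $g_k$ into uniform $L^2$-smallness of $f - g_k$ on every $B_{j+1}$-ball simultaneously. The natural spectral argument only lower-bounds $\|f - g_k\|_{L^2(\mu_G)}^2$ in terms of how much of $f$'s $L^2$ mass lies in the low-$\lambda_i$ spectrum of $\mu_{B_k}$, and converting this to a ball-wise statement requires a delicate interplay between the depth $J$, the thicknesses $c_j$, and the closure parameters $\epsilon(c_{j-1})$ supplied by Proposition~\ref{prop.regapp}. This balancing of parameters is what forces the tower-type dependence of the output constant $c$ in the conclusion.
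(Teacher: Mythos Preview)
Your approach has a genuine gap, and it is precisely the one you flag as ``the main obstacle''. There is no mechanism in your setup that forces $\|f-g_{k^*}\|_{L^2(\mu_{x_{j^*}B_{j^*+1}})}$ to be small for any pair $(j^*,k^*)$; the spectral decomposition of $\mu_{B_k}$ and Lemma~\ref{lem.pbd} only control global $L^2$-quantities, and nothing in Proposition~\ref{prop.regapp} links the nested chain to pointwise or ball-wise approximation of $f$. Worse, your use of Lemma~\ref{lem.spectralcollection} points the wrong way: the bound $\sum_j\|g_j-g_{j+1}\|_{L^\infty}^2\le 2M^2$ is an \emph{upper} bound on differences and cannot manufacture a large one. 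If every adjacent difference is below $\nu'$, the triangle inequality gives $\|g_{j^*}-g_{k^*}\|_{L^\infty}\le (k^*-j^*)\nu'$, but nothing forces this to be bounded below.

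The paper's argument is structurally different and does not iterate along the chain at all. It tests case~(1) at a single scale $(B_3,B_4)$; if it fails at some $x_0$, it localises by setting $h:=(f-f\ast\widetilde{\mu_{B_3}}\ast\mu_{B_3})\cdot 1_{x_0B_3}\ast\mu_{B_3}$, which still has $\|h\|_{A(G)}\le M$. A pigeonhole on the singular values of $h$ (this is where the long chain from Proposition~\ref{prop.regapp} is actually used) isolates a level $j$ and an eigenvector $v_i$ of $L_h^*L_h$ carrying substantial correlation with the localised mass $g:=h\,d\mu_{x_0x_jB_j}$. Lemma~\ref{lem.chopup} then transfers this to a genuinely relativised energy inequality, and Proposition~\ref{prop.mpls} --- the Bogolio{\`u}boff-type result built on all of \S\S\ref{sec.locft}--\ref{sec.bohrandunball} --- converts that energy into the multiplicative pair and the $L^\infty$-correlation of case~(2). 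Your proposal bypasses this entire machinery, which is exactly what is needed to turn the local failure of continuity into a structured correlation.
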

The main meat of the argument is the proof of the above proposition.  We include the proof of the reduction to this statement now, setting the stage for the work of the next section where we establish the above.

The idea of the proof is to use Proposition \ref{prop.disclarge} to repeatedly provide an increase in spectral mass.  This process must terminate since the algebra norm is bounded, so we are not always in the second case of the proposition; in the first case we have our desired continuity conclusion.
\begin{proof}[Proof of Proposition \ref{prop.quantcon}]  
Let $(v_i)_{i=1}^N$ be a Fourier basis of $L^2(\mu_G)$ for $f$.  Write $F(\nu,M)$ for the function of $\nu$ and $M$ implicit in the second conclusion of Proposition \ref{prop.disclarge} -- it is apparent from the nature of the proposition that this may be taken to be monotonely decreasing in $M$ and monotonely increasing in $\nu$.

We construct three sequences of sets $(B_i)_{i\geq 1}$, $(B_i')_{i\geq 1}$ and $(B_i'')_{i\geq 1}$ iteratively and write
\begin{equation*}
\mu_i:=\sum_{j=1}^N{|s_j(f)|\|\mu_{B_i'}\ast v_i\|_{L^2(\mu_G)}^2}.
\end{equation*}
At this point we remind the reader of Lemma \ref{lem.spectralcollection} to give an idea of how $\mu_i$ is to be controlled.

We shall arrange the sets such that the following properties hold for $\eta:=F(\nu,M)^2/32M^2$.
\begin{enumerate}
\item $B_i \subset B_{i-1}''^4$ and $B_i \subset A^4$;
\item $(B_i,B_{i}')$ is an $\eta$-closed $4$-multiplicative pair;
\item $(B_i',B_{i}'')$ is an $\eta$-closed $4$-multiplicative pair;
\item $\mu_i \geq \mu_{i-1}+F(\nu,M)^2/2M$;
\item $\mu_G(B_i),\mu_G(B_i'),\mu_G(B_i'') = \Omega_{\nu,M,K,i}(\mu_G(A))$.
\end{enumerate}
To begin the construction apply Proposition \ref{prop.regapp} to get a constant $c_0=\Omega_{K,\eta}(1)$ and sets $B_0'',B_0',B_0$ such that $(B_0,B_0')$ and $(B_0',B_0'')$ is are $c_0$-thick $\eta$-closed $4$-multiplicative pairs,
\begin{equation*}
B_0 \subset A^4 \textrm{ and } \mu_G(B_0)= \Omega_{K,\eta}(\mu_G(A)).
\end{equation*}
The iteration is thus initialised in light of the definition of $\eta$.  Now, suppose that we have defined $B_i,B_i'$ and $B_i''$ as per the above. By the lower bounds on the density of $B_i''$ and the fact that $B_i''^4 \subset B_i \subset A^4$ we have that
\begin{equation*}
\mu_G(B_i''^4) =O_{\nu,M,K,i}(\mu_G(B_i'')).
\end{equation*}
We apply Proposition \ref{prop.disclarge} to the set $B_i$ and the function $f$.  If we are in the first case of the proposition terminate; otherwise we get $\eta$-closed $4$-multiplicative pairs $(B_{i+1},B_{i+1}')$ and $(B_{i+1}',B_{i+1}'')$ with $B_{i+1}\subset B_{i}''^4\subset A^4$ such that
\begin{equation*}
\|f \ast \widetilde{\mu_{B_{i+1}}}\ast  \mu_{B_{i+1}} - f \ast \widetilde{\mu_{B_{i+1}'}}\ast  \mu_{B_{i+1}'}\|_{L^\infty(\mu_G)}\geq F(\nu,M)
\end{equation*}
and
\begin{equation*}
\mu_G(B_{i+1}''),\mu_G(B_{i+1}'),\mu_G(B_{i+1}) = \Omega_{\nu,K,M,\eta,i}(\mu_G(B_i'')) =  \Omega_{\nu,K,M,i}(\mu_G(A)).
\end{equation*}
It follows that all the hypotheses are satisfied except the bound on $\mu_{i+1}$.  By Lemma \ref{lem.spectralcollection} we have that
\begin{equation*}
\mu_{i+1} \geq \sum_{j=1}^N{|\lambda_j|\|\mu_{B_{i+1}} \ast v_i\|_{L^2(\mu_G)}^2} + F(\nu,M)^2/M - 8\eta M
\end{equation*}
Now, since $(B_i',B_{i}'')$ is an $\eta$-closed $4$-multiplicative pair and $B_{i+1} \subset B_i''^4$ we get that $(B_i',B_{i+1})$ is an $\eta$-closed $1$-multiplicative pair, whence by Lemma \ref{lem.spectralcollection} again we have that
\begin{equation*}
\sum_{j=1}^N{|\lambda_j|\|\mu_{B_{i+1}} \ast v_i\|_{L^2(\mu_G)}^2} \geq \mu_i - 8\eta M.
\end{equation*}
We thus conclude from the choice of $\eta$ that
\begin{equation*}
\mu_{i+1} \geq \mu_i + F(\nu,M)^2/2M.
\end{equation*}
Now, it remains to note that $\mu_i \leq M$, whence the iteration terminates with some $i = O_{\nu,M}(1)$; for it to terminate we must have been in the first case of Proposition \ref{prop.disclarge} and we are done since $B_i \subset A^4$.
\end{proof}
It turns out that $F$ is polynomial in its variables, whence the lower bound in the above proposition is a tower of towers of height $O(\nu^{-O(1)}M^{O(1)})$.

\section{Discontinuity in $A(G)$ implies correlation with a multiplicative pair}\label{sec.dis}
 
 In this section we shall show that if a function $f \in A(G)$ is not continuous in the sense of the conclusion of Proposition \ref{prop.quantcon}, then we have correlation with a multiplicative pair.  Specfically we show the following. 
 \begin{proposition}\label{prop.disc2}
Suppose that $G$ is a finite group, $f \in A(G)$ has $\|f\|_{A(G)} \leq M$, $A$ is symmetric and $\mu_G(A^4) \leq K\mu_G(A)$ and $\nu,\eta \in (0,1]$ are parameters.  Then either
\begin{enumerate}
\item there are symmetric neighbourhoods of the identity $B' \subset B \subset A^4$ such that $\mu_G(B')=\Omega_{K,\nu,\eta,M}(\mu_G(A))$,
\begin{equation*}
\sup_{x \in G}{\|f\ast\widetilde{\mu_B}\ast  \mu_B-f\ast\widetilde{\mu_B}\ast  \mu_B(x)\|_{L^\infty(\mu_{xB'})}}\leq \nu
\end{equation*}
and
\begin{equation*}
\sup_{x \in G}{\|f - f\ast\widetilde{\mu_B}\ast  \mu_B\|_{L^2(\\mu_{xB'})}}\leq \nu.
\end{equation*}
\item or there are symmetric neighbourhoods of the identity $B''\subset B' \subset B \subset A^4$ such that  $\mu_G(B'')=\Omega_{K,\nu,\eta,M}(\mu_G(A))$, $\mathcal{B}':=(B,B')$ and $\mathcal{B}'':=(B',B'')$ are $\eta$-closed $4$-multiplicative pairs and
\begin{equation*}
\|f \ast \widetilde{\mu_B}\ast  \mu_B - f \ast \widetilde{\mu_{B'}}\ast  \mu_{B'}\|_{L^\infty(\mu_G)}= \Omega_{\nu,M}(1).
\end{equation*}
\begin{equation*}
\end{equation*}
\end{enumerate}
\end{proposition}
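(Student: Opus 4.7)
The plan is to combine the multi-scale machinery of Proposition \ref{prop.regapp} with the Bogolio\`uboff-type Proposition \ref{prop.mpls} to turn a local $L^2$-failure into a spectral gap. First I would apply Proposition \ref{prop.regapp} to $A$ with $r\equiv 4$ and the constant closure function $\epsilon\equiv \min\!\bigl(\eta,\nu/(4M)\bigr)$ and enough scales to obtain nested sets $B_0\supset B_1\supset B_2\supset B_3$ inside $A^4$ with $\mu_G(B_3)=\Omega_K(\mu_G(A))$ and such that each pair $(B_i,B_j)$ with $i<j$ is a $c_j$-thick, $\epsilon$-closed $4$-multiplicative pair. Here the smallness of the closure plays a double role: it satisfies the $\eta$-closed hypothesis in conclusion (ii), and more importantly it makes the first continuity condition in (i) automatic, for by Lemma \ref{lem.appcon}, Lemma \ref{lem.linfag} and Lemma \ref{lem.a-ap}
\[
\sup_{x\in G}\|f\ast\widetilde{\mu_{B_0}}\ast\mu_{B_0}-f\ast\widetilde{\mu_{B_0}}\ast\mu_{B_0}(x)\|_{L^\infty(\mu_{xB_1})}\leq \epsilon\,\|f\ast\widetilde{\mu_{B_0}}\|_{L^\infty(\mu_G)}\leq \epsilon M\leq \nu/4.
\]

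With this setup, if the second (local $L^2$) condition in (i) also holds for $(B,B')=(B_0,B_1)$ then we are done. Otherwise there is some $x\in G$ with $\|f-f\ast\widetilde{\mu_{B_0}}\ast\mu_{B_0}\|_{L^2(\mu_{xB_1})}>\nu$, and after replacing $f$ by $\rho_{x^{-1}}(f)$ (legitimate since the $A(G)$-norm is translation invariant by Lemma \ref{lem.invariance}) we may assume $x=1_G$. Writing $h:=f-f\ast\widetilde{\mu_{B_0}}\ast\mu_{B_0}=f\ast k$ with the self-adjoint kernel $k=\delta_{1_G}-\widetilde{\mu_{B_0}}\ast\mu_{B_0}$, we have $\|h\|_{L^2(\mu_{B_1})}^2>\nu^2$.

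The next step is the crux: converting this localized $L^2$ statement into a hypothesis of the form required by Proposition \ref{prop.mpls}, namely $\|(f_\ast d\mu_{B_2})\ast g_\ast\|_{L^2(\mu_{B_1})}^2\gtrsim \nu\|g_\ast\|_{L^2(\mu_{B_1})}^2\|f_\ast\|_{L^\infty(\mu_{B_2})}^2$. The natural choice is to take $g_\ast=h\cdot 1_{B_1}$ and $f_\ast$ a suitable restriction or modification of $f$ to $B_2$: unfolding
\[
\langle h,h\cdot 1_{B_1}\rangle_{L^2(\mu_G)}>\nu^2\mu_G(B_1)
\]
and using Lemma \ref{lem.triv} to transfer $k$ across the inner product, this inner-product inequality should pair up against the bound $\|f\ast\widetilde{\mu_{B_0}}\|_{L^\infty}\leq M$ to give a genuine energy inequality on scale $B_1$. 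Proposition \ref{prop.mpls} then outputs an $\eta$-closed, $\Omega_{K,\nu,\eta,M}(1)$-thick $4$-multiplicative pair $(B_9,B_{10})$ with $B_9^2\subset B_3^3$ satisfying a lower bound of the shape $\sup_{u\in B_{2,3}^-}|f\ast\widetilde{\mu_{B_9}}\ast\mu_{B_9}(u)|=\Omega(\sqrt{\nu}M)$.

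Finally, I would take $B:=B_0$, $B':=B_9$, $B'':=B_{10}$: at the point $u$ where the above supremum is attained, $|f\ast\widetilde{\mu_{B'}}\ast\mu_{B'}(u)|$ is $\Omega(\sqrt{\nu}M)$, while $|f\ast\widetilde{\mu_B}\ast\mu_B(u)|$ is controlled because convolving by $\widetilde{\mu_{B_0}}\ast\mu_{B_0}$ smooths the energy away from the sharp local profile captured by the small pair $(B_9,B_{10})$. This produces an $L^\infty(\mu_G)$-gap of size $\Omega(\sqrt{\nu}M)$ between the two smoothed functions, yielding conclusion (ii). \textbf{The main obstacle} is precisely the third step: packaging the local $L^2$-failure so that it becomes a genuine multiplicative-pair energy inequality suitable for Proposition \ref{prop.mpls}. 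This involves choosing the auxiliary function $g_\ast$ so that $\|g_\ast\|_{L^\infty(\mu_{B_1})}$ is not too large compared to its $L^2$ norm (which may require preliminary smoothing, a pigeonhole step, or invoking Lemma \ref{lem.chopup}) and then verifying that the pair produced by Proposition \ref{prop.mpls} actually produces a separation between the $B_0$-scale and the $B_9$-scale smoothings of $f$, rather than merely correlating with $f$ itself.
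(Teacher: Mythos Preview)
Your outline identifies the right ingredients (Proposition~\ref{prop.regapp}, then Proposition~\ref{prop.mpls}) but the step you flag as ``the main obstacle'' really is one, and the approach you sketch for it does not work.

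\textbf{The energy inequality cannot be obtained as you suggest.} Writing $h=f-f\ast\widetilde{\mu_{B_0}}\ast\mu_{B_0}$ and taking $g_\ast=h\cdot 1_{B_1}$, the quantity $\langle h,h\cdot 1_{B_1}\rangle_{L^2(\mu_G)}$ is simply $\|h\|_{L^2(B_1)}^2$; it is not a convolution and there is no $f_\ast$ supported on $B_2$ for which this becomes $\|(f_\ast d\mu_{B_2})\ast g_\ast\|_{L^2(\mu_{B_1})}^2$. Proposition~\ref{prop.mpls} requires a genuine convolution-energy lower bound, not merely that $h$ is locally large. The paper's route is quite different: one takes a Fourier basis $(v_i)$ for the (smoothed, localised) function $h$, and uses a \emph{single singular vector} $v_i$ as the $g$ in Proposition~\ref{prop.mpls}, with the restriction of $h$ to a small ball playing the role of $f$. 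The point of passing to a singular vector is twofold: it is an eigenvector of $L_h^*L_h$, which is exactly what Lemma~\ref{lem.chopup} needs to manufacture the localised energy inequality \emph{and} the crucial $L^\infty/L^2$ control on $g$ that the hypotheses of Proposition~\ref{prop.mpls} demand.

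\textbf{Four scales with constant closure are not enough.} Two separate reasons force many more scales. First, the hypotheses of Proposition~\ref{prop.mpls} require $\epsilon_3\leq(c_2\nu/2)^{C_{\rm Bog}}$ and $r_3\geq 32$: the closure at scale $j+1$ must depend on the thickness $c_j$, so the functions $r(\cdot),\epsilon(\cdot)$ fed to Proposition~\ref{prop.regapp} cannot be constant. Second, and more fundamentally, the paper needs $J=O(M^2\nu^{-2})$ scales to run a pigeonhole over the dyadic level sets $\mathcal{L}_j=\{i:|s_i(h)|\approx \mu_G(B_j)\}$ of the singular values of $h$; this locates a scale $j$ at which the contribution from $\mathcal{L}_j$ is negligible, and hence forces the existence of a single $v_i$ with \emph{both} $\|L_g v_i\|$ large (where $g=h\,d\mu_{x_0x_jB_j}$) and $|s_i(h)|$ bounded below by a quantity depending on $\mu_G(B_{j-1})$. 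Without this pigeonhole one cannot guarantee a vector with large eigenvalue, and without a large eigenvalue Lemma~\ref{lem.chopup} gives nothing.

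\textbf{The final step is also not as you describe.} The output of Proposition~\ref{prop.mpls} is a lower bound on $|k\ast\widetilde{\mu_B}\ast\mu_B|$ where $k$ is (a translate of) $h$ restricted to $B_j$, not on $|f\ast\widetilde{\mu_{B'}}\ast\mu_{B'}|$. Because $k$ is morally $f-f\ast\widetilde{\mu_{B_3}}\ast\mu_{B_3}$ on the support in question, this converts directly into a lower bound on $\|f\ast\widetilde{\mu_B}\ast\mu_B-f\ast\widetilde{\mu_{B_3}}\ast\mu_{B_3}\ast\widetilde{\mu_B}\ast\mu_B\|_{L^\infty}$, and then a single application of Lemma~\ref{lem.approxhaar} removes the trailing $\widetilde{\mu_B}\ast\mu_B$. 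There is no need to argue that ``smoothing kills the sharp profile''; the difference of scales is already built into $h$.
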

In the abelian setting the basic idea is that if the first conclusion is not satisfied then by Parseval's theorem there is a character at which $\wh{f}$ is large.  This is then converted into a density increment on a Bohr set by standard arguments.  In the non-abelian setting things are not so simple.

We shall take an singular value decomposition of $L_f$ and find that since $\|f\|_{A(G)}$ is bounded, we have a vector $v$ such that $f$ and $v$ have large cross energy.  We then use an averaging argument (Lemma \ref{lem.chopup}) to pass to a situation of having large energy and apply the work of \S\ref{sec.bog} to get correlation with a multiplicative pair. Unfortunately the averaging argument does not work directly and we are forced to introduce an additional regularity argument to ensure that $v$ is well enough behaved that it does.

We are now ready to proceed with the proof.
\begin{proof}[Proof of Proposition \ref{prop.disc2}]
Let $c$ be the absolute constant in the correlation lower bound in Proposition \ref{prop.mpls}.  We apply Proposition \ref{prop.regapp} to the set $A$ to get sets $(B_j)_{j=0}^J$ with $J=\lceil 100 M^2\nu^{-2} \rceil$, and positive reals $c_j=\Omega_{\eta,\nu,M,K}(1)$ such that $(B_i,B_j)$ is a $c_j$-thick, $\epsilon_j$-closed, $r_j$-multiplicative pair with
\begin{equation*}
r_j = 32 \textrm{ and } \epsilon_j \leq (c\eta \nu /2M)^{1000}c_{j-1}^4
\end{equation*}
for all $j \in \{0,\dots,J\}$ (with the obvious convection that $c_{-1}=1$), and
\begin{equation*}
B_0 \subset A^4 \textrm{ and } \mu_G(B_0) = \Omega_{\eta,\nu,M,K}(\mu_G(A)).
\end{equation*}
From Lemma \ref{lem.a-ap} and Lemma \ref{lem.bohrfourier} we have that
\begin{equation*}
\|f -f \ast \widetilde{\mu_{B_3}} \ast \mu_{B_3}\|_{A(G)} \leq M,
\end{equation*}
and hence, by Lemma \ref{lem.linfag},
\begin{equation*}
\|f-f \ast \widetilde{\mu_{B_3}} \ast \mu_{B_3}\|_{L^\infty(\mu_G)}\leq M.
\end{equation*}
We shall use these bounds in the sequel without comment.  Now, if
\begin{equation*}
\sup_{x \in G}{\|f -f \ast \widetilde{\mu_{B_3}} \ast \mu_{B_3}\|_{L^2(\mu_{xB_4})}^2} \leq \nu^2,
\end{equation*}
then we are done in the first case of the proposition with $B:= B_3$ and $B':=B_4$, since it is easy to check by Lemma \ref{lem.appcon} that since $\epsilon_4 \leq \nu/M$ and $r_4 \geq 1$ we have
\begin{equation*}
\sup_{x \in G}{\|f\ast \widetilde{\mu_{B_3}} \ast \mu_{B_3}-f \ast \widetilde{\mu_{B_3}} \ast \mu_{B_3}(x)\|_{L^\infty(\mu_{xB_4})}}\leq \nu.
\end{equation*}
Thus we may assume that there is some $x_0 \in G$ such that
\begin{equation}\label{eqn.juk}
\|f -f \ast \widetilde{\mu_{B_3}} \ast \mu_{B_3}\|_{L^2(\mu_{x_0B_4})}^2 >\nu^2.
\end{equation}
We put
\begin{equation*}
h:=(f -f \ast \widetilde{\mu_{B_3}} \ast \mu_{B_3})1_{x_0B_3} \ast \mu_{B_3},
\end{equation*}
and make the following claims about $h$.
\begin{claim}
\begin{equation*}
\|h\|_{A(G)} \leq M \textrm{ and } \|h\|_{L^\infty(\mu_G)} \leq M
\end{equation*}
and
\begin{equation}\label{eqn.big}
h^2 \ast \mu_{B_4}(x_0)=\|h\|_{L^2(\mu_{x_0B_4})}^2 > 9\nu^2/16.
\end{equation}
\end{claim}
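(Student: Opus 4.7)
Write $\phi := (f - f \ast \widetilde{\mu_{B_3}}\ast\mu_{B_3})\cdot 1_{x_0 B_3}$, so $h = \phi \ast \mu_{B_3}$ and $L_h = L_\phi L_{\mu_{B_3}}$. Setting $g := f - f \ast \widetilde{\mu_{B_3}}\ast\mu_{B_3}$, first observe that $\|g\|_{A(G)} \leq M$ and $\|g\|_{L^\infty(\mu_G)} \leq M$: write $g = f \ast (\delta_{1_G} - \widetilde{\mu_{B_3}}\ast\mu_{B_3})$, use Lemma \ref{lem.bohrfourier} to bound $\|\delta_{1_G} - \widetilde{\mu_{B_3}}\ast\mu_{B_3}\|_{PM(G)} \leq 1$, apply Lemma \ref{lem.a-ap} and then Lemma \ref{lem.linfag}. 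Item (2) is then immediate from Young's inequality: $\|h\|_{L^\infty(\mu_G)} \leq \|\phi\|_{L^\infty(\mu_G)}\|\mu_{B_3}\|_{L^1(\mu_G)} \leq M$. For item (1) the key is the Schatten Cauchy--Schwarz inequality $\|AB\|_{S_1} \leq \|A\|_{S_2}\|B\|_{S_2}$ applied to the factorisation $L_h = L_\phi L_{\mu_{B_3}}$; identifying Schatten-$2$ norms with $L^2(\mu_G)$-norms via Parseval's theorem from Section \ref{sec.ft} and $\|\cdot\|_{A(G)}$ with the Schatten-$1$ norm via Lemma \ref{lem.algnormexp}, this yields
\[
\|h\|_{A(G)} \leq \|\phi\|_{L^2(\mu_G)}\cdot\|\mu_{B_3}\|_{L^2(\mu_G)} \leq M\sqrt{\mu_G(B_3)}\cdot\frac{1}{\sqrt{\mu_G(B_3)}} = M,
\]
where the middle inequality uses the pointwise bound $|\phi|\leq M\cdot 1_{x_0 B_3}$ and the identity $\|\mu_{B_3}\|_{L^2(\mu_G)}^2 = 1/\mu_G(B_3)$.

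For item (3) the plan is the triangle inequality. Since $(B_3,B_4)$ is one of the multiplicative pairs produced by Proposition \ref{prop.regapp}, we have $B_4 \subset B_3$, hence $x_0 B_4 \subset x_0 B_3$, so $g = \phi$ on $x_0 B_4$ and $(h-g)|_{x_0 B_4} = (\phi\ast(\mu_{B_3} - \delta_{1_G}))|_{x_0 B_4}$. The aim is to show that $\|h-g\|_{L^2(\mu_{x_0 B_4})} \leq \nu/4$: combined with $\|g\|_{L^2(\mu_{x_0 B_4})} > \nu$ from (\ref{eqn.juk}), the triangle inequality then gives $\|h\|_{L^2(\mu_{x_0 B_4})} > 3\nu/4$ and hence the desired squared bound (\ref{eqn.big}). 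For the error estimate, for $x = x_0 u \in x_0 B_4$ write $\phi\ast\mu_{B_3}(x) - \phi(x) = \int_{B_3}(\phi(xt^{-1}) - \phi(x))\,d\mu_{B_3}(t)$ and split the integration domain according to whether $xt^{-1}\in x_0 B_3$; equivalently, whether $t \in B_3\cap B_3 u$ or $t \in B_3\setminus B_3 u$. The boundary portion (where $\phi(xt^{-1}) = 0$) is controlled pointwise on $x_0 B_4$ by $M\cdot\mu_{B_3}(B_3\setminus B_3 u) \leq M\epsilon_4$ via the approximate-Haar property of Lemma \ref{lem.approxhaar}, and is easily absorbed into $\nu/4$ by the extreme smallness $\epsilon_4 \leq (c\eta\nu/(2M))^{1000}c_3^4$ chosen at the outset of the proof of Proposition \ref{prop.disc2}.

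The main obstacle is the bulk contribution $\int_{B_3\cap B_3 u}(g(xt^{-1}) - g(x))\,d\mu_{B_3}(t)$, which records how much $g$ varies under $B_3$-translates inside $x_0 B_3$. Without the $A(G)$-norm bound on $f$ this term could be as large as $M$ and would destroy the estimate; it is precisely the constraint $\|f\|_{A(G)} \leq M$ (equivalently, $\|g\|_{A(G)}\leq M$) that forces $g$ to be smooth enough on $B_3$-scales for this to be absorbable. Concretely, the quantitative translation invariance of the large spectrum of $g$ relative to the pair $(B_3,B_4)$ developed in Section \ref{sec.anallarge} (specifically Lemma \ref{lem.trans}, which says that $\Spec_\delta(\mathcal{B},g)$ is nearly translation-invariant under the perturbation set, with error governed by $\epsilon_4$) controls, after a Fubini/Cauchy--Schwarz exchange against $d\mu_{B_3}(t)\,d\mu_{x_0 B_4}(x)$ and combined with the $L^\infty$-bound $\|g\|_{L^\infty}\leq M$ from item (2) of the claim, the bulk term by a quantity polynomial in $\epsilon_4$ and $1/c_3$. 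The parameter choices made at the start of Proposition \ref{prop.disc2} are tuned precisely so that this bound lies below $\nu^2/16$, completing the proof of (3) and hence of the claim.
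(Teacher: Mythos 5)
You have misparsed the definition of $h$. In the paper, $h$ is the pointwise product
\[
h = \bigl(f - f\ast\widetilde{\mu_{B_3}}\ast\mu_{B_3}\bigr)\cdot\bigl(1_{x_0B_3}\ast\mu_{B_3}\bigr),
\]
not the convolution $\phi\ast\mu_{B_3}$ with $\phi=(f-f\ast\widetilde{\mu_{B_3}}\ast\mu_{B_3})\cdot 1_{x_0B_3}$ that you assume. The evidence is decisive: the paper bounds $\|h\|_{A(G)}$ ``by the product property of the $A(G)$-norm,'' i.e.\ it is a pointwise product of two functions of small algebra norm; and item (3) is obtained by a ``trivial instance of H\"{o}lder's inequality'' applied to $g-h=g\cdot(1 - 1_{x_0B_3}\ast\mu_{B_3})$, where $g:=f-f\ast\widetilde{\mu_{B_3}}\ast\mu_{B_3}$ and the factor $1 - 1_{x_0B_3}\ast\mu_{B_3}$ is controlled by $\nu/4M$ on $x_0B_4$ via Lemma~\ref{lem.appcon}. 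The later claim involving $k$ and $k'$ in the same proof has exactly this structure. With the correct reading the whole claim is short: $\|1_{x_0B_3}\ast\mu_{B_3}\|_{A(G)}\leq 1$ by Lemma~\ref{lem.convag} and Lemma~\ref{lem.invariance}, so $\|h\|_{A(G)}\leq M$ by the algebra property; $\|h\|_{L^\infty(\mu_G)}\leq M$ by Lemma~\ref{lem.linfag}; and $|g-h|\leq M\cdot\nu/4M=\nu/4$ pointwise on $x_0B_4$, which together with~(\ref{eqn.juk}) and the triangle inequality yields~(\ref{eqn.big}).

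Under your (incorrect) reading the Schatten Cauchy--Schwarz argument for item (1) would go through --- and the observation that $\|\mu_{B_3}\|_{L^2(\mu_G)}^2=1/\mu_G(B_3)$ exactly cancels $\|\phi\|_{L^2(\mu_G)}$ is a nice one --- but the route is unavailable for the paper's $h$, since $L_h\neq L_\phi L_{\mu_{B_3}}$ when $h$ is a pointwise product. The more serious trouble is item (3): you correctly isolate the ``bulk contribution'' $\int_{B_3}(g(xt^{-1})-g(x))\,d\mu_{B_3}(t)$ as the obstacle, and then try to dispatch it with Lemma~\ref{lem.trans}. That does not work. Lemma~\ref{lem.trans} controls near-invariance of the finite-dimensional subspace $\Spec_\delta(\mathcal{B},f)$ of $L^2(\mu_B)$ under $\rho_{\mathcal{B}',y}$ with $y$ in a power of the \emph{perturbation} set, with error governed by the closure of that much smaller pair; it says nothing about the pointwise variation of a function $g$ with $\|g\|_{A(G)}\leq M$ under translation by elements of the \emph{ground} set $B_3$, and no such bound follows from the algebra norm hypothesis. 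If it did, the present case of Proposition~\ref{prop.disc2} (discontinuity at the $B_4$-scale) could never arise and the machinery of \S\S\ref{sec.qc}--\ref{sec.dis} would be unnecessary. The easiness of the claim is precisely that, with the paper's $h$, the error $g-h$ inherits none of the oscillation of $g$: it is $g$ multiplied by a scalar quantity which Lemma~\ref{lem.appcon} forces to be small on $x_0B_4$.
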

\begin{proof}
To see the algebra norm bound note by the calculation in Lemma \ref{lem.convag} couples with the translation invariance of Lemma \ref{lem.invariance} that
\begin{equation*}
\|\widetilde{1_{xB_3}} \ast  \mu_{B_3}\|_{A(G)} \leq 1,
\end{equation*}
whence by the product property of the $A(G)$-norm we have the desired bound.  The second bound is just the usual domination of the algebra norm by $L^\infty(\mu_G)$ from Lemma \ref{lem.linfag}.

Now for the lower size estimate we begin by noting that
\begin{equation*}
\sup_{b \in B_4}|1_{x_0B_3} \ast \mu_{B_3}(x_0b) - 1_{x_0B_3} \ast \mu_{B_3}(x_0) | \leq \nu/4M
\end{equation*}
by Lemma \ref{lem.appcon} since $\epsilon_4 \leq \nu^2/16M^2$ and $r_4 \geq 1$.  On the other hand $1_{x_0B_3} \ast \mu_{B_3}(x_0)=1$, and so 
\begin{equation*}
\sup_{b \in B_4}|1_{x_0B_3} \ast \mu_{B_3}(x_0b) - 1| \leq \nu/4M.
\end{equation*}
Now, by a trivial instance of H{\"o}lder's inequality we then have
\begin{equation*}
\|f -f \ast \widetilde{\mu_{B_3}} \ast \mu_{B_3}-h\|_{L^2(\mu_{x_0B_4})}^2\leq M^2. (\nu/4M)^2 = \nu^2/16.
\end{equation*}
The final bound now follows from the triangle inequality and (\ref{eqn.juk}).
\end{proof}
Now, let $v_1,\dots,v_N$ be a Fourier basis of $L^2(\mu_G)$ for $h$ and recall that the singular values of $h$ are just
\begin{equation*}
|s_i(h)| = \|L_hv_i\|_{L^2(\mu_G)} \textrm{ for all } i \in \{1,\dots,N\}.
\end{equation*}
Decompose the vectors into sets according to the size of the corresponding singular value as follows:
\begin{equation*}
\mathcal{L}_j:=\{i:  \nu^6\mu_G(B_{j})/2^{20}M^5\leq |s_i(h)| < \nu^6\mu_G(B_{j-1})/2^{20}M^5\}.
\end{equation*}
By the explicit formula for the algebra norm we have
\begin{equation*}
\sum_{i=1}^N{\|L_hv_i\|_{L^2(\mu_G)}} = \|h\|_{A(G)} \leq M,
\end{equation*}
whence by the pigeonhole principle there is a natural $j$ with $ 5 \leq j \leq 8M^2\nu^{-2}+6$ such that
\begin{equation*}
 |\sum_{i \in \mathcal{L}_j}{\| L_hv_i\|_{L^2(\mu_G)}}| \leq \nu^2/8M.
\end{equation*}
We should like to examine $h$ on the set $B_j$ and the next claim asserts that it is large on some translate.
\begin{claim}
There is some $x_j \in B_4$ such that
\begin{equation*}
h^2\ast \mu_{B_j}(x_0x_j)> \nu^2/2.
\end{equation*}
\end{claim}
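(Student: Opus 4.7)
My plan is to prove the claim by a simple averaging argument, exploiting the approximate Haar measure property of the multiplicative pair $(B_4, B_j)$. The key observation is that the already-established lower bound
\begin{equation*}
h^2 \ast \mu_{B_4}(x_0) > 9\nu^2/16
\end{equation*}
can be ``pushed down'' from scale $B_4$ to scale $B_j \subset B_4$ at the cost of a small additive error controlled by $\epsilon_j$.

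Concretely, I would first unpack the average over $x_j$ via change of variables and the symmetry of $B_4$:
\begin{equation*}
\int_{} h^2 \ast \mu_{B_j}(x_0x_j)\, d\mu_{B_4}(x_j) \;=\; (h^2 \ast \mu_{B_j}) \ast \widetilde{\mu_{B_4}}(x_0) \;=\; h^2 \ast \mu_{B_j} \ast \mu_{B_4}(x_0),
\end{equation*}
using that $\widetilde{\mu_{B_4}}=\mu_{B_4}$. The next step is to compare this with $h^2 \ast \mu_{B_4}(x_0)$ via
\begin{equation*}
h^2 \ast \mu_{B_j} \ast \mu_{B_4} - h^2 \ast \mu_{B_4} = h^2 \ast (\mu_{B_j} \ast \mu_{B_4} - \mu_{B_4}).
\end{equation*}

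To control the right-hand side I would apply Lemma \ref{lem.approxhaar} to the pair $(B_4,B_j)$, which is $\epsilon_j$-closed and $32$-multiplicative, taking the probability measure there to be $\mu_{B_j}$ (which is supported in $B_j \subset B_j^{32}$). This yields $\|\mu_{B_j}\ast \mu_{B_4}-\mu_{B_4}\|\leq \epsilon_j$ in total variation. Combined with Young's inequality and the bound $\|h^2\|_{L^\infty(\mu_G)}\leq M^2$ from the previous claim, this gives
\begin{equation*}
\left|\int h^2 \ast \mu_{B_j}(x_0x_j)\, d\mu_{B_4}(x_j) - h^2 \ast \mu_{B_4}(x_0)\right| \leq M^2\epsilon_j.
\end{equation*}

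Finally, the choice of multiplicative pair parameters in the proof guarantees $\epsilon_j \leq (c\eta\nu/2M)^{1000}c_{j-1}^4 \leq \nu^2/(16M^2)$, so that the average above exceeds $9\nu^2/16-\nu^2/16=\nu^2/2$; by the pigeonhole principle, some $x_j\in B_4$ achieves $h^2 \ast \mu_{B_j}(x_0x_j)>\nu^2/2$, as required. There is no real obstacle here: the argument is an entirely routine application of Lemma \ref{lem.approxhaar}, and the quantitative slack was already built into the earlier choice of the $\epsilon_j$.
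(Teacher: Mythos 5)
Your proposal is correct and follows essentially the same route as the paper's: both identify the average over $x_j \in B_4$ as $h^2 \ast \mu_{B_j} \ast \mu_{B_4}(x_0)$, bound its deviation from $h^2 \ast \mu_{B_4}(x_0)$ by $M^2\epsilon_j \leq \nu^2/16$ via Lemma \ref{lem.approxhaar} together with Young's inequality, and then conclude by averaging against the previously established bound $h^2 \ast \mu_{B_4}(x_0) > 9\nu^2/16$.
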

\begin{proof}
By Young's inequality and Lemma \ref{lem.approxhaar} we have that
\begin{equation*}
|h^2 \ast \mu_{B_j} \ast \mu_{B_4}(x_0)-h^2 \ast \mu_{B_4}(x_0)| \leq M^2\epsilon_j \leq \nu^2/16,
\end{equation*}
since
\begin{equation*}
r_j \geq 1 \textrm{ and } \epsilon_j \leq \nu^2/16M^2.
\end{equation*}
Now, recall from (\ref{eqn.big}) that $h^2 \ast \mu_{B_4}(x_0) > 9\nu^2/16$, whence, by the triangle inequality we have
\begin{equation*}
\int{h^2 \ast \mu_{B_j}(z) d\mu_{B_4}(z^{-1}x_0)}=h^2   \ast \mu_{B_j} \ast \mu_{B_4}(x_0)> \nu^2/2.
\end{equation*}
Thus by averaging there is some $x_j \in B_4$ such that
\begin{equation*}
h^2\ast \mu_{B_j}(x_0x_j)> \nu^2/2,
\end{equation*}
and the result follows.
\end{proof}
Write $g:=h.d\mu_{x_0x_jB_j}$ and
\begin{equation*}
\mathcal{L}:=\{i: \|L_gv_i\|_{L^2(\mu_G)}\geq \nu^2/8M\}.
\end{equation*}
We shall estimate the size of $\mathcal{L}$ using the Parseval bound in the usual way: by Parseval's theorem
\begin{eqnarray*}
\sum_{i \in \mathcal{L}}{\|L_gv_i\|_{L^2(\mu_G)}^2} & \leq & \sum_{i=1 }^N{\|L_gv_i\|_{L^2(\mu_G)}^2} \\ & = & \langle g,g\rangle_{L^2(\mu_G)} = h^2\ast \mu_{B_j}(x_0x_j)/\mu_G(x_0x_jB_j).
\end{eqnarray*}
Thus, by the definition of $\mathcal{L}$ and the upper bound on $\|h\|_{L^\infty(\mu_G)}$ we have
\begin{equation*}
|\mathcal{L}| \leq M^2\mu_G(x_0x_jB_j)^{-1}.(8M/\nu^2)^2 = 2^{6}M^4\nu^{-4}\mu_G(B_j)^{-1}.
\end{equation*}
Now, by the Cauchy-Schwarz inequality we have
\begin{equation}\label{eqn.ce1}
 |\sum_{i\not \in \mathcal{L}}{\langle L_hv_i,L_gv_i\rangle_{L^2(\mu_G)}}|  \leq  \sum_{i=1}^N{\|L_hv_i\|_{L^2(\mu_G)}}.\sup_{i \not \in \mathcal{L}}\|L_gv_i\|_{L^2(P_G)}\leq  \nu^2/8,
\end{equation}
by the explicit formula for $\|h\|_{A(G)}$ and the fact that its upper bound is $M$.  Furthermore, writing 
\begin{equation*}
\mathcal{S}:=\{i:  |s_i(h)| \leq \nu^6\mu_G(B_{j})/2^{20}M^5\}
\end{equation*}
we have (again by Cauchy-Schwarz)
\begin{equation*}
 |\sum_{i \in \mathcal{L}\cap \mathcal{S}}{\langle L_hv_i,L_gv_i\rangle_{L^2(\mu_G)}}| \leq  |\mathcal{L}|.\sup_{i \in \mathcal{S}}{\|L_gv_i\|_{L^2(\mu_G)}\|L_hv_i\|_{L^2(P_G)}}.
\end{equation*}
Now by the Hausdorff-Young bound (Lemma \ref{lem.hdy}) we have that
\begin{equation*}
\|L_gv_i\|_{L^2(\mu_G)} \leq \|g\| \leq \|h\|_{L^\infty(\mu_G)} \leq M,
\end{equation*}
whence
\begin{equation}\label{eqn.ce2}
|\sum_{i \in \mathcal{L}\cap \mathcal{S}}{\langle L_hv_i,L_gv_i\rangle_{L^2(\mu_G)}}| \leq  \nu^2/8.
\end{equation}
Finally, by the choice of $j$, we have that
\begin{equation}\label{eqn.ce3}
 |\sum_{i \in \mathcal{L}\cap \mathcal{L}_j}{\langle L_hv_i,L_gv_i\rangle_{L^2(\mu_G)}}|\leq \nu^2/8.
\end{equation}
Combining (\ref{eqn.ce1}), (\ref{eqn.ce2}) and (\ref{eqn.ce3}) by the triangle inequality we get that
\begin{equation*}
|\sum_{i \not \in \mathcal{L}\cup \mathcal{S} \cup \mathcal{L}_j}{\langle L_hv_i,L_gv_i\rangle_{L^2(\mu_G)}}| \leq 3\nu^2/8.
\end{equation*}
On the other hand, by Parseval's theorem
\begin{equation*}
\nu^2/2 < h^2 \ast \mu_{B_j}(x_0x_j) = \langle g,h \rangle_{L^2(\mu_G)} = \sum_{i=1}^N{ \langle L_hv_i,L_gv_i\rangle_{L^2(\mu_G)}},
\end{equation*}
whence by the triangle inequality
\begin{equation*}
|\sum_{i \in \mathcal{L}\cap \mathcal{S} \cap \mathcal{L}_j}{\langle L_hv_i,L_gv_i\rangle_{L^2(\mu_G)}}| > \nu^2/8.
\end{equation*}
In particular, there is some $i$ such that
\begin{equation*}
\|g \ast v_i\|_{L^2(\mu_G)} \geq \nu^2/8M, \textrm{ and } |s_i(h)| \geq \nu^6\mu_G(B_{j-1})/2^{20}M^5.
\end{equation*}
Now we should like to apply Lemma \ref{lem.chopup} to the sets $(B_1,B_2,B_{j})$.  First, writing $k$ for the function $x \mapsto h(x_0x_jx)$ restricted to $B_j$ we find that $k \in L^1(\mu_{B_j})$ and
\begin{equation*}
(kd\mu_{B_j}) \ast v_i(x) = g \ast v_i(x_0x_jx) \textrm{ for all }x \in G.
\end{equation*}
Thus
\begin{equation*}
\|(kd\mu_{B_j}) \ast v_i\|_{L^2(\mu_G)}^2= \|g \ast v_i\|_{L^2(\mu_G)}^2> \nu^2\|k\|_{L^\infty(\mu_{B_j})}^2\|v_i\|_{L^2(\mu_G)}^2/8M^3
\end{equation*}
by change of variables and the fact that $\|v_i\|_{L^2(\mu_G)}=1$.  Secondly, the function $h$ is supported on $x_0B_3^2 \subset x_0B_2$ since $r_3 \geq 1$ and
\begin{equation*}
\widetilde{(hd\mu_{x_0B_2})} \ast (hd\mu_{x_0B_2}) \ast v_i = \lambda \|h\|_{L^\infty(\mu_{x_0B_2})}^2v_i
\end{equation*}
for some $\lambda$ with
\begin{equation*}
|\lambda| \geq (\nu^6 \mu_G(B_{j-1})/2^{20}M^5)^2/\mu_G(B_2)^2\|h\|_{L^\infty(\mu_{x_0B_2})}^2 \geq \nu^{12}c_{j-1}^2/2^{40}M^{12}.
\end{equation*}
Thus since $r_2,r_j \geq 4$, $\epsilon_2 \leq 1$ and
\begin{equation*}
\epsilon_j \leq c_{j-1}^4\nu^{26}/2^{90}M^{27}\leq (\nu^2/8M^3).c_2^2(\nu^{12}c_{j-1}^2/2^{40}M^{12})^2/16
\end{equation*}
our application of Lemma \ref{lem.chopup} gives us some $x' \in G$ such that
\begin{equation*}
\|(kd\mu_{B_j}) \ast (\rho_{x'}(v_i)|_{B_1})\|_{L^2(\mu_{B_1})}^2 > \nu^2\|k\|_{L^\infty(\mu_{B_j})}^2\|\rho_{x'}(v_i)\|_{L^2(\mu_{B_1})}^2/2^5M^3,
\end{equation*}
and
\begin{equation*}
\|\rho_{x'}(v_i)\|_{L^\infty(\mu_{B_1})}\leq 16\nu^{-1}M|\lambda|^{-1}c_2^{-1}\|\rho_{x'}(v_i)\|_{L^2(\mu_{B_1})}.
\end{equation*}
In particular we have
\begin{equation*}
\frac{\|\rho_{x'}(v_i)\|_{L^2(\mu_{B_1})}}{\|\rho_{x'}(v_i)\|_{L^\infty(\mu_{B_1})} }\geq \nu^{13}c_{j-1}^3/M^{12}2^{44}.
\end{equation*}
Now we may apply Proposition \ref{prop.mpls} to the sets $B_0,B_1,B_j,B_{j+1}$ since $r_{j+1} \geq 32$,
\begin{equation*}
\epsilon_{j+1} \leq (c_j\nu^2/2^9M^3)^{C_{\rm{Bog}}}
\end{equation*}
and
\begin{equation*}
\epsilon_{j+1} \leq \nu^{54}c_{j-1}^{12}/M^{48}2^{200} \leq (\nu^2/2^8) .\left(\frac{\|\rho_{x'}(v_i)\|_{L^2(\mu_{B_1})}}{\|\rho_{x'}(v_i)\|_{L^\infty(\mu_{B_1})} }\right)^4.
\end{equation*}
The proposition gives us a positive real $c = \Omega_{\nu,\eta,M,K}(1)$ and $c$-thick $\eta$-closed $4$-multiplicative pairs $(B,B')$ and $(B',B'')$ with $B^2 \subset B_{j+1}^3$ and some $x' \in B_{j,j+1}^-$ such that
\begin{equation*}
|k \ast \widetilde{\mu_{B}} \ast \mu_B(x')|^2 \geq c\nu \|k\|_{L^\infty(\mu_G)}/2^4M^2,
\end{equation*}
where $c$ is the implied constant in the lower bound in Proposition \ref{prop.mpls}. Now, define 
\begin{equation*}
k'(x):=(f - f \ast \widetilde{\mu_{B_3}} \ast \mu_{B_3})(x_0x_jx)1_{B_j}(x).
\end{equation*}
We make the following claim.
\begin{claim}
\begin{equation*}
\|k - k'\|_{L^\infty(\mu_G)} \leq c\nu \|k\|_{L^\infty(\mu_G)}/2^5M^2.
\end{equation*}
\end{claim}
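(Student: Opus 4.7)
The plan is to exploit the factorisation (implicit in the earlier proof that $\|h\|_{A(G)} \leq M$)
\begin{equation*}
h = F \cdot G, \quad\textrm{where } F := f - f \ast \widetilde{\mu_{B_3}} \ast \mu_{B_3} \textrm{ and } G := 1_{x_0 B_3} \ast \mu_{B_3}.
\end{equation*}
Under this identification one has
\begin{equation*}
k(x) - k'(x) = F(x_0 x_j x)\bigl(G(x_0 x_j x) - 1\bigr) 1_{B_j}(x),
\end{equation*}
so the task splits into controlling $\|F\|_{L^\infty(\mu_G)}$ on the one hand and $|G - 1|$ on the translate $x_0 x_j B_j$ on the other.

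The first of these is already in hand: $\|F\|_{L^\infty(\mu_G)} \leq M$, as noted at the outset of the proof of Proposition \ref{prop.disc2}. For the second, the crucial point is that $G(x_0) = \mu_G(x_0 B_3)/\mu_G(B_3) = 1$, reducing the matter to showing that $G$ is almost constant on $x_0 x_j B_j$. This is exactly the content of Lemma \ref{lem.appcon} applied to the function $1_{x_0 B_3}$ and the $\epsilon_4$-closed, $r_4$-multiplicative pair $(B_3, B_4)$ (recall $r_4 = 32$): it yields $|G(x_0 y) - 1| \leq \epsilon_4$ for every $y \in B_4^{32}$. Since $x_j \in B_4$ and, because $j \geq 5 > 4$ and the $B_i$ are nested, $x \in B_j \subset B_4$, we have $x_j x \in B_4^2 \subset B_4^{32}$, and therefore $|G(x_0 x_j x) - 1| \leq \epsilon_4$ for every $x \in B_j$.

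Combining the two estimates gives the pointwise bound $\|k - k'\|_{L^\infty(\mu_G)} \leq M \epsilon_4$ (the bound being trivial outside $B_j$). To convert this absolute estimate into the claimed relative one, I will use the lower bound
\begin{equation*}
\|k\|_{L^\infty(\mu_G)}^2 \geq \|k\|_{L^2(\mu_{B_j})}^2 = h^2 \ast \mu_{B_j}(x_0 x_j) > \nu^2/2
\end{equation*}
from the previous claim, so that $\|k\|_{L^\infty(\mu_G)} > \nu/\sqrt{2}$. The deliberately aggressive choice $\epsilon_j \leq (c\eta\nu/2M)^{1000} c_{j-1}^4$ made at the start of the proof, applied at $j=4$ and using $c_3 \leq 1$, then trivially yields $M \epsilon_4 \leq c\nu^2/(32\sqrt{2}\,M^2) \leq c\nu \|k\|_{L^\infty(\mu_G)}/2^5 M^2$, as required. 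The claim really consists only of Lemma \ref{lem.appcon} plus book-keeping: I do not anticipate any genuine obstacle.
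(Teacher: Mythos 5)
Your proof is correct and follows essentially the same route as the paper's: factor $h$ as the pointwise product of $f - f\ast\widetilde{\mu_{B_3}}\ast\mu_{B_3}$ (bounded by $M$ in $L^\infty$) with $1_{x_0B_3}\ast\mu_{B_3}$, apply Lemma \ref{lem.appcon} to the pair $(B_3,B_4)$ on $x_jb\in B_4^2$ together with $1_{x_0B_3}\ast\mu_{B_3}(x_0)=1$, and combine by a trivial H\"older step. You are in fact slightly more explicit than the paper in converting the absolute bound $M\epsilon_4$ into the relative one, by invoking $\|k\|_{L^\infty(\mu_G)}>\nu/\sqrt{2}$ from the preceding claim — a step the paper leaves implicit in its choice of constants.
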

\begin{proof}
We have seen this calculation before.  We begin by noting that
\begin{equation*}
\sup_{b \in B_j}|1_{x_0B_3} \ast \mu_{B_3}(x_0x_jb) - 1_{x_0B_3} \ast \mu_{B_3}(x_0) |
\end{equation*}
is at most
\begin{equation*}
c\nu \|k\|_{L^\infty(\mu_G)}/2^5M^3
\end{equation*}
by Lemma \ref{lem.appcon} since $x_jb \in B_4^2$, 
\begin{equation*}
\epsilon_4 \leq c\nu/2^6M^4 \textrm{ and } r_4 \geq 2.
\end{equation*}
On the other hand $1_{x_0B_3} \ast \mu_{B_3}(x_0)=1$, and so 
\begin{equation*}
\sup_{b \in B_4}|1_{x_0B_3} \ast \mu_{B_3}(x_0b) - 1| \leq c\nu \|k\|_{L^\infty(\mu_G)}/2^5M^3.
\end{equation*}
Now, by a trivial instance of H{\"o}lder's inequality we are done.
\end{proof}
It follows from the claim and the triangle inequality that
\begin{equation*}
|k' \ast \widetilde{\mu_{B}} \ast \mu_B(x')|^2 \geq c\nu \|k\|_{L^\infty(\mu_G)}/2^5M^2.
\end{equation*}
Of course, since $x' \in B_{j,j+1}^+$ and $B^2 \subset B_{j+1}^3$ we have that $xB^2 \subset B_j$ since $r_{j+1} \geq 3$, whence
\begin{equation*}
k' \ast \widetilde{\mu_{B}} \ast \mu_B(x)= f \ast\widetilde{\mu_{B}} \ast \mu_B(x_0x_jx) - f \ast \widetilde{\mu_{B_3}} \ast \mu_{B_3} \ast \widetilde{\mu_{B}} \ast \mu_B(x_0x_jx).
\end{equation*}
It follows that
\begin{equation*}
\| f \ast\widetilde{\mu_{B}} \ast \mu_B - f \ast \widetilde{\mu_{B_3}} \ast \mu_{B_3} \ast \widetilde{\mu_{B}} \ast \mu_B\|_{L^\infty(\mu_G)} \geq c\nu \|k\|_{L^\infty(\mu_G)}/2^5M^2.
\end{equation*}
Finally, $B^2 \subset B_{j+1}^3$ whence $(B_3,B)$ is an $\epsilon_{j+1}$-closed $2$-multiplicative pair (since $r_{j+1} \geq 6$) , and so by Lemma \ref{lem.approxhaar} we have that
\begin{equation*}
\| f \ast\widetilde{\mu_{B_3}} \ast \mu_{B_3} - f \ast \widetilde{\mu_{B_3}} \ast \mu_{B_3} \ast \widetilde{\mu_{B}} \ast \mu_B\|_{L^\infty(\mu_G)}
\end{equation*}
is at most
\begin{equation*}
c\nu \|k\|_{L^\infty(\mu_G)}/2^6M^2,
\end{equation*}
since
\begin{equation*}
\epsilon_j \leq c\nu \|k\|_{L^\infty(\mu_G)}/2^6M^3.
\end{equation*}
The result follows by the triangle inequality.
\end{proof}
While the lower bound on the correlation is $\Omega(\nu^{O(1)}M^{-O(1)})$, the lower bound on the size of the balls is a tower of $2\eta^{-1}$s of height $O(\nu^{-O(1)}M^{O(1)})$.  The regularity argument in the above proof gives rise to one of the `tower contributions' in the final bound.

\section{From small algebra norm to correlation with a multiplicative pair}\label{sec.ank}

In this section we show that if $f$ is integer-valued and has small algebra norm then its square correlates with a large multiplicative pair. Specifically we shall prove the following proposition.
\begin{proposition}\label{prop.correl}
Suppose that $G$ is a finite group, $f:G \rightarrow \Z$, not identically zero, has $\|f\|_{A(G)} \leq M$ and $\epsilon \in (0,1]$ is a parameter.  Then there is positive real $c=\Omega_{M,\epsilon}(1)$ and a $c$-thick $\epsilon$-closed $4$-multiplicative pair $\mathcal{B}=(B,B')$ such that
\begin{equation*}
\|f^2\ast \mu_B\|_{L^\infty(\mu_G)} =\Omega_M(1) \textrm{ and } \mu_G(B)=\Omega_{M}(\|f\|_{L^2(\mu_G)}^2).
\end{equation*}
\end{proposition}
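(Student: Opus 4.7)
The plan is to reduce the statement to the small-doubling case already handled by Proposition \ref{prop.wkfr}. The starting point is to convert the algebra-norm bound into multiplicative energy for $A := \supp f$. By the explicit formula (Lemma \ref{lem.algnormexp}) and Parseval's theorem we have $\|f\|_{A(G)} = \sum_i |s_i(f)|$ and $\|f\|_{L^2(\mu_G)}^2 = \sum_i |s_i(f)|^2$; log-convexity of $\ell^p$-norms of the singular values then gives
\begin{equation*}
\|\widetilde{f} \ast f\|_{L^2(\mu_G)}^2 = \sum_i |s_i(f)|^4 \geq \|f\|_{L^2(\mu_G)}^6 / M^2.
\end{equation*}
Since $f$ is integer-valued and $\|f\|_{L^\infty(\mu_G)} \leq M$ by Lemma \ref{lem.linfag}, one has $|f| \leq M \cdot 1_A$ pointwise and $\|f\|_{L^2(\mu_G)}^2 \geq \mu_G(A)$, whence $|\widetilde{f} \ast f| \leq M^2 \, \widetilde{1_A} \ast 1_A$. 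Combining these yields the multiplicative energy bound $\|\widetilde{1_A} \ast 1_A\|_{L^2(\mu_G)}^2 \geq \mu_G(A)^3 / M^6$.

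Next I would invoke a non-abelian Balog-Szemer{\'e}di-Fre{\u\i}man-type theorem. As remarked in \S\ref{sec.abover}, the BSF half of Fre{\u\i}man's theorem does extend to arbitrary finite groups (see \cite{TCTNC}), producing a subset $A' \subset A$ with $\mu_G(A') = \Omega_M(\mu_G(A))$ and $\mu_G(A'^2) = O_M(\mu_G(A'))$. Applying Proposition \ref{prop.wkfr} to $A'$ with the given parameter $\epsilon$ and with $r = 4$ then furnishes a positive real $c = \Omega_{M,\epsilon}(1)$ and a $c$-thick $\epsilon$-closed $4$-multiplicative pair $\mathcal{B} = (B, B')$ satisfying
\begin{equation*}
\|1_{A'} \ast \mu_B\|_{L^\infty(\mu_G)} = \Omega_M(1) \textrm{ and } \mu_G(B) = \Omega_M(\mu_G(A')).
\end{equation*}

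It then remains to transfer the correlation from $1_{A'}$ to $f^2$: since $f$ is integer-valued and $A' \subset \supp f$, we have $f^2 \geq 1_{A'}$ pointwise and therefore $\|f^2 \ast \mu_B\|_{L^\infty(\mu_G)} \geq \|1_{A'} \ast \mu_B\|_{L^\infty(\mu_G)} = \Omega_M(1)$. The size estimate follows by chaining $\mu_G(B) = \Omega_M(\mu_G(A')) = \Omega_M(\mu_G(A))$ with the sandwich $\mu_G(A) \leq \|f\|_{L^2(\mu_G)}^2 \leq M^2 \mu_G(A)$.

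The hard part is the non-abelian BSF step: full Fre{\u\i}man's theorem is not available in general groups, but fortunately only the combinatorial BSF half is needed here, and that does transfer. An alternative would be to feed the energy bound into the Bogolio{\`u}boff-type Proposition \ref{prop.mpls} directly; but the relevant ratio $\nu$ there would then depend on $\|f\|_{L^2(\mu_G)}$ as well as on $M$, producing the wrong quantitative shape for $\mu_G(B)$. Renormalising to fix this is essentially what the BSF reduction accomplishes, so the two approaches amount to the same thing in the end.
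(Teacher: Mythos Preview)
Your proposal is correct and follows essentially the same route as the paper: H\"older on the singular values to get a multiplicative-energy lower bound for $A=\supp f$, then the non-abelian Balog--Szemer\'edi--Gowers theorem (which the paper quotes as \cite[Theorem 5.4]{TCTNC}), then Proposition~\ref{prop.wkfr}, and finally the pointwise bound $f^2\geq 1_{A'}$ to transfer the correlation. The one noteworthy difference is that the paper first passes to $g:=f^2$ and invokes the algebra-norm property to get $\|g\|_{A(G)}\leq M^2$ (this is, as the paper remarks, the only place that property is used in full generality), obtaining an energy constant $M^{-12}$; your argument works directly with $f$ and avoids this detour, giving the slightly sharper constant $M^{-6}$ and showing that the algebra-norm property is not actually needed for this step.
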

The important part about this result is that the lower bound on $\|f^2\ast\mu_B\|_{L^\infty(\mu_G)}$ depends only on $M$.

Our strategy is fairly straightforward: we note from the algebra norm property (and this is the only place in the paper that we use that property in generality) implies that $f^2$ also has small algebra norm.  Then, essentially by H{\"o}lder's inequality, we conclude that the support has large multiplicative energy, and apply the Balog-Szemer{\'e}di theorem and our weak Fre{\u\i}man-type theorem from \S\ref{sec.frei}.

We need to recall the non-abelian version of the Balog-Szemer{\'e}di-Gowers theorem -- the proof does not change in the passage to the non-abelian world as is remarked in \cite{TCTVHV}.  The argument seems to have been first officially recorded by Tao in \cite{TCTNC}.
\begin{theorem}[{Balog-Szemer{\'e}di-Gowers theorem, \cite[Theorem 5.4]{TCTNC}}]
Suppose that $G$ is a finite group, $A \subset G$ has $\|1_{A^{-1}}\ast 1_{A}\|_{L^2(\mu_G)}^2\geq c\mu_G(A)^3$.  Then there is a subset $A' \subset A$ such that
\begin{equation*}
\mu_G(A') \geq c^{O(1)}\mu_G(A) \textrm{ and } \mu_G(A'^2) \leq c^{-O(1)}\mu_G(A').
\end{equation*}
\end{theorem}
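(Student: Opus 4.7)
The plan is to reduce to the weak Fre\u{\i}man-type theorem of \S\ref{sec.containment} applied to $A := \supp f$. Since $f$ is integer-valued with $\|f\|_{L^\infty(\mu_G)} \leq \|f\|_{A(G)} \leq M$ by Lemma \ref{lem.linfag}, we have $1_A \leq |f| \leq M 1_A$ pointwise. This gives $\mu_G(A) \geq \|f\|_{L^2(\mu_G)}^2/M^2$ (from $\|f\|_{L^2}^2 \leq M\|f\|_{L^1} \leq M^2\mu_G(A)$), and any correlation statement for $1_A$ will lift to $f^2$ since $f^2 \geq 1_A$.

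The key first step is to establish large multiplicative energy of $A$. Let $v_1,\dots,v_N$ be a Fourier basis of $L^2(\mu_G)$ for $f$. By Parseval applied twice as in the proof of Proposition \ref{prop.small}, $\|f \ast \widetilde{f}\|_{L^2(\mu_G)}^2 = \sum_i |s_i(f)|^4$, and H\"older's inequality with exponents $(3,3/2)$ yields
\begin{equation*}
\|f\|_{L^2(\mu_G)}^6 = \Big(\sum_i |s_i(f)|^2\Big)^3 \leq \Big(\sum_i |s_i(f)|^4\Big)\Big(\sum_i |s_i(f)|\Big)^2 \leq M^2 \|f\ast \widetilde{f}\|_{L^2(\mu_G)}^2.
\end{equation*}
Since $|f| \leq M1_A$ and $|\widetilde{f}| \leq M1_{A^{-1}}$, we have $|f \ast \widetilde{f}| \leq M^2 \cdot 1_A \ast 1_{A^{-1}}$ pointwise, so $\|1_A \ast 1_{A^{-1}}\|_{L^2(\mu_G)}^2 \geq M^{-6}\|f\|_{L^2(\mu_G)}^6 \geq M^{-6}\mu_G(A)^3$ (using $\mu_G(A) \leq \|f\|_{L^2(\mu_G)}^2$, which holds because $|f| \geq 1_A$).

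With this energy bound in hand, the Balog-Szemer\'edi-Gowers theorem (as stated in the excerpt) gives a subset $A' \subset A$ with $\mu_G(A') = \Omega_M(\mu_G(A))$ and $\mu_G(A'^2) = O_M(\mu_G(A'))$. Now I apply Proposition \ref{prop.wkfr} to $A'$ with parameters $r = 4$ and $\epsilon$: this produces a positive real $c = \Omega_{M,\epsilon}(1)$ and an $\epsilon$-closed, $c$-thick, $4$-multiplicative pair $\mathcal{B} = (B,B')$ with $\mu_G(B) = \Omega_M(\mu_G(A')) = \Omega_M(\|f\|_{L^2(\mu_G)}^2)$ (absorbing powers of $M$ into the $\Omega_M$) and $\|1_{A'} \ast \mu_B\|_{L^\infty(\mu_G)} = \Omega_M(1)$. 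Since $f^2 \geq 1_A \geq 1_{A'}$ pointwise (as $f$ is integer-valued), we conclude $\|f^2 \ast \mu_B\|_{L^\infty(\mu_G)} \geq \|1_{A'} \ast \mu_B\|_{L^\infty(\mu_G)} = \Omega_M(1)$, as required.

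There is no substantial obstacle: every ingredient (the explicit formula for $\|\cdot\|_{A(G)}$, Parseval, H\"older on singular values, Balog-Szemer\'edi-Gowers, and Proposition \ref{prop.wkfr}) has been established earlier in the paper. The only thing to check carefully is bookkeeping — namely, that the correlation constant in the conclusion genuinely depends only on $M$ (not on $\epsilon$), which follows because Proposition \ref{prop.wkfr}'s correlation bound $\Omega_K(1)$ depends only on the doubling constant $K = O_M(1)$ of $A'$, while the dependence on $\epsilon$ enters only through the closure parameter of the multiplicative pair and hence only through the thickness $c$.
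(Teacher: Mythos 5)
What you have written is not a proof of the stated theorem. The statement in question is the Balog--Szemer{\'e}di--Gowers theorem itself: a purely combinatorial assertion that any set $A$ with $\|1_{A^{-1}}\ast 1_A\|_{L^2(\mu_G)}^2 \geq c\mu_G(A)^3$ contains a subset $A'$ with $\mu_G(A') \geq c^{O(1)}\mu_G(A)$ and $\mu_G(A'^2) \leq c^{-O(1)}\mu_G(A')$. Your argument instead proves Proposition \ref{prop.correl} --- that an integer-valued $f$ with $\|f\|_{A(G)} \leq M$ has $f^2$ correlating with a multiplicative pair --- and in the middle of it you explicitly invoke ``the Balog--Szemer{\'e}di--Gowers theorem (as stated in the excerpt)'' to extract the subset $A'$ of small doubling. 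As a proof of the theorem you were asked to establish, this is circular: the key combinatorial step is assumed rather than proved, and everything else in your write-up (the $A(G)$-norm, singular values, Proposition \ref{prop.wkfr}) is machinery for the downstream application, not for the theorem itself.

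For the record, the paper does not prove this theorem either; it is quoted from \cite{TCTNC} with the remark that the abelian argument transfers unchanged to the non-abelian setting. An actual proof must go through the graph-theoretic Balog--Szemer{\'e}di machinery: the energy hypothesis says the bipartite graph on $A \times A$ recording popular products has $\Omega(c)$ edge density in a suitable sense, one passes to a large piece in which almost every pair $(a,a')$ is joined by many paths of length two or three, and one then counts representations of elements of $A'A'$ to bound $\mu_G(A'^2)$. None of the spectral apparatus you deploy is relevant to that argument. Your intermediate computation --- H{\"o}lder on the singular values giving $\|1_A \ast 1_{A^{-1}}\|_{L^2(\mu_G)}^2 \geq M^{-O(1)}\mu_G(A)^3$ for $A = \supp f$ --- is correct and is essentially the opening move of the paper's proof of Proposition \ref{prop.correl}, but it establishes the \emph{hypothesis} of the theorem under discussion, not its conclusion.
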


\begin{proof}[Proof of Proposition \ref{prop.correl}]
Since $\|f\|_{A(G)} \leq M$, we have $\|f^2\|_{A(G)} \leq M^2$ since $A(G)$-norm is an algebra norm. Put $g:=f^2$, and let $v_1,\dots,v_N$ be a Fourier basis of $L^2(\mu_G)$ for $g$.  It follows from Parseval's theorem and the fact $h \mapsto L_h$ is a homomorphsim that
\begin{equation*}
\|\tilde{g} \ast g\|_{L^2(\mu_G)}^2 =\sum_{i=1}^N{\langle L_g^*L_gv_i,L_g^*L_gv_i\rangle_{L^2(\mu_G)}} =  \sum_{i=1}^N{|s_i(g)|^4}.
\end{equation*}
Now, by H{\"o}lder's inequality
\begin{equation*}
\left(\sum_{i=1}^N{|s_i(f)|^2}\right)^3 \leq \left(\sum_{i=1}^N{|s_i(f)|^4}\right)\left(\sum_{i=1}^N{|s_i(f)|}\right)^2.
\end{equation*}
On the other hand by the explicit formula for the $A(G)$-norm and Parseval's theorem we have
\begin{equation*}
\|g\|_{A(G)} = \sum_{i=1}^N{|s_i(f)|} \textrm{ and } \|g\|_{L^2(\mu_G)}^2=\sum_{i=1}^N{|s_i(f)|^2},
\end{equation*}
whence
\begin{equation*}
\|\tilde{g} \ast g\|_{L^2(\mu_G)}^2\geq \|g\|_{L^2(\mu_G)}^6/M^4.
\end{equation*}
Put $A:=\supp g$ and note that $\|g\|_{L^\infty(\mu_G)} \leq M^2$, whence $1_A \leq g \leq M^2.1_A$.  We conclude that
\begin{equation*}
\|1_{A^{-1}}\ast 1_A \|_{L^2(\mu_G)}^2 \geq \|1_A\|_{L^2(\mu_G)}^6/M^{12}.
\end{equation*}
We apply the Balog-Szemer{\'e}di-Gowers theorem to get a set $A' \subset A$ such that
\begin{equation*}
\mu_G(A')\geq M^{-O(1)}\|f\|_{L^2(\mu_G)}^2 \textrm{ and } \mu_G(A'^2) \leq M^{O(1)}\mu_G(A').
\end{equation*}
Now apply Proposition \ref{prop.wkfr} to get a positive real $c=\Omega_{M,\epsilon}(1)$ and a $c$-thick $\epsilon$-closed $4$-multiplicative pair $(B,B')$ such that
\begin{equation*}
\mu_G(B) = \Omega_{M}(\mu_G(A)) \textrm{ and } \|1_{A'}\ast \mu_B\|_{L^\infty(\mu_G)} =\Omega_M(1).
\end{equation*}
Since $A' \subset A$ it follows that $\|1_A \ast \mu_B\|_{L^\infty(\mu_G)}=\Omega_M(1)$, and we have the result.
\end{proof}
The bounds in the above are inherited from Proposition \ref{prop.wkfr}: $c^{-1}$ is quadruply exponential in $O(M^{O(1)}\epsilon^{-O(1)})$, while the correlation bounds are exponential in $O(M^{O(1)})$.

\section{The proof of the main theorem}\label{sec.pmt}

In this section we bring together the main results of \S\S\ref{sec.qc}\verb!&!\ref{sec.ank} in our proof of Theorem \ref{thm.main}.  We shall also make use of some of the more elementary results about the algebra norm and multiplicative pairs.

Before we start the proof proper we shall need two lemmas and some notation.  Suppose that $G$ is a finite group and $f:G \rightarrow \R$.  We say that $f$ is \emph{$\epsilon$-almost integer-valued} if $f(G) \subset \Z+(-\epsilon,\epsilon)$ and we write $f_{\Z}$ for the (unique if $\epsilon<1/2$) integer-valued function which most closely approximates $f$.

Although we shall not be working with the whole class of $\epsilon$-almost integer-valued functions we shall be dipping in and out.  Our first lemma lets us take an almost integer-valued function with small algebra norm and bound the algebra norm of its integral approximant in certain cases.
\begin{lemma}\label{lem.apxalg}
Suppose that $G$ is a finite group and $\epsilon \in [0,1/6)$ is a parameter such that $f \in L^1(\mu_G)$ is $\epsilon$-almost integer valued, and $H \leq G$ is a subgroup such that $f_\Z \ast \mu_H$ is $\epsilon$-almost integer valued and
\begin{equation*}
\|f_\Z\|_{A(G)} \leq M \textrm{and } \mu_G(H) \geq \eta\|f_{\Z}\|_{L^1(\mu_G)}.
\end{equation*}
Then $f-f \ast \mu_H$ is $3\epsilon$-almost integer-valued,
\begin{equation*}
\|(f-f\ast \mu_H)_{\Z}\|_{A(G)} =O(\eta^{-1}M)
\end{equation*}
and there is a natural $k=O(\eta^{-1})$ and integers $z_1,\dots,z_k$ with absolute values at most  $M+O(1)$ such that
\begin{equation*}
(f \ast \mu_H)_{\Z}=\sum_{i=1}^k{z_i.1_{x_i.H_i}}.
\end{equation*}
\end{lemma}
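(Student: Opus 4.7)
The plan has three main pieces matching the three conclusions, and they are best handled in reverse order since conclusion 2 uses the explicit decomposition from conclusion 3.

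I will first establish that $(f \ast \mu_H)_\Z = (f_\Z \ast \mu_H)_\Z$. Since $f$ is $\epsilon$-almost integer-valued, we have $\|f - f_\Z\|_{L^\infty(\mu_G)} < \epsilon$, so Young's inequality gives $\|f \ast \mu_H - f_\Z \ast \mu_H\|_{L^\infty(\mu_G)} < \epsilon$, and since $f_\Z \ast \mu_H$ is itself $\epsilon$-almost integer-valued by hypothesis, $f \ast \mu_H$ is $2\epsilon$-almost integer-valued with the same integer approximant (using $\epsilon < 1/6$ to avoid any ambiguity). The same pointwise argument, applied to the difference, shows that $f - f \ast \mu_H$ is $3\epsilon$-almost integer-valued and that its integer approximant is $f_\Z - (f_\Z \ast \mu_H)_\Z$, giving the first claim.

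Next, the key observation for conclusion 3 is that the convolution $f_\Z \ast \mu_H$ is constant on left cosets of $H$ (by a change of variables against $\mu_H$), and therefore so is its integer approximant. This immediately furnishes a decomposition
\begin{equation*}
(f \ast \mu_H)_\Z = (f_\Z \ast \mu_H)_\Z = \sum_{i=1}^k z_i 1_{x_i H}
\end{equation*}
where the $x_i H$ range over the left cosets on which this integer-valued function is non-zero. To bound $k$, note that on each such coset the integer approximant has absolute value at least $1$, so $|(f_\Z \ast \mu_H)(x)| \geq 1 - \epsilon$ there; integrating and using Young's inequality gives $k \mu_G(H)(1-\epsilon) \leq \|f_\Z \ast \mu_H\|_{L^1(\mu_G)} \leq \|f_\Z\|_{L^1(\mu_G)}$, whence $k \leq 1/(\eta(1-\epsilon)) = O(\eta^{-1})$. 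The bound on $|z_i|$ follows from $|z_i| \leq \|f_\Z \ast \mu_H\|_{L^\infty(\mu_G)} + \epsilon$ together with Young's inequality and Lemma \ref{lem.linfag} giving $\|f_\Z \ast \mu_H\|_{L^\infty(\mu_G)} \leq \|f_\Z\|_{L^\infty(\mu_G)} \leq \|f_\Z\|_{A(G)} \leq M$.

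Finally, conclusion 2 is obtained by feeding the explicit decomposition from conclusion 3 into the triangle inequality for the algebra norm: since $(f - f\ast \mu_H)_\Z = f_\Z - \sum_{i=1}^k z_i 1_{x_i H}$, we get
\begin{equation*}
\|(f - f \ast \mu_H)_\Z\|_{A(G)} \leq \|f_\Z\|_{A(G)} + \sum_{i=1}^k |z_i| \cdot \|1_{x_i H}\|_{A(G)} \leq M + k(M+O(1)) = O(\eta^{-1} M),
\end{equation*}
where $\|1_{x_i H}\|_{A(G)} = 1$ comes from Corollary \ref{cor.indsmall}. No single step is an obstacle; the only point that requires a moment's thought is the compatibility of integer-approximation with subtraction, which is guaranteed by the $3\epsilon < 1/2$ slack, and the fact that all three bounds can be routed through the single structural observation that $f_\Z \ast \mu_H$ is a coset-constant function.
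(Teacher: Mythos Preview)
Your proposal is correct and follows essentially the same route as the paper: establish that $f\ast\mu_H$ is $2\epsilon$-almost integer-valued via Young's inequality, use coset-constancy of $f_\Z\ast\mu_H$ to get the decomposition, bound $k$ by an $L^1$ estimate and $|z_i|$ by Lemma~\ref{lem.linfag}, and then obtain the $A(G)$ bound by the triangle inequality and Corollary~\ref{cor.indsmall}. Your bound on $k$ (integrating $|f_\Z\ast\mu_H|\geq 1-\epsilon$ over the support cosets) is in fact slightly cleaner than the paper's, which detours through a pointwise comparison of $(f\ast\mu_H)_\Z$ with $|f_\Z\ast\mu_H|$.
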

\begin{proof}
Since $f$ is $\epsilon$-almost integer-valued we have that $\|f-f_\Z\|_{L^\infty(\mu_G)} \leq \epsilon$.  It follows from Young's inequality that
\begin{equation}\label{eqn.wb}
\|f \ast \mu_H - f_\Z \ast \mu_H\|_{L^\infty(\mu_G)} \leq \epsilon.
\end{equation}
However, $f_\Z\ast \mu_H$ is $\epsilon$-almost integer valued, whence
\begin{equation*}
\|f \ast \mu_H - (f_\Z \ast \mu_H)_\Z\|_{L^\infty(\mu_G)} \leq 2\epsilon
\end{equation*}
by the triangle inequality.  It follows that $f \ast \mu_H$ is $2\epsilon$-almost integer-valued and, again by the triangle inequality, that $f-f \ast \mu_H$ is $3\epsilon$-almost integer valued as required. Furthermore, since $3\epsilon<1/2$ we have that $(f-f \ast \mu_H)_\Z$ is well-defined.

Now we examine how often we can have $|(f \ast \mu_H)_{\Z}(x)|>0$.  Since $f \ast \mu_H$ is $2\epsilon$-almost integer valued and $\epsilon<1/4$ we have that $|(f \ast \mu_H)_{\Z}(x)|>0$ if and only if $|f \ast \mu_H(x)|>1/2$.  However, $f \ast \mu_H$ is constant on cosets of $H$ whence $|(f \ast \mu_H)_{\Z}(x)|>0$ if and only if $|f \ast \mu_H(x')|>1/2$ for all $x' \in xH$. It follows that there are cosets $x_1.H,\dots,x_k.H$ such that
\begin{equation}\label{eqn.1}
(f \ast \mu_H)_{\Z}=\sum_{i=1}^k{(f \ast \mu_H)_\Z(x_i).1_{x_i.H_i}}.
\end{equation}
However since $f \ast \mu_H$ is $2\epsilon$-almost integer-valued we have that
\begin{equation*}
(f \ast \mu_H)_{\Z}(x) \leq (1-2\epsilon)^{-1}|f \ast \mu_H(x)|1_{|f \ast \mu_H(x)|>1/2}.
\end{equation*}
Now, from (\ref{eqn.wb}) we have
\begin{equation*}
|f_\Z\ast \mu_H(x)| \geq |f \ast \mu_H(x)| - \epsilon \geq |f \ast \mu_H(x)|1_{|f \ast \mu_H(x)|>1/2}(1-2\epsilon).
\end{equation*}
It follows that
\begin{equation*}
(f \ast \mu_H)_{\Z}(x) \leq (1-2\epsilon)^{-2}|f_\Z\ast \mu_H(x)| \leq 4|f_\Z\ast \mu_H(x)| .
\end{equation*}
Using this upper bound in (\ref{eqn.1}) and the fact that the cosets are disjoint we get that
\begin{equation*}
k\mu_G(H)/2 \leq \|(f \ast \mu_H)_{\Z}\|_{L^1(\mu_G)} \leq 4\|f_\Z\ast \mu_H\|_{L^1(\mu_G)} \leq \|f_\Z\|_{L^1(\mu_G)}.
\end{equation*}
The bound on $k$ now follows from the lower bound on the size of $H$.  To bound the integers $(f \ast \mu_H)_\Z(x_i)$ we just note that
\begin{eqnarray*}
\|(f \ast \mu_H)_{\Z}\|_{L^\infty(\mu_G)}& \leq& \|f\ast \mu_H\|_{L^\infty(\mu_G)} + 1\\& \leq & \|f\|_{L^\infty(\mu_G)}+1 \leq \|f_Z\|_{L^\infty(\mu_G)} + 2 \leq M+2
\end{eqnarray*}
by Young's inequality again and Lemma \ref{lem.linfag}.  Finally by Corollary \ref{cor.indsmall} and the triangle inequality we have
\begin{equation*}
\|(f \ast \mu_H)_{\Z}\|_{A(G)} \leq \sum_{i=1}^k{|(f \ast \mu_H)_\Z(x_i)|} \leq 8\eta^{-1}(M+2),
\end{equation*}
and the result follows by the triangle inequality given that $(f-f\ast \mu_H)_\Z = f_\Z - (f\ast\mu_H)_\Z$ and $\|f_\Z\|_{A(G)} \leq M$.
\end{proof}
The next lemma lets us take a quantitative notion of continuity such as that developed in \S\ref{sec.qc} and show that if at the same time the function is almost integer valued then in fact it is approximately constant on the subgroup generated by the ball of continuity.
\begin{lemma}\label{lem.spread}
Suppose that $G$ is a finite group, $f:G \rightarrow \Z$, $g \in L^\infty(\mu_G)$ and $B \subset G$ is a symmetric non-empty set such that for some parameter $\epsilon \in [0,1/10)$ we have
\begin{equation*}
\sup_{x \in G}{\|g-g(x)\|_{L^\infty(\mu_{xB})}} \leq \epsilon \textrm{ and } \sup_{x \in G}{\|f-g\|_{L^2(\mu_{xB})}} \leq \epsilon.
\end{equation*}
Then, writing $H:=\langle B \rangle$ for the group generated by $B$ we have that $f \ast \mu_H$ is $5\epsilon$-almost integer-valued and
\begin{equation*}
\|f \ast \mu_H\|_{L^\infty(\mu_G)} > \|g\|_{L^\infty(\mu_G)} - 3\epsilon.
\end{equation*}
\end{lemma}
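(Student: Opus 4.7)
The plan is to identify, for each $x \in G$, an integer $n(x)$ that approximates $g(x)$, show that $n$ is constant on the right cosets of $H$, and finally verify that $f \ast \mu_H(x)$ is close to $n(x)$. By the triangle inequality the two hypotheses give $\|f - g(x)\|_{L^2(\mu_{xB})} \leq 2\epsilon$, so Markov's inequality applied to $A_x := \{y \in xB : |f(y) - g(x)| \geq 1/2\}$ yields $\mu_{xB}(A_x) \leq 16\epsilon^2 < 1$. Since $f$ is integer-valued, on the non-empty complement $xB \setminus A_x$ it must take the unique integer in the open interval $(g(x) - 1/2, g(x) + 1/2)$; I will call this integer $n(x)$. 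Comparing the $L^2$ bound against the lower bound obtained from $|g(y) - g(x)| \leq \epsilon$ on $xB$ and $f(y) = n(x)$ on $xB \setminus A_x$ gives (by a short calculation) $|n(x) - g(x)| \leq \epsilon + \epsilon/\sqrt{1 - 16\epsilon^2} \leq 2.1\epsilon$.

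Next, treating $B$ as a symmetric neighbourhood of the identity (so $e \in B$, as in all the applications of the lemma in the paper), for any $y \in xB$ we have $y \in xB \cap yB$, and the chain
\begin{equation*}
|n(x) - n(y)| \leq |n(x) - g(x)| + |g(x) - g(y)| + |g(y) - n(y)| \leq 2.1\epsilon + \epsilon + 2.1\epsilon < 1
\end{equation*}
forces $n(x) = n(y)$ because both are integers. Iterating the argument along chains in $B$ and using $H = \bigcup_k B^k$ shows that $n$ is constant on $xH$.

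For the main inequality I would first derive $\|f - g\|_{L^2(\mu_{xH})} \leq \epsilon$ by averaging the hypothesis $\|f - g\|_{L^2(\mu_{zB})}^2 \leq \epsilon^2$ over $z \in xH$, noting that $B \subseteq H$ so that left translation by any $b \in B$ permutes $xH$. Setting $E_x := \{y \in xH : f(y) \neq n(x)\}$ and observing that $y \in yB$ with $f(y) = n(y) = n(x)$ on $yB \setminus A_y$, we get $E_x \subseteq \{y \in xH : |f(y) - g(y)| \geq 1/2\}$, so Markov again gives $\mu_{xH}(E_x) \leq 4\epsilon^2$. Then Cauchy-Schwarz together with $\|f - n(x)\|_{L^2(\mu_{xH})} \leq \|f-g\|_{L^2(\mu_{xH})} + \|g-n(x)\|_{L^\infty(\mu_{xH})} \leq 3.1\epsilon$ yields
\begin{equation*}
|f \ast \mu_H(x) - n(x)| \leq \sqrt{\mu_{xH}(E_x)} \cdot \|f - n(x)\|_{L^2(\mu_{xH})} \leq 2\epsilon \cdot 3.1\epsilon \leq 6.2\epsilon^2 < \epsilon,
\end{equation*}
which is well within the required tolerance of $5\epsilon$. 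For the norm bound I would choose $x^*$ with $|g(x^*)| = \|g\|_{L^\infty(\mu_G)}$, and combine the two triangle estimates to get $|f \ast \mu_H(x^*)| \geq |g(x^*)| - 2.1\epsilon - 0.62\epsilon > \|g\|_{L^\infty(\mu_G)} - 3\epsilon$.

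The main subtlety is the chaining argument extending $n$ from $xB$ to all of $xH$; this works cleanly only because $B$ is a symmetric neighbourhood of the identity, ensuring consecutive translates $xB$ and $yB$ overlap at $y$. The numerics also rely on having $\epsilon$ small enough that $16\epsilon^2$ is well below $1$ and $5.2\epsilon < 1$, both of which follow from $\epsilon < 1/10$.
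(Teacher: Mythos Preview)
Your proof is correct and follows the same overall strategy as the paper: show that $g$ is close to an integer-valued function constant on right cosets of $H$, then show $f\ast\mu_H$ is close to that function. The execution of the final step differs. The paper first bounds $|f\ast\mu_B(x)-g(x)|$ pointwise via Cauchy--Schwarz, then integrates over a coset using the identity $\mu_B\ast\mu_H=\mu_H$ (valid since $B\subset H$) to pass to $f\ast\mu_H$, and finally compares to $g_\Z$. You instead average the hypothesis $\|f-g\|_{L^2(\mu_{zB})}\leq\epsilon$ over $z\in xH$ to get $\|f-g\|_{L^2(\mu_{xH})}\leq\epsilon$ directly, and then use Markov plus Cauchy--Schwarz on the exceptional set $E_x$. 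Your route is a little sharper numerically (you get $|f\ast\mu_H(x)-n(x)|=O(\epsilon^2)$ rather than $O(\epsilon)$), while the paper's convolution identity is perhaps more conceptual.

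One small point: your added assumption $e\in B$ is neither in the statement nor actually needed in your own argument. In the chaining step, the bound $|g(x)-g(y)|\leq\epsilon$ for $y\in xB$ is already the first hypothesis, and $|n(z)-g(z)|\leq 2.1\epsilon$ holds for every $z$; nothing about $y\in yB$ is used. In the $E_x$ step, if $|f(y)-g(y)|<1/2$ then $f(y)$ and $n(y)$ are two integers each within strictly less than $1/2$ of $g(y)$, hence equal; you do not need to place $y$ inside $yB\setminus A_y$. With that observation your proof matches the stated hypotheses exactly.
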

\begin{proof}
It follows from the triangle inequality and nesting of norms that
\begin{equation*}
\sup_{x \in G}{\|f-g(x)\|_{L^2(\mu_{xB})}} \leq 2\epsilon.
\end{equation*}
Let $z$ be the smallest integer with $z \geq g(x)$ and put $S:=\{y \in xB: f(y) \geq z\}$.  Since $f$ is integer-valued it follows that
\begin{equation*}
|z-g(x)|.\sqrt{\mu_{xB}(S)} + |z-1-g(x)|.\sqrt{1-\mu_{xB}(S)} \leq 2\epsilon.
\end{equation*}
It follows that
\begin{equation*}
\min\{|z-g(x)|,|z-1-g(x)|\}.(\sqrt{\mu_{xB}(S)} + \sqrt{1-\mu_{xB}(S)}) \leq 2\epsilon.
\end{equation*}
However,
\begin{equation*}
\sqrt{\mu_{xB}(S)} + \sqrt{1-\mu_{xB}(S)}\geq 1,
\end{equation*}
whence $g(x)$ is within $2\epsilon$ of an integer.

 On the other hand $|g(y)-g(x)|<\epsilon$ whenever $y\in xB$ whence
 \begin{equation*}
 |g_\Z(x)-g_\Z(y)| \leq |g_\Z(x) - g(x)| + |g(x) - g(y)| + |g(y) - g_\Z(y)| \leq 5\epsilon.
 \end{equation*}
 It follows that $g_\Z(x)=g_\Z(y)$ since $\epsilon<1/10$, and hence $g_\Z$ is constant on cosets of $H$.
 
Now, note that for all $x \in G$ we have
\begin{eqnarray*}
|f \ast \mu_B(x) - g(x)|^2 &\leq& (|f - g(x)| \ast \mu_B(x))^2\\ & \leq & (f-g(x))^2\ast \mu_B(x) \leq \epsilon^2
\end{eqnarray*}
by the Cauchy-Schwarz inequality.  Now, integrating over cosets of $H$ we get that
\begin{equation*}
|f \ast \mu_B\ast \mu_H(x) - g\ast \mu_H(x)| = |\int{f \ast \mu_B(x')d\mu_{xH}(x')} -\int{g(x')d\mu_{xH}(x')}|\leq \epsilon.
\end{equation*}
However, $\mu_B \ast \mu_H=\mu_H$ whence
\begin{equation*}
|f \ast \mu_H(x) - g \ast \mu_H(x)| \leq \epsilon.
\end{equation*}
Now $g$ is $\epsilon$-almost integer-valued and $g_\Z$ is constant on cosets of $H$, whence
\begin{eqnarray*}
|g \ast \mu_H(x) - g(x)| &\leq& |g \ast \mu_H(x) - g_\Z(x)| +\epsilon\\ & = & |g \ast \mu_H(x) - g_\Z \ast \mu_H(x)| +\epsilon \leq 2\epsilon.
\end{eqnarray*}
The lemma follows from the triangle inequality.
\end{proof}
We are now in a position to prove the main theorem.
\begin{proof}[Proof of Theorem \ref{thm.main}]
We define a sequence of functions $(f_i)_{i\geq 1}$ and subgroups $(H_i)_{i\geq 1}$ with the following properties.
\begin{enumerate}
\item \label{pty.00} $f_i$ is $3^i\epsilon $-almost integer-valued;
\item \label{pty.01} $M_i:=\|(f_i)_\Z\|_{A(G)}$ has $M_i=O_{M,i}(1)$;
\item \label{pty.02} $\|f_i \ast \mu_{H_i}\|_{L^\infty(\mu_G)} >1/2$;
\item \label{pty.03} $f_{i+1} =f_i - f_i \ast \mu_{H_i}$;
\item \label{pty.04} there are integers $k,z_1,\dots,z_k = O_{M,i}(1)$ and elements $x_1,\dots,x_k \in G$ such that\begin{equation*}
(f_{i} \ast \mu_{H_i})_\Z = \sum_{j=1}^k{z_j.1_{x_j.H_i}}.
\end{equation*}
\end{enumerate}

We put $f_0:=f$ which trivially satisfies the first two conditions above since $f$ is integer valued.  Suppose that we are at stage $i$ of the iteration, having defined $f_i$ satisfying conditions (\ref{pty.00}) and (\ref{pty.01}).  Write $F$ for the function hiding behind the $\Omega_M(1)$ in Proposition \ref{prop.correl}, $\epsilon_i=3^i\epsilon$ and
\begin{equation*}
\nu_i:=\min\{\epsilon_i,F(M_i)^2/12,1/20\}.
\end{equation*}
If $(f_i)_\Z \equiv 0$ then terminate the iteration.  Apply Proposition \ref{prop.correl} to $(f_i)_{\Z}$ to get a $\nu_i$-closed $4$-multplicative pair $(B_i,B_i')$ such that
\begin{equation}\label{eqn.wp}
\|(f_i)_\Z^2\ast \mu_{B_i}\|_{L^\infty(\mu_G)}\geq F(M_i),
\end{equation}
and
\begin{equation*}
\mu_G(B_i),\mu_G(B_i')=\Omega_{\epsilon,i,M}(\|(f_i)_\Z\|_{L^2(\mu_G)}^2).
\end{equation*}
Since $B_i'^4 \subset B_i$ we get (from the bounds on $\mu_G(B_i)$ and $\mu_G(B_i')$) that
\begin{equation*}
\mu_G(B_i'^4) =O_{\epsilon,i,M}(\mu_G(B_i)).
\end{equation*}
We may thus apply Proposition \ref{prop.quantcon} to $B_i$ and $(f_i)_\Z$ with parameter $\nu_i$ to get balls $B_i''' \subset B_i'' \subset B_i^4$ with  $\mu_G(B_i''')=\Omega_{\epsilon,i,M}(\mu_G(B_i))$,
\begin{equation*}
\sup_{x \in G}{\|(f_i)_\Z\ast\widetilde{\mu_{B_i''}}\ast  \mu_{B_i''}-(f_i)_\Z\ast\widetilde{\mu_{B_i''}}\ast  \mu_{B_i''}(x)\|_{L^\infty(\mu_{xB_i'''})}}\leq \nu_i
\end{equation*}
and
\begin{equation}\label{eqn.jx}
\sup_{x \in G}{\|(f_i)_\Z- (f_i)_\Z\ast\widetilde{\mu_{B_i''}}\ast  \mu_{B_i''}\|_{L^2(\mu_{xB_i'''})}}\leq \nu_i.
\end{equation}
Writing $H_i:=\langle B_i'''\rangle$ we have, by Lemma \ref{lem.spread}, that $(f_i)_\Z \ast \mu_{H_i}$ is $5\nu_i$-almost integer-valued and
\begin{equation}\label{eqn.maass}
\|(f_i)_\Z \ast \mu_{H_i}\|_{L^\infty(\mu_G)} > \|(f_i)_\Z\ast\widetilde{\mu_{B_i''}}\ast  \mu_{B_i''}\|_{L^\infty(\mu_G)} - 3\nu_i.
\end{equation}
On the other hand, by (\ref{eqn.jx}) we have that
\begin{eqnarray*}
\|(f_i)_\Z \ast \widetilde{\mu_{B_i''}} \ast \mu_{B_i''}\|_{L^\infty(\mu_G)}& \geq & \sup_{x \in G}{\|(f_i)_\Z\|_{L^2(xB_i''')}} -\nu_i\\ & = & \|(f_i)_\Z^2 \ast \mu_{B_i'''}\|_{L^\infty(\mu_G)}^{1/2} - \nu_i. 
\end{eqnarray*}
Now, since $B_i''' \subset B_i'^4$ and $(B_i,B_i)$ is a $\nu_i$-closed $4$-multiplicative pair we see that
\begin{equation*}
\| \mu_{B_i'''}  \ast \mu_{B_i}- \mu_{B_i}\| \leq \nu_i 
\end{equation*}
by Lemma \ref{lem.approxhaar}.  It follows by the triangle inequality and Young's inequality that
\begin{equation*}
\|(f_i)_{\Z}^2 \ast \mu_{B_i'''}\|_{L^\infty(\mu_G)}\geq \|(f_i)_{\Z}^2 \ast \mu_{B_i}\|_{L^\infty(\mu_G)} - \nu_i,
\end{equation*}
and hence
\begin{equation*}
\|(f_i)_\Z \ast \widetilde{\mu_{B_i''}} \ast \mu_{B_i''}\|_{L^\infty(\mu_G)}\geq  \|(f_i)_{\Z}^2 \ast \mu_{B_i}\|_{L^\infty(\mu_G)}^{1/2} - 2\nu_i.
\end{equation*}
Combining this with (\ref{eqn.maass}) tells us that
\begin{equation*}
\|(f_i)_\Z \ast \mu_{H_i}\|_{L^\infty(\mu_G)} > \|(f_i)_\Z^2 \ast \mu_{B_i}\|_{L^\infty(\mu_G)}^{1/2} -6\nu_i>0
\end{equation*}
by choice of $\nu_i$ and the lower bound from (\ref{eqn.wp}).  Since $(f_i)_\Z \ast \mu_{H_i}$ is $5\nu_i$-almost integer-valued and $f_i$ is $\epsilon_i$-almost integer valued this bootstraps to
\begin{equation*}
\|f_i\ast \mu_{H_i}\|_{L^\infty(\mu_G)} >1/2
\end{equation*}
provided $\epsilon_i<1/4$, and so (\ref{pty.03}) is satisfied by $f_i$. Now, by Lemma \ref{lem.apxalg} applied to $f_i$ which is $\epsilon_i$-almost integer-valued and has $\|(f_i)_\Z\|_{A(G)} \leq M_i$, and the subgroup $H_i$ which is such that $(f_i)_\Z\ast \mu_{H_i}$ is $\epsilon_i$-almost integer-valued by choice of $\nu_i$ and
\begin{equation*}
\mu_G(H_i) \geq \mu_G(B_i''') = \Omega_{\epsilon,i,M}(\|(f_i)_\Z\|_{L^2(\mu_G)}^2)= \Omega_{\epsilon,i,M}(\|(f_i)_\Z\|_{L^1(\mu_G)}).
\end{equation*}
Property (\ref{pty.04}) now follows immediately from this lemma, as do properties (\ref{pty.00}) and (\ref{pty.01}) for $f_{i+1}$.  This closes the induction.

Now, by properties (\ref{pty.02}) and (\ref{pty.03}) of the sequence constructed above and Lemma \ref{lem.decompmass} we get that
\begin{equation*}
\|f_{i+1}\|_{A(G)} \leq \|f_i\|_{A(G)} - 1/2.
\end{equation*}
Since $\|f_0\|_{A(G)} \leq M$ it follows that the iteration cannot proceed for more than $O(M)$ steps which lets us choose $\epsilon \geq \exp(-O(M))$ so that $3^i\epsilon < 1/10i$. It remains to unravel the situation when the construction terminates.  Suppose it does so at some stage $i_0$ in whichcase we have $(f_{i_0})_\Z \equiv 0$.  Since $f_i$ is always $1/4$-almost integer-valued we have that
\begin{equation*}
(f_{i+1})_\Z =( f_i)_\Z-(f_i \ast \mu_{H_i})_\Z,
\end{equation*}
by the definition of the $f_i$s. It follows by induction that
\begin{equation*}
f=f_0=(f_0)_\Z=\sum_{i=0}^{i_0-1}{(f_i \ast \mu_{H_i})_\Z}.
\end{equation*}
On the other hand each of the summands has a structure described by (\ref{pty.04}), and so combining all these gives the result.
 \end{proof}
The bound on $L$ of a tower of tower of towers in $O(M)$ can be easily read out of this argument: essentially we iterate $O(M)$ times and each time we do it we replace $M_i$ by a tower of towers in $M_i$, whence the bound.

\section{Concluding remarks}\label{sec.con}

The bounds in Theorem \ref{thm.main} appear rather weak and, indeed, we have no better example of an integer-valued function with small algebra norm than we have in the abelian setting, namely an arithmetic progression. Specifically if $G=\Z/p\Z$ for some large prime $p$ and $A$ is an arithmetic progression then it is easy enough to see that $\|1_A\|_{A(G)} = \Omega(\log |A|)$.  Of course if $|A|$ is sufficiently small then any $\pm$-decomposition of $1_A$ into indicator functions of cosets must involve $|A|$ terms.  It follows that $L(\|1_A\|_{A(G)}) = \Omega(|A|)$ and hence we must have $L(M)=\exp(\Omega(M))$ in Theorem \ref{thm.main}.

\section*{Acknowledgements}

The author should like to thank Ben Green for useful conversations and much encouragement, and the anonymous referee for suggesting many improvements to the paper.

\bibliographystyle{alpha}

\bibliography{master}

\end{document}